\theoremstyle{plain}
\newtheorem{theorem}{Theorem}[section]
\newtheorem{proposition}{Proposition}[section]
\newtheorem{corollary}{Corollary}[section]
\newtheorem{example}{Example}[section]
\numberwithin{equation}{section}
\theoremstyle{remark}
\newtheorem{remark}{Remark}[section]
 \numberwithin{equation}{section}
\title[Warped product submanifolds]
{A survey on geometry of warped product submanifolds}
\author{ Bang-Yen Chen}
\address{\it Michigan State University \newline\indent
Department of Mathematics  \newline\indent
619 Red Cedar Road,  \newline\indent East Lansing, Michigan
48824--1027, U.S.A.}
\email{bychen@math.msu.edu}
\subjclass[2000]{53C40, 53C42, 53C50}
\keywords{Warped product submanifold,  $CR$-warped product, twisted product submanifold.}
\begin{document}

\begin{abstract} 
The warped product $N_1\times_f N_2$ of two Riemannian manifolds $(N_1,g_1)$ and $(N_2,g_2)$ is the product manifold $N_1\times N_2$ equipped with the warped product metric
$g=g_1+f^2 g_2$, where $f$ is a positive function on $N_1$. 
The notion of warped product manifolds is one of the most fruitful generalizations of Riemannian products. Such notion plays very important roles in differential geometry as well as in
physics, especially in general relativity. 
Warped product manifolds have been studied for a long period of time. In contrast, the study of warped product submanifolds was only initiated around the beginning of this century   in a series of articles \cite{c5.1,c7,c8,c02-2}.  Since then the study of  warped product submanifolds has become a very active research subject.

 In this article we survey important results on warped product submanifolds in various ambient manifolds.   It is the author's hope that this survey article will provide a good introduction on the theory of warped product submanifolds as well as a useful reference for further research on this vibrant research subject.

\end{abstract}

\maketitle
\pagestyle{myheadings}
\markboth{\hspace{0,45 cm}\hrulefill\ B.-Y. Chen}{{\it Warped product submanifolds} \hrulefill\hspace{0,45 cm} }

\setcounter{equation}{0}

{\bf Table of Contents}

\begin{enumerate}
\item[1.]  Introduction \dotfill 1                            

\item[2.] Preliminaries\dotfill 2
\item[3.]  Warped product submanifolds of Riemannian manifolds \dotfill 6
\item[4.]  Multiply warped product submanifolds \dotfill 8
\item[5.]  Arbitrary warped products in complex space forms \dotfill 10
\item[6.]  Warped products as K\"ahlerian submanifolds \dotfill 13
\item[7.] $CR$-products in K\"ahler manifolds \dotfill 14
\item[8.] Warped product Lagrangian submanifolds of K\"ahler manifolds \dotfill 15
\item[9.]  Warped Product $CR$-submanifolds of K\"ahler manifolds \dotfill  16
\item[10.] $CR$-warped products with compact holomorphic factor \dotfill  18
\item[11.]  Another optimal inequality for $CR$-warped products \dotfill 20
\item[12.] Warped Product $CR$-submanifolds and $\delta$-invariants \dotfill  21
\item[13.]  Warped product real hypersurfaces in complex space forms \dotfill  22
\item[14.]  Warped Product $CR$-submanifolds of nearly K\"ahler manifolds \dotfill  25
\item[15.] Warped product submanifolds in para-K\"ahler manifolds \dotfill  26
\item[16.] Contact $CR$-warped product submanifolds in Sasakian manifolds. \dotfill  28
\item[17.] Warped product submanifolds in affine spaces \dotfill  30
\item[18.] Twisted product submanifolds \dotfill  35

\item[19.] Related articles \dotfill  37

\item[20.] References \dotfill 38
\end{enumerate}

\section{\uppercase{Introduction}}

\def\<{\left < }
\def\>{\right >}

Let $B$ and $F$ be two Riemannian manifolds equipped with Riemannian metrics $g_B$ and $g_F$, respectively, and let
$f$ be a positive function on $B$. Consider the product manifold $B\times F$ with
its projection $\pi:B\times F\to B$ and $\eta:B\times F\to F$. The {\it warped product}
$M=B\times_f F$ is the manifold $B\times F$ equipped with the Riemannian
structure such that 
\begin{equation}\label{E:warped} ||X||^2=||\pi_*(X)||^2+f^2(\pi(x))
||\eta_*(X)||^2\end{equation}

\noindent for any tangent vector $X\in T_xM$. Thus we have $g=g_B+f^2 g_F$. The function $f$ is called the {\it warping function\/}  (cf. \cite{bishop}). The concept of warped products appeared in the mathematical and physical literature long before \cite{bishop}, e.g., warped products were called semi-reducible spaces in \cite{Kr}.
It is well-known that the notion of warped products plays important roles in differential geometry as well as in physics, especially in the theory of general relativity (cf. \cite{c11,oneill}). 
 
According to a famous theorem of J. F. Nash, every Riemannian manifold can be isometrically
immersed in some Euclidean spaces with sufficiently high codimension.  In particular, Nash's embedding theorem implies that {\it every warped product manifold $N_1\times_f N_2$ can be isometrically embedded as Riemannian submanifolds in  Euclidean spaces with sufficiently high codimension.}  

In view of Nash's theorem, the author asked in early 2000s the following fundamental question concerning warped product submanifolds (see \cite{c02-2}).
\vskip.05in

{\bf Fundamental Question:} 
 {\it What can we conclude from an arbitrary isometric immersion of a warped product manifold into a Euclidean space with arbitrary codimension\,?} 
\vskip.03in

\vskip.03in
Or, more generally,
\vskip.03in

 {\it What can we conclude from an arbitrary isometric immersion of a warped product manifold into an arbitrary  Riemannian manifold\,?} \vskip.03in
 
 The study of this fundamental question on warped product submanifolds was not initiated until the beginning of this century by the author in a series of articles \cite{c5.1,c7,c8,c02-2}.  Since then the study of  warped product submanifolds has become a very active research topic in differential geometry of submanifolds.
 
In this article we survey the most important results on warped product submanifolds in various manifolds, including Riemannian,  K\"ahler, nearly K\"ahler, para-K\"ahler and Sasakian manifolds. 
It is the author's hope that this survey article will provide a good introduction on the theory of warped product submanifolds as well as a useful reference for further research on this subject.

\section{\uppercase{Preliminaries}} In this section we provide some basic notations, formulas, definitions, and results for later use.

\subsection{Basic notations and formulas} Let $M$ be an $n$-dimensional submanifold of a
 Riemannian $m$-manifold $\tilde M^m$.   We choose a local field of orthonormal
frame
$$e_1,\ldots,e_n,e_{n+1},\ldots,e_m$$ in $\tilde M^m$ such that, restricted to  $M$, the vectors $e_1,\ldots,e_n$ are tangent to $M$ and hence $e_{n+1},\ldots,e_m$ are normal to $M$. 
Let $K(e_i\wedge e_j)$ and $\tilde K(e_i\wedge e_j)$ denote respectively the sectional curvatures of $M$ and $\tilde M^m$ of the plane section spanned by $e_i$ and $e_j$. 

For the submanifold $M$ in  $\tilde M^m$ we denote by $\nabla$ and ${\tilde \nabla}$ the Levi-Civita connections of $M$ and $\tilde M^m$, respectively. The Gauss and Weingarten
formulas are given respectively by (see, for instance, \cite{cbook,c11})
\begin{align} &{\tilde \nabla}_{X}Y=\nabla_{X} Y +\sigma(X,Y),\\ &{\tilde\nabla}_{X}\xi =
A_{\xi}X+D_{X}\xi \end{align} 
for any  vector fields $X,Y$ tangent to $M$ and  vector field $\xi$ normal to $M$, where $\sigma$ denotes the second fundamental form, $D$ the normal connection, and $A$ the shape operator
of the submanifold. 

Let $\{\sigma^r_{ij}\}$, $i,j=1,\ldots,n;\,r=n+1,\ldots,m$,  denote the coefficients of the second fundamental form $h$ with respect to $e_1,\ldots,e_n,e_{n+1},\ldots,e_m$. Then we have $$\sigma^r_{ij}=\<\sigma(e_i,e_j),e_r\>=\<A_{e_r}e_i,e_j\>,$$ where $\<\;\,,\;\>$ denotes the inner product.

The mean curvature vector $\overrightarrow{H}$ is defined by
\begin{align}\overrightarrow{H} = {1\over n}\,\hbox{\rm trace}\,\sigma = {1\over n}\sum_{i=1}^{n} \sigma(e_{i},e_{i}), \end{align}
where $\{e_{1},\ldots,e_{n}\}$ is a local  orthonormal frame of the tangent bundle $TM$ of $M$. The squared mean curvature is then given by $$H^2=\left<\right.\hskip-.02in \overrightarrow{H},\overrightarrow{H}\hskip-.02in\left.\right>.$$ A submanifold $M$  is called  minimal  in $\tilde M^m$ if  its mean curvature vector  vanishes identically. 

Denote by $R$ and $\tilde R$  the Riemann curvature tensors of $M$ and $\tilde M^m$, respectively. Then the {\it equation of Gauss\/} is given  by \begin{equation}\begin{aligned} &R(X,Y;Z,W)=\tilde R(X,Y;Z,W)+\<\sigma(X,W),\sigma(Y,Z)\> -\<\sigma(X,Z),\sigma(Y,W)\>\end{aligned}\end{equation} 
for vectors $X,Y,Z,W$ tangent to $M$. In particular, for a submanifold of a Riemannian manifold of constant sectional curvature $c$, we have 
\begin{equation}\begin{aligned} R(X&,Y;Z,W)=c\{\left<X,W\right>\left<Y,Z\right>-
\left<X,Z\right>\left<Y,W\right>\}\\ & +\left<\sigma(X,W),\sigma(Y,Z)\right>
-\left<\sigma(X,Z),\sigma(Y,W)\right>.\end{aligned}\end{equation}

Let $M$ be a Riemannian $p$-manifold and  $e_1,\ldots,e_p$ be an orthonormal frame fields on $M$. For differentiable function $\varphi$ on $M$, the Laplacian $\Delta\varphi$ of $\varphi$ is defined  by
\begin{equation} \Delta\varphi=\sum_{j=1}^p \{(\nabla_{e_j}e_j)\varphi
-e_je_j\varphi\}.\end{equation}

For any orthonormal basis $e_1,\ldots,e_n$ of the tangent space $T_pM$, the scalar curvature $\tau$ of $M$ at $p$ is defined to be \begin{align}\tau(p)=\sum_{i<j} K(e_i\wedge e_j),\end{align} where $K(e_i\wedge e_j)$ denotes the sectional curvature of the plane section spanned by $e_i$ and $e_j$.

Let $L$ be a subspace of $T_pM$  of dimension $r\geq 2$  and $\{e_1,\ldots,e_r\}$ an orthonormal basis of $L$. The scalar curvature $\tau(L)$ of the $r$-plane section $L$ is defined by 
\begin{align}\label{2.6}\tau(L)=\sum_{\alpha<\beta} K(e_\alpha\wedge
e_\beta),\quad 1\leq \alpha,\beta\leq r.\end{align} 

\subsection{$\delta$-invariants}
For a  Riemannian $n$-manifold $M$, let $K(\psi)$ denote the sectional curvature associated with a 2-plane section $\psi\subset T_xM,\,x\in M$. For an orthonormal basis $\{e_1,\ldots,e_n\}$ of $T_xM$, the scalar curvature $\tau_{M}$ of $M$ at $x$ is defined to be \begin{align}\tau_{M}(x)=\sum_{i<j} K(e_i, e_j).\end{align}

Let $L$ be a $r$-subspace of $T_xM$ with $r\geq 2$  and $\{e_1,\ldots,e_r\}$ an orthonormal basis of $L$. We define the scalar curvature $\tau(L)$ of $L$ by  
$$\tau(L)=\sum_{\alpha<\beta} K(e_\alpha, e_\beta),\;\; 1\leq\alpha,\beta\leq r.$$

For an integer $k\geq 0$ let  ${\mathcal S}(n,k)$ denote the set consisting of unordered $k$-tuples $(n_1,\ldots,n_k)$ of integers $\geq 2$ such that  $ n>n_1$ and $ n_1+\cdots+n_k\leq n.$ Denote by ${\mathcal S}(n)$ the set of unordered $k$-tuples with $k\geq 0$ for a fixed $n$. 

 For each $k$-tuple $(n_1,\ldots,n_k)\in {\mathcal S}(n)$, the $\delta$-invariant $\delta{(n_1,\ldots,n_k)}(x)$ is defined by 
\begin{align}\notag \delta (n_1,\ldots,n_k)(x)=\tau_{M}(x)-\inf\{\tau(L_1)+\cdots+\tau(L_k)\}, \end{align}
where $L_1,\ldots,L_k$ run over all $k$ mutually orthogonal subspaces of $T_xM$ such that  $\dim L_j=n_j,\, j=1,\ldots,k$.

Some other invariants of similar nature, i.e., invariants obtained from the scalar curvature by deleting certain amount of sectional curvatures,  are also called $\delta$-{\it invariants}. For a general survey on $\delta$-invariants and their applications, see the recent book \cite{c11}.

\subsection{Complex extensors}
We recall the notions of complex extensors and Lagrangian $H$-umbilical submanifolds introduced in \cite{c4.1}.  

Let $G:N^{p-1}\rightarrow \mathbb E^p$ be an isometric immersion of a Riemannian $(p-1)$-manifold into the Euclidean $p$-space $\mathbb E^p$ and let $F:I\rightarrow {\mathbb C}^*$ be a unit speed curve in  ${\mathbb C}^*={\mathbb C}-\{0\}$. We extend $G:N^{p-1}\rightarrow \mathbb E^p$ to an immersion of $I\times
N^{p-1}$ into  ${\mathbb C}^p$  as
 \begin{align}F\otimes G:I\times N^{p-1}\rightarrow {\mathbb C}\otimes \mathbb E^p={\mathbb C}^p,\end{align}
where
$(F\otimes G)(s,q)=F(s)\otimes G(q)$ for $ s\in I, \; q\in N^{p-1}.$ This extension $F\otimes G$ of $G$ via tensor product is called the {\it complex extensor\/} of $G$ via $F$.

 A Lagrangian submanifold of $\tilde M^p$ without totally geodesic points is called {\it $H$-umbilical\/} if its second
fundamental form takes the following simple form (cf. \cite{c4.1}):  \begin{equation} \begin{aligned} \label{2.12}&
\sigma(\bar e_1,\bar e_1)\hskip-.01in =\hskip-.01in  \lambda J\bar e_1,\; \sigma(\bar e_j,\bar e_j)\hskip-.01in =\hskip-.01in \mu
J\bar e_1, \; j>1,\\ & \sigma(\bar e_1,\bar e_j)\hskip-.01in =\hskip-.01in \mu  J\bar e_j,\, \sigma(\bar e_j,\bar e_k)\hskip-.01in =\hskip-.01in 0,\, 2\leq j\ne k\leq p \end{aligned}\end{equation}
  for some  functions $\lambda,\mu$ with respect to a suitable
orthonormal local frame field $\{\bar e_1,\ldots,\bar e_p\}$.
Such submanifolds are the simplest Lagrangian submanifolds   next to the totally geodesic ones.

\begin{example} $($Lagrangian pseudo-sphere\,$)$ {\rm  For a real number $b>0$, let $F:{\mathbb R}\to {\mathbb C}$ be the unit speed curve given by
$$F(s)={{e^{2bsi}+1}\over{2bi}}.$$ 
With respect to the induced metric, the complex extensor $\phi=F\otimes \iota$ of the unit hypersphere of ${\mathbb E}^n$ via $F$ is a Lagrangian isometric immersion of an open portion of an $n$-sphere $S^n(b^2)$ of  sectional curvature $b^2$ into ${\mathbb C}^n$ which is simply called a {\it Lagrangian pseudo-sphere.}

A Lagrangian pseudo-sphere is a Lagrangian $H$-umbilical submanifold satisfying \eqref{2.12} with $\lambda=2\mu$. Conversely,  Lagrangian pseudo-spheres are the only Lagrangian $H$-umbilical submanifolds of ${\mathbb C}^n$ which satisfy \eqref{2.12}  with $\lambda=2\mu$.}\end{example}

\begin{example} $($Whitney sphere\,$)$ {\rm  Let $w:S^p(1)\rightarrow {\mathbb C}^p$ be the map of the unit $p$-sphere into ${\mathbb C}^{p}$ defined by
$$w(y_0,y_1,\ldots,y_p)={{1+{\rm i}y_0}\over {1+y_0^2}}( y_1,\ldots,y_p),\quad y_0^2+y_1^2+\ldots+y_p^2=1.$$
The map $w$ is a (non-isometric) Lagrangian immersion  with one self-intersection point which is called the {\it Whitney $p$-sphere.}  The Whitney
$p$-sphere is a complex extensor $\phi=F\otimes\iota$ of  $\iota:S^{p-1}(1) \subset \mathbb E^p$ via $F$, where $F=F(s)$
is an arclength reparametrization of the  curve  $f:I\rightarrow {\mathbb C}$ defined  by
$$f(t)={{\sin t+{\rm i}\sin t \cos t }\over{1+\cos^2 t}}.$$  

It is well-known that, up to rigid motions and dilations, the Whitney  sphere is the only Lagrangian $H$-umbilical submanifold in ${\mathbb C}^{p}$ which satisfies \eqref{2.12} with $\lambda=3\mu$.}\end{example}

The following results from \cite{c4.1} are fundamental for complex extensors.

\begin{proposition}\label{P:2.1}  Let $\iota:S^{p-1}\rightarrow \mathbb E^p$ be a hypersphere of $\mathbb E^p$ centered at the origin. Then every complex extensor $\phi=F\otimes \iota$ of $\iota$ via a unit speed curve $F:I\to \bf C^*$ is a Lagrangian $H$-umbilical submanifold of ${\mathbb C}^p$ unless $F$ is contained in a line through the origin $($which gives a totally geodesic Lagrangian submanifold$\,)$. \end{proposition}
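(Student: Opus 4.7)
The plan is to parametrize the complex extensor explicitly and then read off the form of its second fundamental form in a carefully chosen orthonormal frame. Writing $\phi(s,q)=F(s)\otimes q$ for $s\in I$ and $q\in S^{p-1}\subset\mathbb E^p$, we have $\phi_*(\partial_s)=F'\otimes q$ and $\phi_*(V)=F\otimes V$ for $V\in T_qS^{p-1}$; the identity $\<z\otimes u,w\otimes v\>_{\mathbb C^p}=\mathrm{Re}(z\bar w)\<u,v\>_{\mathbb E^p}$ then shows the induced metric is $ds^2+|F|^2 g_{S^{p-1}}$. The Lagrangian property follows at once: $J\phi_*(\partial_s)=iF'\otimes q$ and $J\phi_*(V)=iF\otimes V$ each pair to zero with every tangent vector (using $|F'|^2=1$, $\<q,V\>=0$, and $\mathrm{Re}(i)=0$), and the real dimensions match.

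I next fix the orthonormal frame $\bar e_1=F'\otimes q$ and $\bar e_j=|F|^{-1}F\otimes e_j$, $j=2,\dots,p$, where $\{e_j\}$ is a local orthonormal frame on $S^{p-1}$. Unit speed of $F$ gives $\mathrm{Re}(F''\overline{F'})=0$, so $F''=\kappa\,iF'$ for a real scalar $\kappa$; then $\tilde\nabla_{\bar e_1}\bar e_1=F''\otimes q=\kappa J\bar e_1$ is already normal, yielding $\sigma(\bar e_1,\bar e_1)=\lambda J\bar e_1$ with $\lambda=\kappa$.

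For the remaining components I use that the Euclidean second fundamental form of the unit sphere is $\sigma^S(X,Y)=-\<X,Y\>q$. Direct differentiation in $\mathbb C^p$ yields
\begin{align*}
\tilde\nabla_{\bar e_1}\bar e_j&=\bigl(|F|^{-1}F'-|F|^{-2}|F|'\,F\bigr)\otimes e_j,\\
\tilde\nabla_{\bar e_j}\bar e_j&=|F|^{-2}F\otimes\bigl(D^S_{e_j}e_j-q\bigr),\\
\tilde\nabla_{\bar e_j}\bar e_k&=|F|^{-2}F\otimes D^S_{e_j}e_k\qquad(j\ne k).
\end{align*}
The scalars in front of $\otimes q$ are decomposed in the real basis $\{F',iF'\}$ of $\mathbb C$ (aligned with $\bar e_1,J\bar e_1$), while those in front of $\otimes e_j$ use $\{F,iF\}$ (aligned with $\bar e_j,J\bar e_j$). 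Applying the identity $\mathrm{Re}(F\overline{F'})=|F|\,|F|'$, the tangential parts reproduce the intrinsic warped-product Levi-Civita connection, and the normal parts collapse to
\[
\sigma(\bar e_j,\bar e_j)=\mu J\bar e_1,\quad \sigma(\bar e_1,\bar e_j)=\mu J\bar e_j,\quad \sigma(\bar e_j,\bar e_k)=0\ (2\le j\ne k\le p),
\]
with the common value $\mu=-|F|^{-2}\mathrm{Im}(F\overline{F'})$. Together with $\sigma(\bar e_1,\bar e_1)=\lambda J\bar e_1$ this is exactly \eqref{2.12}.

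To finish, I analyze the degeneracy. If $F$ is contained in a real line through the origin, write $F=\rho\,e^{i\theta_0}$; then $F\overline{F'}\in\mathbb R$, so $\mu\equiv 0$, while $F''$ remains parallel to that line and $iF'$ is orthogonal to it, forcing $\kappa\equiv\lambda\equiv 0$, and $\phi$ is totally geodesic. Conversely, $\lambda\equiv 0$ forces $F''\equiv 0$, hence $F(s)=cs+d$ with $|c|=1$; then $\mu\equiv 0$ gives $\mathrm{Im}(d\bar c)=0$, making $d$ a real multiple of $c$, so $F$ lies in the line $\mathbb R c$. In every remaining case $(\lambda,\mu)\ne(0,0)$ and the computed form of $\sigma$ realizes $\phi$ as a Lagrangian $H$-umbilical submanifold. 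The main obstacle is the bookkeeping in the third step: one must split $\tilde\nabla_{\bar e_j}\bar e_j$ into a tangential part that \emph{exactly} matches the intrinsic warped-product connection plus a clean normal part aligned with $J\bar e_1$; this is what makes the coefficient $\mu$ in $\sigma(\bar e_j,\bar e_j)$ coincide with the one in $\sigma(\bar e_1,\bar e_j)$, which is the key structural feature of \eqref{2.12}.
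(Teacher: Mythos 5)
Your argument is correct and is essentially the argument of the cited source \cite{c4.1} (the survey states Proposition \ref{P:2.1} without proof): one parametrizes $\phi(s,q)=F(s)\otimes q$, checks the induced metric is the warped product $ds^2+|F|^2g_{S^{p-1}}$ and that $J$ maps tangent to normal, and then computes the flat derivatives of the adapted frame $\bar e_1=F'\otimes q$, $\bar e_j=|F|^{-1}F\otimes e_j$, using the total umbilicity of the sphere, to obtain exactly \eqref{2.12} with $\lambda$ the curvature of $F$ and $\mu=-|F|^{-2}\,\mathrm{Im}\bigl(F\overline{F'}\bigr)$, together with the correct identification of the degenerate case. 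One small caveat: your closing claim that $(\lambda,\mu)\ne(0,0)$ ``in every remaining case'' should be read as ``not identically zero'' --- a unit speed curve not contained in a line through the origin can still have an isolated parameter value where both its curvature vanishes and $F\parallel F'$, producing isolated totally geodesic points of the extensor; this looseness is already present in the statement of the proposition (via the definition of $H$-umbilical requiring no totally geodesic points) and does not affect the substance of your computation.
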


\begin{theorem} Let $n\geq 3$ and $L: M\to {\mathbb C}^n$  be a Lagrangian $H$-umbilical isometric immersion. Then we have: 
\begin{enumerate}

\item If $M$ is of constant sectional curvature, then either $M$ is flat or, up to rigid motions of ${\mathbb C}^n$, $L$ is a Lagrangian pseudo-sphere.

\item If $M$ contains no open subset of constant sectional curvature, then,  up to rigid motions of ${\mathbb C}^n$, $L$ is a complex extensor of the unit hypersphere of ${\mathbb E}^n$.
\end{enumerate}\end{theorem}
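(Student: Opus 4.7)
The plan is to start by using Gauss's equation for the flat ambient $\mathbb{C}^n$ together with the explicit $H$-umbilical form \eqref{2.12} of the second fundamental form to compute the sectional curvatures of $M$ in terms of $\lambda$ and $\mu$. Because the ambient curvature tensor vanishes and $J$ is a linear isometry, a direct calculation yields only two kinds of sectional curvatures: $K(\bar e_i\wedge\bar e_j)=\mu^2$ for $2\le i<j\le n$ and $K(\bar e_1\wedge\bar e_j)=\mu(\lambda-\mu)$ for $j\ge 2$, while all other components of $R$ vanish thanks to $\sigma(\bar e_j,\bar e_k)=0$ for distinct $j,k\ge 2$. In particular, $M$ has constant sectional curvature on an open set if and only if $\mu^2=\mu(\lambda-\mu)$ holds there, equivalently $\mu=0$ or $\lambda=2\mu$.

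Part (1) falls out immediately from this dichotomy. If $M$ has constant curvature $c$ globally, then at each point either $\mu=0$ (forcing $c=0$) or $\lambda=2\mu$. In the first case $M$ is flat. In the second case $\mu$ must be nowhere zero, since otherwise $\sigma$ would vanish at some point, contradicting the no-totally-geodesic-points requirement in the $H$-umbilical definition; thus $\lambda=2\mu$ holds on all of $M$, and the converse assertion recalled in the Lagrangian pseudo-sphere example above identifies $L$, up to a rigid motion of $\mathbb{C}^n$, with a Lagrangian pseudo-sphere.

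For part (2), the hypothesis forbids constant curvature on every open set, so by the curvature formulas above the open dense subset $U=\{p\in M:\mu(p)\ne 0\text{ and }\lambda(p)\ne 2\mu(p)\}$ is nonempty. On $U$ the next step is to invoke the Codazzi equation, conveniently packaged as the total symmetry of the cubic form $C(X,Y,Z)=\langle\sigma(X,Y),JZ\rangle$ valid for any Lagrangian submanifold of $\mathbb{C}^n$, together with the explicit shape operators $A_{J\bar e_1}=\mathrm{diag}(\lambda,\mu,\ldots,\mu)$ and the $A_{J\bar e_j}$ supported on the $(1,j)$-entries. I expect this to force $\bar e_1$ to be a unit geodesic vector field, $\bar e_j\lambda=\bar e_j\mu=0$ for $j\ge 2$, and the distribution $\mathcal{D}=\mathrm{span}(\bar e_2,\ldots,\bar e_n)$ to be integrable with totally umbilical spherical leaves. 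This yields a local warped-product decomposition $M=I\times_f N^{n-1}$ in which each leaf $\{s\}\times N^{n-1}$ is an open piece of a round $(n-1)$-sphere.

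The main obstacle, and the final step, is to promote this intrinsic decomposition and the algebraic form of $\sigma$ to the concrete tensor representation $L=F\otimes G$. Integrating the Gauss-Weingarten equations along an integral curve of $\bar e_1$ should show that the position vector $L(s,q)\in\mathbb{C}^n$ satisfies an ODE of the shape $\partial_s L=(F'(s)/F(s))\,L$ for a suitable unit-speed curve $F:I\to\mathbb{C}^*$, so that each leaf $\{s\}\times N^{n-1}$ lies in the affine real $n$-plane $F(s)\cdot\mathbb{E}^n\subset\mathbb{C}^n$ as an open piece of a hypersphere centered at the origin. Proposition~\ref{P:2.1} then recognizes the resulting map as a complex extensor $F\otimes\iota$ of the unit hypersphere $\iota:S^{n-1}\to\mathbb{E}^n$, completing part (2) on $U$; a real-analyticity and density argument extends the conclusion from $U$ to all of $M$.
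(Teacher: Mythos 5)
The survey states this theorem without proof (it is quoted from \cite{c4.1}), so your proposal has to be measured against the argument of that reference, whose general route you do follow: the Gauss equation applied to \eqref{2.12} gives the two sectional curvatures $\mu^{2}$ and $\mu(\lambda-\mu)$, hence the pointwise dichotomy ``$\mu=0$ or $\lambda=2\mu$'', and part (1) then follows from the uniqueness of the Lagrangian pseudo-sphere recalled in the example preceding the theorem (a prior result of \cite{c4.1}, so there is no circularity). One slip there: $\mu(p)=0$ does not make $\sigma$ vanish at $p$ (only $\sigma(\bar e_1,\bar e_1)=\lambda J\bar e_1$ need not vanish); the correct observation is that $\mu(p)=0$ forces all sectional curvatures at $p$ to vanish, so a non-flat constant curvature $c$ gives $\mu^{2}=c\neq 0$, hence $\mu$ nowhere zero and $\lambda=2\mu$ everywhere. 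With that repair, part (1) is essentially complete.

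For part (2) there are genuine gaps. First, the Codazzi equation is not the total symmetry of the cubic form $C(X,Y,Z)=\langle \sigma(X,Y),JZ\rangle$: that symmetry is a pointwise algebraic identity valid for every Lagrangian submanifold and carries no differential information beyond the shape \eqref{2.12} you already assumed; what is needed is the symmetry of the covariant derivative $\bar\nabla\sigma$ (equivalently of $\nabla C$). Applied to \eqref{2.12} with $n\geq 3$ on the set where $\mu\neq0$ and $\lambda\neq2\mu$, the genuine Codazzi equation does yield exactly what you list ($\bar e_j\lambda=\bar e_j\mu=0$ for $j\geq2$, $\bar e_1$ geodesic, the distribution spanned by $\bar e_2,\dots,\bar e_n$ integrable with spherical leaves), so this is a mislabeling rather than a fatal error, but as written the stated tool cannot produce those facts. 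The serious gap is the final step: the passage from the warped-product structure to the tensor-product form $L=F\otimes\iota$ is only asserted. The identity $\partial_sL=(F'(s)/F(s))L$ presupposes precisely what must be proved, namely that the leaves are round hyperspheres with one common center (the origin after a translation), lying in totally real $n$-planes all obtained from a single fixed $\mathbb{E}^{n}\subset\mathbb{C}^{n}$ by multiplication by unit complex numbers, with radii matching $|F|$; nothing in your outline explains why $L_*\bar e_1$ should be a complex scalar multiple of the translated position vector, and establishing this by actually integrating the structure equations is where the bulk of the proof in \cite{c4.1} lies (once $L=F\otimes\iota$ is in hand, it is a complex extensor by definition; Proposition~\ref{P:2.1} goes in the opposite direction and is not what closes the argument). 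Finally, the appeal to real-analyticity is unavailable for a merely smooth Lagrangian immersion; the extension from the dense open set $U$ (whose density itself needs the Schur-type argument you leave implicit) must be done by continuity. So part (2), as written, is an outline whose central step is missing.
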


\subsection{Warped product immersion}

Suppose that $M_1,\ldots,M_k$ are  Riemannian manifolds and that $$f:M_1\times\cdots\times M_k\to E^N$$ is an isometric immersion of the Riemannian product $M_1\times\cdots\times M_k$ into Euclidean $N$-space. J. D. Moore
\cite{moore}  proved that if the second fundamental form $\sigma$ of $f$ has the property that $\sigma(X,Y)=0$ for $X$  tangent to $M_i$ and $Y$ tangent to $M_j$, $i\ne j$, then $f$ is a product immersion, that is, there exist isometric immersions $f_i:M_i\to  E^{m_i},\, 1\leq i\leq k$, such that
\begin{equation}f(x_1,\ldots,x_k)=(f(x_1),\ldots,f(x_k))\end{equation}
when $x_i\in M_i$ for $1\leq i\leq k$.

 Let $M_0,\cdots ,M_k$ be Riemannian manifolds, $M=M_ 0 \times\cdots \times M_ k$ their product, and $\pi_i : M\to M_i$  the canonical projection. If $\rho_1,\cdots ,\rho_k : M_0\to \hbox{\bf R}_+$ are positive-valued functions, then 
\begin{equation}\left< X,Y \right>:= \left<\pi_{0*}X,\pi_{0*}Y\right> + \sum^k_{i=1} (\rho_i \circ\pi_0)^2 \left< \pi\sb {i*}X, \pi\sb {i*}Y\right>\end{equation} defines a Riemannian metric on $M$, called a warped product metric. $M$ endowed with this metric is denoted by $M_0 \times_{\rho_1} M_1 \times \cdots \times_{\rho\sb k}M_k$. 

A warped product immersion is defined as follows: Let $M_0 \times_{\rho_1}M_1\times \cdots\times_{\rho_k}M_k$ be a warped product and let $f_i: N_i\to M_i$, $i=0,\cdots,k$, be isometric immersions, and define $\sigma_i:= \rho_i \circ f_0 : N_0\to \hbox{\bf R}_+$ for $i = 1,\cdots ,k$.
Then the map \begin{equation}f: N_0 \times_{\sigma_1} N_1\times \cdots \times_{\sigma_k} N_k
\to M_0 \times_{\rho_1} M_1\times\cdots \times_{\rho_k} M_k\end{equation}
given by $f(x_0,\cdots ,x_k):= (f_0(x_0),f_1(x_1),\cdots , f_k(x_k))$ is an isometric immersion, which is called a {\it warped product immersion.} 

S. N\"olker \cite{nolker} extended Moore's result to the following.
 \vskip.1in 
 
\begin{theorem} Let $f: N_0 \times_{\sigma_1} N_1 \times \cdots \times_{\sigma_k} N_k\to R^N(c)$ be an isometric immersion into a Riemannian manifold of constant curvature $c$. If $h$ is the second fundamental form of $f$ and
$h(X_i,X_j)=0$, for all vector fields $X_i$ and $X\sb j$, tangent to $N_i$ and $N_j$
respectively, with $i\ne j$, then, locally, $f$ is a warped product immersion. \end{theorem}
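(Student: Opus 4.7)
The strategy is to extend Moore's product-immersion theorem: the hypothesis $h(X_i,X_j)=0$ combined with constant ambient curvature should force each $N_i$-slice to map into an extrinsic sphere of $R^N(c)$, so that the $N_0$-direction parametrises these spheres and yields the desired warped product structure. To set this up, on $N:=N_0\times_{\sigma_1}N_1\times\cdots\times_{\sigma_k}N_k$ let $\mathcal{D}_i$ denote the distribution tangent to the $N_i$-factor. Standard intrinsic warped-product identities give: each $\mathcal{D}_i$ is integrable; the $\mathcal{D}_0$-leaves are totally geodesic in $N$; for $i\geq 1$ each $\mathcal{D}_i$-leaf is totally umbilical in $N$ with mean curvature vector $H^N_i=-\nabla\log\sigma_i$ lying in $\mathcal{D}_0$; and $\nabla_ZX_i=(Z\log\sigma_i)X_i$ for $X_i\in\mathcal{D}_i$, $Z\in\mathcal{D}_0$. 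Moreover $H^N_i$ is parallel in the normal bundle of its leaf inside $N$, so the $\mathcal{D}_i$-leaves are \emph{extrinsic spheres} of $N$.

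The key technical step is to transfer this structure to $R^N(c)$. For any $\mathcal{D}_i$-leaf $L$ with $i\geq 1$, the Gauss formula applied to $L\hookrightarrow N\to R^N(c)$ gives
\[
h^{f|_L}(X_i,Y_i)=h(X_i,Y_i)+\langle X_i,Y_i\rangle f_*H^N_i.
\]
Combining $h(X_i,X_j)=0$ for $i\neq j$ with the Codazzi equation $(\tilde\nabla_Xh)(Y,Z)=(\tilde\nabla_Yh)(X,Z)$, valid because $R^N(c)$ has constant curvature, and the connection identities above, one picks $X\in\mathcal{D}_0$, $Y,Z\in\mathcal{D}_i$ and expands both sides to deduce $h|_{\mathcal{D}_i\times\mathcal{D}_i}=\mu_i\langle\cdot,\cdot\rangle\,\xi_i$ for some normal vector $\xi_i$ depending only on the $N_0$-coordinate; a further Codazzi calculation shows that the resulting mean-curvature vector of $f|_L$ is parallel in the normal bundle of $L$ in $R^N(c)$. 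Hence each $\mathcal{D}_i$-leaf is mapped into an extrinsic sphere, and thus lies in a totally umbilical submanifold $M_i\subset R^N(c)$.

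The assembly is then essentially bookkeeping. Fix a point and let $L_0$ be the $\mathcal{D}_0$-leaf through it. Since $L_0$ is totally geodesic in $N$ and the $\mathcal{D}_0$-$\mathcal{D}_i$ components of $h$ vanish, $f_0:=f|_{L_0}:N_0\to R^N(c)$ is an isometric immersion onto a submanifold $M_0$. As $x_0$ varies along $L_0$, the family of spheres $M_i(x_0)$ from the previous step has smoothly varying centres and radii; identifying each $M_i(x_0)$ with a fixed model $M_i$ yields isometric immersions $f_i:N_i\to M_i$, and reading off the radii yields positive functions $\rho_i:M_0\to\mathbb{R}_+$ with $\sigma_i=\rho_i\circ f_0$. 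This produces locally the warped product immersion $(f_0,f_1,\ldots,f_k):N_0\times_{\sigma_1}\cdots\times_{\sigma_k}N_k\to M_0\times_{\rho_1}M_1\times\cdots\times_{\rho_k}M_k\subset R^N(c)$ agreeing with $f$. The main obstacle is the middle step: leveraging constant ambient curvature (through Codazzi) to upgrade the intrinsic umbilicity and normal parallelism of the $\mathcal{D}_i$-leaves in $N$ to the analogous properties in $R^N(c)$. Without the constant-curvature hypothesis, the shape operator restricted to $\mathcal{D}_i$ need not be a scalar multiple of the identity, and the $N_i$-slices need not land in round spheres, so the warped product target simply would not exist; once this is settled, the remaining organisation follows from the warped-product connection formulae.
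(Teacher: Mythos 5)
The survey you are working from does not prove this result at all---it is quoted verbatim as N\"olker's theorem with a citation to his 1996 paper---so your argument has to stand on its own, and unfortunately its central step is not just unproved but false. You claim that the hypothesis $h(X_i,X_j)=0$, fed into the Codazzi equation with $X\in\mathcal D_0$, $Y,Z\in\mathcal D_i$, forces $h|_{\mathcal D_i\times\mathcal D_i}=\mu_i\langle\cdot,\cdot\rangle\,\xi_i$ and then that each $\mathcal D_i$-leaf is itself an extrinsic sphere of $R^N(c)$. Carrying out that Codazzi computation with the warped-product connection identities actually gives
\begin{equation*}
D_X\bigl(h(Y,Z)\bigr)=(X\log\sigma_i)\,h(Y,Z)+\langle Y,Z\rangle\, h\bigl(X,\nabla\log\sigma_i\bigr),\qquad X\in\mathcal D_0,\ Y,Z\in\mathcal D_i,
\end{equation*}
a propagation law for the $\mathcal D_i$-block of $h$ along the base directions; it says nothing about that block being umbilical, and indeed it cannot, because umbilicity fails in general. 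Concretely, take $N_1$ to be a Clifford torus in $S^3\subset\mathbb E^4$ and let $f$ be the cone immersion $\mathbb R^+\times_s N_1\to\mathbb E^4$: this is mixed totally geodesic and is a warped product immersion (so it satisfies both the hypotheses and the conclusion of the theorem), yet $h|_{\mathcal D_1\times\mathcal D_1}$ has principal curvatures of opposite sign, and the leaf $\{s\}\times N_1$ is not an extrinsic sphere---it merely sits inside the totally umbilical $S^3(s)$, whose dimension exceeds that of the leaf.

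This mislocates what actually has to be proved, and it also breaks your assembly step: ``identifying each $M_i(x_0)$ with a fixed model'' and ``reading off the radii'' presupposes that $f$ restricted to a leaf is a round sphere of dimension $n_i$, which would force every $f_i:N_i\to M_i$ to be a local isometry; N\"olker's theorem allows the $f_i$ to be arbitrary isometric immersions into higher-dimensional umbilical factors (as in the cone example). The genuine content of the proof is different: one first constructs warped product representations $M_0\times_{\rho_1}M_1\times\cdots\times_{\rho_k}M_k$ of $R^N(c)$ attached to an orthogonal splitting of the \emph{entire} tangent space $T_{f(p)}R^N(c)$, in which portions of the normal space of $f$ are apportioned to the factors $W_i\supseteq f_*\mathcal D_i(p)$ together with the umbilical data coming from $-\nabla\log\sigma_i$; one then shows, by a reduction-of-codimension/parallel-subbundle argument in which Codazzi and the mixed condition enter through identities like the one displayed above, that $f$ maps each $\mathcal D_i$-leaf \emph{into} the corresponding totally umbilical factor $M_i$ and each $\mathcal D_0$-leaf into $M_0$, after which the maps $f_i$ and the relation $\sigma_i=\rho_i\circ f_0$ fall out. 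So the correct target of your ``key technical step'' is containment of the leaves in umbilical submanifolds of a suitably chosen representation, not umbilicity of the leaves themselves; as written, the middle of your argument proves a false statement and the theorem you would obtain from the rest is strictly weaker than the one asserted.
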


\vskip.1in
 
\section{\uppercase{Warped product submanifolds of Riemannian manifolds}}

In this section, we present solutions from \cite{c02} to the Fundamental Question mention in Introduction.
 
For a warped product $N_1\times_f N_2$,  we denote by $\mathcal D_1$ and $\mathcal D_2$ the distributions given by the vectors tangent to leaves and fibers, respectively. Thus  $\mathcal D_1$ is obtained from tangent vectors of $N_1$ via the horizontal lift and  $\mathcal D_2$ obtained by tangent vectors of $N_2$ via
the vertical lift.

 Let $\phi:N_1\times_f N_2\to R^m(c)$ be an isometric immersion of a warped product $N_1\times_f N_2$ into a Riemannian manifold with  constant sectional curvature $c$. Denote by $\sigma$ the second fundamental form of $\phi$. 
 
 The immersion $\phi:N_1\times_f N_2\to R^m(c)$ is called {\it mixed totally geodesic\/} if $\sigma(X,Z)=0$ for any $X$ in $\mathcal D_1$ and $Z$ in  $\mathcal D_2$.

The following result was proved in \cite{c02}.

\begin{theorem}\label{T:3.1} For any  isometric immersion \ $\phi:N_1\times_f N_2\to R^m(c)$ of a warped product $\,N_1\times_f N_2$  into a  Riemannian manifold of
constant curvature $c$, we have
\begin{equation}\label{E:wpfirst} \frac{\Delta f} {f}\leq \frac{n^2}{4n_2}H^2+ n_1c,\end{equation}
 where $n_i=\dim N_i$, $n=n_1+n_2$,  $H^2$ is the squared mean curvature of $\phi$, and $\Delta f$ is the Laplacian $f$ on  $N_1$. 

 The equality sign of \eqref{E:wpfirst}  holds identically if and only if $\iota :N_1\times_f N_2\to R^m(c)$ is a mixed totally geodesic immersion satisfying ${\rm trace}\, h_1={\rm trace}\,h_2$, where ${\rm trace}\, h_1$ and ${\rm trace }\,h_2$ denote the trace of $\sigma$ restricted to $N_1$ and $N_2$, respectively.\end{theorem}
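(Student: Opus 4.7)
The plan is to compute the sum $\sum_{i,j} K(e_i\wedge V_j)$ of mixed sectional curvatures in two ways---once intrinsically from the warped product structure of $N_1\times_f N_2$, and once extrinsically via the Gauss equation---and compare.  Fix a local orthonormal frame $\{e_1,\dots,e_{n_1}\}$ of $\mathcal D_1$ and $\{V_1,\dots,V_{n_2}\}$ of $\mathcal D_2$, orthonormal with respect to the warped metric (so each $V_j$ is $f^{-1}$ times a frame vector on $N_2$).  For the intrinsic side, the O'Neill curvature formulas for warped products give, for each pair $(i,j)$,
\begin{equation*}
K(e_i\wedge V_j)=-\frac{1}{f}\,\mathrm{Hess}(f)(e_i,e_i),
\end{equation*}
where the Hessian is computed on $N_1$.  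The right-hand side is independent of $j$, and since the Laplacian convention adopted in Section~2 yields $\Delta f=-\mathrm{tr}\,\mathrm{Hess}(f)$, summing gives $\sum_{i,j} K(e_i\wedge V_j)=\frac{n_2\,\Delta f}{f}$.  For the extrinsic side, applying the Gauss equation in constant sectional curvature $c$ to the pair $(e_i,V_j)$ gives
\begin{equation*}
K(e_i\wedge V_j)=c+\langle \sigma(e_i,e_i),\sigma(V_j,V_j)\rangle-|\sigma(e_i,V_j)|^2,
\end{equation*}
and summing over $i,j$ (with $\mathrm{tr}\,h_1=\sum_i\sigma(e_i,e_i)$ and $\mathrm{tr}\,h_2=\sum_j\sigma(V_j,V_j)$) produces the master identity
\begin{equation*}
\frac{n_2\,\Delta f}{f}=n_1 n_2\,c+\langle \mathrm{tr}\,h_1,\mathrm{tr}\,h_2\rangle-\sum_{i,j}|\sigma(e_i,V_j)|^2.
\end{equation*}

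The rest is algebraic.  Since $\mathrm{tr}\,h_1+\mathrm{tr}\,h_2=n\overrightarrow{H}$, the polarization identity $\langle A,B\rangle=\tfrac14|A+B|^2-\tfrac14|A-B|^2$ in the normal bundle gives the sharp bound
\begin{equation*}
\langle \mathrm{tr}\,h_1,\mathrm{tr}\,h_2\rangle\leq \frac{n^2 H^2}{4},
\end{equation*}
with equality if and only if $\mathrm{tr}\,h_1=\mathrm{tr}\,h_2$.  Discarding the manifestly nonnegative sum $\sum|\sigma(e_i,V_j)|^2$ and dividing by $n_2$ yields \eqref{E:wpfirst}.  The equality discussion then reads off immediately: both the polarization bound must be saturated (forcing $\mathrm{tr}\,h_1=\mathrm{tr}\,h_2$) and the dropped term must vanish (forcing $\sigma(e_i,V_j)=0$ for all $i,j$, i.e.\ mixed totally geodesic), which are exactly the two conditions in the theorem.

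The main step requiring care is the intrinsic computation.  Getting the sign and normalization of $K(e_i\wedge V_j)$ right forces one to track two distinct scalings---the $V_j$ are unit in the warped metric, not in $g_2$---and to reconcile them with the paper's sign convention for $\Delta$.  A useful sanity check is the totally geodesic embedding $S^n=\mathbb R\times_{\sin r} S^{n-1}\hookrightarrow S^n$, for which both sides of \eqref{E:wpfirst} equal $1$.  Everything downstream---the Gauss equation reduction, the polarization bound, and the equality analysis---is routine once the two expressions for $\sum K(e_i\wedge V_j)$ are in hand.
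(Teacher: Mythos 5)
Your argument is correct and complete: the intrinsic identity $\sum_{i,j}K(e_i\wedge V_j)=n_2\,\Delta f/f$ (which does hold with the sign convention for $\Delta$ fixed in Section 2, since that convention gives $\Delta f=-\operatorname{trace}\operatorname{Hess}f$), the Gauss-equation evaluation of the same sum in constant curvature $c$, the polarization bound $\left<\operatorname{trace}h_1,\operatorname{trace}h_2\right>\le\tfrac14 n^2H^2$, and the equality analysis together yield exactly \eqref{E:wpfirst} with the stated characterization of equality. Note, however, that this survey does not reproduce a proof of Theorem \ref{T:3.1}; it only cites \cite{c02}, and the argument there is organized differently: it starts from the scalar-curvature form of the Gauss equation, $n^2H^2=2\tau+\|\sigma\|^2-n(n-1)c$, chooses the normal vector $e_{n+1}$ parallel to the mean curvature vector, and extracts the estimate by a completing-the-square/algebraic-lemma argument in the spirit of the $\delta$-invariant inequalities, together with the same warped-product curvature identity for $\Delta f/f$. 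Your direct two-way count of the mixed sectional curvatures is the streamlined method (essentially the ``method of \cite{c05}'' alluded to before Theorem \ref{T:3.2} and used in \cite{CWei}); what it buys is transparency in the equality case and immediate generality, since the constancy of the ambient curvature enters only through $\tilde K(e_i\wedge V_j)$, so the same computation proves Theorem \ref{T:3.2} (ambient curvature bounded above) and, with minor bookkeeping, the multiply warped version, Theorem \ref{T:CD}. Two cosmetic points: the warped-product representation of $S^n$ in your sanity check should have base $(0,\pi)$ rather than $\mathbb R$, and it is worth stating explicitly that the Hessian in your O'Neill formula is taken on $(N_1,g_1)$, which agrees with the Hessian of the lifted function because the leaves $N_1\times\{q\}$ are totally geodesic.
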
  

The classification of isometric immersions from warped products into a real space form $R^m(c)$ satisfying the equality case of  \eqref{E:wpfirst} have been obtained in  \cite{c05-2}.

Theorem \ref{T:3.1} has many applications \cite{c02}. 

\begin{corollary}\label{C:1}  Let $N_1$ and $N_2$ be two Riemannian manifolds and $f$ be a nonzero harmonic  function on  $N_1$. Then every  minimal isometric immersion of $N_1\times_f N_2$ into any Euclidean space is a warped product immersion.
\end{corollary}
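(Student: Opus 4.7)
The plan is to read this off Theorem \ref{T:3.1} together with N\"olker's warped product immersion theorem. The setting of the corollary has three pieces of data that conspire to force equality in \eqref{E:wpfirst}: the ambient space is Euclidean, so $c=0$; the immersion is minimal, so $H^2=0$; and the warping function is harmonic, so $\Delta f=0$. Substituting these into the inequality
$$\frac{\Delta f}{f}\le \frac{n^{2}}{4n_{2}}H^{2}+n_{1}c$$
shows both sides vanish identically, so the inequality holds with equality on all of $N_{1}\times_{f} N_{2}$.

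Next I would invoke the equality clause of Theorem \ref{T:3.1}. It tells me that a minimal immersion achieving equality is \emph{mixed totally geodesic}, i.e.\ $\sigma(X,Z)=0$ whenever $X\in\mathcal{D}_{1}$ and $Z\in\mathcal{D}_{2}$. (The auxiliary condition ${\rm trace}\,h_{1}={\rm trace}\,h_{2}$ is automatic from minimality, since then both traces vanish.) So the obstruction to splitting the second fundamental form across the two factors is gone.

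Finally I would apply N\"olker's theorem (the last theorem of Section~2.4) in the special case $k=1$, taking $N_{0}=N_{1}$, $N_{1}=N_{2}$, and the warping function $\sigma_{1}=f$. The mixed-totally-geodesic condition obtained in the previous step is exactly the hypothesis $h(X_{0},X_{1})=0$ required by that theorem, and the ambient Euclidean space is a space of constant curvature $c=0$. N\"olker's theorem then concludes that, locally, $\phi$ is a warped product immersion, which is the statement of the corollary.

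The whole argument is really a matching exercise: the only subtle point, and where I would expect to pause, is verifying that the equality case of \eqref{E:wpfirst} is genuinely attained \emph{identically} rather than pointwise on some subset. This follows because $H=0$ and $\Delta f=0$ hold at every point (not merely at a maximum or minimum of $f/H$), so equality is global. After that, the appeal to N\"olker is routine, and no extra computation is needed.
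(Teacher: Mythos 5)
Your argument is correct and is essentially the same route the paper (via \cite{c02}) takes: $c=0$, $H=0$ and $\Delta f=0$ force equality in \eqref{E:wpfirst} identically, the equality case of Theorem \ref{T:3.1} yields that the immersion is mixed totally geodesic, and N\"olker's theorem then gives that it is (locally) a warped product immersion. One small remark: your parenthetical is slightly off, since ${\rm trace}\,h_1={\rm trace}\,h_2$ is not something to be checked from minimality (which alone only gives ${\rm trace}\,h_1=-\,{\rm trace}\,h_2$) but rather a conclusion of the equality case; this does not affect the proof, as only the mixed totally geodesic condition is needed to invoke N\"olker's theorem.
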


\begin{remark} {\rm There exist many minimal immersion of $N_1\times_f N_2$ into Euclidean spaces. For example, if $N_2$ is a minimal submanifold of $S^{m-1}(1)\subset\mathbb E^m$, then the minimal cone $C(N_2)$ over $N_2$ with vertex at the origin is a warped product $\mathbb R_+\times_s N_2$ whose warping function $f=s$ is a harmonic function. Here $s$ is the coordinate function of the positive real line $\mathbb R_+$.}\end{remark} 

\begin{corollary} \label{C:2} Let $f\ne 0$ be a harmonic function on  $N_1$. Then for any Riemannian manifold $N_2$ the warped product $N_1\times_f N_2$ does not admits any  minimal isometric immersion into any hyperbolic space. \end{corollary}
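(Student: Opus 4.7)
The plan is to derive Corollary \ref{C:2} as a direct consequence of Theorem \ref{T:3.1}, in the same spirit as Corollary \ref{C:1}. The guiding observation is that under the three hypotheses --- $f$ harmonic on $N_1$, the immersion minimal, and the ambient space of negative constant curvature --- the two sides of the inequality \eqref{E:wpfirst} are forced into a contradictory sign relationship.

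Concretely, I would argue by contradiction. Suppose there exists a minimal isometric immersion $\phi : N_1 \times_f N_2 \to R^m(c)$ into a hyperbolic space, i.e.\ with $c<0$. Minimality yields $H^{2}\equiv 0$ identically, so applying Theorem \ref{T:3.1} reduces \eqref{E:wpfirst} to
$$ \frac{\Delta f}{f} \,\le\, n_1 c . $$
Since $f$ is a warping function it is strictly positive everywhere on $N_1$, so the left-hand side is well defined pointwise, and harmonicity gives $\Delta f \equiv 0$. Substituting produces $0 \le n_1 c$, which contradicts $c<0$ together with $n_1\ge 1$. Hence no such immersion can exist.

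I expect no substantive obstacle here; the whole argument is essentially a one-line substitution into Theorem \ref{T:3.1}. The only matters requiring care are formal: confirming that "warping function'' carries the convention $f>0$ that was fixed in the Introduction (so that the quotient $\Delta f/f$ is legitimate and the hypothesis $f\ne 0$ can be replaced by $f>0$), reading "minimal'' as $\overrightarrow{H}\equiv 0$, and noting that a hyperbolic space is precisely an $R^m(c)$ with $c<0$. Each of these is unambiguous in the setup of the survey, so the conclusion follows at once.
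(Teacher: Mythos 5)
Your argument is correct and is exactly the intended one: the corollary is a direct application of Theorem \ref{T:3.1}, substituting $H^{2}\equiv 0$ (minimality) and $c<0$ (hyperbolic ambient space) into \eqref{E:wpfirst} to get $0=\Delta f/f\leq n_1c<0$, a contradiction with harmonicity and $f>0$. This matches the paper's treatment, which presents Corollary \ref{C:2} as an immediate consequence of Theorem \ref{T:3.1} in just this way.
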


\begin{corollary} \label{C:3} Let $f$ be a function on $N_1$ with $\Delta f=\lambda f$ with $\lambda>0$. Then for any Riemannian manifold $N_2$ the warped product $N_1\times_f N_2$ admits no  minimal isometric immersion into any Euclidean space or   hyperbolic space.
\end{corollary}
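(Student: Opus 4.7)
The plan is to derive this as a direct consequence of Theorem \ref{T:3.1}. Suppose, for contradiction, that there exists a minimal isometric immersion $\phi:N_1\times_f N_2 \to R^m(c)$ with $c\le 0$ (Euclidean space corresponds to $c=0$, hyperbolic space to $c<0$). Since the immersion is minimal, the mean curvature satisfies $H^2=0$, so the inequality \eqref{E:wpfirst} collapses to
\begin{equation*}
\frac{\Delta f}{f} \le n_1 c.
\end{equation*}

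Next I would use the hypothesis that $f$ is a warping function. By the very definition of the warped product metric in \eqref{E:warped}, $f$ is a positive function on $N_1$, so $f>0$ everywhere. Combined with the eigenvalue equation $\Delta f = \lambda f$, this gives
\begin{equation*}
\frac{\Delta f}{f} = \lambda > 0
\end{equation*}
pointwise on $N_1$. On the other hand, the right-hand side $n_1 c$ is $\le 0$ in both the Euclidean and hyperbolic cases, so we obtain the contradiction $\lambda \le 0$.

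There is essentially no obstacle here: the statement is a clean application of Theorem \ref{T:3.1} once one records that (i) minimality kills the $H^2$ term, (ii) the warping function is positive so dividing by $f$ is legitimate, and (iii) the sign convention for $\Delta$ in the paper is consistent with $\Delta f = \lambda f$, $\lambda>0$, being the natural ``positive-spectrum'' condition (matching, e.g., the Laplacian defined in the Preliminaries via $\Delta\varphi=\sum_j\{(\nabla_{e_j}e_j)\varphi - e_je_j\varphi\}$). The only mild care needed is to observe that the argument works verbatim for the Euclidean case of Corollary \ref{C:1}/\ref{C:2}, with $\lambda=0$ replaced by $\lambda>0$ merely making the inequality strict.
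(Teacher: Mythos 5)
Your argument is correct and is exactly the paper's intended proof: apply Theorem \ref{T:3.1} with $H^2=0$ and $c\le 0$, note $f>0$ and $\Delta f=\lambda f$ force the left side of \eqref{E:wpfirst} to equal $\lambda>0\ge n_1c$, a contradiction, and the sign convention for $\Delta$ in the Preliminaries indeed makes this consistent (as the paper's remark about $\mathbb{R}\times_{e^x}\mathbb{E}^{n-1}$ with negative eigenvalue confirms). Only your closing aside is slightly off: Corollary \ref{C:1} is an equality-case statement (harmonic $f$, Euclidean target, conclusion that the immersion is a warped product immersion), so it does not follow ``verbatim'' from this non-existence argument, but that does not affect your proof of Corollary \ref{C:3}.
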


\begin{remark} {\rm In views of  Corollaries \ref{C:2} and \ref{C:3}, we point out that {there exist  minimal immersions from $N_1\times_f N_2$  into hyperbolic space such that the warping function $f$ is an eigenfunction with negative eigenvalue}. For example, $\hbox{\bf R}\times_{e^x} \mathbb E^{n-1}$ admits an isometric minimal immersion into the hyperbolic space $H^{n+1}(-1)$ of constant  curvature $-1$.}
\end{remark}  

\begin{corollary} \label{C:4}  If $N_1$ is a compact, then  $N_1\times_f N_2$ does not admit a minimal isometric immersion into any Euclidean space or hyperbolic space. \end{corollary}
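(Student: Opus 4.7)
The plan is to feed the minimality hypothesis $H \equiv 0$ into Theorem \ref{T:3.1}, which collapses inequality \eqref{E:wpfirst} to the pointwise bound $\Delta f \le n_1 c f$ on $N_1$. Because $N_1$ is compact without boundary, the divergence theorem (with the sign convention adopted in the paper) gives $\int_{N_1} \Delta f \, dV = 0$. The argument then splits according to the sign of $c$.

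For a hyperbolic ambient space ($c<0$), integrating the inequality against the volume form is already enough: one obtains
\[
0 = \int_{N_1} \Delta f \, dV \;\le\; n_1 c \int_{N_1} f \, dV \;<\; 0,
\]
since $f>0$, a contradiction.

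For a Euclidean ambient space ($c=0$), integration of $\Delta f \le 0$ forces $\Delta f \equiv 0$, so $f$ is a positive harmonic function on the compact manifold $N_1$ and hence constant. This means equality is attained throughout \eqref{E:wpfirst}, and the equality clause of Theorem \ref{T:3.1} asserts that the immersion is mixed totally geodesic with $\mathrm{trace}\,h_1 = \mathrm{trace}\,h_2$; combined with the global minimality $\mathrm{trace}\,h_1 + \mathrm{trace}\,h_2 = 0$, this yields $\mathrm{trace}\,h_1 \equiv 0$. Since $f$ is now constant, $N_1 \times_f N_2$ is a Riemannian product, so each slice $N_1 \times \{p\}$ is totally geodesic in $N_1 \times_f N_2$; the second fundamental form of the slice in $\mathbb E^m$ is therefore precisely $h_1$, and its mean curvature $(1/n_1)\,\mathrm{trace}\,h_1$ vanishes. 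This produces a compact minimal isometric immersion of $N_1$ into $\mathbb E^m$, which is impossible for $\dim N_1 \ge 1$ by the standard maximum-principle argument applied to the Euclidean coordinate functions (they become harmonic on a minimal submanifold and hence constant, forcing $N_1$ to be a point).

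The main nuisance is this last Euclidean step: the integrated bound alone only delivers that $f$ is constant, and one must extract the equality case of Theorem \ref{T:3.1} in order to convert that information into actual minimality of the $N_1$-factor. One could instead invoke Moore's product-immersion theorem (quoted in Section 2.4) to split the immersion as a product and read off minimality of each factor directly, but the slice argument above is the most self-contained path to the contradiction.
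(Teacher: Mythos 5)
Your argument is correct and is essentially the intended application of Theorem \ref{T:3.1}: with $H\equiv 0$ and $c\le 0$, inequality \eqref{E:wpfirst} gives $\Delta f\le n_1cf$ on $N_1$, and integration over the compact factor disposes of the hyperbolic case outright and forces $f$ to be constant (hence harmonic) in the Euclidean case. The survey states Corollary \ref{C:4} without proof, quoting it from \cite{c02}; your finish of the Euclidean case --- invoking the equality clause of Theorem \ref{T:3.1} to get ${\rm trace}\,h_1={\rm trace}\,h_2$, combining with minimality to obtain ${\rm trace}\,h_1=0$, and then noting that the totally geodesic slice $N_1\times\{q\}$ becomes a compact minimal submanifold of $\mathbb E^m$, which is impossible --- is a sound, self-contained substitute for the route through Moore's product-immersion theorem \cite{moore} (equivalently Corollary \ref{C:1}) used in the original source, and the two finishes are equivalent in substance.
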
 

\begin{remark} {\rm For Corollary \ref{C:4}, we  point out that {there exist many  minimal immersions of $N_1\times_f N_2$ into $\mathbb E^m$ with compact $N_2$}. For examples, a hypercaternoid in $\mathbb E^{n+1}$ is a minimal hypersurfaces which is isometric to a warped product $\mathbb R\times_f
S^{n-1}$. Also, for any compact minimal submanifold $N_2$ of $S^{m-1}\subset\mathbb E^m$, the minimal cone $C(N_2)$ is a warped product $\mathbb
R_+\times_s N_2$ which is also a such example.} \end{remark} 

\begin{remark} {\rm Contrast to Euclidean and hyperbolic spaces,  $S^{m}$ {admits minimal warped product submanifolds $N_1\times_f N_2$ with  $N_1$ and $N_2$ being compact}. The simplest such examples are minimal Clifford tori defined by
$$M_{k,n-k}=S^k\Big(\sqrt{\tfrac{k}{n}}\,\Big)\times S^{n-k}\Big(\sqrt{\tfrac{n-k}{n}}\,\Big)\subset S^{n+1},\; 1\leq k<n.$$}
\end{remark} 

\begin{remark} {\rm  Suceav\u{a} constructs in \cite{suceava}  a family of warped products of hyperbolic planes which do not admit any isometric minimal immersion into Euclidean space, by applying some $\delta$-invariants introduced in \cite{c5}.}   \end{remark} 
   
By  making a minor modification of the proof of Theorem \ref{T:3.1} in \cite{c02}, using the method of \cite{c05}, we have the following  general solution from \cite{CWei} to the Fundamental Question.

\begin{theorem}\label{T:3.2} If $\tilde M^m_c$ is a Riemannian manifold with sectional curvatures bounded from above by a constant $c$, then for any isometric immersion $\phi : N_1 \times _f N_2 \to \tilde M^m_c$   from a warped product $N_1 \times _f N_2$ into  $\tilde M^m_c$ the warping function $f$ satisfies
\begin{align}\label{3.1} \frac{\Delta f} {f}\leq \frac{n^2}{4n_2}H^2+ n_1c,\end{align}
where $n_1= \dim N_1$ and $n_2=\dim N_2$.  \end{theorem}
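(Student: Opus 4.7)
The plan is to follow the template of Theorem \ref{T:3.1} with the single change that the pointwise bound $\tilde K \le c$ replaces the constant-curvature identity, which is exactly what Chen means by a ``minor modification.''

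First I would choose, at a given point $x$, an orthonormal frame $e_{1},\ldots,e_{n}$ of $T_{x}M$ adapted to the warped product splitting, so that $e_{1},\ldots,e_{n_{1}}$ span $\mathcal D_{1}$ (the horizontal lift of $TN_{1}$) and $e_{n_{1}+1},\ldots,e_{n}$ span $\mathcal D_{2}$ (the vertical lift of $TN_{2}$). I would then invoke the standard warped-product curvature identity
\[
K(X\wedge Z) \;=\; \frac{1}{f}\bigl((\nabla_{X}X)f - XXf\bigr)
\]
for any unit $X\in\mathcal D_{1}$ and unit $Z\in\mathcal D_{2}$, and sum it over the $n_{1}n_{2}$ mixed pairs. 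With the sign convention for $\Delta f$ used in \eqref{2.6} this gives the key identity
\[
\sum_{i=1}^{n_{1}}\sum_{j=n_{1}+1}^{n} K(e_{i}\wedge e_{j}) \;=\; n_{2}\,\frac{\Delta f}{f}.
\]

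Next I would compare intrinsic and ambient curvatures through the Gauss equation. For each mixed pair $(i,j)$,
\[
K(e_{i}\wedge e_{j}) \;=\; \tilde K(e_{i}\wedge e_{j}) \;+\; \langle \sigma(e_{i},e_{i}),\sigma(e_{j},e_{j})\rangle \;-\; |\sigma(e_{i},e_{j})|^{2}.
\]
Here is precisely where the hypothesis enters: because the ambient sectional curvatures are bounded above by $c$, each term $\tilde K(e_{i}\wedge e_{j})$ can be replaced by $c$ at the cost of an inequality, so summing over mixed $(i,j)$ gives
\[
n_{2}\,\frac{\Delta f}{f} \;\le\; n_{1}n_{2}\,c \;+\; \sum_{i,j\,\text{mixed}} \langle \sigma(e_{i},e_{i}),\sigma(e_{j},e_{j})\rangle \;-\; \sum_{i,j\,\text{mixed}} |\sigma(e_{i},e_{j})|^{2}.
\]
The last (nonpositive) term may be discarded.

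The remaining task is to bound the mixed product $\sum_{i,j\,\text{mixed}}\langle \sigma(e_{i},e_{i}),\sigma(e_{j},e_{j})\rangle$ by $\tfrac{n^{2}}{4}H^{2}$. I would do this componentwise: with $\sigma^{r}_{kk}=\langle\sigma(e_{k},e_{k}),e_{r}\rangle$, set $A_{r}=\sum_{i\le n_{1}}\sigma^{r}_{ii}$ and $B_{r}=\sum_{j>n_{1}}\sigma^{r}_{jj}$, so that $A_{r}+B_{r}=nH^{r}$. The elementary inequality $A_{r}B_{r}\le \tfrac{1}{4}(A_{r}+B_{r})^{2}$ then gives $\sum_{i,j\,\text{mixed}}\sigma^{r}_{ii}\sigma^{r}_{jj}=A_{r}B_{r}\le \tfrac{n^{2}}{4}(H^{r})^{2}$, and summing over $r=n+1,\ldots,m$ yields the desired bound $\tfrac{n^{2}}{4}H^{2}$. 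Dividing through by $n_{2}$ produces \eqref{3.1}.

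I do not expect a serious obstacle: the only delicate point is keeping the signs in the warped-product Hessian/Laplacian relation consistent with the convention in \eqref{2.6}, and making sure that the ambient bound $\tilde K\le c$ (rather than equality) is applied in the direction that preserves the inequality after the summation, i.e.\ after $n_{2}\Delta f/f$ has already been expressed as a sum of intrinsic curvatures. Once the identity $\sum_{\text{mixed}}K(e_{i}\wedge e_{j})=n_{2}\Delta f/f$ is in hand, replacing Gauss for constant curvature by Gauss combined with $\tilde K\le c$ is completely mechanical.
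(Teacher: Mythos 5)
Your argument is correct: with the paper's sign convention for $\Delta$, the identity $\sum_{i\le n_1<j}K(e_i\wedge e_j)=n_2\,\Delta f/f$, the Gauss equation applied only to mixed planes together with $\tilde K\le c$, and the block-trace estimate $A_rB_r\le\tfrac14(A_r+B_r)^2=\tfrac{n^2}{4}(H^r)^2$ yield exactly \eqref{3.1}, and the discarded term $-\sum|\sigma(e_i,e_j)|^2$ has the right sign. This is precisely the ``minor modification'' of the proof of Theorem \ref{T:3.1} that the paper invokes (and the same mixed-plane summation used for the multiply warped case, Theorem \ref{T:CD}), so your proposal matches the intended proof.
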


Another general solution to the Fundamental Question is the following result  from \cite{c04}.

\begin{theorem}[\cite{c12}]  For any  isometric immersion $\phi:N_1\times_f N_2\to R^m(c)$, the scalar curvature $\tau$ of  the warped product $N_1\times_f N_2$ satisfies
\begin{equation}\label{E:3.2}\tau\leq {\Delta f\over {n_1f}}+{{n^2(n-2)}\over{2(n-1)}}H^2+{1\over 2}(n+1)(n-2)c.\end{equation} 

 If $n=2$, the equality case of  \eqref{E:3.2} holds automatically.  

 If $n\geq 3$, the equality sign of  \eqref{E:3.2} holds identically if and only if  either

\begin{enumerate}
 \item  $N_1\times_f N_2$  is of constant  curvature $c$, the warping function $f$ is an eigenfunction with eigenvalue $c$, i.e.,  $\Delta f=cf$, and $N_1\times_f N_2$ is immersed as a totally geodesic submanifold in $R^m(c)$, or 

\item locally, $\,N_1\times_f N_2$ is immersed as a rotational hypersurface in a  totally geodesic submanifold $R^{n+1}(c)$ of $R^m(c)$ with a geodesic of $R^{n+1}(c)$ as its profile curve.\end{enumerate}\end{theorem}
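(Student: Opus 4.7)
The plan is to combine Chen's first pointwise inequality for submanifolds of real space forms (the prototype of the $\delta$-invariant inequalities discussed in Section 2.2) with the standard warped-product formula for mixed sectional curvatures. Recall that for any submanifold $M^n\subset R^m(c)$ with $n\geq 3$ and any $2$-plane $\pi\subset T_pM$, Chen's 1993 inequality states
\[
\tau(p) - K(\pi)\;\leq\;\frac{n^2(n-2)}{2(n-1)}H^2(p)+\frac{(n+1)(n-2)}{2}c.
\]
I will apply this inequality to a single $2$-plane tailored to the warped-product splitting.

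For $M=N_1\times_f N_2$, the O'Neill formula $K(X\wedge V)=-\frac{1}{f}\mathrm{Hess}(f)(X,X)$ (for horizontal unit $X$ and vertical unit $V$) gives, upon summing over an orthonormal frame $e_1,\dots,e_{n_1}$ of $\mathcal D_1$ and using the sign convention $\Delta f=-\mathrm{trace}\,\mathrm{Hess}(f)$ of Section 2.1,
\[
\sum_{i=1}^{n_1}K(e_i\wedge V)=\frac{\Delta f}{f}
\]
for every unit $V\in\mathcal D_2$. By averaging there is some index $i_0$ with $K(e_{i_0}\wedge V)\leq\Delta f/(n_1f)$; feeding $\pi=\mathrm{span}(e_{i_0},V)$ into Chen's inequality then yields exactly \eqref{E:3.2}. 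The case $n=2$ is automatic: then $n_1=n_2=1$, both Chen coefficients on the right-hand side vanish, and $\tau=K(e_1\wedge V)=\Delta f/f=\Delta f/(n_1f)$.

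The main obstacle is the equality analysis for $n\geq 3$. Two equalities must hold simultaneously: (i) the averaging is tight, forcing $K(e_j\wedge V)=\Delta f/(n_1f)$ for every horizontal unit $e_j$ and every vertical unit $V$; and (ii) equality in Chen's inequality at the chosen $2$-plane, which is known to produce, with respect to a suitable adapted orthonormal frame, a shape operator in the mean-curvature direction of the form $\mathrm{diag}(\mu_1,\mu_2,\mu_1+\mu_2,\dots,\mu_1+\mu_2)$, with the remaining shape operators constrained to a $2\times 2$ trace-free block. Superimposing these algebraic constraints with the warped-product identity (i) then yields the stated dichotomy: if $\sigma\equiv 0$, the Gauss equation immediately forces $M$ to have constant sectional curvature $c$, and the warped-product formula gives the eigenfunction relation $\Delta f=cf$ (case 1); otherwise the constraints show that $\sigma$ takes values in a one-dimensional subbundle of the normal bundle, so $M$ lies in a totally geodesic $R^{n+1}(c)\subset R^m(c)$ as a hypersurface, and the combined data identifies it via the fundamental theorem of submanifolds as a rotational hypersurface whose profile is a geodesic (case 2). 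The principal difficulty lies precisely in this classification: one must integrate the Codazzi equations carefully to recover the rotational structure and verify the geodesic character of the profile curve from the purely algebraic conditions on the second fundamental form.
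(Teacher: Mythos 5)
Your derivation of the inequality itself is fine and uses the right mechanism: Chen's 1993 inequality $\tau-K(\pi)\leq \frac{n^2(n-2)}{2(n-1)}H^2+\frac{1}{2}(n+1)(n-2)c$ applied to a mixed plane, combined with the warped-product relation $\sum_{i=1}^{n_1}K(e_i\wedge V)=\Delta f/f$ (with the sign convention of Section 2), and the $n=2$ case is correctly an identity. Note also that equality in fact forces $K(X\wedge V)=\Delta f/(n_1f)$ for \emph{every} mixed plane (since each $K(e_i\wedge V)\geq\tau-C$ while the sum equals $n_1(\tau-C)$), which is stronger than your item (i) as you first phrase it and is what one needs. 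The genuine gap is the equality analysis for $n\geq 3$, which is the bulk of the theorem and which you do not prove in either direction. For necessity, the passage from ``Chen's basic equality holds on every mixed plane together with $\mathrm{Hess}\, f$ proportional to $g_1$'' to the stated dichotomy is exactly the hard content: the basic-equality normal form for a single plane still allows the shape operators $A_r$, $r\geq n+2$, a nonzero trace-free $2\times 2$ block, so your assertion that $\sigma$ takes values in a one-dimensional normal subbundle does not follow from it; one must exploit the simultaneity over all mixed planes, a reduction-of-codimension argument, and an integration of the Gauss--Codazzi equations together with the warped-product structure to recover the rotational hypersurface and the geodesic character of its profile. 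You explicitly defer this (``the principal difficulty'') rather than carry it out, and you also never verify sufficiency, i.e.\ that the totally geodesic constant-curvature case and rotational hypersurfaces over geodesic profiles actually attain equality in \eqref{E:3.2} identically. One must also be careful that the dichotomy is only local and that the set where $\sigma$ vanishes need not be all of $M$ a priori.

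There is also a concrete slip in your easy branch: if $\sigma\equiv 0$ then $M$ has constant curvature $c$, and the mixed-curvature formula gives $\mathrm{Hess}\,f=-cf\,g_1$, hence (with the paper's convention) $\Delta f=n_1cf$; equivalently, equality with $H=0$ gives $\Delta f/(n_1f)=c$. So the eigenvalue produced by your argument is $n_1c$, not $c$ (the two agree only when $n_1=1$ or $c=0$), and the sentence ``the warped-product formula gives $\Delta f=cf$'' is not a correct derivation as written, whatever the intended reading of the statement. In short: the inequality part stands, but the classification of the equality case --- the heart of the theorem --- is asserted rather than proved.
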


\begin{remark} {\rm Every Riemannian manifold of constant  curvature  $c$ can be locally expressed as a warped product  whose warping function satisfies $\Delta f=cf$.  For examples,  $S^n(1)$ is locally isometric to $(0,\infty)\times_{\cos x} S^{n-1}(1)$;  $\mathbb E^n$ is locally isometric to $(0,\infty)\times_x S^{n-1}(1)$;
and  $H^n(-1)$ is locally isometric to $\mathbb R\times_{e^x} \mathbb E^{n-1}$.

There are other warped product decompositions of $R^n(c)$ whose
warping function satisfies $\Delta f=cf$.  For example, let $\{x_1,\ldots,x_{n_1}\}$ be a Euclidean coordinate system of $\mathbb E^{n_1}$ and let
$\rho=\sum_{j=1}^{n_1} a_j x_j+b$, where $a_1,\ldots,a_{n_1},b$ are real numbers satisfying $\sum_{j=1}^{n_1} a_j^2=1$. Then the warped product $\mathbb E^{n_1}\times_\rho S^{n_2}(1)$ is a flat space whose warping function is a harmonic function. In fact, those are the only warped product decompositions of flat spaces  whose warping functions are harmonic functions.} \end{remark}

\vskip.1in

\section{\uppercase{Multiply warped product submanifolds}}

 Let $(N_i,g_{i}), i=1,\ldots,k,$ be Riemannian manifolds.
For a multiply warped product manifold $N_1 \times_{f_2} N_2 \times\cdots \times_{f_k}N_k$,  let $\mathcal D_i$  denote the distributions obtained from the vectors tangent to  $N_i$ (or more precisely, vectors tangent to the horizontal lifts of $N_i$). Assume that  $$\phi:N_1 \times_{f_2} N_2 \times \cdots \times_{f_k}N_k\to \tilde M$$ is an isometric immersion of a multiply warped product $N_1 \times_{f_2} N_2 \times \cdots \times_{f_k}N_k$ into a Riemannian manifold $\tilde M$. Denote by $\sigma$ the second fundamental form of $\phi$. Then the immersion $\phi$ is called {\it mixed totally geodesic\/} if $\sigma(\mathcal D_i,\mathcal D_j)=\{0\}$ holds for  distinct $i,j\in \{1,\ldots,k\}$.

Let $\psi:N_1 \times_{f_2} N_2 \times \cdots \times_{f_k} N_k\to\tilde M$ be an isometric immersion of a multiply warped product $N_1 \times_{f_2} N_2 \times \cdots \times_{f_k} N_k$ into an arbitrary Riemannian manifold $\tilde M$.   Denote by  ${\rm trace}\,h_i$   the trace of $\sigma$ restricted to $N_i$,  that is
\begin{align*}{\rm trace}\,h_i=\sum_{\alpha=1}^{n_i} \sigma(e_\alpha,e_\alpha)\end{align*}
for some orthonormal frame fields $e_1,\ldots,e_{n_i}$  of $\mathcal D_i$.

An extension of Theorems \ref{T:3.1} and \ref{T:3.2} is the following result from \cite{CD08-2}.

\begin{theorem}\label{T:CD} Let $\phi:N_1 \times_{f_2} N_2 \times \cdots \times_{f_k} N_k\to \tilde M^m$ be an isometric immersion of a multiply warped product
$N:=N_1 \times_{f_2} N_2 \times \cdots \times_{f_k} N_k$ into an arbitrary Riemannian $m$-manifold. Then we have
\begin{align} \label{01.3} & \sum_{j=2}^k n_j\frac{\Delta f_j}{f_j} \leq \frac{n^2(k-1)}{2k} H^2
 + n_1(n-n_1) \max\tilde K, \end{align}
where $n=\sum_{i=1}^{k}n_{i}$ and $\max\tilde K(p)$ denotes the maximum of the sectional curvature function of $\tilde M^m$ restricted to 2-plane sections of the tangent space $T_pN$ of $N$ at $p=(p_1,\ldots,p_k)$.

The equality sign of \eqref{01.3} holds identically if and only if the following two statements hold:
\begin{enumerate}
 \item $\phi$ is a mixed totally geodesic immersion satisfying ${\rm trace}\, h_1=\cdots={\rm trace }\,h_k$;

\item at each point $p\in N$,  the sectional curvature function $\tilde K$ of $\tilde M^m$ satisfies $$\tilde K(u,v)=\max \tilde K(p)$$ for each unit vector $u$ in $T_{p_1}(N_1)$ and unit vector $v$ in $T_{(p_2,\cdots,p_k)}(N_2\times \cdots\times N_k)$. \end{enumerate} \end{theorem}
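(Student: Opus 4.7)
The plan is to reduce the inequality \eqref{01.3} to a ``mixed Gauss equation plus algebra'' argument: identify the left-hand side as a sum of sectional curvatures of $N$ over the $n_1(n-n_1)$ mixed 2-planes pairing an $N_1$-direction with a fiber direction, bound this sum above using the Gauss equation, and close with a Chen-style algebraic inequality on the traces of the shape operators $h_i$.

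First, at each $p = (p_1,\ldots,p_k) \in N$, choose an orthonormal frame $\{e_1,\ldots,e_n\}$ of $T_pN$ adapted to the multiply warped product structure: $\{e_1,\ldots,e_{n_1}\}$ tangent to $N_1$, and for each $j \geq 2$ an orthonormal basis $\{\hat e_{j,1},\ldots,\hat e_{j,n_j}\}$ of $T_{p_j}N_j$ measured in the warped metric. The standard O'Neill-type identity for multiply warped products gives $K^N(X \wedge V) = -\operatorname{Hess}(f_j)(X,X)/f_j$ for unit $X \in TN_1$ and unit $V \in T N_j$ with $j \geq 2$. Summing over the $N_1$-basis, the $N_j$-basis, and then $j = 2,\ldots,k$, the sign convention of Section 2 for $\Delta$ collapses the LHS of \eqref{01.3} into
\[
\sum_{j=2}^{k} n_j\,\frac{\Delta f_j}{f_j} \;=\; \sum_{\ell=1}^{n_1}\sum_{j=2}^{k}\sum_{\alpha=1}^{n_j} K^N\bigl(e_\ell \wedge \hat e_{j,\alpha}\bigr).
\]

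Next, apply the Gauss equation to each summand. The ambient-curvature contribution sums to at most $n_1(n-n_1)\,\max\tilde K$, with equality precisely when condition~(2) holds. The off-diagonal squared terms $\sum_r (\sigma^r_{\ell,(j\alpha)})^2 \geq 0$ are discarded---their vanishing being the mixed-totally-geodesic half of (1). The remaining ``diagonal product'' contribution aggregates to
\[
\sum_{r=n+1}^{m} \sum_{\ell,j,\alpha} \sigma^r_{\ell\ell}\,\sigma^r_{(j\alpha)(j\alpha)} \;=\; \sum_r a_{r,1}\bigl(a_{r,2}+\cdots+a_{r,k}\bigr),
\]
where $a_{r,i}$ denotes the $e_r$-component of $\operatorname{trace}(h_i)$ and $\sum_i a_{r,i} = n H^r$.

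The crux is to bound this diagonal-product sum by $\tfrac{n^2(k-1)}{2k}\,H^2$, with equality iff all $a_{r,i}$ coincide (the second half of (1)). The natural vehicle is the fully symmetric algebraic lemma $\sum_{1\leq i<j\leq k} a_ia_j \leq \tfrac{k-1}{2k}\bigl(\sum_i a_i\bigr)^2$, which follows from the identity $\sum_{i<j} a_ia_j = \tfrac12\bigl[\bigl(\sum_i a_i\bigr)^2 - \sum_i a_i^2\bigr]$ combined with Cauchy--Schwarz $\sum_i a_i^2 \geq \bigl(\sum_i a_i\bigr)^2/k$, with equality iff the $a_i$ are all equal. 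The main subtlety, and the point where the multiply-warped case transcends the singly-warped Theorem~\ref{T:3.2} (where $k=2$ and the diagonal-product sum agrees with the symmetric $\sum_{i<j}a_ia_j$ up to trivial AM--GM), is reconciling the telescoped form $a_{r,1}(a_{r,2}+\cdots+a_{r,k})$ that comes out of the $N_1$-indexed summation with the fully symmetric $k$-variable lemma so as to realise the bound $\tfrac{n^2(k-1)}{2k}H^2$ precisely and pin down $\operatorname{trace}h_1 = \cdots = \operatorname{trace}h_k$ as the equality condition.
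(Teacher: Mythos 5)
Your reduction is the right skeleton and is the standard Chen-style route (the same one underlying Theorems \ref{T:3.1} and \ref{T:3.2}): the identity $\sum_{j=2}^k n_j\Delta f_j/f_j=\sum_{\ell\le n_1<t}K(e_\ell\wedge e_t)$ for a multiply warped metric, the Gauss equation on each mixed plane, the bound of the ambient contribution by $n_1(n-n_1)\max\tilde K$, and discarding the mixed components of $\sigma$. Note, however, that the survey states Theorem \ref{T:CD} without proof, citing \cite{CD08-2}, so the assessment has to be of your argument on its own terms; and on its own terms it stops exactly at the decisive step, and the way you intend to finish does not work. The diagonal contribution your decomposition produces is $\sum_r A_1^r(A_2^r+\cdots+A_k^r)$, with $A_i^r$ the $e_r$-component of ${\rm trace}\,h_i$, and this is \emph{not} the symmetric quantity $\sum_r\sum_{i<j}A_i^rA_j^r$ to which the lemma $\sum_{i<j}a_ia_j\le\frac{k-1}{2k}\bigl(\sum_i a_i\bigr)^2$ applies: the discrepancy $\sum_r\sum_{2\le i<j\le k}A_i^rA_j^r$ is sign-indefinite, so it can neither be added in nor dropped. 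What your chain actually delivers is the AM--GM bound $A_1^r(nH^r-A_1^r)\le\frac14(nH^r)^2$, i.e.\ the estimate with constant $\frac{n^2}{4}$ (which is at most $\frac{n^2(k-1)}{2k}$), whose equality condition is ${\rm trace}\,h_1={\rm trace}\,h_2+\cdots+{\rm trace}\,h_k$ together with $\sigma(\mathcal D_1,\mathcal D_j)=0$ for $j\ge 2$ --- not the conditions claimed in the theorem.

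Consequently the equality case cannot be pinned down along your route, and the deferred ``reconciliation'' is not a technicality but the whole difficulty. If one insists on the constant $\frac{n^2(k-1)}{2k}$, equality in your chain forces, for $k\ge 3$, the additional identity $\frac{n^2}{4}H^2=\frac{n^2(k-1)}{2k}H^2$, hence $H=0$ and then ${\rm trace}\,h_1=0=\sum_{j\ge2}{\rm trace}\,h_j$, which neither implies nor is implied by ${\rm trace}\,h_1=\cdots={\rm trace}\,h_k$; conversely, if all traces are equal then $\sum_rA_1^r(A_2^r+\cdots+A_k^r)=\frac{(k-1)n^2}{k^2}H^2$, which is strictly smaller than $\frac{(k-1)n^2}{2k}H^2$ unless $H=0$ when $k\ge3$, so the stated conditions would not even produce equality in your chain. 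Moreover, your argument only ever sees the components $\sigma(\mathcal D_1,\mathcal D_j)$, so it cannot yield the full mixed totally geodesic property ($\sigma(\mathcal D_i,\mathcal D_j)=0$ for all distinct $i,j$) asserted in item (1). A complete proof must either work with all mixed pairs of factors (which changes both the left-hand side, through the terms $-\langle\nabla f_i,\nabla f_j\rangle/(f_if_j)$, and the ambient curvature count) or follow the treatment of \cite{CD08-2} itself --- which, as the bibliography records, required a published erratum precisely on this result.
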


The following example shows that inequalities \eqref{01.3} is sharp.

\begin{example}\label{E:4.1} {\rm
Let $M_1\times_{\rho_2} M_2\times \cdots\times_{\rho_k} M_k$ be a {\it multiply warped product representation\/} of a real space form $R^m(c)$. Assume that $\psi_1:N_1\to M_1$ is a minimal immersion of $N_1$ into $M_1$ and let $f_2,\ldots,f_k$ be the restrictions of $\rho_2,\ldots,\rho_k$ on $N_1$. Then the following warped product
immersion:
$$\psi=(\psi_1,id,\ldots,id):N_1\times_{f_2} M_2\times \cdots\times_{f_k} M_k\to
M_1\times_{\rho_2} M_2\times \cdots\times_{\rho_k} M_k\subset R^m(c)$$
is a mixed totally geodesic warped product submanifold of $R^m(c)$ which satisfies the condition:
\begin{align*}&{\rm trace}\, h_1=\cdots={\rm trace }\,h_k =0. \end{align*}
Thus $\psi$ satisfies the equality case of \eqref{01.3} according to Theorem \ref{T:CD}. Therefore inequality \eqref{01.3} is optimal.}
\end{example}

By applying Theorem \ref{T:CD} we have  the following corollaries.
 
\begin{corollary}\label{C:4.1} Let $\phi:N_1 \times_{f_2} N_2 \times \cdots \times_{f_k} N_k\to R^m(c)$ be an isometric immersion of the multiply warped product $N_1 \times_{f_2} N_2 \times \cdots \times_{f_k} N_k$ into a Riemannian $m$-manifold of constant curvature $c$.  If we have
 \begin{align*}  & \sum_{j=2}^k n_j\frac{\Delta f_j}{f_j} = \frac{n^2(k-1)}{2k} H^2 + n_1(n-n_1) c, \end{align*}
 then $\phi$ is a warped product immersion.
 \end{corollary}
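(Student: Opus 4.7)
The plan is to deduce Corollary \ref{C:4.1} by combining the equality characterization of Theorem \ref{T:CD} with N\"olker's warped product immersion theorem stated in Subsection 2.4. Since the ambient space $R^m(c)$ has constant sectional curvature $c$, the quantity $\max\tilde K(p)$ reduces to $c$ for every point $p$, so the right-hand side of \eqref{01.3} becomes exactly the expression appearing in the hypothesis of the corollary. Hence the hypothesis of the corollary is precisely the equality case of \eqref{01.3} for an ambient real space form.

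First I would invoke Theorem \ref{T:CD}: the assumed equality forces condition (1), namely that $\phi$ is a mixed totally geodesic immersion. Condition (2) is automatic here because in $R^m(c)$ every plane has sectional curvature $c = \max\tilde K$, so no extra information is produced. The upshot is the vanishing
\begin{equation*}
\sigma(X_i, X_j) = 0 \quad \text{for all } X_i \in \mathcal D_i,\ X_j \in \mathcal D_j,\ i \ne j.
\end{equation*}

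Next I would quote the theorem of N\"olker from Subsection 2.4: an isometric immersion $f: N_0 \times_{\sigma_1} N_1 \times \cdots \times_{\sigma_k} N_k \to R^N(c)$ whose second fundamental form annihilates mixed pairs of tangent vectors from distinct factors is, locally, a warped product immersion. The mixed totally geodesic condition obtained in the previous paragraph is exactly this hypothesis applied to $\phi: N_1\times_{f_2} N_2\times\cdots\times_{f_k} N_k \to R^m(c)$. Therefore $\phi$ is locally a warped product immersion, as required.

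I do not expect any genuine obstacle: the corollary is really a direct packaging of the two earlier results, with the only mild point being to notice that the hypothesis of the corollary is the equality case of \eqref{01.3} specialized to constant curvature, and that condition (2) of Theorem \ref{T:CD} holds automatically in this setting. No further computation is needed.
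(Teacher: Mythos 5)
Your proposal is correct and is exactly the intended argument: the hypothesis is the equality case of \eqref{01.3} when $\max\tilde K\equiv c$, so Theorem \ref{T:CD} yields that $\phi$ is mixed totally geodesic, and N\"olker's theorem from Subsection 2.4 then gives that $\phi$ is (locally) a warped product immersion. This matches the paper's own derivation of the corollary from Theorem \ref{T:CD}, so no further comment is needed.
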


\begin{corollary}\label{C:4.2} If $f_2,\ldots,f_k$ are  harmonic functions on $N_1$ or
eigenfunctions of the Laplacian on $N_1$ with positive eigenvalues, then the multiply warped product  $N_1 \times_{f_2} N_2 \times \cdots \times_{f_k} N_k$  cannot be isometrically immersed into every Riemannian manifold of negative sectional curvature as a minimal submanifold.
\end{corollary}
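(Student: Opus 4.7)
The plan is to derive a contradiction directly from Theorem \ref{T:CD}. Suppose, for the sake of contradiction, that a minimal isometric immersion
$$\phi : N_1 \times_{f_2} N_2 \times \cdots \times_{f_k} N_k \to \tilde M^m$$
exists, where $\tilde M^m$ has strictly negative sectional curvature. Minimality means $\overrightarrow{H} \equiv 0$, so $H^2 = 0$ and the first term on the right-hand side of \eqref{01.3} vanishes. What remains of the inequality is
$$\sum_{j=2}^{k} n_j \frac{\Delta f_j}{f_j} \;\leq\; n_1(n-n_1)\,\max\tilde K.$$

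The first step is to observe that the right-hand side is strictly negative at every point $p \in N$. Indeed, for each $p$ the function $\tilde K$ is continuous on the compact Grassmannian of $2$-planes in $T_pN$, and since $\tilde K < 0$ everywhere on $\tilde M^m$, the maximum $\max\tilde K(p)$ is a negative real number. The coefficient $n_1(n-n_1)$ is a positive integer, so the entire right-hand side is strictly negative.

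The second step is to show that the left-hand side is non-negative. Under the sign convention for the Laplacian adopted in Section 2 — for which the remark following Corollary \ref{C:3} exhibits $e^x$ as an eigenfunction with eigenvalue $-1$, confirming that $\Delta = -\sum_j \partial_j^2$ on $\mathbb E^1$ — a harmonic function satisfies $\Delta f_j/f_j = 0$, while an eigenfunction with positive eigenvalue $\lambda_j > 0$ satisfies $\Delta f_j = \lambda_j f_j$ with $f_j > 0$, hence $\Delta f_j/f_j = \lambda_j > 0$. In either case each summand $n_j\,\Delta f_j/f_j \geq 0$, so the left-hand side is $\geq 0$. Combining the two steps yields $0 \leq n_1(n-n_1)\max\tilde K < 0$, the desired contradiction.

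There is no real obstacle once Theorem \ref{T:CD} is available; the only subtlety is to track the sign convention of the Laplacian correctly, since under the opposite ("analyst's") convention the roles of positive and negative eigenvalues would swap and the statement would have to be reformulated. It is also worth noting that the strictness of "negative sectional curvature" (as opposed to merely non-positive) is essential: when every $f_j$ is harmonic, the left-hand side is exactly $0$, and only $\max\tilde K < 0$ forces the contradiction.
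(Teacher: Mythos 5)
Your proposal is correct and is exactly the intended argument: the paper presents Corollary \ref{C:4.2} as an immediate application of Theorem \ref{T:CD}, namely setting $H=0$ in \eqref{01.3}, noting $\max\tilde K(p)<0$ on the compact Grassmannian of $2$-planes in $T_pN$, and observing that the left-hand side is nonnegative under the paper's sign convention for $\Delta$ (which you verify correctly via the $e^x$ remark). Your closing observations on the necessity of strict negativity are consistent with Corollary \ref{C:4.3}, which handles the non-positive case.
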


\begin{corollary}\label{C:4.3} If $f_2,\ldots,f_k$ are  eigenfunctions of the Laplacian $\Delta$ on $N_1$ with nonnegative eigenvalues and at least one of $f_2,\ldots, f_k$ is non-harmonic, then the multiply warped product manifold $N_1\times_{f_2} N_2 \times \cdots \times_{f_k} N_k$ cannot be isometrically immersed into every Riemannian manifold of non-positive sectional curvature as a minimal submanifold.
 \end{corollary}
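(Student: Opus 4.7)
The plan is to derive a contradiction by applying Theorem~\ref{T:CD} directly to the hypothetical minimal immersion. Suppose, for the sake of contradiction, that
$$\phi:N_1\times_{f_2}N_2\times\cdots\times_{f_k}N_k \longrightarrow \tilde M^m$$
is an isometric minimal immersion, where $\tilde M^m$ has non-positive sectional curvature. Theorem~\ref{T:CD} gives
\begin{align*}
\sum_{j=2}^{k} n_j\,\frac{\Delta f_j}{f_j} \;\leq\; \frac{n^2(k-1)}{2k}H^2 \;+\; n_1(n-n_1)\max\tilde K.
\end{align*}

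Next I would analyze the two sides under the given hypotheses. On the right-hand side, minimality gives $H\equiv 0$, and the sectional curvature hypothesis on $\tilde M^m$ yields $\max\tilde K\leq 0$ at every point of $N$; consequently the right-hand side is non-positive. On the left-hand side, the warping functions $f_j$ are by definition positive, and the eigenvalue equation $\Delta f_j=\lambda_j f_j$ with $\lambda_j\geq 0$ gives $\Delta f_j/f_j=\lambda_j\geq 0$ for each $j$. The assumption that at least one $f_j$ is non-harmonic forces the corresponding $\lambda_j$ to satisfy $\lambda_j\neq 0$, hence $\lambda_j>0$. Since each $n_j\geq 1$, the left-hand side satisfies
$$\sum_{j=2}^{k}n_j\,\frac{\Delta f_j}{f_j}=\sum_{j=2}^{k}n_j\lambda_j>0.$$

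Combining the two bounds yields $0<\sum_{j=2}^{k}n_j\lambda_j\leq 0$, a contradiction. Hence no such minimal isometric immersion can exist, proving the corollary. There is no real obstacle here beyond verifying the sign bookkeeping: the only subtle point is confirming that ``non-harmonic'' in conjunction with nonnegativity of the eigenvalue strictly rules out $\lambda_j=0$ for that index, which is immediate from $f_j>0$ and $\Delta f_j\not\equiv 0$.
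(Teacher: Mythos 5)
Your proof is correct and is exactly the intended deduction: the paper states this corollary as an immediate consequence of Theorem~\ref{T:CD}, obtained by setting $H^2=0$ (minimality) and $\max\tilde K\leq 0$ on the right-hand side, while the eigenfunction hypothesis with at least one strictly positive eigenvalue makes the left-hand side strictly positive. Your handling of the sign conventions (including ruling out $\lambda_j=0$ for the non-harmonic factor) matches the paper's setup, so there is nothing to add.
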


\begin{corollary}\label{C:4.4} If $f_2,\ldots,f_k$ are harmonic functions on $N_1$, then every isometric minimal immersion of the multiply warped product manifold
$N_1 \times_{f_2} N_2 \times \cdots \times_{f_k} N_k$ into a Euclidean space is a warped product immersion. \end{corollary}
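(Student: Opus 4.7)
The plan is to apply Theorem \ref{T:CD} directly to the situation at hand and observe that both sides of the inequality \eqref{01.3} collapse to zero, so that the equality case is automatic; Corollary \ref{C:4.1} then delivers the conclusion.

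First I would record the ambient data. Taking $\tilde M^m = \mathbb E^m$, Euclidean space has constant sectional curvature $0$, hence $\max \tilde K \equiv 0$ on every $2$-plane of every tangent space. Minimality of $\phi$ forces the mean curvature vector to vanish identically, so $H^2 \equiv 0$. Together these two facts show that the right-hand side of \eqref{01.3}, namely
\[
\frac{n^2(k-1)}{2k} H^2 + n_1(n-n_1)\,\max\tilde K,
\]
is identically zero on $N$.

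Next I would plug in the harmonicity hypothesis. Since each $f_j$ is a positive function on $N_1$ with $\Delta f_j = 0$, each summand $n_j\,\Delta f_j/f_j$ on the left-hand side of \eqref{01.3} vanishes, so the left-hand side is identically zero as well. Thus \eqref{01.3} is saturated at every point of $N_1\times_{f_2}N_2\times\cdots\times_{f_k}N_k$, and we are precisely in the equality case of Theorem \ref{T:CD}. Invoking Corollary \ref{C:4.1} (which is exactly the statement that equality in \eqref{01.3} for an immersion into a real space form implies that $\phi$ is a warped product immersion) then finishes the proof.

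I do not expect any serious obstacle: the argument is essentially a one-line specialization of Theorem \ref{T:CD}. The only point deserving care is the logical ordering — one must use Corollary \ref{C:4.1} (constant ambient curvature) rather than the weaker sectional-curvature-bounded version of Theorem \ref{T:CD}, because the warped product conclusion for the immersion itself ultimately rests on N\"olker's extension of Moore's theorem, which is formulated for targets of constant curvature. If one preferred not to cite Corollary \ref{C:4.1}, one could instead read off from the equality case of Theorem \ref{T:CD} that $\sigma(\mathcal D_i,\mathcal D_j)=\{0\}$ for $i\ne j$ (mixed total geodesy) and then apply N\"olker's theorem from the preliminaries directly to conclude that $\phi$ is locally a warped product immersion.
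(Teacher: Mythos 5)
Your argument is correct and is essentially the paper's own route: minimality and flatness of the ambient space kill the right-hand side of \eqref{01.3}, harmonicity of $f_2,\ldots,f_k$ kills the left-hand side, so equality holds identically, and Corollary \ref{C:4.1} (equivalently, the mixed total geodesy from the equality case of Theorem \ref{T:CD} combined with N\"olker's theorem) yields that $\phi$ is a warped product immersion. Your closing remark about needing the constant-curvature setting for the N\"olker step is exactly the right point of care, and it is precisely what Corollary \ref{C:4.1} encapsulates.
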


Let $(N_{1},g_{1})$ and $(N_{2},g_{2})$ be two Riemannian manifolds and let   $\sigma_{1}:N_{1}\to (0,\infty)$ and $\sigma_{2}:N_{2}\to (0,\infty)$ be differentiable functions. The doubly warped product $N={}_{\sigma_{2}} N_{1}\times_{\sigma_{1}} N_{2} $ is the product manifold $N_{1}\times N_{2}$ endowed with the metric
$$ g=\sigma_{2}^{2}g_{1}+\sigma_{1}^{2} g_{2}.$$ 

The following result is obtained in \cite{Ol10-3}.

\begin{theorem}\label{C:4.5} Let $\phi:{}_{\sigma_{2}}N_1 \times_{\sigma_{1}} N_2\to \tilde M^m$ be an isometric immersion of a doubly warped product
${}_{\sigma_{2}}N_1 \times_{\sigma_{1}} N_2$ into an arbitrary Riemannian $m$-manifold. Then we have
\begin{align} \label{0.3} & n_{2}\frac{\Delta_{1}\sigma_{1}}{\sigma_{1}}+ n_1\frac{\Delta_{2} \sigma_{2}}{\sigma_{2}} \leq \frac{n^2}{4} H^2
 + n_1n_{2} \max\tilde K, \end{align}
where $n_{i}=\dim N_{i}, n=n_{1}+n_{2}$, $\Delta_{i}$ denotes the Laplacian of  $N_{i}$.

The equality sign of \eqref{0.3} holds identically if and only if the following two statements hold:
\begin{enumerate}
 \item $\phi$ is a mixed totally geodesic immersion satisfying ${\rm trace}\, h_1={\rm trace }\,h_2$ and

\item at each point $p=(p_{1},p_{2})\in N$,  the sectional curvature function $\tilde K$ of $\tilde M^m$ satisfies $\tilde K(u,v)=\max \tilde K(p)$ for each unit vector $u$ in $T_{p_1}(N_1)$ and unit vector $v$ in $T_{p_{2}}( N_2)$. \end{enumerate}
\end{theorem}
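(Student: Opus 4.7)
The plan is to follow the same blueprint as the proof of Theorem~\ref{T:CD} (which itself refined Theorem~\ref{T:3.1}), adapted to the symmetric roles played by the two warping functions $\sigma_1$ and $\sigma_2$ of a doubly warped product. The intrinsic input will be a formula for the sum of the sectional curvatures of mixed $2$-planes of $N$; the extrinsic input will be the Gauss equation together with a standard polarization estimate for the mean curvature.

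First, fix a point $p=(p_1,p_2)\in N$ and choose orthonormal frames $\{e_1,\dots,e_{n_1}\}$ for $\mathcal{D}_1$ and $\{e_{n_1+1},\dots,e_n\}$ for $\mathcal{D}_2$ at $p$. Using the O'Neill-type computation for the Levi-Civita connection of $g=\sigma_2^{2}g_1+\sigma_1^{2}g_2$, together with the fact that $\sigma_1$ depends only on $N_1$ and $\sigma_2$ only on $N_2$ (so $e_\alpha\sigma_1=0$ and $e_i\sigma_2=0$ whenever $i\le n_1<\alpha$), I would derive the intrinsic identity
\begin{equation}\label{plan:mixedsum}
\sum_{i=1}^{n_1}\sum_{\alpha=n_1+1}^{n} K(e_i\wedge e_\alpha) \;=\; n_2\,\frac{\Delta_1\sigma_1}{\sigma_1}\;+\;n_1\,\frac{\Delta_2\sigma_2}{\sigma_2},
\end{equation}
with $\Delta_i$ read in the sign convention of the Preliminaries. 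Next, expanding the left-hand side by summing the Gauss equation over the same index pairs yields
\begin{equation*}
n_2\frac{\Delta_1\sigma_1}{\sigma_1}+n_1\frac{\Delta_2\sigma_2}{\sigma_2} \;=\; \sum_{i,\alpha}\tilde K(e_i\wedge e_\alpha)\;+\;\langle \mathrm{trace}\,h_1,\,\mathrm{trace}\,h_2\rangle\;-\;\sum_{i,\alpha}\|\sigma(e_i,e_\alpha)\|^{2}.
\end{equation*}
Three termwise estimates now close the argument: the ambient sum is majorized by $n_1 n_2\max\tilde K(p)$; the nonnegative quantity $\sum_{i,\alpha}\|\sigma(e_i,e_\alpha)\|^{2}$ is dropped; and the polarization inequality
\[
4\langle \mathrm{trace}\,h_1,\mathrm{trace}\,h_2\rangle \;\le\; \|\mathrm{trace}\,h_1+\mathrm{trace}\,h_2\|^{2}=\|n\vec H\|^{2}=n^{2}H^{2}
\]
supplies the mean-curvature term. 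Combining these three estimates produces inequality~\eqref{0.3}.

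For the equality analysis, every intermediate inequality must saturate simultaneously: discarding $\sum_{i,\alpha}\|\sigma(e_i,e_\alpha)\|^{2}$ is an equality only when $\sigma(\mathcal{D}_1,\mathcal{D}_2)=\{0\}$, so the immersion is mixed totally geodesic; equality in the polarization step forces $\mathrm{trace}\,h_1=\mathrm{trace}\,h_2$; and equality in the ambient bound forces $\tilde K(u,v)=\max\tilde K(p)$ for every unit $u\in T_{p_1}N_1$ and every unit $v\in T_{p_2}N_2$. These three conditions are also clearly sufficient, so together they characterize the equality case of~\eqref{0.3}. The main obstacle is the derivation of~\eqref{plan:mixedsum} itself: one must carry out the Koszul-formula computation for the doubly warped metric $\sigma_2^{2}g_1+\sigma_1^{2}g_2$ (in which both warping functions contribute to every relevant Christoffel symbol), track how $\sigma_1$ and $\sigma_2$ enter each mixed sectional curvature symmetrically, and pin down the signs in the paper's Laplacian convention. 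Once~\eqref{plan:mixedsum} is in hand, the remainder of the proof is the same Gauss-equation plus Cauchy--Schwarz pattern already deployed in Theorems~\ref{T:3.1} and~\ref{T:CD}.
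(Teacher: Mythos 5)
Your overall strategy is the right one: the survey itself gives no proof of Theorem~\ref{C:4.5} (it is quoted from \cite{Ol10-3}), and the original argument, like the proofs of Theorems~\ref{T:3.1} and \ref{T:CD}, follows exactly the pattern you describe — sum the Gauss equation over mixed index pairs, drop $\sum_{i,\alpha}\|\sigma(e_i,e_\alpha)\|^2$, bound $\langle \mathrm{trace}\,h_1,\mathrm{trace}\,h_2\rangle$ by $\tfrac{n^2}{4}H^2$ via polarization, and bound the ambient mixed-curvature sum by $n_1n_2\max\tilde K$. Your equality analysis is also sound, including the (implicit) point that the mixed ambient sum is independent of the adapted frame, so its saturation forces $\tilde K(u,v)=\max\tilde K(p)$ for \emph{all} mixed planes.

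The one step that would fail as you have literally written it is the key intrinsic identity. The Koszul computation for $g=\sigma_2^2g_1+\sigma_1^2g_2$ gives, for $g$-unit $X\in\mathcal D_1$ and $Z\in\mathcal D_2$,
\begin{equation*}
K(X\wedge Z)=\frac{(\nabla_XX)\sigma_1-XX\sigma_1}{\sigma_1}+\frac{(\nabla_ZZ)\sigma_2-ZZ\sigma_2}{\sigma_2},
\end{equation*}
so summing over a $g$-orthonormal adapted frame produces Laplacian-type sums computed in the \emph{induced} metrics $\sigma_2^2g_1$ and $\sigma_1^2g_2$ on the leaf and fiber through $p$. If $\Delta_i$ is instead the Laplacian of $(N_i,g_i)$ — the reading you announce ("the sign convention of the Preliminaries") — the correct identity is
\begin{equation*}
\sum_{i,\alpha}K(e_i\wedge e_\alpha)=\frac{n_2}{\sigma_2^{2}}\,\frac{\Delta_1\sigma_1}{\sigma_1}+\frac{n_1}{\sigma_1^{2}}\,\frac{\Delta_2\sigma_2}{\sigma_2},
\end{equation*}
with conformal factors $\sigma_2^{-2},\sigma_1^{-2}$ absent from your display; a quick check is the surface $\sigma_2^2\,dx^2+\sigma_1^2\,dy^2$, whose Gauss curvature is $-\sigma_1''/(\sigma_1\sigma_2^2)-\ddot\sigma_2/(\sigma_2\sigma_1^2)$. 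So the delicate point in deriving your identity is not merely the sign of the Laplacian, as you suggest, but in which metric the two Laplacians are taken: your argument goes through verbatim only under the leaf/fiber (equivalently, $g$-orthonormal frame) reading of $\Delta_1,\Delta_2$, which is the convention under which the quoted inequality \eqref{0.3} must be understood; under the plain $g_i$-reading your chain of estimates proves the displayed inequality above, not \eqref{0.3}.
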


\vskip.15in

\section{\uppercase{Arbitrary warped products in complex space forms}}

Now, we present the following  results form \cite{c02-4,c03-2} for arbitrary  warped products submanifolds in non-flat complex space forms. 
  
For arbitrary  warped products submanifolds in complex hyperbolic spaces, we have the following general results from \cite{c02-4}.

\begin{theorem}\label{T:5.1} Let $\phi:N_1\times_f N_2\to CH^m(4c)$ be an arbitrary isometric immersion of a warped product $N_1\times_f N_2$ into the complex hyperbolic $m$-space $CH^m(4c)$ of constant holomorphic sectional curvature $4c$. Then we have
\begin{align} \label{E:1.1} {\Delta f\over {f}}\leq {{(n_1+n_2)^2}\over{4n_2}}H^2+ n_1c. \end{align} 

The equality sign of \eqref{E:1.1} holds if and only if the following three conditions hold.
\begin{enumerate}

\item  $\phi$ is mixed totally geodesic, 

\item ${\rm trace}\,h_1= {\rm trace}\,h_2$, and

\item $J{\mathcal D_1}\perp {\mathcal D_2}$, where $J$ is the almost complex structure of $CH^m$.
\end{enumerate}\end{theorem}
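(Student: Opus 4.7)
The plan is to adapt the method used for Theorem~\ref{T:3.1} to the Kähler setting. The starting point is the warped-product identity
\[
\sum_{i=1}^{n_1}\sum_{j=1}^{n_2} K(e_i\wedge e_j^*) \;=\; \frac{n_2\,\Delta f}{f},
\]
where $\{e_1,\ldots,e_{n_1}\}$ and $\{e_1^*,\ldots,e_{n_2}^*\}$ are local orthonormal frames of $\mathcal D_1$ and $\mathcal D_2$ respectively. This follows from the O'Neill-type formula $K(X\wedge Z)=-\mathrm{Hess}^{N_1}f(X,X)/f$ for a unit horizontal $X$ and unit vertical $Z$, together with the sign convention for $\Delta$ recorded in Section~2.

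I would then expand each $K(e_i\wedge e_j^*)$ by the Gauss equation together with the standard curvature of a complex space form,
\[
\tilde K(X\wedge Y) \;=\; c\bigl(1+3\langle JX,Y\rangle^2\bigr)
\]
for orthonormal $X,Y$. Writing $H_1=\sum_i\sigma(e_i,e_i)$, $H_2=\sum_j\sigma(e_j^*,e_j^*)$ and $|\mathcal P|^2=\sum_{i,j}\langle Je_i,e_j^*\rangle^2$, summation over $(i,j)$ yields
\[
\frac{n_2\,\Delta f}{f} \;=\; n_1n_2\,c+3c\,|\mathcal P|^2+\langle H_1,H_2\rangle-\sum_{i,j}|\sigma(e_i,e_j^*)|^2.
\]
Two ingredients now finish the estimate: the polarization bound $\langle H_1,H_2\rangle\le\tfrac14|H_1+H_2|^2=\tfrac{n^2}{4}H^2$ (with equality iff $H_1=H_2$), and the sign $c<0$ for $CH^m(4c)$, which makes $3c\,|\mathcal P|^2\le 0$. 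Discarding this term together with the nonpositive $-\sum|\sigma(e_i,e_j^*)|^2$ and dividing by $n_2$ produces \eqref{E:1.1}.

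For the equality discussion, sharpness must hold in all three bounds simultaneously, yielding respectively $\sigma(e_i,e_j^*)=0$ (so $\phi$ is mixed totally geodesic), $H_1=H_2$ (so $\mathrm{trace}\,h_1=\mathrm{trace}\,h_2$), and $|\mathcal P|^2=0$; because $c\ne 0$, this last condition forces $\langle Je_i,e_j^*\rangle=0$ for every $i,j$, which is precisely $J\mathcal D_1\perp\mathcal D_2$. The main obstacle is not analytic but organizational: one must carefully split the frame into its horizontal and vertical parts so that $H_1,H_2$ and $|\mathcal P|^2$ emerge cleanly from the double sum. Once that bookkeeping is done, everything else is a direct combination of the Gauss equation, the known curvature of $CH^m(4c)$, and a one-line polarization inequality.
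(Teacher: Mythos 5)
Your proposal is correct, and it is essentially the argument behind this theorem (the survey only cites it from \cite{c02-4}): one sums the mixed sectional curvatures $K(e_i\wedge e_j^*)$ to get $n_2\Delta f/f$, expands via the Gauss equation and the curvature $c\,(1+3\langle JX,Y\rangle^2)$ of $CH^m(4c)$, and then applies exactly the three estimates $\langle H_1,H_2\rangle\le\tfrac{n^2}{4}H^2$, $3c\,|\mathcal P|^2\le 0$, and $-\sum|\sigma(e_i,e_j^*)|^2\le 0$, whose equality cases reproduce conditions (1)--(3). No gaps; the bookkeeping and the equality discussion are as in the original proof.
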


Some interesting applications of Theorem \ref{T:5.1} are the following three non-immersion theorems  \cite{c02-4}.

\begin{theorem}\label{T:5.2} Let $N_1\times_f N_2$ be a warped product whose warping function $f$ is  harmonic. Then  $N_1\times_f N_2$ does not admit an isometric minimal immersion into any complex hyperbolic space.
 \end{theorem}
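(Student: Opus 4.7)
The plan is very short: Theorem \ref{T:5.1} essentially does all the work, so the proof is a direct application of that inequality combined with the two hypotheses (minimality and harmonicity of $f$), finishing by a sign argument based on the defining curvature condition of a complex hyperbolic space.

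More precisely, I would argue by contradiction. Assume there exists an isometric minimal immersion $\phi:N_1\times_f N_2\to CH^m(4c)$ with $f$ harmonic on $N_1$. Since $CH^m(4c)$ is complex hyperbolic, its constant holomorphic sectional curvature $4c$ is negative, so $c<0$. Apply Theorem \ref{T:5.1} to $\phi$: at each point of $N_1\times_f N_2$ we obtain
\begin{equation*}
\frac{\Delta f}{f}\leq \frac{(n_1+n_2)^2}{4n_2}H^2+n_1 c.
\end{equation*}
Minimality of $\phi$ forces $H^2\equiv 0$, and harmonicity of $f$ on $N_1$ gives $\Delta f\equiv 0$. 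Since $f>0$ as a warping function, the left-hand side vanishes identically, reducing the inequality to $0\leq n_1 c$. With $n_1=\dim N_1\geq 1$ and $c<0$, this is impossible, and the assumed immersion cannot exist.

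The only step that warrants a second look is the claim that $\Delta f$ in the statement of Theorem \ref{T:5.1} is the Laplacian of $f$ on the base $N_1$ (and not on the warped product); this is the convention fixed in Section~3 and used throughout, so no recomputation is needed. No step is genuinely a main obstacle: once Theorem \ref{T:5.1} is available, the proof is a one-line sign check, and the whole content of the non-immersion result is the negativity of the ambient holomorphic sectional curvature combined with the optimal inequality.
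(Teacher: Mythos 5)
Your proof is correct and takes essentially the same route as the paper: Theorem \ref{T:5.2} is presented there precisely as an application of the inequality \eqref{E:1.1} of Theorem \ref{T:5.1}, with the argument being exactly your sign check ($H^2\equiv 0$ by minimality, $\Delta f\equiv 0$ by harmonicity, $f>0$, and $c<0$ yielding the contradiction $0\leq n_1c$). Your reading of $\Delta f$ as the Laplacian on $N_1$ also matches the paper's convention, so nothing is missing.
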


\begin{theorem}\label{T:5.3} If $f$ is an eigenfunction of Laplacian on $N_1$ with eigenvalue $\lambda>0$, then  $N_1\times_f N_2$ does not admits an isometric minimal immersion into any complex hyperbolic space.
\end{theorem}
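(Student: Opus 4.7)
The plan is to argue by contradiction using Theorem \ref{T:5.1} as the sole ingredient. Suppose, toward a contradiction, that $\phi: N_1\times_f N_2\to CH^m(4c)$ is an isometric minimal immersion, where $f$ satisfies $\Delta f=\lambda f$ on $N_1$ with $\lambda>0$.

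First I would specialize the inequality \eqref{E:1.1} from Theorem \ref{T:5.1}. Since $\phi$ is minimal, the mean curvature vector $\overrightarrow{H}$ vanishes identically, so $H^2\equiv 0$ and the inequality collapses to
\begin{equation*}
\frac{\Delta f}{f}\le n_1 c.
\end{equation*}
Next I would use the standing sign convention for $CH^m(4c)$: complex hyperbolic space has negative constant holomorphic sectional curvature, so $4c<0$, hence $c<0$ and the right-hand side $n_1 c$ is strictly negative.

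Finally I would evaluate the left-hand side directly from the eigenfunction hypothesis. Because $f$ is a warping function it is everywhere positive, so dividing $\Delta f=\lambda f$ by $f$ gives $\Delta f/f=\lambda>0$. Combining with the previous display yields $0<\lambda\le n_1 c<0$, a contradiction, so no such minimal immersion exists.

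There is essentially no obstacle here once Theorem \ref{T:5.1} is in hand; the argument is a sign check. The only point requiring a little care is the sign convention for the Laplacian fixed in Section~2, where $\Delta$ is the geometer's Laplacian (so on a round sphere its eigenvalues on nonconstant eigenfunctions are positive); with that convention, $\lambda>0$ together with $f>0$ unambiguously gives $\Delta f/f>0$, and the contradiction with $n_1 c<0$ is immediate. Note that the same argument with $c=0$ recovers the Euclidean non-existence statement (cf.\ Corollary \ref{C:3}), which is a useful sanity check.
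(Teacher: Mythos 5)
Your proof is correct and is essentially the paper's own argument (given in the cited source \cite{c02-4}): apply inequality \eqref{E:1.1} of Theorem \ref{T:5.1}, use minimality to drop the $H^2$ term, and contradict $\Delta f/f=\lambda>0$ against $n_1c<0$ for $CH^m(4c)$. Your attention to the Laplacian sign convention from Section 2 is exactly the right point to check, and the $c=0$ sanity check is consistent with Corollary \ref{C:3}.
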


\begin{theorem}\label{T:5.4}If $N_1$ is compact, then every warped product $N_1\times_f N_2$ does not admit an isometric minimal immersion into any complex hyperbolic space.
\end{theorem}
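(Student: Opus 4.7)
My plan is to derive a contradiction from Theorem~\ref{T:5.1} combined with the compactness of $N_1$, in direct analogy with the proof of Corollary~\ref{C:4} in the real-space-form setting. Suppose toward contradiction that a minimal isometric immersion $\phi:N_1\times_f N_2\to CH^m(4c)$ exists; by the very definition of complex hyperbolic space the holomorphic sectional curvature $4c$ is negative, so $c<0$.

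Since $\phi$ is minimal we have $H^2\equiv 0$, and the sharp bound \eqref{E:1.1} of Theorem~\ref{T:5.1} collapses to $\Delta f/f\leq n_1 c$. Multiplying by the (positive) warping function $f$, I obtain
\begin{equation*}\Delta f\leq n_1 c\, f<0\end{equation*}
pointwise on $N_1$, where strictness follows from $n_1\geq 1$, $c<0$, and $f>0$.

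The final step is an integration argument on the compact manifold $N_1$ (taken, as is standard, without boundary). The divergence theorem yields $\int_{N_1}\Delta f\,dV_1=0$; this remains true under Chen's sign convention $\Delta\varphi=\sum_j\{(\nabla_{e_j}e_j)\varphi-e_je_j\varphi\}$, because Chen's Laplacian differs from $\operatorname{div}\nabla$ only by an overall sign and the integral of a divergence over a closed manifold vanishes. On the other hand, the pointwise strict inequality $\Delta f<0$ forces $\int_{N_1}\Delta f\,dV_1<0$, giving the desired contradiction.

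The argument is essentially routine given Theorem~\ref{T:5.1}, so there is no genuine technical obstacle. The only point requiring care is keeping the inequality strict: the \emph{strict} negativity of $c$ (rather than merely $c\leq 0$) is precisely what upgrades $\Delta f\leq 0$ to $\Delta f<0$ and so produces a contradiction, rather than merely recovering Theorem~\ref{T:5.2} (the harmonic-$f$ case) as the marginal situation.
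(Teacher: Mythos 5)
Your argument is correct and is precisely the intended one behind Theorem~\ref{T:5.4} (as in \cite{c02-4}): apply Theorem~\ref{T:5.1} with $H\equiv 0$ and $c<0$ to get the strict pointwise inequality $\Delta f\leq n_1cf<0$, then contradict $\int_{N_1}\Delta f\,dV_1=0$ on the closed manifold $N_1$. Your handling of the sign convention for $\Delta$ and your remark on why strict negativity of $c$ is essential are both accurate, so there is nothing to add.
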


For arbitrary  warped products submanifolds in the complex projective $m$-space $CP^{m}(4c)$ with constant holomorphic sectional curvature $4c$, we have the following results from \cite{c03-2}.

\begin{theorem}\label{T:5.5} Let $\phi:N_1\times_f N_2\to CP^m(4c)$ be an arbitrary isometric immersion of a warped product into the complex projective $m$-space $CP^m(4c)$ of constant holomorphic sectional curvature $4c$. Then we have 
\begin{align} \label{F:1.2}{\Delta f\over{f}}\leq {{(n_1+n_2)^2}\over{4n_2}}H^2+ (3+n_1)c.\end{align} 

The equality sign of \eqref{F:1.2} holds identically if and only if  we have 

\begin{enumerate}

\item  $n_1=n_2=1$,  

\item  $f$ is an eigenfunction of the Laplacian of $N_1$ with eigenvalue $4c$, and  

\item $\phi$ is totally geodesic and holomorphic.
\end{enumerate}
\end{theorem}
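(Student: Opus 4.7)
The plan is to mirror the proof of Theorem \ref{T:5.1} (the $CH^m(4c)$ analog), the only essential change being the sign of the holomorphic sectional curvature contribution. Three ingredients combine to yield \eqref{F:1.2}: the warped-product identity $K(X,V) = -\mathrm{Hess}_f(X,X)/f$ for unit $X\in\mathcal D_1$ and $V\in\mathcal D_2$ (whose sum over an orthonormal basis of $\mathcal D_1$ gives $\Delta f/f$); the Gauss equation combined with the explicit sectional curvature formula $\tilde K(X\wedge Y) = c\bigl(1+3\langle X,JY\rangle^2\bigr)$ in $CP^m(4c)$; and an AM--GM estimate on the mean-curvature trace, exactly as in the proof of Chen's basic inequality.

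The concrete execution: at a point $p$ pick a local orthonormal frame $\{e_1,\dots,e_{n_1}\}$ of $\mathcal D_1$ and $\{e_{n_1+1},\dots,e_n\}$ of $\mathcal D_2$, and sum the warped-product identity over $\mathcal D_1$ and over $n_2$ unit vectors of $\mathcal D_2$, giving $n_2\,\Delta f/f = \sum_{i,j}K(e_i\wedge e_{n_1+j})$. Substituting via the Gauss equation and the formula for $\tilde K$ yields
\begin{equation*}
n_2\,\frac{\Delta f}{f} \;=\; n_1n_2\,c \;+\; 3c\,\Theta \;+\; \sum_{r}a_r b_r \;-\; \sum_{i,j,r}\bigl(\sigma^r_{i,n_1+j}\bigr)^2,
\end{equation*}
where $\Theta := \sum_{i,j}\langle e_i, Je_{n_1+j}\rangle^2$, $a_r := \sum_{i\leq n_1}\sigma^r_{ii}$, and $b_r := \sum_{j=1}^{n_2}\sigma^r_{n_1+j,n_1+j}$. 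Bessel's inequality gives $\sum_i\langle e_i,Je_{n_1+j}\rangle^2\leq |Je_{n_1+j}|^2=1$, hence $\Theta\leq n_2$; AM--GM applied to $a_r+b_r = nH^r$ gives $a_rb_r\leq n^2(H^r)^2/4$, hence $\sum_r a_rb_r \leq n^2H^2/4$; and the last sum is manifestly nonnegative. Dividing by $n_2$ produces \eqref{F:1.2}.

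The delicate part is the equality analysis. Equality in \eqref{F:1.2} at every point forces: (a) $\sigma(\mathcal D_1,\mathcal D_2)=0$ (mixed total geodesicity), (b) $\mathrm{trace}\,h_1=\mathrm{trace}\,h_2$, and (c) $\Theta = n_2$, i.e.\ $J\mathcal D_2\subset\mathcal D_1$ (which in particular forces $n_2\leq n_1$). The main obstacle will be to upgrade these three pointwise conditions to the listed structural conclusions. My approach would be to differentiate (c) along leaves using the warped-product connection formula $\nabla_X V=(X\ln f)V$ together with the Codazzi equation of $CP^m(4c)$; the resulting identities, combined with (a) and (b), should force the second fundamental form to vanish on all of $TN\times TN$, so $\phi$ is totally geodesic. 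Condition (c) then shows $J$ preserves $TN$, so $\phi$ is holomorphic, and $\phi(N)$ lies in a totally geodesic complex submanifold, which is itself a $CP^k(4c)\subset CP^m(4c)$. Since $CP^k$ admits no nontrivial warped-product decomposition for $k\geq 2$, we must have $k=1$, giving $n_1=n_2=1$; substituting $H=0$ and $n_1=1$ into equality in \eqref{F:1.2} then forces $\Delta f=4cf$.
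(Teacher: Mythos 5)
Your derivation of the inequality itself is correct and is exactly the route of the source this survey quotes (Theorem \ref{T:5.5} is stated here without proof, from \cite{c03-2}): the identity $\sum_{j\le n_1}K(e_j\wedge e_t)=\Delta f/f$ for each unit $e_t\in\mathcal D_2$, the Gauss equation with $\tilde K(X\wedge Y)=c(1+3\langle X,JY\rangle^2)$, the Bessel bound $\Theta\le n_2$ (valid as an upper bound because $c>0$), and the AM--GM step $a_rb_r\le\frac14(a_r+b_r)^2$ with $a_r+b_r=nH^r$. You also identify the correct pointwise equality conditions: (a) $\sigma(\mathcal D_1,\mathcal D_2)=0$, (b) ${\rm trace}\,h_1={\rm trace}\,h_2$, (c) $J\mathcal D_2\subset\mathcal D_1$.

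The equality analysis, however, has genuine gaps. First, ``condition (c) then shows $J$ preserves $TN$'' is false as stated: (c) controls $J\mathcal D_2$ only, not $J\mathcal D_1$. Holomorphy has to be extracted, e.g.\ from the normal component of the Codazzi equation: for $X,Y\in\mathcal D_1$, $Z\in\mathcal D_2$ one finds $(\tilde R(X,Y)Z)^\perp=c\{\langle X,JZ\rangle FY-\langle Y,JZ\rangle FX\}=0$, where $FX$ is the normal part of $JX$; choosing $Y=JZ$ gives $FX=0$, hence $J(TN)\subset TN$. Second, the central step ``the resulting identities should force $\sigma\equiv 0$'' is not an argument, and the computations you indicate do not deliver it in arbitrary dimensions: combining (a) with the K\"ahler-submanifold identity $\sigma(JX,Y)=J\sigma(X,Y)$ one only gets $\sigma(V,\cdot)=\sigma(JV,\cdot)=0$ for $V\in\mathcal D_2$; nothing yet kills $\sigma$ on $\mathcal D_1\ominus J\mathcal D_2$. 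The workable order is the reverse of yours: holomorphy first; then $n_2=1$, because for orthonormal $V,W\in\mathcal D_2$ the warped-product identity makes $K(JV\wedge V)=K(JV\wedge W)$ (both equal $-{\rm Hess}\,f(JV,JV)/f$), while the Gauss equation gives $4c$ and $c$ respectively; then $n_1=1$, because $JV$ is parallel along each (totally geodesic) leaf and ${\rm Hess}\,f=-cf\,(g_1+3\,u^\flat\otimes u^\flat)$ with $u=JV$ forces the factor of the leaf orthogonal to $u$ to be zero-dimensional since $c>0$; only then is $TN={\rm span}\{V,JV\}$, so $\sigma\equiv0$, $N$ is an open piece of a totally geodesic $CP^1(4c)$, and $\Delta f=4cf$. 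Third, your shortcut ``$CP^k$ admits no nontrivial warped-product decomposition for $k\ge2$'' is invoked as if standard, but it is itself a nontrivial assertion requiring proof; in this setting it should be replaced by the two concrete dimension-reduction steps above. So the inequality half stands, but the equality half remains an outline whose key steps are missing or misordered.
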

 
 From now on, we denote $S^{n}(1),\, RP^{n}(1)$, $CP^{n}(4)$ and $ CH^{n}(-4)$ simply  by $S^{n},\, RP^{n}$, $CP^{n}$ and $CH^{n}$, respectively.
 
An immediate  application of Theorem \ref{T:5.5} is the following non-immersion result.

\begin{theorem} \label{T:5.6}  If $f$ is a positive function on a Riemannian $n_1$-manifold $N_1$ such that $(\Delta f)/f>3+n_1$ at some point  $p\in N_1$, then, for any Riemannian manifold $N_2$, the warped product  $N_1\times_f N_2$ does not admit any isometric minimal immersion into  $CP^m$ for any $m$.
\end{theorem}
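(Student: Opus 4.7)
The plan is to derive this as an immediate consequence of Theorem \ref{T:5.5}, arguing by contradiction. Suppose, contrary to the conclusion, that for some Riemannian manifold $N_2$ and some $m$ there exists an isometric minimal immersion
\[
\phi : N_1 \times_f N_2 \longrightarrow CP^m.
\]
Under the paper's convention $CP^m = CP^m(4)$, so the holomorphic sectional curvature is $4c$ with $c=1$.

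Since $\phi$ is minimal, its squared mean curvature satisfies $H^2 \equiv 0$ on $N_1 \times_f N_2$. Applying Theorem \ref{T:5.5} with $c = 1$ then yields
\[
\frac{\Delta f}{f} \;\leq\; \frac{(n_1+n_2)^2}{4n_2}\,H^2 + (3 + n_1) \;=\; 3 + n_1
\]
at every point of $N_1$ (the warping function and its Laplacian depend only on the first factor, so this inequality is pointwise on $N_1$). In particular, the inequality $(\Delta f)/f \leq 3 + n_1$ must hold at the specific point $p \in N_1$ given in the hypothesis.

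This directly contradicts the assumption that $(\Delta f)/f > 3 + n_1$ at $p$. Hence no such immersion can exist, and the theorem follows. There is no genuine obstacle here; the only thing to verify carefully is the normalization (that $CP^m$ stands for $CP^m(4)$, so that the constant on the right-hand side of \eqref{F:1.2} reduces exactly to $3 + n_1$), which is precisely the convention fixed just before the statement of Theorem \ref{T:5.6}.
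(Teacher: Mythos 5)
Your proposal is correct and is precisely the paper's intended argument: the paper presents Theorem \ref{T:5.6} as an immediate application of Theorem \ref{T:5.5}, i.e., setting $H^2=0$ for a minimal immersion into $CP^m=CP^m(4)$ (so $c=1$) forces $(\Delta f)/f\leq 3+n_1$ everywhere on $N_1$, contradicting the hypothesis at $p$. Your check of the normalization convention fixed just before the statement is exactly the only point needing care, and you handled it correctly.
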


For totally real minimal immersions, Theorem \ref{T:5.6} can be sharpen as the following.

\begin{theorem} \label{T:5.7}  If $f$ is a positive function on a Riemannian $n_1$-manifold $N_1$ such that $(\Delta f)/f>n_1$ at some point  $p\in N_1$, then, for any Riemannian manifold $N_2$, the warped product  $N_1\times_f N_2$ does not admit any isometric totally real minimal immersion into  $CP^m$ for any $m$.\end{theorem}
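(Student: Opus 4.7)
The plan is to observe that every totally real submanifold of $CP^m=CP^m(4)$ inherits the same Gauss-equation structure as an arbitrary submanifold of a real space form of constant sectional curvature $1$, after which Theorem \ref{T:3.1} applies essentially verbatim. First I would recall that the Riemannian curvature tensor of $CP^m(4c)$ has the standard form
$$\tilde R(X,Y)Z=c\bigl\{\langle Y,Z\rangle X-\langle X,Z\rangle Y+\langle JY,Z\rangle JX-\langle JX,Z\rangle JY+2\langle X,JY\rangle JZ\bigr\}.$$
If $\phi:N_1\times_f N_2\to CP^m(4c)$ is totally real, meaning $J(\phi_*T_pM)\subset T_p^\perp M$ at every point, then for tangent vectors $X,Y,Z,W$ one has $\langle JY,Z\rangle=\langle JX,Z\rangle=\langle X,JY\rangle=0$, so all three $J$-terms vanish and
$$\tilde R(X,Y;Z,W)=c\{\langle Y,Z\rangle\langle X,W\rangle-\langle X,Z\rangle\langle Y,W\rangle\}.$$
This coincides with the ambient curvature contribution to the Gauss equation used for submanifolds of a real space form $R^m(c)$.

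Consequently, the proof of Theorem \ref{T:3.1}, which relies only on the warped-product identity $\nabla_X Z=(X\ln f)\,Z$ for $X\in\mathcal D_1,\,Z\in\mathcal D_2$ together with the Gauss equation in $R^m(c)$, transfers word-for-word to our totally real setting. This yields
$$\frac{\Delta f}{f}\leq \frac{n^2}{4n_2}H^2+n_1 c,$$
where $n=n_1+n_2$. Specializing to $CP^m=CP^m(4)$, i.e. $c=1$, and imposing minimality $H\equiv 0$, one obtains the pointwise estimate $\Delta f/f\leq n_1$ on all of $N_1$, which directly contradicts the hypothesis $(\Delta f/f)(p)>n_1$.

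The only non-routine step is the Gauss-equation reduction in the first paragraph; once the $J$-contributions are seen to drop out on totally real tangent planes, the remainder is a direct appeal to Theorem \ref{T:3.1}. This reduction is also precisely what distinguishes Theorem \ref{T:5.7} from Theorem \ref{T:5.6}: for totally real immersions the holomorphic $3c$ term responsible for the factor $(3+n_1)c$ in Theorem \ref{T:5.5} is absent, which sharpens the threshold from $n_1+3$ down to $n_1$.
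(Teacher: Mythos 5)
Your argument is correct and is essentially the derivation behind the cited result: for a totally real immersion all tangent planes are totally real, so the $J$-terms in the curvature tensor of $CP^m(4c)$ drop out of every ambient curvature term appearing in the Gauss-equation argument of Theorem \ref{T:3.1}, which therefore applies with $c=1$ and, with $H\equiv 0$, gives $\Delta f/f\le n_1$ everywhere, contradicting the hypothesis at $p$. This is the same mechanism by which the general inequality \eqref{F:1.2} (where the tangential part of $J$ contributes the extra $3c$) sharpens to the $n_1c$ bound in the totally real case, so no further comment is needed.
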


The following examples show that Theorems \ref{T:5.5}, \ref{T:5.6} and \ref{T:5.7} are sharp.

\begin{example} {\rm Let $I=(-\frac{\pi}{4},\frac{\pi}{4}),\, N_2=S^1$ and $f=\frac{1}{2}\cos 2 s$. Then the warped product
$$N_1\times_f N_2=: I\times_{(\cos 2 s)/2}S^1 $$ has constant sectional curvature 4. Clearly, we have $(\Delta f)/f=4$. If we define the complex structure $J$ on the warped product by
$J\left({\partial\over {\partial s}}\right)=2(\sec 2 s) {\partial\over {\partial t}},$
 then $\,(I\times_{(\cos 2 s)/2}S^1,g,J)\,$ is  holomorphically isometric to a dense open subset of $CP^1$.

Let $\phi:CP^1\to CP^m$ be a standard totally geodesic embedding of $CP^1$ into $CP^m$. Then the restriction of $\phi$ to $I\times_{(\cos 2 s)/2}S^1$ gives rise to a minimal isometric immersion of $I\times_{(\cos 2 s)/2}
S^1$ into $CP^m$ which  satisfies the equality case of \eqref{F:1.2} on $I\times_{(\cos 2 s)/2} S^1$  identically. }
\end{example}

\begin{example} {\rm Consider the same warped product $N_1\times_f N_2=I\times_{(\cos 2 s)/2}S^1$ as given in Example 5.1. Let $\phi:CP^1\to CP^m$ be the  totally geodesic holomorphic embedding of $CP^1$ into $CP^m$. Then the restriction of $\phi$ to $N_1\times_f N_2$ is an isometric minimal immersion of $N_1\times_f N_2$ into $CP^m$ which satisfies $(\Delta f)/f=3+n_1$ identically. 
This example shows that the assumption ``$(\Delta f)/f>3+n_1$ at some point in $N_1$'' given in Theorem \ref{T:5.6} is best possible.}\end{example}

\begin{example} {\rm Let  $g_1$ be the standard metric on $S^{n-1}$. Denote by $N_1\times_f N_2$ the warped product given by $N_1=(-\pi/2,\pi/2),\,N_2=S^{n-1}$ and $f=\cos s$. Then the warping function of this warped product satisfies $\Delta f/{f}=n_1$
identically. Moreover, it is easy to verify that this warped product is isometric to a dense open subset of $S^n$.

Let \begin{align}\phi\,:\, & S^n\xrightarrow[\text{2:1}]{\text{projection}}
RP^{n} \xrightarrow[\text{totally real}]{\text{totally geodesic}}  CP^n\notag\end{align}
be a standard totally geodesic Lagrangian immersion of $S^n$ into $CP^n$. Then the restriction of $\phi$ to $N_1\times_f N_2$ is a totally real minimal
immersion.
This example illustrates that the assumption ``$(\Delta f)/f>n_1$ at some point in $N_1$'' given in Theorem \ref{T:5.7} is also sharp.}\end{example}

A {\it generalized complex space form} of constant curvature $c$ and constant type $\alpha$, denoted by $M(c,\alpha)$, is an almost Hermitian manifold $(M,J,g)$ whose curvature tensor $R$ satisfies
\begin{equation}\begin{aligned} &\hskip.4in R(X,Y,Z,W)=\frac{1}{4}(c+3\alpha)\{g(X,Z)g(Y,W)-g(X,W)g(Y,Z)\}
\\&+\frac{1}{4}(c-\alpha)\{g(X,JZ)g(Y,JW)-g(X,JW)g(Y,JZ)+2g(X,JY)g(Z,JW)\}
\end{aligned}\end{equation}
for some $c$ and $\alpha$. The class of generalized complex space forms contains all complex space forms and the nearly K\"ahler manifold $S^{6}$.

For warped products submanifold of a generalized complex space form, the following result is obtained in  \cite{M04}.

\begin{theorem} \label{T:15.8}  Let  $N=N_{T}\times_{f} N_{\perp}$ a warped product submanifold of a generalized complex space form $ M(c,\alpha)$. Then

\begin{enumerate}
\item For $c\leq \alpha$, one has
\begin{equation}\label{5.24}\frac{\Delta f}{f} \geq \frac{n^{2}}{4n_{2}} H^{2} +n_{1}\frac{c+3\alpha}{4} , \end{equation}
where $n_{i}=\dim N_{i}$ and $n=n_{1}+n_{2}$.

\item For $c>\alpha$, one has
\begin{equation}\label{5.25}\frac{\Delta f}{f} \geq \frac{n^{2}}{4n_{2}} H^{2} +n_{1}\frac{c+3\alpha}{4}+3\frac{c-\alpha}{8}||P||^{2} , \end{equation}
where $PX$ denotes the part of $JX$ which is tangent to the submanifold.
\end{enumerate}\end{theorem}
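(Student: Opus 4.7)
My plan is to adapt the Gauss-equation plus AM--GM technique of Chen (Theorem \ref{T:3.1}) to the curvature tensor of the generalized complex space form $M(c,\alpha)$, exploiting the CR-type warped product structure $N_{T}\times_{f}N_{\perp}$. Throughout, I would fix at a point a local orthonormal frame $e_1,\ldots,e_{n_1},e_{n_1+1},\ldots,e_n$ with the first $n_1$ vectors tangent to $N_T$ and the last $n_2$ tangent to the warped fiber, and denote by $H_1=\sum_{i\le n_1}\sigma(e_i,e_i)$ and $H_2=\sum_{s>n_1}\sigma(e_s,e_s)$ the partial mean-curvature traces, so that $nH=H_1+H_2$. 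The warped-product identity $K(X\wedge V)=-{\rm Hess}(f)(X,X)/f$ for horizontal unit $X$ and vertical unit $V$, summed over the adapted frame, gives
\begin{equation*}
\sum_{i=1}^{n_1}\sum_{s=n_1+1}^{n}K(e_i\wedge e_s)=\frac{n_2\,\Delta f}{f},
\end{equation*}
and substituting the Gauss equation rewrites the left-hand side in terms of the ambient curvature $\tilde K(e_i\wedge e_s)$ and the second fundamental form through $\langle\sigma(e_i,e_i),\sigma(e_s,e_s)\rangle-|\sigma(e_i,e_s)|^2$.

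Next, I would compute $\tilde K(e_i\wedge e_s)$ explicitly from the stated curvature tensor of $M(c,\alpha)$. A direct calculation using $g(JX,JY)=g(X,Y)$ and $g(Z,JX)=-g(X,JZ)$ yields
\begin{equation*}
\tilde K(e_i\wedge e_s)=\frac{c+3\alpha}{4}+\frac{3(c-\alpha)}{4}\,g(e_i,Je_s)^2,
\end{equation*}
whose sum over the mixed pairs produces the constant piece $n_1n_2\,\tfrac{c+3\alpha}{4}$ together with a $\tfrac{3(c-\alpha)}{4}\sum_{i,s}g(e_i,Je_s)^2$ correction. After accounting for symmetry in the indices and the definition of $P$ as the tangential part of $J$, the latter sum assembles into the invariant $\|P\|^2$, and its sign makes it discardable when $c\le\alpha$ but forces the explicit term $\tfrac{3(c-\alpha)}{8}\|P\|^2$ when $c>\alpha$, producing the two cases in the statement.

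The remaining step is the trace-splitting optimization that converts the cross term $\langle H_1,H_2\rangle$ into the full squared mean curvature. Writing $H_1,H_2$ component-wise as $a_r,b_r$ in a normal frame, the elementary identity $4a_rb_r=(a_r+b_r)^2-(a_r-b_r)^2$ gives $\langle H_1,H_2\rangle\le\tfrac{n^2H^2}{4}$, with equality iff $H_1=H_2$, i.e.\ ${\rm trace}\,h_1={\rm trace}\,h_2$. Coupling this with the sign-definite remainder $-\sum_{i,s}|\sigma(e_i,e_s)|^2$ (which vanishes exactly for mixed-totally-geodesic immersions) converts the Gauss identity into the asserted bound relating $\Delta f/f$ to $\tfrac{n^2}{4n_2}H^2$ and the curvature-type corrections $n_1\tfrac{c+3\alpha}{4}$ and, in case (2), $\tfrac{3(c-\alpha)}{8}\|P\|^2$.

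The main obstacle is the bookkeeping of the $g(\cdot,J\cdot)^2$ piece coming from the $(c-\alpha)$-part of the $M(c,\alpha)$-curvature: one must isolate its contribution on the mixed pairs $\{(i,s)\}$ from contaminating contributions among the $N_T$-block or the $N_\perp$-block, and, in the case $c>\alpha$, track the combinatorial factor relating $\sum_{i,s}g(e_i,Je_s)^2$ to the invariantly defined $\|P\|^2$. The case split $c\le\alpha$ versus $c>\alpha$ is precisely the point at which this $J$-term is either dropped with the favorable sign or retained as the $\tfrac{3(c-\alpha)}{8}\|P\|^2$ correction; handling that dichotomy correctly is what distinguishes the two parts of the statement.
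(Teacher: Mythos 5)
The survey itself offers no proof of this theorem: it is quoted from \cite{M04}, with only the remark that the equality cases are discussed there. Measured against the standard argument (which is the one used in \cite{M04}), your plan is the right one in every structural respect: the warped-product identity $\sum_{i,s}K(e_i\wedge e_s)=n_2\Delta f/f$, the Gauss equation turning the mixed sectional curvatures into $\langle H_1,H_2\rangle-\sum_{i,s}||\sigma(e_i,e_s)||^2$ plus ambient curvature, the computation $\tilde K(e_i\wedge e_s)=\frac{c+3\alpha}{4}+\frac{3(c-\alpha)}{4}g(Je_i,e_s)^2$, and the estimate $\langle H_1,H_2\rangle\le\frac{1}{4}||H_1+H_2||^2=\frac{n^2H^2}{4}$. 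Your bookkeeping for the $J$-term is also exactly what is needed: since $g(Je_i,e_s)^2=g(Je_s,e_i)^2$, the mixed block satisfies $\sum_{i\le n_1<s}g(Je_i,e_s)^2\le\frac{1}{2}||P||^2$, and this factor of $\frac{1}{2}$ is precisely the origin of the coefficient $\frac{3(c-\alpha)}{8}$ in \eqref{5.25}.

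The one genuine problem is the direction of the inequality. Every estimate in your chain bounds $\Delta f/f$ from \emph{above} (you drop the nonpositive term $-\sum_{i,s}||\sigma(e_i,e_s)||^2$, you bound $\langle H_1,H_2\rangle$ above, and you bound the $J$-term above), so what your argument delivers is $\frac{\Delta f}{f}\le\frac{n^2}{4n_2}H^2+n_1\frac{c+3\alpha}{4}$, respectively with the additional term $\frac{3(c-\alpha)}{8}||P||^2$ when $c>\alpha$; yet you declare this to be ``the asserted bound'' while the statement is printed with $\ge$. The two cannot be reconciled, and the printed $\ge$ is in fact false: the minimal $CR$-product $CP^h\times RP^p\subset CP^{h+p+hp}$ (Segre embedding composed with a totally geodesic Lagrangian $RP^p\subset CP^p$) is a warped product submanifold of $CP^{h+p+hp}=M(4,0)$ with constant warping function and $H=0$, so the left-hand side vanishes while the right-hand side of \eqref{5.25} is at least $n_1>0$. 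The sign in Theorem \ref{T:15.8} as printed is a misprint for $\le$, consistent with Theorems \ref{T:3.1}, \ref{T:5.1} and \ref{T:5.5}. Your argument is a correct proof of the corrected statement, but a complete write-up must flag this discrepancy explicitly rather than silently passing from a chain of upper bounds to a claimed lower bound.
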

The equality case of \eqref{5.24} and \eqref{5.25} were discussed in \cite{M04}.

 For $CR$-warped product submanifold of a generalized complex space form, one has the following result from  \cite{AAK09}.

\begin{theorem} \label{T:5.9}  Let  $N=N_{T}\times_{f} N_{\perp}$ a $CR$-warped product submanifold of a generalized complex space form $ M(c,\alpha)$. Then we have
$$ ||\sigma||^{2}\geq 2p \Big\{ ||\nabla (\ln f)||^{2} +\frac{1}{2} \Delta (\ln f)+\frac{h(c-\alpha)}{4}\Big\}, $$
where $\sigma$ is the second fundamental form) and $2h$ and $p$ are the real dimensions of $N_{T}$ and $N_{\perp}$, respectively.
\end{theorem}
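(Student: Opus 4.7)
The plan is to combine two complementary calculations: (i) an algebraic identity for the components of $\sigma$ lying in $J\mathcal{D}^{\perp}$, derived from the $CR$-warped-product structure, and (ii) the Gauss equation applied to mixed plane sections, using the explicit form of the curvature tensor of $M(c,\alpha)$.

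First I would fix an adapted local orthonormal frame $\{X_{1},\dots,X_{h},JX_{1},\dots,JX_{h}\}$ spanning $TN_{T}=\mathcal{D}$ and $\{Z_{1},\dots,Z_{p}\}$ spanning $TN_{\perp}=\mathcal{D}^{\perp}$, noting that $J\mathcal{D}^{\perp}\subset T^{\perp}N$ by the $CR$-condition. The warped-product Koszul formula gives $\nabla_{X}Z=\nabla_{Z}X=(X\ln f)Z$ for $X\in\mathcal{D}$, $Z\in\mathcal{D}^{\perp}$. Together with the Gauss--Weingarten formulas and the K\"ahler-type form of the curvature tensor of $M(c,\alpha)$, this yields the $CR$-warped-product identity
\[\langle\sigma(X,Z),JW\rangle=-(JX\ln f)\langle Z,W\rangle,\qquad X\in\mathcal{D},\;Z,W\in\mathcal{D}^{\perp}.\]
Summing squares over the adapted frame produces
\[\sum_{i=1}^{2h}\sum_{\alpha,\beta=1}^{p}\langle\sigma(e_{i},Z_{\alpha}),JZ_{\beta}\rangle^{2}=p\|\nabla\ln f\|^{2},\]
whence $\|\sigma\|^{2}\geq 2\sum_{i,\alpha}\|\sigma(e_{i},Z_{\alpha})\|^{2}\geq 2p\|\nabla\ln f\|^{2}$, the factor $2$ arising because mixed pairs appear twice in the full sum $\sum_{I,J}\|\sigma(e_{I},e_{J})\|^{2}$ over ordered frame indices.

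Next I would apply the Gauss equation to the mixed plane sections $e_{i}\wedge Z_{\alpha}$. The standard warped-product formula $K(X\wedge Z)=-(1/f)\,\mathrm{Hess}^{N_{T}}f(X,X)$ for unit $X\in\mathcal{D}$, $Z\in\mathcal{D}^{\perp}$, together with $\Delta f/f=\Delta(\ln f)+\|\nabla\ln f\|^{2}$, gives
\[\sum_{i,\alpha}K(e_{i}\wedge Z_{\alpha})=-p\bigl[\Delta(\ln f)+\|\nabla\ln f\|^{2}\bigr].\]
On the other hand, Gauss expresses the same sum as
\[\sum_{i,\alpha}\tilde K(e_{i}\wedge Z_{\alpha})+\sum_{i,\alpha}\langle\sigma(e_{i},e_{i}),\sigma(Z_{\alpha},Z_{\alpha})\rangle-\sum_{i,\alpha}\|\sigma(e_{i},Z_{\alpha})\|^{2},\]
and the explicit form of $\tilde R$ on $M(c,\alpha)$---using $\langle e_{i},JZ_{\alpha}\rangle=0$ for $CR$-mixed pairs---evaluates $\sum_{i,\alpha}\tilde K(e_{i}\wedge Z_{\alpha})$ in terms of $(c+3\alpha)$ and $(c-\alpha)$. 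Solving for $\sum_{i,\alpha}\|\sigma(e_{i},Z_{\alpha})\|^{2}$ yields an expression for this quantity involving $\Delta(\ln f)$, the curvature constants, and a sign-indefinite cross term.

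Finally I would combine the two lower bounds for $\|\sigma\|^{2}$---the algebraic bound from the first step and the Gauss-equation identity from the second---by taking a suitable convex combination; this is what produces the factor $\tfrac{1}{2}$ on $\Delta(\ln f)$ in the target inequality. The main obstacle will be the precise bookkeeping of the almost-complex-structure contributions to $\tilde R$: one must verify that the $(c+3\alpha)$- and $(c-\alpha)$-terms from the two pieces of the curvature tensor of $M(c,\alpha)$ collapse to exactly $h(c-\alpha)/4$, and one must absorb the sign-indefinite cross term $\sum\langle\sigma(e_{i},e_{i}),\sigma(Z_{\alpha},Z_{\alpha})\rangle$ either through a Cauchy--Schwarz-type estimate against $\|\sigma\|^{2}$ or by symmetrizing the computation under $X_{i}\leftrightarrow JX_{i}$ so that this term cancels.
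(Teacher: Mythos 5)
The survey states this theorem without proof (it is quoted from the reference [AAK09]), so your proposal has to stand on its own, and as designed it has two genuine gaps. First, you only ever invoke the \emph{curvature-tensor} form of $M(c,\alpha)$, but the two structural facts your argument needs are consequences of $\widetilde\nabla J=0$ (or of the nearly K\"ahler identities), not of the curvature form: a generalized complex space form is merely almost Hermitian. Concretely, the identity $\langle\sigma(X,Z),JW\rangle=-(JX\ln f)\langle Z,W\rangle$ is obtained by moving $J$ through $\widetilde\nabla_ZX$, which produces a correction term $\langle(\widetilde\nabla_ZJ)X,W\rangle$; this vanishes for K\"ahler ambients and, in the nearly K\"ahler case, only for the diagonal choice $W=Z$ (which is all you actually need for the sum of squares, but you use the full identity). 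Likewise, ``symmetrizing under $X_i\leftrightarrow JX_i$'' does not by itself cancel the cross term $\langle\mathrm{trace}\,h_1,\mathrm{trace}\,h_2\rangle$: the sum over a $J$-invariant frame is already symmetric, and what is really needed is the pointwise fact $\sigma(X,X)+\sigma(JX,JX)=0$ on $\mathcal D$, i.e.\ that the $N_T$-leaves (totally geodesic in $N$ and $J$-invariant in the ambient) are minimal submanifolds of $\widetilde M$ --- a theorem about K\"ahler/nearly K\"ahler ambients that again uses $\widetilde\nabla J$ and is not established in your plan; absorbing the cross term by Cauchy--Schwarz instead would destroy the constants.

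Second, the constant cannot come out as claimed from your bookkeeping. For mixed sections one has $\langle e_i,JZ_\alpha\rangle=0$, so the $(c-\alpha)$-part of $\widetilde R$ contributes nothing and each mixed sectional curvature equals $\frac{c+3\alpha}{4}$. Hence, even granting the leaf-minimality above, your Gauss-equation step yields $\|\sigma\|^2\geq 2p\{\|\nabla\ln f\|^2+\Delta(\ln f)+2h\frac{c+3\alpha}{4}\}$ (in the K\"ahler case $\alpha=0$ this is exactly Chen's Theorem 11.1, and no convex combination is needed), and averaging with the basic bound gives the constant $h\frac{c+3\alpha}{4}$, not $h\frac{c-\alpha}{4}$; these agree only when $\alpha=0$, and when $\alpha<0$ your bound is strictly weaker than the one to be proved, so the discrepancy is not harmless. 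In the known proofs of such refined inequalities the $(c-\alpha)$-term enters through $J$-twisted curvature components, e.g.\ $\widetilde R(X,JX;Z,JZ)=\frac{c-\alpha}{2}$ for orthonormal $X\in\mathcal D$, $Z\in\mathcal D^\perp$, which appear when one differentiates the mixed-$\sigma$ identity and uses the Codazzi equation together with the nearly K\"ahler control of $\widetilde\nabla J$; this ingredient is absent from your outline, and it is where both the curvature constant and (plausibly) the factor $\tfrac12$ on $\Delta(\ln f)$ really come from. A small additional point: you use the analyst's Laplacian, whereas the survey's $\Delta$ is its negative; since the whole refinement is the sign-sensitive $\Delta(\ln f)$ term, the convention must be fixed and tracked consistently.
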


\vskip.1in

\section{\uppercase{Warped products as K\"ahlerian submanifolds}}

Let $(z_0^i,\ldots,z_{\alpha_i}^i),$ $1\leq i\leq s,$ be homogeneous coordinates of $CP^{\alpha_i}$. Define a map:
\begin{equation}S_{\alpha_1\cdots \alpha_s}:CP^{\alpha_1}\times\cdots\times CP^{\alpha_s}\to CP^N,\quad N=\prod_{i=1}^s (\alpha_i+1)-1,\end{equation} 
which maps a point $((z_0^1,\ldots,z_{\alpha_1}^1),\ldots,(z_0^s,\ldots,z_{\alpha_s}^s))$ in $CP^{\alpha_1}\times\cdots\times CP^{\alpha_s}$ to the point $(z^1_{i_1}\cdots z^s_{i_j})_{1\leq i_1\leq \alpha_1,\ldots,1\leq i_s\leq \alpha_s}$ in $CP^N$. The map $S_{\alpha_1\cdots \alpha_s}$ is a K\"ahler embedding which is known as the {\it Segre embedding}. The Segre embedding was constructed by C. Segre in 1891.  
 
The following results from \cite{c2,CK} obtained in 1981 can be regarded as ``converse'' to Segre embedding constructed in 1891.  

\begin{theorem}\label{T:s1} Let $M_1^{\alpha_1},\ldots,M_s^{\alpha_s}$ be K\"ahler manifolds of
dimensions $\alpha_1,\ldots,$ $\alpha_s$, respectively. Then every holomorphically isometric immersion
$$f:M_1^{\alpha_1}\times\cdots\times M_s^{\alpha_s}\to CP^N,\quad N=\prod_{i=1}^s (\alpha_i+1)-1,$$ 
of $M_1^{\alpha_1}\times\cdots\times M_s^{\alpha_s}$ into $CP^N$   is locally  the Segre embedding, i.e., $M_1^{\alpha_1},\ldots,M_s^{\alpha_s}$ are open portions of $CP^{\alpha_1},\ldots, CP^{\alpha_s}$, respectively. Moreover,  $f$ is congruent to
the Segre embedding.\end{theorem}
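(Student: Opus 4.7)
The plan is to exploit the Gauss equation for Kähler submanifolds of $CP^N(4)$ in conjunction with the product structure of the domain, then invoke a Calabi-type rigidity to match the immersion with the standard Segre embedding. First I would set up notation: write $TM = T_1 \oplus \cdots \oplus T_s$ where $T_i$ is the tangent bundle of $M_i^{\alpha_i}$ lifted to the product; each $T_i$ is $J$-invariant because $f$ is holomorphic, and the summands are mutually orthogonal because $f$ is isometric and the domain carries the product metric. A standing Kähler identity $\sigma(JX,Y) = \sigma(X,JY) = J\sigma(X,Y)$ (valid for any holomorphic isometric immersion between Kähler manifolds) will be used repeatedly.

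Next I would exploit the product structure on the curvature side. For $X \in T_i$ and $Y \in T_j$ with $i \neq j$, the intrinsic curvature satisfies $R(X,Y,Z,W) = 0$ for all $Z,W$; moreover $g(X,JY) = 0$ since $JY \in T_j$. Plugging into the Gauss equation for $CP^N(4)$ and using the formula for $\tilde R$ in a complex space form, every term involving $g(\cdot, J\cdot)$ that mixes the two factors drops, and one obtains the clean relation
\begin{equation*}
\langle \sigma(X,X),\sigma(Y,Y)\rangle - \|\sigma(X,Y)\|^2 = g(X,X)g(Y,Y).
\end{equation*}
Replacing $Y$ by $JY$ and using $\sigma(X,JY) = J\sigma(X,Y)$ yields a companion identity. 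Polarizing in $X$ and $Y$ and iterating over all pairs of distinct factors produces a rigid algebraic system for the components $\sigma(T_i,T_j)$, which in effect forces the normal bundle to decompose in precisely the way dictated by the tensor-product model underlying the Segre map.

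Third, I would use the analogous Gauss identity for $X,Y$ in the \emph{same} factor $T_i$ to read off the intrinsic holomorphic sectional curvature of $M_i^{\alpha_i}$. The constraint from the first step, combined with the dimension count $N=\prod(\alpha_i+1)-1$ (which makes the codimension exactly what the Segre image uses, with no slack), forces each $M_i^{\alpha_i}$ to have constant holomorphic sectional curvature $4$. Hence each factor is locally holomorphically isometric to $CP^{\alpha_i}(4)$.

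Finally, once the intrinsic geometry of each factor is pinned down, I would invoke Calabi's rigidity theorem for holomorphic isometric immersions between complex projective spaces: any two such immersions of $CP^{\alpha_1}\times\cdots\times CP^{\alpha_s}$ into $CP^N$ with $N=\prod(\alpha_i+1)-1$ agree up to a holomorphic isometry of $CP^N$, so $f$ is congruent to $S_{\alpha_1\cdots\alpha_s}$. The main obstacle will be the middle step: extracting constant holomorphic sectional curvature on each factor from the coupled Gauss relations, since one must carefully track how the off-diagonal blocks $\sigma(T_i,T_j)$ interact with the diagonal blocks $\sigma(T_i,T_i)$, and show that the only solution compatible with the tight codimension is the one modeled on the tensor product. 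Everything else—the Kähler identities for $\sigma$ and the appeal to Calabi rigidity—is essentially bookkeeping.
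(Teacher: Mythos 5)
Your overall skeleton (Gauss equation for the mixed directions, complex bilinearity of $\sigma$ for holomorphic immersions, a codimension count, then Calabi rigidity to pin down congruence) is the right family of ideas — note the survey itself gives no proof and refers to Chen and Chen--Kuan, whose argument starts the same way — but as written there are two concrete problems. First, a sign: with the paper's Gauss equation (2.5) and $\tilde K(X\wedge Y)=1$ for unit vectors $X\in T_i$, $Y\in T_j$, $i\ne j$ (since $\langle X,JY\rangle=0$), the correct relation is $\langle\sigma(X,X),\sigma(Y,Y)\rangle-\|\sigma(X,Y)\|^2=-g(X,X)g(Y,Y)$. With your sign, your own companion identity obtained by $Y\mapsto JY$ (using $\sigma(JY,JY)=-\sigma(Y,Y)$) adds up to $\|\sigma(X,Y)\|^2=-g(X,X)g(Y,Y)<0$, a contradiction; with the right sign the same manipulation yields the two facts you actually need, namely $\|\sigma(X,Y)\|^2=g(X,X)g(Y,Y)$ and $\langle\sigma(X,X),\sigma(Y,Y)\rangle=0$.

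The genuine gap is the middle step, which you defer as "the main obstacle" but which is the entire content of the theorem. Your proposed mechanism — the codimension is "exactly what the Segre image uses, with no slack", so the first normal bundle must be the tensor-product model and each factor must have constant holomorphic sectional curvature $4$ — only has a chance when $s=2$: the complex codimension is $\prod_{i=1}^s(\alpha_i+1)-1-\sum_i\alpha_i$, which equals $\alpha_1\alpha_2$ for $s=2$ but strictly exceeds $\sum_{i<j}\alpha_i\alpha_j$, the largest possible dimension of the span of the mixed components $\sigma(T_i,T_j)$, as soon as $s\ge 3$. So at the level of the second fundamental form there \emph{is} slack for $s\ge 3$, and a first-order dimension count cannot force $\sigma(T_i,T_i)=0$; the cited proof copes with this by analyzing higher fundamental forms/osculating spaces (equivalently, an induction on the number of factors — compare Theorem 6.2 of the survey, whose inequalities involve $\|\bar\nabla^{k-2}\sigma\|^2$ for all $k$), and your outline offers no substitute. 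Even for $s=2$ the conclusion is not "bookkeeping": you must first extract from the complexified Gauss equation (components with three holomorphic indices in one factor and one in the other, and the mixed-mixed components) that the vectors $\sigma(e_a,f_b)$ of unitary frames are orthonormal and that the diagonal blocks $\sigma(T_i,T_i)$ are Hermitian-orthogonal to the mixed ones; only then does the exact count force $\sigma(T_i,T_i)=0$, hence totally geodesic slices, hence holomorphic sectional curvature $4$ on each factor (your plan to read the curvature off the same-factor Gauss identity presupposes this vanishing, so the order of your steps is circular as stated). The final appeal to Calabi's rigidity theorem is fine once each factor is identified with an open piece of $CP^{\alpha_i}(4)$.
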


Let $\bar \nabla^k\sigma$, $k=0,1,2,\cdots$, denote the $k$-th covariant derivative of the second fundamental form. Denoted by $||\bar\nabla^k\sigma||^2$ the squared norm of $\bar \nabla^k\sigma$. 

The following result was proved in  \cite{c2,CK}.
 
\begin{theorem}\label{T:s2} Let
$M_1^{\alpha_1}\times\cdots\times M_s^{\alpha_s}$ be a product K\"ahler submanifold of  $CP^N$. Then 
\begin{equation}\label{E:6.2} ||\bar \nabla^{k-2}\sigma||^2\geq k!\,2^k\sum_{i_1<\cdots<i_k}\alpha_1\cdots\alpha_k,\end{equation} for $k=2,3,\cdots.$

The equality sign of \eqref{E:6.2}  holds for some $k$ if and only if   $M_1^{\alpha_1},\ldots,M_s^{\alpha_s}$ are open parts of $CP^{\alpha_1},\ldots, CP^{\alpha_s}$, respectively, and the immersion is congruent to the Segre embedding.\end{theorem}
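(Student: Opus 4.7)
The plan is to induct on $k\geq 2$, using the K\"ahler product structure together with the Gauss and Codazzi equations of $CP^N$. Write $TM=T^{(1)}\oplus\cdots\oplus T^{(s)}$ for the orthogonal, $J$-invariant decomposition coming from the product, and for each $i$ fix an orthonormal frame $\{e_a^{(i)}\}_{a=1}^{2\alpha_i}$ of $T^{(i)}$. Two elementary facts underpin the argument: (i) for a K\"ahler immersion one has $\sigma(JX,Y)=J\sigma(X,Y)$, hence $\sigma(JX,JX)=-\sigma(X,X)$, which forces $\sum_{a=1}^{2\alpha_i}\sigma(e_a^{(i)},e_a^{(i)})=0$ on each $J$-invariant factor $T^{(i)}$; and (ii) for $X\in T^{(i)}$, $Y\in T^{(j)}$ with $i\neq j$, both $\<X,Y\>$ and $\<X,JY\>$ vanish, since the decomposition is orthogonal and $J$-invariant.

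For the base case $k=2$, I would take unit $X\in T^{(i)}$ and unit $Y\in T^{(j)}$ with $i\ne j$. Because $X,Y$ lie in distinct factors of a Riemannian product, $K_M(X\wedge Y)=0$, while in $CP^N$ the sectional curvature of the plane spanned by $X,Y$ is $1+3\<X,JY\>^2=1$ by (ii). The Gauss equation then reads
$$|\sigma(X,Y)|^2 \;=\; 1+\<\sigma(X,X),\sigma(Y,Y)\>.$$
Summing over the bases of $T^{(i)}$ and $T^{(j)}$ and invoking (i) to kill the cross term yields the exact identity $\sum_{a,b}|\sigma(e_a^{(i)},e_b^{(j)})|^2=4\alpha_i\alpha_j$. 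Since within-factor terms $\sum_{a,b}|\sigma(e_a^{(i)},e_b^{(i)})|^2$ are non-negative, summing over ordered pairs $(i,j)$ with $i\ne j$ gives $||\sigma||^2\ge 2\sum_{i<j}4\alpha_i\alpha_j=2!\cdot 2^2\sum_{i<j}\alpha_i\alpha_j$, matching the $k=2$ bound.

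For the inductive step, fix distinct indices $i_1<\cdots<i_k$ and unit vectors $X_p\in T^{(i_p)}$. Because the curvature tensor of $CP^N$ is built from $\<\cdot,\cdot\>$ and $\<\cdot,J\cdot\>$ and $J$ preserves each $T^{(i)}$, fact (ii) implies $\tilde R(X_p,X_q)X_r=0$ whenever $X_p,X_q,X_r$ lie in three distinct factors; the Riemann tensor of the product also vanishes on such mixed vectors. The Codazzi equation together with the iterated Ricci identity therefore forces $(\bar\nabla^{k-2}\sigma)(X_3,\ldots,X_k;X_1,X_2)$ to be totally symmetric in $X_1,\ldots,X_k$. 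Applying the Gauss equation along each differentiating direction in turn and repeatedly cancelling the arising trace terms via (i) should give, by induction, the identity
$$\sum_{a_1,\ldots,a_k}\bigl|(\bar\nabla^{k-2}\sigma)(e_{a_3}^{(i_3)},\ldots,e_{a_k}^{(i_k)};e_{a_1}^{(i_1)},e_{a_2}^{(i_2)})\bigr|^2 \;=\; 2^k\alpha_{i_1}\cdots\alpha_{i_k}.$$
Counting the $k!$ orderings of these factors inside the norm $||\bar\nabla^{k-2}\sigma||^2$, and discarding the non-negative non-fully-mixed contributions, then yields $||\bar\nabla^{k-2}\sigma||^2\ge k!\,2^k\sum_{i_1<\cdots<i_k}\alpha_{i_1}\cdots\alpha_{i_k}$.

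The main obstacle will be the careful bookkeeping in the induction: one must track exactly how each factor of $2\alpha_{i_p}$ is introduced as a new probe direction is added, how the iterated Gauss equation contributes a factor of $1$ at every step from the unit sectional curvature of $CP^N$ on a mixed-factor plane, and how (i) eliminates every spurious trace term at every stage. For the equality case, every non-fully-mixed component of $\bar\nabla^{k-2}\sigma$ must vanish. In particular, from $k=2$ one gets $\sigma(T^{(i)},T^{(i)})\equiv 0$ for each $i$, so each $M_i^{\alpha_i}$ is immersed as a totally geodesic K\"ahler submanifold with respect to its own factor; a single-factor Gauss calculation then shows $M_i$ has constant holomorphic sectional curvature $4$, so is an open part of $CP^{\alpha_i}$. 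Finally, Theorem~\ref{T:s1} identifies the immersion with the Segre embedding up to congruence.
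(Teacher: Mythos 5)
First, a point of comparison: the survey states this theorem without proof, citing \cite{c2,CK}, so your argument has to stand on its own; judged that way, your base case $k=2$ is correct and is exactly the classical computation behind Theorem \ref{T:s3} (mixed Gauss equation on a plane spanned by vectors from different factors, plus $\sigma(JX,JX)=-\sigma(X,X)$ to kill the cross term). The inductive step, however, has a genuine gap at precisely the point where the theorem becomes nontrivial. Your displayed claim that the fully mixed components satisfy the exact identity $\sum\bigl|(\bar\nabla^{k-2}\sigma)(e^{(i_3)}_{a_3},\ldots;e^{(i_1)}_{a_1},e^{(i_2)}_{a_2})\bigr|^2=2^k\alpha_{i_1}\cdots\alpha_{i_k}$ for an arbitrary product K\"ahler submanifold is only asserted (``should give''), and no mechanism is offered that produces it: differentiating the $k=2$ identity yields first-order relations such as $\sum\langle(\bar\nabla_Z\sigma)(e_a,e_b),\sigma(e_a,e_b)\rangle=0$, not the squared norm of $\bar\nabla^{k-2}\sigma$, and ``applying the Gauss equation along each differentiating direction'' is not a defined operation for derivatives of $\sigma$. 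Moreover, the total symmetry of $\bar\nabla^{k-2}\sigma$ on mixed arguments, which your $k!$ counting requires, is justified only for $k=3$ (Codazzi plus the fact that $\tilde R(X,Y)Z$ is tangent for a holomorphic submanifold). For $k\geq4$ the Ricci identity for commuting derivative slots produces the term $R^{\perp}(X_p,X_q)(\bar\nabla^{k-4}\sigma)(\ldots)$, and by the Ricci equation $\langle R^\perp(X,Y)\xi,\eta\rangle=2\langle X,JY\rangle\langle J\xi,\eta\rangle+\langle[A_\xi,A_\eta]X,Y\rangle$; only the first summand dies when $X,Y$ lie in different factors, so the commutator terms do not obviously vanish and the claimed symmetry (hence the clean $k!$ bookkeeping) is unproven.

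The equality discussion is also incomplete. The theorem asserts equality in \eqref{E:6.2} for \emph{some} $k$, while your argument extracts $\sigma(T^{(i)},T^{(i)})\equiv0$ only from equality at $k=2$; if equality holds for some $k\geq3$ you only learn that certain components of $\bar\nabla^{k-2}\sigma$ vanish, and you give no route from that back to the vanishing of the pure parts of $\sigma$ itself. Even in the $k=2$ case, after concluding that each factor is an open part of $CP^{\alpha_i}$ you invoke Theorem \ref{T:s1}, but that theorem requires the codimension condition $N=\prod_i(\alpha_i+1)-1$, which is not part of the hypothesis here; you need an intermediate reduction-of-codimension step (e.g., showing the first normal spaces have complex dimension $\sum_{i<j}\alpha_i\alpha_j$ and are parallel, so the image lies in a totally geodesic $CP^{N'}$ of the right dimension) before the converse-to-Segre theorem applies. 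So the proposal captures the right first step and the right heuristic picture, but the inductive identity, the symmetry needed for the $k!$ count when $k\geq4$, and the equality analysis are all missing, and these constitute the substance of the result proved in \cite{c2,CK}.
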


In particular, if $k=2$, Theorem \ref{T:s2} reduces to the following result of \cite{c2}.

\begin{theorem}\label{T:s3} Let $M_1^{h}\times M_2^{p}$ be a product K\"ahler submanifold of  $CP^N$. Then we have
\begin{equation}\label{E:6.0} ||\sigma||^2\geq 8hp.\end{equation}

The equality sign of \eqref{E:6.0}  holds if and only if  $M_1^h$ and $M_2^p$ are open portions of $CP^{h}$ and $CP^{p}$, respectively, and the immersion is congruent to the Segre embedding $S_{h,p}$.
\end{theorem}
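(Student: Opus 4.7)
The plan is to extract $\|\sigma\|^2 \geq 8hp$ from the Gauss equation by exploiting two structural ingredients: the Kähler--holomorphic nature of the immersion and the Riemannian-product structure of $M_1^h \times M_2^p$. I begin with the observation that a holomorphic isometric immersion into a Kähler manifold has $J$-invariant second fundamental form, $\sigma(JX,Y) = J\sigma(X,Y)$, so that $\sigma(JX,JX) = -\sigma(X,X)$. Pairing each orthonormal frame vector with its $J$-image, this yields
\[
\sum_\alpha \sigma(X_\alpha,X_\alpha) = 0, \qquad \sum_\beta \sigma(Y_\beta,Y_\beta) = 0,
\]
for real orthonormal frames $\{X_\alpha\}_{\alpha=1}^{2h}$ of $TM_1$ and $\{Y_\beta\}_{\beta=1}^{2p}$ of $TM_2$ (note the superscripts in $M_1^h, M_2^p$ denote complex dimension).

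Next, for unit vectors $X \in TM_1$, $Y \in TM_2$, the ambient curvature of $CP^N = CP^N(4)$ evaluates to $\tilde R(X,Y,Y,X) = 1 + 3\langle X,JY\rangle^2 = 1$, because the Kähler product structure sends $JY$ to $TM_2$, which is orthogonal to $TM_1$. Since the product curvature vanishes on mixed sections, $R(X,Y,Y,X) = 0$, so the Gauss equation reduces to
\[
|\sigma(X,Y)|^2 - \langle \sigma(X,X),\sigma(Y,Y)\rangle = 1
\]
for every such pair. Summing over $\alpha, \beta$ and using the minimality relations, the cross term collapses, leaving
\[
\sum_{\alpha,\beta} |\sigma(X_\alpha,Y_\beta)|^2 = (2h)(2p) = 4hp.
\]
Since $\|\sigma\|^2$ decomposes as the sum of pure $M_1$-$M_1$, pure $M_2$-$M_2$, and (doubly-counted) mixed $M_1$-$M_2$ contributions, we immediately obtain $\|\sigma\|^2 \geq 2\cdot 4hp = 8hp$.

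For the equality case, the pure contributions must vanish: $\sigma(X,X') = 0$ for all $X,X' \in TM_1$ and $\sigma(Y,Y') = 0$ for all $Y,Y' \in TM_2$. Combined with the fact that on a Riemannian product $\nabla_X X'$ stays tangent to the factor containing $X,X'$, this forces each slice $M_1 \times \{q_2\}$ and $\{q_1\} \times M_2$ to be totally geodesic as a complex submanifold of $CP^N$, hence an open portion of a linearly embedded $CP^h$ or $CP^p$. At this point I would invoke Theorem \ref{T:s1} to conclude that $\phi$ is congruent to the Segre embedding $S_{h,p}$. The delicate step is not the inequality --- which is a clean Gauss-equation count --- but the equality discussion: one must first verify that the image lies in the enveloping linear $CP^{hp+h+p} \subset CP^N$ spanned by the totally geodesic slices so that Theorem \ref{T:s1} applies directly, and then reassemble the product-immersion rigidity to match $S_{h,p}$ up to congruence.
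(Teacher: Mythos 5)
Your Gauss-equation count for the inequality is correct and is in fact the standard argument (the survey gives no in-text proof of Theorem \ref{T:s3}, quoting it from \cite{c2}, where precisely this computation is used): $J$-invariance of $\sigma$ makes the partial traces over each factor vanish, the mixed intrinsic sectional curvatures of the Riemannian product are zero, the ambient mixed curvature equals $1$ because $J$ preserves the two factors, so summing the Gauss equation over the $4hp$ mixed pairs gives $\sum_{\alpha,\beta}\|\sigma(X_\alpha,Y_\beta)\|^2=4hp$ and hence $\|\sigma\|^2\geq 8hp$, with equality exactly when $\sigma(TM_1,TM_1)=\sigma(TM_2,TM_2)=\{0\}$. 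The next step is also fine: the slices are then totally geodesic complex submanifolds of $CP^N$, so $M_1^h$ and $M_2^p$ are open portions of linear $CP^h$ and $CP^p$.

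The genuine gap is the final identification with the Segre embedding. Theorem \ref{T:s1} applies only when the ambient dimension is exactly $N=(h+1)(p+1)-1=h+p+hp$; for general $N$ you must first prove that the image lies linearly fully in a totally geodesic $CP^{h+p+hp}\subset CP^N$, and you name this step without carrying it out. It is not a formality: it is the codimension-reduction half of the equality proof. Concretely, from $\sigma(TM_i,TM_i)=0$ and the Gauss equation one gets $\|\sigma(X,Y)\|=1$ for unit $X\in TM_1$, $Y\in TM_2$; since $\sigma(JX,Y)=\sigma(X,JY)=J\sigma(X,Y)$, the first normal space is a $J$-invariant subspace of complex dimension at most $hp$, and one must show, using the Codazzi equation, that the first osculating space has constant dimension $h+p+hp$ and is parallel, and then invoke a reduction-of-codimension theorem for K\"ahler submanifolds of $CP^N$ before Theorem \ref{T:s1} can be quoted. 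Note also that your description of the target subspace as the one ``spanned by the totally geodesic slices'' is off: two slices through a point span only a $CP^{h+p}$, whereas the relevant $CP^{h+p+hp}$ is the linear span of the whole image --- which is exactly why the containment requires this separate argument. Finally, the converse direction (that $S_{h,p}$ itself satisfies $\|\sigma\|^2=8hp$) should be recorded, though it is a short direct computation.
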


We may extend Theorem \ref{T:s3} to the following for warped products.

\begin{theorem}\label{T:s4} Let $(M_1^{h},g_{1})$ and $(M_2^{p},g_{2})$ be two K\"ahler manifolds of complex dimension $h$ and $p$ respectively and let $f$ be a positive function on $M_{1}^{h}$. If $\phi:M_1^{h}\times_{f} M_2^{p}\to CP^{N}$ is a holomorphically isometric immersion of the warped product manifold $M_1^{h}\times_{f} M_2^{p}$ into   $CP^N$. Then $f$ is a constant, say $c$. Moreover, we have
\begin{equation}\label{E:6.01} ||\sigma||^2\geq 8hp.\end{equation}

The equality sign of \eqref{E:6.01}  holds if and only if  $(M_1^h,g_{1})$ and $(M_2^p,cg_{2})$ are open portions of $CP^{h}$ and $CP^{p}$, respectively, and the immersion $\phi$ is congruent to the Segre embedding.
\end{theorem}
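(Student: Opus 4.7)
The plan is to reduce Theorem \ref{T:s4} to the Riemannian product case already handled by Theorem \ref{T:s3}. First I would show that the warping function $f$ is forced to be constant, and then apply Theorem \ref{T:s3} to the resulting K\"ahler product.

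For the first step, since $\phi:M_1^{h}\times_f M_2^{p}\to CP^N$ is holomorphic and isometric and $CP^N$ is K\"ahler, the warped product inherits a K\"ahler structure $(g,J)$, where $g=g_1+f^2 g_2$ and $J=J_1\oplus J_2$ is the product complex structure coming from the given K\"ahler structures of $M_1^{h}$ and $M_2^{p}$. Its fundamental 2-form $\omega(X,Y)=g(JX,Y)$ splits along $TM=TM_1\oplus TM_2$ as
\begin{equation*}
\omega=\pi_1^{*}\omega_1+f^{2}\,\pi_2^{*}\omega_2,
\end{equation*}
where $\omega_i$ is the K\"ahler form of $(M_i,g_i)$ and $\pi_i$ is the projection onto $M_i$. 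Since $d\omega_1=d\omega_2=0$ and $f$ depends only on $M_1$, one computes
\begin{equation*}
d\omega=2f\,df\wedge\pi_2^{*}\omega_2.
\end{equation*}
The K\"ahler condition $d\omega=0$, the non-degeneracy of $\omega_2$ on $TM_2$ (using $p\geq 1$), and the purely horizontal nature of $df$ together force $df\equiv 0$ on $M_1^{h}$. Hence $f$ is a positive constant, call it $c$.

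For the second step, with $f\equiv c$ the warped metric is $g=g_1+c^{2}g_2$, so $M_1^{h}\times_f M_2^{p}$ is simply the Riemannian product of the K\"ahler manifolds $(M_1^{h},g_1)$ and $(M_2^{p},c^{2}g_2)$, and $\phi$ becomes a holomorphic isometric immersion of this K\"ahler product into $CP^N$. Applying Theorem \ref{T:s3} directly yields $\|\sigma\|^2\geq 8hp$; moreover, equality holds iff $(M_1^{h},g_1)$ and $(M_2^{p},c^{2}g_2)$ are open portions of $CP^{h}$ and $CP^{p}$ respectively and $\phi$ is congruent to the Segre embedding $S_{h,p}$. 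This is exactly the statement of Theorem \ref{T:s4}.

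The only real obstacle is the rigidity step showing that $f$ must be constant. The underlying point is that warping a K\"ahler product metric by a nonconstant positive function on the base always destroys the K\"ahler condition, so no genuine warping can coexist with a holomorphic isometric immersion into a K\"ahler ambient; the $d\omega$ computation above is the cleanest way to see this. An alternative is to use the O'Neill warped-product Levi-Civita formulas and impose $\nabla J=0$ directly, but the $d\omega=0$ route avoids all case analysis and immediately reduces the theorem to the known product result \ref{T:s3}.
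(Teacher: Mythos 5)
Your proposal is correct and follows essentially the same route as the paper: the hypothesis forces the warped product to be a K\"ahler manifold, hence the warping function is constant, and the result then reduces to Theorem \ref{T:s3}. The paper simply asserts the constancy of $f$, while you supply the clean $d\omega=2f\,df\wedge\pi_2^{*}\omega_2$ justification (and, in passing, write the rescaled metric more precisely as $c^2g_2$), which is a welcome filling-in of detail rather than a different method.
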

\begin{proof} Under the hypothesis, the warped product manifold $M_1^{h}\times_{f} M_2^{p}$ is a K\"ahler manifold. Therefore, the warping function must be a positive constant. Now, the theorem follows from Theorem \ref{T:s3}. 
\end{proof}

\vskip.1in

\section{\uppercase{$CR$-products in K\"ahler manifolds}}
 
 A submanifold $N$ in a K\"ahler manifold $\tilde M$ is called a {\it totally real submanifold} if the almost complex structure $J$ of $\tilde M$ carries each tangent space $T_xN$ of $N$ into its corresponding normal space $T_x^\perp N$  \cite{CO}. The submanifold $N$ is called a
holomorphic submanifold (or K\"ahler submanifold) if $J$ carries each $T_xN$ into itself.

A submanifold $N$ of a K\"ahler manifold $\tilde M$ is called a {\it $CR$-submanifold} \cite{bejancu} if there exists on $N$ a holomorphic distribution $\mathcal D$ whose orthogonal  complement $\mathcal D^\perp$ is a totally real distribution, i.e., $J\mathcal D^\perp_x\subset T^\perp_x N$. 

A $CR$-submanifold  of a K\"ahler manifold $\tilde M$ is called a
{\it $CR$-product} \cite{c2} if it is a Riemannian product  $N_T\times N_\perp$ of a K\"ahler submanifold $N_T$ and a totally real submanifold $N_\perp$.  It is called {\it mixed totally geodesic} if the second fundamental form of the $CR$-submanifold satisfying $\sigma(X,Z)=0$ for any $X\in \mathcal D$ and $Z\in \mathcal D^{\perp}$.

For $CR$-products in complex space forms, the following result from \cite{c2} are known. 

\begin{theorem}\label{T:7.1} We have
\begin{enumerate}
\item[(i)] A $CR$-submanifold in the complex Euclidean $m$-space $\mathbb C^m$  is a $CR$-product if and only if it is a direct sum of a K\"ahler submanifold and a totally real submanifold of  linear complex subspaces. 

\item[(ii)] There do not exist $CR$-products in complex hyperbolic spaces other than K\"ahler submanifolds and totally real submanifolds.
\end{enumerate}\end{theorem}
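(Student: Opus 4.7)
The converse in (i) is immediate once one unpacks the definitions, so both parts reduce to showing that a $CR$-product $N=N_T\times N_\perp$ in a complex space form of constant holomorphic sectional curvature $4c$ satisfies strong structural constraints. The plan is to derive the identity
\begin{equation*}
\|\sigma(X,Z)\|^2+\|\sigma(JX,Z)\|^2=2c
\end{equation*}
for unit vectors $X\in\mathcal D$ and $Z\in\mathcal D^\perp$, and then specialize $c$. Since $N$ is a Riemannian product, $R(X,Z;Z,X)=0$, and the Gauss equation reads
\begin{equation*}
\|\sigma(X,Z)\|^2=\tilde K(X,Z)+\langle\sigma(X,X),\sigma(Z,Z)\rangle.
\end{equation*}
Because $N_T$ is a totally geodesic leaf of the product and is K\"ahler in $\tilde M$, the second fundamental form obeys $\sigma(JX,Y)=J\sigma(X,Y)$ on $\mathcal D\times\mathcal D$, so in particular $\sigma(JX,JX)=-\sigma(X,X)$. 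Replacing $X$ by $JX$ (which is still in $\mathcal D$ and orthogonal to $Z$) and adding the two Gauss identities cancels the cross term. The planes $\mathrm{span}(X,Z)$ and $\mathrm{span}(JX,Z)$ are both totally real (since $JX\in\mathcal D\subset TN$ and $JZ\in T^\perp N$ force $\langle X,JZ\rangle=\langle JX,JZ\rangle=0$), hence each has sectional curvature $c$, yielding the displayed identity.

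For (ii), $c<0$ in $CH^m$ would force the nonnegative left-hand side to equal a negative number, a contradiction unless $\mathcal D=\{0\}$ or $\mathcal D^\perp=\{0\}$; in these trivial cases $N$ is either purely K\"ahler or purely totally real. For (i), $c=0$ in $\mathbb C^m$ forces $\sigma(X,Z)=0$ for all $X\in\mathcal D$ and $Z\in\mathcal D^\perp$, so the immersion is mixed totally geodesic. Moore's theorem (Section~2.4) then provides a product decomposition $f=(f_T,f_\perp)$ of the isometric immersion, with $f_T(N_T)$ and $f_\perp(N_\perp)$ lying in orthogonal real affine subspaces $E_1,E_2$ of $\mathbb C^m$.

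To upgrade this real decomposition to a complex orthogonal one, translate so that a chosen point of $N$ maps to the origin and let $W_1\subset E_1$ be the linear span of $f_T(N_T)$. Every osculating space of a K\"ahler submanifold of $\mathbb C^m$ is $J$-invariant---a consequence of $\sigma(JX,Y)=J\sigma(X,Y)$ iterated through covariant derivatives together with $\tilde\nabla J=0$---so the connected K\"ahler submanifold $f_T(N_T)$ is contained in the complex linear subspace $W_1$. Its complex orthogonal complement $W_2$ in $\mathbb C^m$ contains $E_2$ (because $E_2\perp E_1\supset W_1$), hence $f_\perp(N_\perp)\subset W_2$ and is automatically totally real in this smaller complex subspace. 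This realizes $N$ as the direct sum of a K\"ahler submanifold of $W_1$ and a totally real submanifold of the orthogonal complex subspace $W_2$.

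The main obstacle is the final step of (i): tightening Moore's real product decomposition into one compatible with $J$. This rests on the classical fact that a connected K\"ahler submanifold of $\mathbb C^m$ is always contained in a complex affine subspace, via the $J$-invariance of all its osculating bundles. Every other step is a short algebraic manipulation with the Gauss equation.
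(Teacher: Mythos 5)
Your proof is correct and is essentially the classical argument for this theorem (which the survey states without proof, citing \cite{c2}): the mixed-plane identity $\|\sigma(X,Z)\|^2+\|\sigma(JX,Z)\|^2=2c$ obtained from the Gauss equation, the product structure, and $\sigma(JX,JX)=-\sigma(X,X)$ on the K\"ahler leaf immediately gives the non-existence statement for $c<0$ and mixed total geodesy for $c=0$, after which Moore's decomposition theorem together with the fact that a connected K\"ahler submanifold of $\mathbb C^m$ spans a $J$-invariant linear subspace yields the direct-sum decomposition into orthogonal complex linear subspaces. The only cosmetic remark is that the $J$-invariance of the span of $f_T(N_T)$ follows already from the $J$-invariance of its tangent spaces (the span of the tangent spaces contains the submanifold once a point is translated to the origin), so the appeal to higher osculating spaces is not needed.
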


 $CR$-products $N_T\times N_\perp$ in $CP^{h+p+hp}$ are always obtained from the Segre embedding $S_{h,p}$; namely, we have the following results from \cite{c2}.

\begin{theorem}\label{T:7.2}  Let $N_T^{h}\times N_\perp^p$ be the $CR$-product in $CP^m$ with constant holomorphic sectional curvature $4$. Then 
\begin{equation}\label{E:7.a} m\geq h+p+ hp. \end{equation}
 
The equality sign of \eqref{E:7.a} holds if and only if 

\begin{enumerate}
\item[(a)] $N_T^h$ is a totally geodesic K\"ahler submanifold, 

\item[(b)] $N_\perp^p$ is a totally real submanifold, and 

\item[(c)] the immersion is given by
\begin{equation}\notag N_T^h\times N_\perp^p \xrightarrow{\text{{\rm}}} CP^h\times CP^p\xrightarrow [\text{{\rm Segre imbedding}}]{\text{$S_{hp}$}} CP^{h+p+hp}.\end{equation} 
\end{enumerate}\end{theorem}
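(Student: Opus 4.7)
My plan is to bound $m$ from below by a dimension count on the normal bundle of the immersion, using the Gauss equation in $CP^m(4)$ combined with the K\"ahler identity to show that the mixed second fundamental form $\sigma(\mathcal{D},\mathcal{D}^\perp)$ spans a sufficiently large subspace of the $J$-invariant complement of $J\mathcal{D}^\perp$ inside the normal bundle.

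First I would record the algebraic structure of the second fundamental form $\sigma$ on the $CR$-product $N=N_T^h\times N_\perp^p$. Writing $\mathcal{D}=TN_T$ (real rank $2h$) and $\mathcal{D}^\perp=TN_\perp$ (real rank $p$), the Riemannian product structure gives $\nabla_X Z=0$ for $X\in\mathcal{D}$, $Z\in\mathcal{D}^\perp$. K\"ahlerness together with the Weingarten formula applied to $\tilde\nabla_X(JZ)=J\tilde\nabla_X Z=J\sigma(X,Z)$ yields
\[
J\sigma(X,Z)=-A_{JZ}X+D_X JZ.
\]
Pairing with $Y\in\mathcal{D}$ gives $\langle\sigma(X,Y),JZ\rangle=\langle JY,\nabla_X Z\rangle=0$, so $A_{JZ}$ sends $\mathcal{D}$ into $\mathcal{D}^\perp$ and $\sigma(\mathcal{D},\mathcal{D})\perp J\mathcal{D}^\perp$. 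Splitting $T^\perp N=J\mathcal{D}^\perp\oplus\nu$ with $\nu$ the $J$-invariant orthogonal complement of real rank $2m-2h-2p$, the mixed part decomposes as $\sigma(X,Z)=JA_{JZ}X+h_2(X,Z)$ with $h_2(X,Z)\in\nu$, while $\sigma(\mathcal{D},\mathcal{D})\subset\nu$.

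The crux step is the application of the Gauss equation in $CP^m(4)$ to four-tuples $(X,Z,Y,W)$ with $X,Y\in\mathcal{D}$ and $Z,W\in\mathcal{D}^\perp$. The intrinsic mixed curvature vanishes on the Riemannian product $N_T\times N_\perp$, and a direct computation of the ambient curvature of $CP^m(4)$ on such four-tuples yields $-\langle X,Y\rangle\langle Z,W\rangle$, so
\[
\langle\sigma(X,Y),\sigma(Z,W)\rangle-\langle\sigma(X,W),\sigma(Y,Z)\rangle=-\langle X,Y\rangle\langle Z,W\rangle.
\]
Feeding in the decomposition $\sigma(X,Z)=JA_{JZ}X+h_2(X,Z)$, using $\sigma(\mathcal{D},\mathcal{D})\subset\nu$ and the symmetry $\langle A_{JZ}X,W\rangle=\langle A_{JW}X,Z\rangle$, the goal is to distill this identity down to the orthonormality
\[
\langle h_2(e_\alpha,\xi_i),h_2(e_\beta,\xi_j)\rangle=\delta_{\alpha\beta}\,\delta_{ij}
\]
for any orthonormal bases $\{e_\alpha\}_{\alpha=1}^{2h}$ of $\mathcal{D}$ and $\{\xi_i\}_{i=1}^{p}$ of $\mathcal{D}^\perp$. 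This is the technical heart and the main anticipated obstacle: the $J\mathcal{D}^\perp$-contamination and the coupling between $\sigma(\mathcal{D},\mathcal{D})$ and $\sigma(\mathcal{D}^\perp,\mathcal{D}^\perp)$ on the right-hand side must be peeled off cleanly, with the positive holomorphic sectional curvature $4$ furnishing the sharp normalization constant.

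Once the orthonormality is in hand, the $2hp$ pairwise orthonormal vectors $h_2(e_\alpha,\xi_i)$ sit inside $\nu$, so
\[
2hp\le\dim_{\mathbb{R}}\nu=2m-2h-2p,
\]
equivalently $m\ge h+p+hp$, proving \eqref{E:7.a}. For equality, $m=h+p+hp$ forces $\{h_2(e_\alpha,\xi_i)\}$ to span $\nu$, and the bookkeeping from the crux step forces both $\sigma(\mathcal{D},\mathcal{D})=0$ and $A_{JZ}|_{\mathcal{D}}=0$ (equivalently $h_1\equiv 0$). The first says $N_T^h$ is a totally geodesic K\"ahler submanifold of $CP^m$, hence contained in a totally geodesic $CP^h\subset CP^m$; the second says $N_\perp^p$ sits as a totally real submanifold in the complementary directions, inside a totally geodesic $CP^p$. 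Finally, the induced holomorphic product map $CP^h\times CP^p\to CP^{h+p+hp}$ meets the hypotheses of Theorem \ref{T:s1} and is therefore congruent to the Segre embedding $S_{h,p}$, yielding the factorization described in (c).
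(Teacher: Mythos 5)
Your skeleton — split $T^\perp N=J\mathcal D^\perp\oplus\nu$, extract $2hp$ orthonormal vectors of the mixed second fundamental form inside $\nu$, count dimensions, and settle equality through the Segre picture — is exactly the architecture of Chen's original argument in \cite{c2} (the survey only quotes the theorem, so that is the proof to compare with). But the step you yourself call the crux is not carried out, and the tool you propose for it does not suffice as stated: the single Gauss identity $\langle\sigma(X,Y),\sigma(Z,W)\rangle-\langle\sigma(X,W),\sigma(Y,Z)\rangle=-\langle X,Y\rangle\langle Z,W\rangle$ still contains the coupling term $\langle\sigma(X,Y),\sigma(Z,W)\rangle$, and no choice of bases removes it; the claimed orthonormality of the $h_2(e_\alpha,\xi_i)$ simply does not follow from what you have written. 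The missing ingredient is the $J$-equivariance of $\sigma$ on a $CR$-product: since both distributions are parallel, $\sigma(\mathcal D,\mathcal D)\subset\nu$ (which you proved) and $\nu$ is $J$-invariant, comparing components of $\tilde\nabla_U(JV)=J\tilde\nabla_U V$ gives $\sigma(X,JY)=J\sigma(X,Y)$, hence $\sigma(JX,JY)=-\sigma(X,Y)$, for $X,Y\in\mathcal D$, and differentiating $JX$ along $Z\in\mathcal D^\perp$ gives both $\sigma(X,Z)\perp J\mathcal D^\perp$ (i.e. $A_{JZ}X=0$, Chen's characterization of $CR$-products — so the ``$J\mathcal D^\perp$-contamination'' $JA_{JZ}X$ you plan to carry along is identically zero; your first step only yields the weaker $A_{JZ}\mathcal D\subset\mathcal D^\perp$) and $\sigma(JX,Z)=J\sigma(X,Z)$. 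With these relations, writing your Gauss identity once for $(X,Y)$ and once for $(JX,JY)$ and adding kills the coupling term and produces $\langle\sigma(X,W),\sigma(Y,Z)\rangle=\langle X,Y\rangle\langle Z,W\rangle$, which is the orthonormality you need; without this $J$-trick (or, alternatively, the Codazzi equation, whose ambient term $2\langle X,JY\rangle JZ$ plays the same role) the plan stalls exactly at its technical heart.

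The equality case is also asserted rather than proved. That $\sigma(\mathcal D,\mathcal D)=0$ when $m=h+p+hp$ is not mere bookkeeping: a priori $\sigma(\mathcal D,\mathcal D)$ lies in $\nu$, which in the equality case is spanned by the mixed vectors, so you must additionally show $\langle\sigma(\mathcal D,\mathcal D),\sigma(\mathcal D,\mathcal D^\perp)\rangle=0$ (and $\langle\sigma(\mathcal D,\mathcal D),\sigma(\mathcal D^\perp,\mathcal D^\perp)\rangle=0$), identities that again come from the mixed Gauss equations combined with the $J$-relations above. Finally, Theorem \ref{T:s1} cannot be invoked directly: the $CR$-product is a real $(2h+p)$-dimensional submanifold, not a holomorphic one, so before the converse-to-Segre theorem applies you must first show that $N_T^h$ sits as an open piece of a totally geodesic $CP^h$, locate $N_\perp^p$ inside a suitable totally geodesic $CP^p$, and verify that the immersion is contained in the image of a holomorphic immersion of $CP^h\times CP^p$ into $CP^{h+p+hp}$; only then does the factorization in (c) follow.
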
  

\begin{theorem} \label{T:7.3}  Let $N_T^{h}\times N_\perp^p$ be the $CR$-product in $CP^m$. Then the squared norm of the second fundamental form  satisfies
\begin{equation}\label{E:7.b} ||\sigma||^2\geq 4 hp.\end{equation}
 
The equality sign of \eqref{E:7.b} holds if and only if 

\begin{enumerate}
\item[(a)] $N_T^h$ is a totally geodesic K\"ahler submanifold, 

\item[(b)] $N_\perp^p$ is a totally geodesic totally real submanifold, and 

\item[(c)] the immersion is given by
\begin{equation}\notag N_T^h\times N_\perp^p \xrightarrow{\text{{\rm totally geodesic}}} CP^h\times CP^p\xrightarrow [\text{{\rm Segre imbedding}}]{\text{$S_{hp}$}} CP^{h+p+hp}\subset CP^m.
\end{equation} 
\end{enumerate}\end{theorem}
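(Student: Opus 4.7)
The plan is to extract the cross term of the second fundamental form via the Gauss equation applied to mixed $2$-planes $X\wedge Z$, $X\in \mathcal D$, $Z\in \mathcal D^\perp$, and to exploit the fact that the K\"ahler factor $N_T^h$ sits as a minimal submanifold of $CP^m$. First I fix local orthonormal frames $\{e_1,Je_1,\ldots,e_h,Je_h\}$ of $\mathcal D$ (adapted to $J$) and $\{e_{2h+1},\ldots,e_{2h+p}\}$ of $\mathcal D^\perp$. Because $N_T^h\times N_\perp^p$ is a Riemannian product, each slice is totally geodesic in the product, so the second fundamental form of $N_T^h$ in $CP^m$ coincides with $\sigma|_{\mathcal D\times\mathcal D}$. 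Since $\mathcal D$ is $J$-invariant, $N_T^h$ is a K\"ahler submanifold of $CP^m$ and the standard identity $\sigma(JX,Y)=J\sigma(X,Y)$ forces $\sigma(Je_i,Je_i)=-\sigma(e_i,e_i)$; hence $H_\mathcal D:=\sum_{i=1}^{2h}\sigma(e_i,e_i)=0$.

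Next, for unit $X\in\mathcal D$ and unit $Z\in\mathcal D^\perp$ I apply Gauss. Since $JZ$ is normal to $N$ while $JX\in\mathcal D$ is tangent, $g(X,JZ)=-g(JX,Z)=0$, so the holomorphic term in the curvature of $CP^m(4)$ drops out and the ambient sectional curvature is $\tilde K(X\wedge Z)=1$. The intrinsic sectional curvature of a Riemannian product vanishes on such mixed planes, so Gauss gives
\begin{equation}\label{P:gauss}
0 \;=\; 1 + g(\sigma(X,X),\sigma(Z,Z)) - \|\sigma(X,Z)\|^2.
\end{equation}
Summing \eqref{P:gauss} over the two frames and using $H_\mathcal D=0$ yields
\begin{equation*}
\sum_{i=1}^{2h}\sum_{j=1}^{p}\|\sigma(e_i,e_{2h+j})\|^2 \;=\; 2hp + g(H_\mathcal D, H_{\mathcal D^\perp}) \;=\; 2hp.
\end{equation*}
Decomposing $\|\sigma\|^2$ into its $\mathcal D\times\mathcal D$, mixed, and $\mathcal D^\perp\times\mathcal D^\perp$ blocks, the mixed block enters with weight $2$ by symmetry of $\sigma$, so
$\|\sigma\|^2 \ge 2(2hp) = 4hp$, proving \eqref{E:7.b}.

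For the equality case, equality requires the pure blocks to vanish: $\sigma|_{\mathcal D\times\mathcal D}\equiv 0$ and $\sigma|_{\mathcal D^\perp\times\mathcal D^\perp}\equiv 0$, i.e.\ $N_T^h$ and $N_\perp^p$ are each totally geodesic in $CP^m$; the former is then an open piece of $CP^h$, and the latter a totally geodesic totally real $p$-manifold. The main obstacle is upgrading this local structure to the Segre factorization in statement (c). I would do this by invoking Theorem~\ref{T:7.2}: with $\sigma(X,X)=\sigma(Z,Z)=0$, equation \eqref{P:gauss} forces $\|\sigma(X,Z)\|=1$ for every pair of unit cross vectors, so a dimension count on the span of $\sigma(\mathcal D,\mathcal D^\perp)\subset T^\perp N$ in the normal bundle shows the image lies in a totally geodesic $CP^{h+p+hp}\subset CP^m$ of minimal codimension for $CR$-products. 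Theorem~\ref{T:7.2} then identifies the $CR$-immersion, on that minimal ambient, with the Segre embedding $S_{h,p}$, yielding the asserted factorization.
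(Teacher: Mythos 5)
Your proof of the inequality \eqref{E:7.b} is correct: the leaves of $\mathcal D$ are totally geodesic in the Riemannian product, hence are K\"ahler submanifolds of $CP^m$, so the trace of $\sigma$ over $\mathcal D$ vanishes; the Gauss equation on mixed planes (where the ambient sectional curvature is $1$ because $JZ$ is normal) then gives $\sum_{i,j}\|\sigma(e_i,e_{2h+j})\|^2=2hp$ exactly, and counting the mixed block twice yields $\|\sigma\|^2\ge 4hp$, with equality forcing $\sigma(\mathcal D,\mathcal D)=\sigma(\mathcal D^\perp,\mathcal D^\perp)=0$, i.e.\ statements (a) and (b). This is essentially an averaged form of the classical pointwise identity $\|\sigma(X,Z)\|^2=1$ for $CR$-products in $CP^m$, and up to this point the argument is sound.

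The genuine gap is in statement (c). From $\|\sigma(X,Z)\|=1$ and a bound on $\dim\operatorname{span}\sigma(\mathcal D,\mathcal D^\perp)$ you cannot conclude by a ``dimension count'' that the image lies in a totally geodesic $CP^{h+p+hp}\subset CP^m$: a submanifold whose first normal spaces have a given pointwise dimension $k$ need not lie in any totally geodesic submanifold of dimension $\dim N+k$ unless the first normal bundle is parallel with respect to the normal connection (and, to obtain a \emph{complex} totally geodesic $CP^{h+p+hp}$, also $J$-invariant). Establishing exactly this is the substantive content of the equality case: one must use the $CR$-product identities (e.g.\ $A_{JZ}X=0$ for $X\in\mathcal D$, $Z\in\mathcal D^\perp$, cf.\ Proposition \ref{P:9.1} with constant warping function, which give $\sigma(\mathcal D,TN)\perp J\mathcal D^\perp$ and $\sigma(JX,Z)=J\sigma(X,Z)$) together with the Codazzi equation to show that $J\mathcal D^\perp\oplus\operatorname{span}\sigma(\mathcal D,\mathcal D^\perp)$ is a parallel, $J$-invariant normal subbundle of rank $p+2hp$; only after this reduction of codimension can Theorem \ref{T:7.2} (with $m=h+p+hp$), or the converse-to-Segre Theorems \ref{T:s1} and \ref{T:s3}, be invoked to identify the immersion with the Segre embedding. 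None of this appears in your sketch, so the passage from (a), (b) to (c) is unproved. You also leave the converse direction unverified, namely that the composition in (c) actually attains $\|\sigma\|^2=4hp$; this is an easy computation (it is recorded in the example of Section 10), but it is part of the ``if and only if'' and should be stated.
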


\vskip.1in

\section{\uppercase{Warped product Lagrangian submanifolds of K\"ahler manifolds}}

A totally real submanifold $N$ in a K\"ahler manifold $\tilde M$ is called a {\it Lagrangian submanifold} if $\dim_{\mathbb R}N=\dim_{\mathbb C}\tilde M$. For the most recent survey on differential geometry of Lagrangian submanifolds, see \cite{c6,c8.1}.

  For Lagrangian immersions into  complex Euclidean $n$-space $\mathbb C^n$, a well-known result of M. Gromov \cite{gromov} states that a compact $n$-manifold $M$ admits a Lagrangian immersion (not necessary isometric) into $\mathbb C^n$ if and only if the complexification $TM\otimes {\mathbb C}$ of the tangent bundle of $M$ is trivial. In particular, Gromov's result implies that there exists no topological obstruction to Lagrangian immersions for compact 3-manifolds in ${\mathbb C}^3$, because the tangent bundle of a 3-manifold is always trivial.  

Not every warped product $N_1\times_f N_2$ can be isometrically immersed in a complex space form as a Lagrangian submanifold. Therefore, from Riemannian point of view, it is natural to ask the following basic question.

\vskip.08in 
\noindent {\bf Problem 8.1.} {\it When a warped product $n$-manifold admits a
Lagrangian isometric immersion into $\mathbb C^n$?}
\vskip.08in 
  
 For warped products of  curves and the unit $(n-1)$-sphere $S^{n-1}$, we have the following existence theorem from \cite{c4.1,c6.1}.

\begin{theorem}\label{T:8.1} Every simply-connected open portion of a warped product manifold $I\times_f S^{n-1}$ of an open interval $I$ and a unit $(n-1)$-sphere admits an isometric Lagrangian immersion into $\mathbb C^n$. \end{theorem}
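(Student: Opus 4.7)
My plan is to construct the immersion as a complex extensor
\[
\phi = F\otimes \iota : I\times S^{n-1}\to \mathbb C\otimes \mathbb E^n=\mathbb C^n,
\]
where $\iota:S^{n-1}\hookrightarrow \mathbb E^n$ is the standard inclusion of the unit sphere centered at the origin and $F:I\to\mathbb C^*$ is a unit-speed planar curve to be chosen. By Proposition \ref{P:2.1}, any such $\phi$ is automatically a Lagrangian (in fact Lagrangian $H$-umbilical) immersion, so the entire problem collapses to engineering $F$ so that the induced metric $\phi^* g_{\mathbb C^n}$ matches the warped product metric $ds^2+f(s)^2\,g_{S^{n-1}}$.

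The first step would be a direct computation of $\phi^* g_{\mathbb C^n}$. Writing $\phi(s,q)=F(s)\otimes\iota(q)$ and using that $|\iota(q)|=1$ together with $\iota(q)\perp \iota_*(T_q S^{n-1})$ in $\mathbb E^n$, the cross terms vanish and one obtains
\[
\phi^* g_{\mathbb C^n}=|F'(s)|^2\,ds^2+|F(s)|^2\,g_{S^{n-1}}.
\]
Matching this against $ds^2+f(s)^2\,g_{S^{n-1}}$ reduces the task to producing a curve $F$ satisfying the two pointwise scalar conditions $|F(s)|=f(s)$ and $|F'(s)|=1$.

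The second step is to solve these constraints explicitly. Writing $F$ in polar form $F(s)=f(s)e^{i\theta(s)}$ automatically arranges $|F|=f$, and the unit-speed condition becomes $f'(s)^2+f(s)^2\theta'(s)^2=1$, that is,
\[
\theta'(s)=\pm\frac{\sqrt{1-f'(s)^2}}{f(s)}.
\]
On any subinterval of $I$ along which $|f'|\le 1$, this separable first-order ODE integrates unambiguously to a smooth $\theta(s)$, producing a valid unit-speed $F$. The resulting complex extensor $\phi=F\otimes\iota$ is then, by Proposition \ref{P:2.1}, the required isometric Lagrangian immersion.

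The main obstacle I foresee is the pointwise inequality $|f'|\le 1$ needed for the square root to be real; a Cauchy--Schwarz argument applied to $(|F|^2)'=2\,\mathrm{Re}(F'\overline F)$ shows this is not an artifact of the polar ansatz but a genuine necessary condition for any $F$ of the extensor form. The ``simply-connected open portion'' hypothesis in the theorem is precisely what absorbs this difficulty: one restricts $s$ to a subinterval of $I$ on which the ODE is solvable, and the simple connectivity of the chosen open portion simultaneously removes any monodromy ambiguity in integrating $\theta$ — a genuine concern in the case $n=2$, where $S^{n-1}=S^1$ would otherwise impose a winding-number constraint on $F$ going around the origin of $\mathbb C^*$.
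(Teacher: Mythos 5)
Your extensor computation and the polar-form ODE are both correct, and this is indeed the construction behind the quoted theorem --- but only on the set where $f'^2\le 1$, and that is exactly where your argument stops being a proof of the statement. The theorem asserts the conclusion for \emph{every} simply-connected open portion of $I\times_f S^{n-1}$ with an \emph{arbitrary} positive warping function $f$; it does not allow you to discard the region where $|f'|>1$. If, say, $f(s)=2s$ or $f(s)=\sinh s$, then $|f'|>1$ on the whole interval, there is no subinterval on which your equation for $\theta$ is solvable, and yet the theorem still claims an isometric Lagrangian immersion there; the case $f=\sinh s$ is precisely what is needed to deduce Corollary \ref{C:8.2} for negative constant curvature, since the only warped-product representation of $H^n$ over a unit sphere is $ds^2+\sinh^2\! s\,g_0$. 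The simple-connectedness hypothesis plays a different role from the one you assign to it: it removes the monodromy/winding obstruction coming from the circle factor when $n=2$ (and any period ambiguity in integrating $\theta$), not the pointwise obstruction $|f'|\le 1$.

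Moreover, your own Cauchy--Schwarz remark shows the gap cannot be closed inside your ansatz: for any unit-speed $F$ one has $\bigl|\frac{d}{ds}|F|\bigr|\le 1$, so a complex extensor of the origin-centered unit hypersphere always induces a warped product whose warping function (in arclength) has derivative of absolute value at most $1$; and since for a generic $f$ any isometry between two such warped products over $S^{n-1}$ must carry fibers to fibers, no choice of $F$ and no reparametrization of $I$ can realize $ds^2+f^2g_0$ once $|f'|>1$ on an open set. Consequently the range $f'^2>1$ (and the transition locus $f'^2=1$, where $\sqrt{1-f'^2}$ need not even be smooth) requires a genuinely different construction, which is how the source \cite{c6.1} completes the proof: the argument there is split into cases according to the sign of $f'^2-1$, the extensor formula you wrote covering only the first case. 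As it stands, your proposal proves the theorem only under the additional hypothesis $f'^2<1$, which excludes the applications the theorem is designed for.
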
 

The Lagrangian immersions given in Theorem \ref{T:8.1} are expressed in terms of complex extensors in the sense of \cite{c4.1}. In particular, Theorem \ref{T:8.1} implies the following.

\begin{corollary} Every warped product surface does admit a  Lagrangian isometric immersion into ${\mathbb C}^{2}$.
\end{corollary}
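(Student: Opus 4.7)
The plan is to reduce the statement directly to Theorem \ref{T:8.1} specialized to $n=2$. The key observation is that every warped product surface is, at least locally, a warped product of an interval with a circle, and therefore falls within the hypothesis of that theorem.

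First I would note that a warped product surface is by definition of the form $N_1\times_f N_2$ with $\dim N_1=\dim N_2=1$ and $f$ a positive smooth function on $N_1$. Every connected Riemannian $1$-manifold is locally flat and hence locally isometric, via its arc-length parameter, to an open interval of $\mathbb{R}$; equivalently, any such $1$-manifold is locally isometric to an open arc of the unit circle $S^1=S^{n-1}$ with $n=2$. Choosing an arc-length parameter $s$ on $N_1$ and an arc-length (or angular) parameter $t$ on $N_2$, the warped product metric takes the canonical form $ds^2+f(s)^2\,dt^2$, which is precisely the metric on $I\times_f S^1$ expressed in standard coordinates.

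Next I would invoke Theorem \ref{T:8.1} with $n=2$: every simply-connected open portion of $I\times_f S^1$ admits an isometric Lagrangian immersion into $\mathbb{C}^2$, realized explicitly as a complex extensor $F\otimes\iota$, where $\iota:S^1\hookrightarrow\mathbb{E}^2$ is the standard inclusion and $F$ is a unit-speed curve in $\mathbb{C}^*$ chosen so that $|F|=f$ (up to reparametrization). Pulling this immersion back through the local isometry from the previous paragraph yields the desired Lagrangian isometric immersion of the given warped product surface into $\mathbb{C}^2$.

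The only real subtlety is the identification of the second factor $N_2$ with an open subarc of $S^1$, which is automatic since all Riemannian $1$-manifolds are locally isometric; hence the corollary is essentially an observation rather than a theorem requiring new work. For a surface that is not simply connected, one either interprets the conclusion locally or constructs the immersion on the universal cover and restricts to simply-connected open pieces, matching the hypothesis of Theorem \ref{T:8.1} exactly.
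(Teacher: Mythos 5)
Your argument is correct and coincides with the paper's own derivation: the paper obtains this corollary precisely by specializing Theorem \ref{T:8.1} to $n=2$, using the fact that a warped product surface is (locally) of the form $I\times_f S^1$ since both factors are one-dimensional. Your remarks on the local isometry of $1$-manifolds with arcs of $S^1$ and on passing to simply-connected pieces just make explicit what the paper leaves implicit.
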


 Consequently, we have a complete solution to Problem 8.1 for $n=2$.

Since all rotation hypersurfaces and real space forms can be  expressed, at least locally, as  warped products of some curves and a unit sphere, therefore Theorem \ref{T:8.1} implies  the following
\begin{corollary}\label{C:8.1} Every rotation  hypersurface of $\,\mathbb E^{n+1}$ can be isometrically immersed as Lagrangian submanifold in $\mathbb C^n$.\end{corollary}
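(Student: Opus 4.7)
The plan is to reduce the corollary to Theorem \ref{T:8.1} by exhibiting every rotation hypersurface of $\mathbb E^{n+1}$ as a warped product of the form $I\times_f S^{n-1}$. First I would use the standard parametrization of a rotation hypersurface: choose a profile curve $\gamma(s)=(r(s),z(s))$ in a meridian half-plane, parametrized by arc length so that $\dot r(s)^{2}+\dot z(s)^{2}=1$ and $r(s)>0$ on an open interval $I$, and describe the hypersurface as
\[
\Phi(s,\omega)=(r(s)\,\omega,\,z(s)),\qquad s\in I,\ \omega\in S^{n-1}\subset \mathbb E^{n}.
\]

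Second, a direct computation of the pull-back of the Euclidean metric gives
\[
\Phi^{\ast}g_{\mathbb E^{n+1}}=\bigl(\dot r(s)^{2}+\dot z(s)^{2}\bigr)\,ds^{2}+r(s)^{2}\,g_{S^{n-1}}=ds^{2}+r(s)^{2}\,g_{S^{n-1}},
\]
so intrinsically the rotation hypersurface is isometric to the warped product $I\times_{r}S^{n-1}$ with warping function $f=r$. This identifies the object of interest with the class of spaces covered by Theorem \ref{T:8.1}.

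Third, I would invoke Theorem \ref{T:8.1} directly: every simply-connected open portion of $I\times_{f}S^{n-1}$ admits an isometric Lagrangian immersion into $\mathbb C^{n}$. Since $I$ is simply connected and $S^{n-1}$ is simply connected for $n\geq 3$, the warped product is already simply connected in that range and the theorem applies globally; for $n=2$ one passes to the universal cover $I\times \mathbb R$ of $I\times S^{1}$ to apply Theorem \ref{T:8.1}, obtaining a Lagrangian isometric immersion of the rotation surface (possibly after passing to a cover) into $\mathbb C^{2}$.

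There is essentially no hard step here: the content of the corollary lies entirely in Theorem \ref{T:8.1}, and the only work is the bookkeeping that verifies the warped-product form of the induced metric on a surface of revolution. The mildest subtlety is the simple-connectivity hypothesis of Theorem \ref{T:8.1}, which is handled by passing to the universal cover when needed; this is automatic for $n\geq 3$.
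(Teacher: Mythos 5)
Your proposal is correct and follows essentially the same route as the paper: the corollary is stated there as an immediate consequence of Theorem \ref{T:8.1}, justified by the remark that rotation hypersurfaces can be expressed, at least locally, as warped products of a curve and a unit sphere, which is exactly the reduction you carry out (your explicit computation of the induced metric $ds^{2}+r(s)^{2}g_{S^{n-1}}$ is just the bookkeeping the paper leaves implicit). Your remark about simple connectivity for $n=2$ is a reasonable way to handle the hypothesis of Theorem \ref{T:8.1}; the paper glosses over this point in the same spirit via its surface corollary.
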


\begin{corollary}\label{C:8.2}  Every Riemannian $n$-manifold of constant sectional curvature $c$ can be locally isometrically immersed  in $\Bbb C^n$ as a Lagrangian submanifold. \end{corollary}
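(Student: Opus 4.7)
The plan is to deduce Corollary 8.2 directly from Theorem 8.1 by exhibiting, around any point of a constant curvature space form, a local warped product decomposition of the form $I\times_f S^{n-1}$. Concretely, I would use geodesic polar coordinates centered at an arbitrary point: these identify a punctured normal neighborhood with a warped product $(0,\epsilon)\times_f S^{n-1}(1)$ whose warping function $f$ depends on the sign of $c$. Explicitly, for $c=0$ one takes $f(s)=s$ on $I=(0,\infty)$; for $c>0$ one takes $f(s)=c^{-1/2}\sin(c^{1/2}s)$ on $I=(0,\pi/\sqrt{c})$; and for $c<0$ one takes $f(s)=|c|^{-1/2}\sinh(|c|^{1/2}s)$ on $I=(0,\infty)$. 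A routine curvature computation for the warped product metric $ds^2+f(s)^2 g_{S^{n-1}(1)}$ (equivalently, the standard ODE $f''+cf=0$ with $f(0)=0$, $f'(0)=1$) confirms that each such warped product is a genuine open portion of the corresponding real space form.

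Next, I would restrict to a simply connected open subset of this warped product, which one can obtain by shrinking $I$ if necessary, and then invoke Theorem 8.1: every simply connected open portion of $I\times_f S^{n-1}$ admits a Lagrangian isometric immersion into $\mathbb{C}^n$. Composing this immersion with the isometric identification coming from the geodesic polar decomposition yields a local Lagrangian isometric immersion into $\mathbb{C}^n$ of a neighborhood of the chosen point in the space form. Since the point was arbitrary, every Riemannian $n$-manifold of constant sectional curvature $c$ can be locally isometrically immersed as a Lagrangian submanifold of $\mathbb{C}^n$, as required.

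There is essentially no obstacle of substance here; the entire argument is a packaging of Theorem 8.1 together with the classical fact that the three model spaces are warped products over a unit sphere in geodesic polar coordinates. The only minor bookkeeping point is to ensure that the interval $I$ is chosen so that $f>0$ throughout (so that the warped product metric is genuinely Riemannian) and so that the corresponding open portion is simply connected, both of which are automatic after sufficient shrinking.
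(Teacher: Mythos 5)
Your argument is correct and is essentially the paper's own: the paper simply notes that real space forms can be expressed, at least locally, as warped products of a curve and a unit sphere and then invokes Theorem \ref{T:8.1}, which is exactly your geodesic-polar decomposition $I\times_f S^{n-1}$ with $f''+cf=0$ made explicit. The only cosmetic point is that the punctured normal neighborhood excludes its own center, so to cover a given point one should take the polar coordinates centered at a nearby point; this does not affect the conclusion.
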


\begin{remark} {\rm Not every Riemannian $n$-manifold of constant sectional curvature can be {\bf globally} isometrically immersed in $\Bbb C^n$ as a Lagrangian submanifold. For instance, it is known from \cite{c5}  that every compact Riemannian $n$-manifold with positive sectional curvature (or with positive Ricci curvature) does not admit any Lagrangian isometric immersion into $\mathbb C^n$.}
\end{remark} 

 \begin{remark} {\rm For further results on warped products Lagrangian submanifolds in complex space forms, see \cite{BRV}.}\end{remark}

\vskip.15in

\section{\uppercase{Warped Product $CR$-submanifolds of K\"ahler manifolds}}
 
In this section we present results concerning warped product $CR$-submanifold in an arbitrary K\"ahler manifold.
First, we mention the following result from \cite{c7}. 

\begin{theorem}\label{T:9.1} If  $\,N_\perp\times_f N_T$ is a warped product $CR$-submanifold of a K\"ahler manifold $\tilde M$ such that $N_\perp$ is a totally real and $N_T$  a K\"ahler submanifold of $\tilde M$, then it is a $CR$-product.\end{theorem}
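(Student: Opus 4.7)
The plan is to show that the warping function $f$ is forced to be constant; once this is established, $N_\perp\times_f N_T$ is isometric to a Riemannian product, which is a $CR$-product by definition. The argument will exploit the interplay between the Bishop--O'Neill connection formulas for a warped product and the K\"ahler condition $\tilde\nabla J=0$ of the ambient manifold.

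I would first record the relevant pieces of the Levi-Civita connection of $N_\perp\times_f N_T$: for $X\in TN_\perp$ and $Z,W\in TN_T$ one has $\nabla_X Z=\nabla_Z X=(X\ln f)Z$ and $\nabla_Z W=\nabla^{N_T}_Z W-\langle Z,W\rangle\,\nabla(\ln f)$, where $\nabla(\ln f)\in TN_\perp$ is the gradient of $\ln f$ on the base. I would also record two consequences of the $CR$-structure: $JX\in T^\perp N$ for every $X\in TN_\perp$ (since $N_\perp$ is totally real), while $JZ\in TN_T$ for every $Z\in TN_T$ (since $N_T$ is holomorphic). In particular, $J\nabla(\ln f)$ lies entirely in the normal bundle of $N$. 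Applying $\tilde\nabla_Z JW=J\tilde\nabla_Z W$ for $Z,W\in TN_T$, expanding both sides via Gauss' formula and the warped-product expression for $\nabla_Z W$, and extracting the component in $T^\perp N$, I would obtain
\[
\sigma(Z,JW)=-\langle Z,W\rangle\,J\nabla(\ln f)+(J\sigma(Z,W))^{\perp}.
\]

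Specializing this identity to $W=Z$, and again after replacing $Z$ by $JZ$, produces two expressions for $\sigma(Z,JZ)$; combining them yields
\[
(J\sigma(Z,Z))^{\perp}+(J\sigma(JZ,JZ))^{\perp}=2|Z|^2\,J\nabla(\ln f).
\]
Pairing this with $JX$ for an arbitrary $X\in TN_\perp$, the left-hand side becomes a sum of terms of the form $\langle J\sigma(\cdot,\cdot),JX\rangle=\langle\sigma(\cdot,\cdot),X\rangle$, which vanish because $\sigma$ takes values in $T^\perp N$ while $X$ is tangent to $N$; the right-hand side equals $2|Z|^2\langle J\nabla(\ln f),JX\rangle=2|Z|^2\,X(\ln f)$. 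Therefore $X(\ln f)=0$ for every $X\in TN_\perp$ and every nonzero $Z\in TN_T$, which forces $f$ to be constant on $N_\perp$. The warped product then collapses to a Riemannian product of the totally real factor $N_\perp$ with the holomorphic factor $N_T$, i.e., a $CR$-product. The main technical obstacle is the tangential/normal bookkeeping in decomposing $\tilde\nabla_Z JW-J\tilde\nabla_Z W=0$: the crucial observation that makes everything work is that the ``cross term'' $-\langle Z,W\rangle\nabla(\ln f)$ appearing in $\nabla_Z W$ becomes, after applying $J$, a \emph{purely normal} vector, and this is precisely what lets one isolate the warping-function derivative in the normal equation.
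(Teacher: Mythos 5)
Your proof is correct and is essentially the same argument as the one in Chen's original article where this theorem is established (the survey only states the result): both proofs show $X(\ln f)=0$ for every $X$ tangent to $N_\perp$ by combining the warped-product connection formulas with the K\"ahler condition $\tilde\nabla J=0$ and the substitution $Z\mapsto JZ$, so the warping function is constant and the warped product collapses to a $CR$-product. The only difference is bookkeeping: the original computation differentiates the normal field $JX$ and reads off $\langle\sigma(Z,JZ),JX\rangle=-(X\ln f)\,\|Z\|^2$ via the Weingarten formula and $\nabla_Z X=(X\ln f)Z$, whereas you differentiate the tangent field $JW$, use $\nabla_Z W=\nabla^{N_T}_Z W-\langle Z,W\rangle\nabla(\ln f)$ and extract normal components; pairing your identity with $JX$ gives exactly the same relation.
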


Theorem \ref{T:9.1} shows that there does not exist warped product $CR$-submanifolds of the form $N_\perp\times_f N_T$  other than $CR$-products. So, we shall only consider warped product $CR$-submanifolds of the form: $N_T\times_f N_\perp$, {\it by reversing the two factors $N_T$ and $N_\perp$}. We simply call such  $CR$-submanifolds $CR$-{\it warped products} \cite{c7}.
  
$CR$-{\it warped products} are characterized in \cite{c7} as follows.

\begin{proposition}\label{P:9.1}  A proper $CR$-submanifold $M$ of a K\"ahler manifold $\tilde M$ is locally a $CR$-warped product if and only if the shape operator $A$ satisfies \begin{equation}A_{JZ}X=((JX)\mu)Z,\quad X\in \mathcal D,\quad Z\in \mathcal D^\perp,\end{equation}
for some  function $\mu$ on $M$ satisfying $W\mu=0,\,\forall W\in \mathcal D^\perp$. \end{proposition}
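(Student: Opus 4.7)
My plan is to prove both directions by viewing the shape-operator identity as the analytic translation of Hiepko's local decomposition criterion for warped products.

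For the forward direction, I suppose $M=N_T\times_f N_\perp$ is a $CR$-warped product, with $\mathcal D$ tangent to $N_T$ and $\mathcal D^\perp$ tangent to $N_\perp$. Using the warped-product formula $\nabla_Z X=(X\ln f)Z$ for horizontal $X\in\mathcal D$ and vertical $Z\in\mathcal D^\perp$, together with K\"ahler parallelism, I would expand $\tilde\nabla_Z JX$ in two ways---once via $\tilde\nabla_Z JX=J\tilde\nabla_Z X$ and once via $JX\in\mathcal D$ being tangent to $M$, so that the Gauss formula applies---and compare tangential parts relative to the $CR$-decomposition of $TM^\perp$. This pins down the $J\mathcal D^\perp$-component of $\sigma(X,Z)$ and yields $A_{JZ}X=-(JX\ln f)Z$, the claimed identity with $\mu=-\ln f$. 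The condition $W\mu=0$ for $W\in\mathcal D^\perp$ then just records that $f$ depends only on the $N_T$-factor.

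For the converse, I assume the shape-operator identity and apply Hiepko's theorem to the orthogonal splitting $TM=\mathcal D\oplus\mathcal D^\perp$. First, for $X,Y\in\mathcal D$ and $Z\in\mathcal D^\perp$, a short K\"ahler manipulation---replacing $\tilde\nabla_X Z$ by $-J\tilde\nabla_X JZ$ and invoking Weingarten on the normal vector $JZ$---gives $g(\nabla_X Y,Z)=g(JY,A_{JZ}X)$, which by the hypothesis equals $((JX)\mu)\,g(JY,Z)=0$; hence the $\mathcal D$-leaves are totally geodesic in $M$, so in particular $\mathcal D$ is integrable. Second, running the same manipulation with $Z,W\in\mathcal D^\perp$ and $X\in\mathcal D$ and applying the hypothesis with $JX$ in place of $X$ (so $A_{JW}(JX)=-(X\mu)W$) yields $g(\nabla_Z W,X)=(X\mu)\,g(Z,W)$. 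Since $W\mu=0$ on $\mathcal D^\perp$ forces $\mathrm{grad}\,\mu\in\mathcal D$, this identity simultaneously gives the integrability of $\mathcal D^\perp$ (by antisymmetrizing in $Z,W$) and the total umbilicity of its leaves with mean curvature vector $\mathrm{grad}\,\mu\in\mathcal D$.

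The final and most delicate ingredient is the parallelism of the mean curvature vector in the normal bundle of the $\mathcal D^\perp$-leaves, i.e.\ $g(\nabla_Z\,\mathrm{grad}\,\mu,X)=0$ for $X\in\mathcal D$ and $Z\in\mathcal D^\perp$. Expanding and using $Z\mu=0$ gives $Z(X\mu)=[Z,X]\mu=(\nabla_Z X-\nabla_X Z)\mu$; the term $(\nabla_X Z)\mu$ vanishes because $(\nabla_X Z)^{\mathcal D}=0$ (a direct consequence of the total geodesy of $\mathcal D$-leaves) while $\mathrm{grad}\,\mu\in\mathcal D$, and the remaining term $(\nabla_Z X)\mu=g(\mathrm{grad}\,\mu,\nabla_Z X)$ cancels exactly against the same quantity appearing in $g(\nabla_Z\,\mathrm{grad}\,\mu,X)=Z(X\mu)-g(\mathrm{grad}\,\mu,\nabla_Z X)$. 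With all three of Hiepko's conditions verified, the theorem produces a local isometry $M\cong N_T\times_f N_\perp$ with $f=e^{-\mu}$; restriction to the two factors confirms that $N_T$ is holomorphic and $N_\perp$ totally real, so $M$ is indeed a $CR$-warped product. I expect this parallelism step to be the main obstacle, since it requires simultaneously chaining the scalar hypothesis $W\mu=0$ with the geometric consequence $(\nabla_X Z)^{\mathcal D}=0$ extracted in the first step.
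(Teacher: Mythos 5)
Your proposal is correct and follows essentially the same route as the original argument in \cite{c7} that this survey cites for Proposition \ref{P:9.1}: the warped-product connection formula $\nabla_Z X=(X\ln f)Z$ combined with K\"ahler parallelism for the forward direction (giving $\mu=-\ln f$ up to the sign convention), and Hiepko's criterion for the converse, verified by showing $\mathcal D$ is autoparallel, the leaves of $\mathcal D^\perp$ are totally umbilical with mean curvature vector $\mathrm{grad}\,\mu\in\mathcal D$, and that this mean curvature vector is parallel in the leaf's normal bundle. The only cosmetic omission is that in the forward direction one should also note $\left<\sigma(\mathcal D,\mathcal D),J\mathcal D^\perp\right>=0$ (by the same one-line K\"ahler computation using total geodesy of the $N_T$-leaves) so that $A_{JZ}X$ has no $\mathcal D$-component.
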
 

A fundamental general result on $CR$-warped products in arbitrary K\"ahler manifolds is  the following  theorem from \cite{c7}.

\begin{theorem} \label{T:9.2}  Let $N_T\times_f N_\perp$ be a  $CR$-warped product submanifold in an arbitrary K\"ahler manifold $\tilde M$. Then the second fundamental form $\sigma$ satisfies
\begin{equation}\label{E:9.1}||\sigma||^2\geq 2p\,||\nabla(\ln f)||^2,\end{equation} where $\nabla (\ln f)$ is the gradient of $\,\ln f$ on $N_T$ and $p=\dim N_\perp$.

If the equality sign of \eqref{E:9.1}  holds identically, then $N_T$ is a totally geodesic K\"ahler submanifold and $N_\perp$ is a totally
umbilical totally real submanifold of $\tilde M$. Moreover, $N_T\times_f N_\perp$ is  minimal in $\tilde M$.

 When $M$ is anti-holomorphic, i.e., when $J\mathcal D^\perp_x=T^\perp_xN$, and $p>1$. The equality sign of \eqref{E:9.1}  holds identically if and only if   $N_\perp$ is a totally umbilical submanifold of $\tilde M$.

 If $M$ is anti-holomorphic and $p=1$, then the equality sign of \eqref{E:9.1} holds identically if and only if  the characteristic vector field $J\xi$ of $M$ is a principal vector field with zero as its principal curvature. (Notice that in this case, $M$ is a real hypersurface in $\tilde M$.) Also, in this case, the equality sign of \eqref{E:9.1} holds identically if and only if $M$ is a minimal hypersurface in $\tilde M$.
 \end{theorem}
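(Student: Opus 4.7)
My plan is to isolate the mixed block $\sigma(\mathcal D,\mathcal D^\perp)$ of the second fundamental form and to bound its squared norm from below using Proposition~\ref{P:9.1}. First I would fix a local orthonormal frame $\{X_1,\ldots,X_{2h}\}$ of $\mathcal D$ (so that $\{JX_1,\ldots,JX_{2h}\}$ is again such a frame by the $J$-invariance of $\mathcal D$) and a frame $\{Z_1,\ldots,Z_p\}$ of $\mathcal D^\perp$, and write $T^\perp M=J\mathcal D^\perp\oplus\nu$ for the $J$-invariant splitting of the normal bundle. Proposition~\ref{P:9.1} with $\mu=\ln f$ then gives
\begin{equation*}
\langle\sigma(X_a,Z_i),JZ_j\rangle=\langle A_{JZ_j}X_a,Z_i\rangle=((JX_a)\ln f)\,\delta_{ij},
\end{equation*}
so the $J\mathcal D^\perp$-component of $\sigma(X_a,Z_i)$ equals $((JX_a)\ln f)JZ_i$ and in particular $|\sigma(X_a,Z_i)|^2\ge((JX_a)\ln f)^2$. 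Summing over $a,i$ and using that $\{JX_a\}$ is an orthonormal basis of $\mathcal D$ with $\nabla(\ln f)\in\mathcal D$, one obtains $\sum_{a,i}|\sigma(X_a,Z_i)|^2\ge p\,\|\nabla\ln f\|^2$. Inequality~\eqref{E:9.1} then follows from $\|\sigma\|^2\ge 2\sum_{a,i}|\sigma(X_a,Z_i)|^2$, where the factor $2$ accounts for each mixed pair being counted twice in the double sum defining $\|\sigma\|^2$ and the remaining $(\mathcal D,\mathcal D)$- and $(\mathcal D^\perp,\mathcal D^\perp)$-blocks are non-negative and discarded.

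For the general equality statement, equality would force simultaneously $\sigma|_{\mathcal D\times\mathcal D}=0$, $\sigma|_{\mathcal D^\perp\times\mathcal D^\perp}=0$, and the $\nu$-component of every $\sigma(X_a,Z_i)$ to vanish. Combining these with the standard warped-product identities---namely, that the $N_T$-leaves are totally geodesic in $M$ and the $N_\perp$-fibres are totally umbilical in $M$ with mean curvature vector $-\nabla\ln f$---I would read off that $N_T$ is totally geodesic in $\tilde M$, that $N_\perp$ is totally umbilical in $\tilde M$ with the same mean curvature vector, and, by taking traces, that $M$ itself is minimal in $\tilde M$.

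Under the anti-holomorphic hypothesis $J\mathcal D^\perp=T^\perp M$ the summand $\nu$ vanishes, so Proposition~\ref{P:9.1} already gives $\langle\sigma(X,Y),JZ\rangle=\langle A_{JZ}X,Y\rangle=((JX)\ln f)\langle Z,Y\rangle=0$ for $X,Y\in\mathcal D$ and $Z\in\mathcal D^\perp$; consequently $\sigma(\mathcal D,\mathcal D)=0$ is automatic and equality collapses to the single condition $\sigma|_{\mathcal D^\perp\times\mathcal D^\perp}=0$. For $p>1$ I would show this is equivalent to $N_\perp$ being totally umbilical in $\tilde M$: the forward direction follows from the previous paragraph, while for the converse, umbilicity forces $\sigma(Z,W)=\langle Z,W\rangle J\zeta$ for some $\zeta\in\mathcal D^\perp$; computing $A_{JU}Z$ via Gauss--Weingarten and then combining with the symmetry $\langle A_{JU}Z,W\rangle=\langle A_{JU}W,Z\rangle$ evaluated on a unit vector $Z\perp\zeta$ (available precisely because $p\ge2$) forces $\zeta=0$.

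The case $p=1$, where $\mathcal D^\perp$ is a line and every $N_\perp$ is automatically umbilical, is the main technical obstacle. Here equality reduces to $\sigma(Z_1,Z_1)=0$; since $\sigma(\mathcal D,\mathcal D)=0$ has already been established and $T^\perp M$ is one-dimensional, this single condition is the same as $\operatorname{trace}\sigma=0$, i.e., $M$ is a minimal hypersurface. To recast the same condition in terms of the characteristic vector $J\xi$ being principal with vanishing principal curvature, I would expand $A_\xi Z_1$ in the $\mathcal D\oplus\mathcal D^\perp$ decomposition, invoking Proposition~\ref{P:9.1} for its $\mathcal D$-component and the identity $\langle A_\xi Z_1,Z_1\rangle=\langle\sigma(Z_1,Z_1),\xi\rangle$ for its $\mathcal D^\perp$-component, and then carefully reconcile the sign conventions between the unit normal $\xi$ and the identification $\xi=\pm JZ_1$ so that the eigenvalue statement for $J\xi$ matches the minimality reformulation.
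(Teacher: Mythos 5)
Your treatment of inequality \eqref{E:9.1} and of the first equality assertion is correct and is essentially the argument of the source \cite{c7}: everything rests on the identity $\langle\sigma(X,Z),JW\rangle=\pm\big((JX)\ln f\big)\langle Z,W\rangle$ for $X\in\mathcal D$, $Z,W\in\mathcal D^\perp$, and on discarding all of $\sigma$ except the $J\mathcal D^\perp$-part of the mixed block. One small point: Proposition \ref{P:9.1} only asserts the existence of \emph{some} function $\mu$ with $\mathcal D^\perp\mu=0$, so you must actually verify $\mu=\ln f$ (up to sign); this is the one-line computation $\langle\sigma(X,Z),JW\rangle=\langle\tilde\nabla_ZX,JW\rangle=-\langle\tilde\nabla_Z(JX),W\rangle=-\big((JX)\ln f\big)\langle Z,W\rangle$, using $\tilde\nabla J=0$ and $\nabla_ZX=(X\ln f)Z$. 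With that in place, your bookkeeping (factor $2$ for the mixed block, $\{JX_a\}$ again an orthonormal frame of $\mathcal D$, $\nabla\ln f\in\mathcal D$) and the identification of the equality conditions $\sigma(\mathcal D,\mathcal D)=0$, $\sigma(\mathcal D^\perp,\mathcal D^\perp)=0$, $P_\nu\sigma(\mathcal D,\mathcal D^\perp)=0$, combined with the warped-product facts about leaves and fibres, give exactly the stated conclusions, including minimality by taking traces.

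In the anti-holomorphic case with $p>1$, however, the identity you name, $\langle A_{JU}Z,W\rangle=\langle A_{JU}W,Z\rangle$, is merely the symmetry of a single shape operator; any umbilical $\sigma(Z,W)=\langle Z,W\rangle J\zeta$ satisfies it automatically, so it cannot force $\zeta=0$. What is needed is the exchange identity $A_{JZ}W=A_{JW}Z$ for $Z,W\in\mathcal D^\perp$ (this is what the Gauss--Weingarten computation you allude to actually produces, again via $\tilde\nabla J=0$). Feeding umbilicity into it gives $\langle Z,W\rangle\langle\zeta,U\rangle=\langle U,W\rangle\langle\zeta,Z\rangle$, and choosing $Z=W$ a unit vector orthogonal to $\zeta$ and $U=\zeta$ (possible precisely when $p\geq2$) yields $\|\zeta\|^2=0$. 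So this step is fixable, but as written the cited symmetry does no work.

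The genuine gap is the $p=1$ clause. Your reduction ``equality $\Leftrightarrow\sigma(Z_1,Z_1)=0\Leftrightarrow M$ minimal'' is fine, but the proposed recasting as ``$J\xi$ is principal with principal curvature $0$'' cannot be completed by ``reconciling sign conventions'': by the very identity underlying the inequality, the $\mathcal D$-component of $A_\xi(J\xi)$ is, up to sign, $J\nabla(\ln f)$, which is nonzero wherever $f$ is nonconstant. Hence under equality one only gets the vanishing of $\langle A_\xi J\xi,J\xi\rangle=\langle\sigma(Z_1,Z_1),\xi\rangle$ (which, since $\sigma(\mathcal D,\mathcal D)=0$ automatically, is indeed equivalent to minimality), not $A_\xi(J\xi)=0$; in general $J\xi$ is not even an eigenvector of $A_\xi$. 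A concrete check is the $h=p=1$ equality example of Theorem \ref{T:9.3}, $\mathbf{x}(z,\theta)=(z\cos\theta,\,z\sin\theta)\subset\mathbb C^2$, a minimal cone with warping function $f=|z|$: equality holds, yet the principal curvatures are $0,\pm1/|z|$ and $J\xi$ is not a principal direction. So either you interpret the clause as the vanishing of the $J\xi$-component of $A_\xi(J\xi)$ (equivalently $\sigma(J\xi,J\xi)=0$, which your minimality argument already delivers), or you must explicitly confront the fact that the literal ``principal vector field with zero principal curvature'' formulation does not follow from equality; your proposal, as it stands, leaves exactly this point unresolved.
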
  

$CR$-warped products in complex space forms satisfying the equality case of \eqref{E:9.1} have been completely classified in \cite{c7,c8}.

\begin{theorem}\label{T:9.3} A $CR$-warped product $N_T\times_{f} N_\perp$ in  $\mathbb C^m$ satisfies
\begin{equation}||\sigma||^2= 2p||\nabla (\ln f)||^2\end{equation} identically if and only if the following four statements hold:
\begin{enumerate}
\item[(i)] $N_T$ is an open portion of a complex Euclidean $h$-space $\mathbb C^h$,

\item[(ii)] $N_\perp$ is an open portion of the unit $p$-sphere $S^p$, 

\item[(iii)] there exists  $a=(a_1,\ldots,a_h)\in S^{h-1}\subset {\mathbb E}^h$ such that $f=\sqrt{\left<a,z\right>^2 +\left<ia,z\right>^2}$ for  $z=(z_1,\ldots,z_h)\in{\mathbb C}^h,\, w=(w_0,\ldots,w_p)\in S^p\subset {\mathbb E}^{p+1}$, and

\item[(iv)] up to rigid motions, the immersion is given by
\begin{align}\notag \hbox{\bf x}(z,w)=&\Bigg(  z_1+(w_0-1)a_1\sum_{j=1}^h
a_jz_j,\,\cdots z_h+a(w_0-1)a_h\sum_{j=1}^ha_jz_j,\\& \hskip.5in w_1\sum_{j=1}^h a_jz_j,\,\,\ldots,w_p\sum_{j=1}^h a_jz_j,0,\,\ldots,0\Bigg).\notag\notag\end{align}
\end{enumerate}\end{theorem}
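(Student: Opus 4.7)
The converse is verified by a direct computation with the formula (iv), so my plan focuses on the forward implication. The first step is to apply Theorem~\ref{T:9.2} with $\tilde M=\mathbb C^m$ in the equality case. This yields three pieces of structural information: $N_T$ is a totally geodesic K\"ahler submanifold of $\mathbb C^m$, $N_\perp$ is a totally umbilical totally real submanifold of $\mathbb C^m$, and $\phi$ is minimal. Since totally geodesic K\"ahler submanifolds of $\mathbb C^m$ are open subsets of complex affine subspaces, after a rigid motion of $\mathbb C^m$ we may take $N_T\subset\mathbb C^h\times\{0\}\subset\mathbb C^m$ with complex Euclidean coordinates $z=(z_1,\ldots,z_h)$, giving (i).

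Second, for each fixed $z\in N_T$ I would analyze the fiber $\phi_z\colon N_\perp\to\mathbb C^m,\;w\mapsto\phi(z,w)$, which is isometric from $(N_\perp,f(z)^2 g_2)$ onto its image. Combining the standard warped product identity $\nabla^M_V W=\nabla^{N_\perp}_V W-g(V,W)\nabla\ln f$ (for $V,W$ tangent to $N_\perp$) with the total umbilicity of $N_\perp$ in $\mathbb C^m$ established in Step~1, and with Proposition~\ref{P:9.1}, which places the mean-curvature direction of the fibers inside $J\mathcal D^\perp$, one sees that $\phi_z(N_\perp)$ is totally umbilical in the ambient $\mathbb C^m$ with non-zero parallel mean curvature vector of length $1/f(z)$. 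For $p\geq 2$ this forces $\phi_z(N_\perp)$ to be an open portion of a round $p$-sphere of radius $f(z)$ in an affine $(p+1)$-plane of $\mathbb C^m$; hence $N_\perp$ is intrinsically an open subset of the unit $p$-sphere $S^p$, establishing (ii). The case $p=1$ is handled separately using that $M$ is then a minimal real hypersurface.

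Third and crucially, I would pin down $f$. The identity $A_{JZ}X=((JX)\ln f)Z$ from Proposition~\ref{P:9.1}, together with $\sigma(\mathcal D,\mathcal D)=0$ (from Step~1) and the Codazzi equation in flat $\mathbb C^m$, should extract two rigidity conditions on $\ln f$: pluriharmonicity on $N_T$, and $|\nabla f|\equiv 1$. Pluriharmonicity gives a nowhere-zero holomorphic $H$ on $N_T$ with $f=|H|$; the constant-norm condition then forces $H$ to be affine, because $\sum_j|\partial_j H|^2=1$ together with the Cauchy--Riemann equations yields $\sum_j|\partial_k\partial_j H|^2=0$ for every $k$, so the Hessian of $H$ vanishes. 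Absorbing the additive constant of $H$ via a rigid translation of $\mathbb C^m$ and applying a unitary transformation of the $z$-coordinates (another rigid motion) to make all coefficients real, we obtain $f(z)=|\sum a_jz_j|=\sqrt{\<a,z\>^2+\<ia,z\>^2}$ with $a=(a_1,\ldots,a_h)\in S^{h-1}\subset\mathbb E^h$, which is (iii).

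Finally, I would assemble (iv). By Step~2 the fiber over each $z$ is a round $p$-sphere of radius $|L(z)|$, $L(z):=\sum a_jz_j$, in an affine $(p+1)$-plane transverse to $N_T$ with center $c(z)\in\mathbb C^m$. Total geodesicity of $N_T$ in $\mathbb C^m$ and the fact that the unit mean curvature direction is $-J\nabla\ln f$ (which depends affinely on $z$) force both $c(z)$ and an orthonormal frame of the transverse plane to depend affinely on $z$; matching at the base point $w_0=(1,0,\ldots,0)\in S^p$ then gives precisely the parametrization $\mathbf x(z,w)$ in (iv). The principal obstacle of the whole plan is Step~3: cleanly extracting the pluriharmonicity of $\ln f$ and the condition $|\nabla f|=1$ from the equality in \eqref{E:9.1} requires careful bookkeeping with the Gauss and Codazzi equations in $\mathbb C^m$, whereas once these two conditions are in hand the determination of $f$ and the subsequent reconstruction of $\phi$ are essentially formal.
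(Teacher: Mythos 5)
The paper states Theorem \ref{T:9.3} without proof, citing \cite{c7,c8}, so there is no in-paper argument to compare against; your plan is essentially the strategy of the original proof — equality in \eqref{E:9.1} forces $\sigma(\mathcal D,\mathcal D)=\sigma(\mathcal D^\perp,\mathcal D^\perp)=0$ and $\sigma(X,Z)=((JX)\ln f)\,JZ$, the Codazzi equation in flat $\mathbb C^m$ then gives precisely the anti-$J$-invariance $\operatorname{Hess}(\ln f)(JX,JY)=-\operatorname{Hess}(\ln f)(X,Y)$, i.e.\ pluriharmonicity, the Gauss equation of the totally umbilical fibers gives $\|\nabla f\|=\mathrm{const}$ (normalized to $1$ by rescaling $g_{N_\perp}$), and your holomorphic-function argument correctly reduces $f$ to $\bigl|\sum_j a_jz_j\bigr|$ before integrating the immersion. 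The only bookkeeping cautions are that your Step 2 already invokes $\|\nabla f\|=1$ (to identify the fiber radius with $f(z)$ and hence get the \emph{unit} sphere) before Step 3 establishes it — the clean order is to extract $\|\nabla f\|=\mathrm{const}$ from the fiber Gauss equation first — and that you need the full pointwise form of $\sigma$ in the equality case, in particular $\sigma(\mathcal D,\mathcal D^\perp)\subset J\mathcal D^\perp$, which comes from the proof of Theorem \ref{T:9.2} rather than from its stated equality conclusions alone.
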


A $CR$-warped product $N_T\times_f N_\perp$ is said to be {\it trivial\/} if its warping function $f$ is constant. A trivial $CR$-warped product $N_T\times_f N_\perp$ is nothing but a $CR$-product $N_T\times N_\perp^f$, where $N_\perp^f$ is the  manifold  with metric $f^2g_{N_\perp}$ which is  homothetic to the original metric $g_{N_\perp}$ on $N_\perp$.

The following result from \cite{c8} completely classifies $CR$-warped products in complex projective spaces satisfying the equality case of \eqref{E:9.1} identically.

\begin{theorem}\label{T:9.4}  A non-trivial $CR$-warped product $\,N_T\times_{f} N_\perp$   in $CP^m$ satisfies the basic equality $||\sigma||^2= 2p||\nabla (\ln f)||^2$ if and only if we have
\begin{enumerate}
\item  $N_T$ is an open portion of complex Euclidean $h$-space ${\mathbb C}^h$,

\item $N_\perp$ is an open portion of a unit $p$-sphere $S^p$, and

\item up to rigid motions, the immersion {\bf x} of $N_T\times_{f} N_\perp$ into $CP^m$ is the composition $\pi\circ\breve{\hbox{\bf x}}$, where 
\begin{align}\notag  \breve{\hbox{\bf x}}&(z,w)=\Bigg( z_0+(w_0-1)a_0\sum_{j=0}^h a_j z_j,\,\cdots ,z_h+(w_0-1)a_h\sum_{j=0}^h a_jz_j,\\&\hskip.8inw_1\sum_{j=0}^h a_jz_j,\,\ldots,\;w_p\sum_{j=0}^h
a_jz_j,0,\,\ldots,0\Bigg),\notag\end{align} 
$\pi$ is the projection $\pi:{\mathbb C}^{m+1}_*\to CP^m$, $a_0,\ldots,a_h$ are real numbers satisfying $a_0^2+a_1^2+\cdots+a_h^2=1$, $z=(z_0,z_1,\ldots,z_h)\in {\mathbb C}^{h+1}$ and $w=(w_0,\ldots,w_p)\in S^p\subset {\mathbb E}^{p+1}$.
\end{enumerate} \end{theorem}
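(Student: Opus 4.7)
The plan is to reduce the classification in $CP^m$ to the Euclidean classification of Theorem \ref{T:9.3} by passing through the Hopf fibration $\pi : \mathbb{C}^{m+1}_* \to CP^m$. Write $M=N_T\times_f N_\perp$ for the given non-trivial $CR$-warped product and suppose it satisfies $||\sigma||^2 = 2p\,||\nabla \ln f||^2$. Invoking the equality part of Theorem \ref{T:9.2}, we immediately obtain that $N_T$ is totally geodesic and K\"ahlerian in $CP^m$, that $N_\perp$ is totally umbilical and totally real, and that $M$ is minimal in $CP^m$. Since $N_\perp$ is a totally umbilical totally real submanifold of the complex space form $CP^m$, a standard application of the Codazzi equation together with the curvature expression of $CP^m$ forces $N_\perp$ to have constant sectional curvature; combined with minimality of $M$ and the warped product structure, this shows that $N_\perp$ is an open portion of a round $p$-sphere and that $N_T$ is an open portion of a totally geodesic $CP^h\subset CP^m$.

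Next, take a local horizontal lift $\breve{\mathbf{x}}: M \to S^{2m+1} \subset \mathbb{C}^{m+1}$ of $\mathbf{x}$ through $\pi$, characterized by $\mathbf{x} = \pi \circ \breve{\mathbf{x}}$ and $\langle d\breve{\mathbf{x}}(X), i\breve{\mathbf{x}}\rangle = 0$ for every tangent vector $X$. Because $\pi$ is a Riemannian submersion with totally geodesic fibres, the second fundamental form of $\breve{\mathbf{x}}$ in $\mathbb{C}^{m+1}$ is computable from that of $\mathbf{x}$ in $CP^m$ together with corrections coming from the position vector $\breve{\mathbf{x}}$ itself (since $S^{2m+1}$ sits as a hypersphere in $\mathbb{C}^{m+1}$). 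Via this dictionary, the shape operator identity $A_{JZ}X=((JX)\ln f)Z$ of Proposition \ref{P:9.1} translates into an explicit system of partial differential equations for $\breve{\mathbf{x}}(z,w)$ whose structure parallels exactly the one solved in the proof of Theorem \ref{T:9.3}, but now lives on the unit sphere of $\mathbb{C}^{h+1}\subset\mathbb{C}^{m+1}$ rather than in a complex Euclidean space. Integrating first in the $\mathcal D^\perp$-directions (along which $N_\perp$ is spherical and $f$ is constant) and then in the $\mathcal D$-directions yields, up to a rigid motion of $\mathbb{C}^{m+1}$, precisely the parametric form $\breve{\mathbf{x}}(z,w)$ displayed in the statement; the extra complex coordinate $z_0$ that distinguishes the $CP^m$-case from Theorem \ref{T:9.3} is what accommodates the Hopf fibre direction, and the unit vector $(a_0,\ldots,a_h)$ encodes the normalized gradient of $\ln f$ on $N_T$.

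The converse direction is a direct verification: plug the formula into the definitions of the induced metric, $\nabla \ln f$, and the second fundamental form, check horizontality $\langle d\breve{\mathbf{x}}(X), i\breve{\mathbf{x}}\rangle=0$ and the sphere constraint $|\breve{\mathbf{x}}|^2=1$, and conclude that $\mathbf{x}=\pi\circ\breve{\mathbf{x}}$ is a $CR$-warped product immersion into $CP^m$ for which the basic equality holds. The principal difficulty lies in the forward direction, in transferring the shape operator identity and the equality condition through the Hopf fibration: the Hopf fibre introduces the additional complex coordinate $z_0$ and correction terms involving $\breve{\mathbf{x}}$ itself in the second fundamental form of $\breve{\mathbf{x}}$ in $\mathbb{C}^{m+1}$, and one must carefully align these with horizontality and with the $CR$-structure so that the resulting differential system integrates to the single stated family, and not to a strictly larger one.
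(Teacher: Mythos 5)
Your overall strategy---invoke the equality case of Theorem \ref{T:9.2}, pass through the fibration $\pi:{\mathbb C}^{m+1}_*\to CP^m$, and reduce to the Euclidean analysis behind Theorem \ref{T:9.3}---is the route actually taken in \cite{c8} (the survey itself only cites the result). But the lift you build the argument on does not exist: a map $\breve{\mathbf x}:M\to S^{2m+1}$ with $\mathbf x=\pi\circ\breve{\mathbf x}$ and $\left<d\breve{\mathbf x}(X),i\breve{\mathbf x}\right>=0$ for \emph{every} $X\in TM$ would be an integral submanifold of the horizontal (contact) distribution of the Hopf fibration, so the pullback of $d\eta$, i.e.\ of the K\"ahler form, would vanish on $TM$; this is impossible on the holomorphic distribution $\mathcal D$ once $h\geq 1$. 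The correct object (used in \cite{c8}, and set up in Section 13 of this survey) is the full preimage $\hat M=\pi^{-1}(M)$, a circle bundle over $M$ of dimension $2h+p+1$, together with its punctured cone in ${\mathbb C}^{m+1}$; it is this higher-dimensional submanifold that produces the extra complex coordinate $z_0$ in the stated formula. Your sketch asks simultaneously for a dimension-preserving horizontal lift and for the extra coordinate $z_0$, which are incompatible, and the bookkeeping of the vertical direction (e.g.\ the contribution $\sigma_{\hat M}(Z^*,V)=(JZ)^*$ for $Z\in\mathcal D^\perp$) is precisely where the $CP^m$ case differs from Theorem \ref{T:9.3}.

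Beyond that, the decisive step is missing: you assert that the identity $A_{JZ}X=((JX)\ln f)Z$ of Proposition \ref{P:9.1} translates into a system ``whose structure parallels exactly'' the Euclidean one and that integrating it ``yields precisely'' the stated family, but no equations are derived and no integration is carried out---and you yourself call this transfer ``the principal difficulty.'' That derivation and integration (including the verification that no strictly larger family of solutions occurs) is the entire content of the classification, so as written this is a plan rather than a proof. Two further points need attention. Your intermediate claim that $N_T$ is an open portion of a totally geodesic $CP^h\subset CP^m$ (which is what total geodesy of the leaves in Theorem \ref{T:9.2} suggests) is never reconciled with item (1) of the statement, which asserts that $N_T$ is an open portion of the flat ${\mathbb C}^h$; a complete proof must compute the induced metric on the $\mathcal D$-leaves of the displayed immersion and actually derive item (1) in the stated form, not assert both descriptions. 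And the argument ``Codazzi plus umbilicity forces $N_\perp$ to have constant curvature, hence a round $p$-sphere'' is vacuous for $p=1$ and in any case does not fix the unit radius or the warping function; these also come only out of the explicit integration.
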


The following result from \cite{c8} completely classifies  $CR$-warped products in complex hyperbolic spaces satisfying the equality case of \eqref{E:9.1} identically.

\begin{theorem} \label{T:9.5}  A $CR$-warped product $N_T\times_{f} N_\perp$  in $CH^m$ satisfies the basic equality $$||\sigma||^2= 2p||\nabla (\ln f)||^2$$ if and only if one of the following two cases occurs:
\begin{enumerate}
\item $N_T$ is an open portion of complex Euclidean $h$-space ${\mathbb C}^h$, $N_\perp$ is an open portion of  a unit $p$-sphere $S^p$ and, up to rigid motions, the immersion is the composition $\pi\circ\breve{\hbox{\bf x}}$, where $\pi$ is the projection $\pi:{\mathbb C}^{m+1}_{*1}\to CH^m$ and 
\begin{align}\notag &\breve{\hbox{\bf x}}(z_,w)=\Bigg( z_0+a_0(1-w_0)\sum_{j=0}^h a_j z_j,z_1+a_1(w_0-1)\sum_{j=0}^h a_j z_j,\,\cdots, \\&\hskip.1 in  z_h+a_h(w_0-1)\sum_{j=0}^h a_jz_j,\;w_1\sum_{j=0}^h a_jz_j,\ldots,\; w_p\sum_{j=0}^h
a_jz_j,0,\ldots,0\Bigg)\notag \end{align} 
for some real numbers  $a_0,\ldots,a_{h}$  satisfying
$a_0^2-a_1^2-\cdots-a_{h}^2=-1$, where 
$z=(z_0,\ldots,z_h)\in{\mathbb C}^{h+1}_1$ and $
w=(w_0,\ldots,w_p)\in S^p\subset{\mathbb E}^{p+1}$.

\item $p=1$,   $N_T$ is an open portion of  ${\mathbb C}^h$ and, up to rigid motions, the immersion  is  the composition $\pi\circ\breve{\hbox{\bf x}}$, where 
\begin{align} \notag &\breve{\hbox{\bf x}}(z,t)=\Bigg(z_0+a_0(\cosh t-1)\sum_{j=0}^h a_j z_j,  z_1+a_1(1-\cosh t)\sum_{j=0}^h
a_jz_j,\\&\hskip.3in \ldots,z_h+a_h(1-\cosh t)\sum_{j=0}^h a_jz_j, \sinh t\sum_{j=0}^h a_jz_j,0,\ldots,0\Bigg)\notag
\end{align} 
for some real numbers $a_0,a_1\ldots,a_{h+1}$ satisfying $a_0^2-a_1^2-\cdots-a_{h}^2=1$.
\end{enumerate}\end{theorem}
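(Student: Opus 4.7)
The plan is to deduce the structure of the immersion from the equality analysis of Theorem \ref{T:9.2}, then lift to the pseudo-Hermitian cover of $CH^m$ and integrate the resulting PDEs.

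First, I would invoke Theorem \ref{T:9.2}: equality in $||\sigma||^2=2p||\nabla(\ln f)||^2$ together with the anti-holomorphy that a proper $CR$-warped product with this extremal behavior must satisfy forces $N_T$ to be a totally geodesic K\"ahler submanifold of $CH^m$, $N_\perp$ to be a totally umbilical totally real submanifold (or, in the $p=1$ case, $M$ to be a minimal hypersurface with $J\xi$ a principal direction of zero principal curvature), and the whole submanifold to be minimal. Combined with Proposition \ref{P:9.1}, the shape operator along $J\mathcal{D}^\perp$ is pinned down, and the Codazzi equation in $CH^m$ produces a system of second-order PDEs tying the warping function $f$ to the immersion.

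Next, I would pass to the Hopf-type projection $\pi:\mathbb{C}_{*1}^{m+1}\to CH^m$ and work with a horizontal lift $\breve{\mathbf{x}}$ valued in the indefinite space $\mathbb{C}_1^{m+1}$. The totally-geodesic K\"ahler condition on $N_T$ makes the $z$-dependence of $\breve{\mathbf{x}}$ linear, so $N_T$ is forced to be an open portion of $\mathbb{C}^h$ lying in a suitable affine slice of the cover. The relation $A_{JZ}X=((JX)\ln f)Z$ from Proposition \ref{P:9.1}, combined with the Gauss formula in $\mathbb{C}_1^{m+1}$, then forces every $N_\perp$-derivative of $\breve{\mathbf{x}}$ to be proportional to a linear functional $\sum_j a_jz_j$ for a fixed pseudo-unit vector $(a_0,\ldots,a_h)$; the signature split of $\mathbb{C}_1^{m+1}$ also dictates the sign-inverted first coordinate that appears in both case formulas.

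Finally, I would close the system according to the dimension $p$. When $p\geq 2$, the totally-umbilical totally-real factor $N_\perp$ must have constant positive sectional curvature, hence is an open portion of a unit sphere $S^p$; integration of the transverse ODE gives case (1), with the normalization $a_0^2-a_1^2-\cdots-a_h^2=-1$ enforced by the requirement that $\breve{\mathbf{x}}$ stay on the negative indicatrix $\mathbb{C}_{*1}^{m+1}$. When $p=1$, umbilicity is automatic and the integral curve of $\mathcal{D}^\perp$ can be spacelike, so the governing ODE admits hyperbolic solutions $(\cosh t,\sinh t)$; this yields case (2), with the normalization flipped to $a_0^2-a_1^2-\cdots-a_h^2=+1$. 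The main obstacle is establishing the rigidity that every horizontal lift compatible with the equality conditions has the product-plus-correction form $\breve{\mathbf{x}}(z,w)=z+\phi(w)\cdot\sum_j a_jz_j$ with $\phi(w)$ taking values along the direction $(a_0,-a_1,\ldots,-a_h,\ldots)$; once this rigidity is secured, a direct computation using \eqref{E:9.1} and the explicit shape operator of Proposition \ref{P:9.1} verifies that both explicit families do saturate $||\sigma||^2=2p||\nabla(\ln f)||^2$, closing the equivalence.
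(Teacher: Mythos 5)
The survey you are working from does not actually prove Theorem \ref{T:9.5}; it quotes the classification from \cite{c8}, so your proposal has to stand on its own, and as written it is an outline whose central step is missing. The real content of the theorem is exactly what you defer: starting from the equality data (which amount to $\sigma(\mathcal D,\mathcal D)=0$, $\sigma(\mathcal D^\perp,\mathcal D^\perp)=0$, and $\sigma(\mathcal D,\mathcal D^\perp)\subset J\mathcal D^\perp$ with the components dictated by Proposition \ref{P:9.1}), one must pass to the fibration $\pi:H^{2m+1}_1\subset{\mathbb C}^{m+1}_1\to CH^m$, justify a reduction of codimension (equality only places the first normal space inside $J\mathcal D^\perp$; anti-holomorphy is \emph{not} forced, contrary to your opening sentence, which is why the explicit formulas end in a string of zeros), and then genuinely integrate the structure equations for the lift to show it has the ``identity plus rank-one correction along a fixed vector $a$'' form with the two normalizations $a_0^2-a_1^2-\cdots-a_h^2=\mp1$. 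You yourself label this rigidity ``the main obstacle''; saying that Codazzi ``produces a system of second-order PDEs'' and that Gauss ``forces every $N_\perp$-derivative to be proportional to $\sum_j a_jz_j$'' without deriving or solving anything leaves the heart of the classification untouched, and the sufficiency direction (that both families do satisfy $||\sigma||^2=2p||\nabla(\ln f)||^2$) is likewise only announced.

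Moreover, one of the few concrete inferences you do make is unsound. Granting $\sigma(\mathcal D,\mathcal D)=0$, the leaves of $\mathcal D$ are totally geodesic complex submanifolds of $CH^m$, and complex-linearity of $\breve{\mathbf x}$ in $z$ makes each leaf an open piece of a totally geodesic $CH^h$, whose induced metric is complex hyperbolic; it does not follow that ``$N_T$ is forced to be an open portion of ${\mathbb C}^h$''. Reconciling the totally geodesic conclusion of Theorem \ref{T:9.2} with the description of $N_T$ appearing in the statement is precisely the delicate identification a complete proof must carry out, and your sketch does not even register the tension. Similarly, for $p\geq 2$ you assert that the totally umbilical totally real fiber ``must have constant positive sectional curvature, hence is an open portion of a unit sphere'': a totally umbilical totally real submanifold of $CH^m$ has constant curvature $-1+|H|^{2}$, whose sign is not determined a priori, so pinning $(N_\perp,g_{N_\perp})$ down to the unit $S^p$, and showing that the hyperbolic $(\cosh t,\sinh t)$ family of case (2) is the only further possibility and only when $p=1$, requires exploiting the precise fiber mean curvature $-\nabla(\ln f)$ furnished by the warped-product structure together with the equality data --- none of which appears in your argument. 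In short, the strategy points in a reasonable direction (equality conditions, Proposition \ref{P:9.1}, lifting through $\pi$), but every step that distinguishes this theorem from its statement is asserted rather than proved.
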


A multiply warped product $N_T \times_{f_2} N_2\times \cdots \times_{f_k} N_k$ in a K\"ahler manifold $\tilde M$ is called a {\it multiply $CR$-warped product}  if $N_T$ is a holomorphic submanifold and $N_\perp={}_{f_2} N_2 \times \cdots\times_{f_k} N_k$ is a totally real submanifold of $\tilde M$.

Theorem \ref{T:9.2} was extended in \cite{CD08} to multiply $CR$-warped products in the following.

\begin{theorem}\label{T:2} Let $N=N_T \times_{f_2} N_2 \times \cdots \times_{f_k} N_k$ be a multiply $CR$-warped product in an arbitrary K\"ahler manifold $\tilde M$. Then the second fundamental form $\sigma$ and the warping functions $f_2,\ldots, f_k$ satisfy
 \begin{align} \label{0.6} &||\sigma||^2 \geq 2\sum_{i=2}^k n_i ||\nabla (\ln f_i)||^2. \end{align}

 The equality sign of \eqref{0.6} holds identically if and only if the following four statements hold:

\begin{enumerate}
\item[(a)] $N_T$ is a  totally geodesic holomorphic submanifold of $\tilde M$;

\item[(b)]  For each $i\in \{2,\ldots,k\}$,   $N_i$ is  a totally umbilical submanifold of $\tilde M$ with $-\nabla (\ln f_i)$ as its mean
curvature vector;

\item[(c)] ${}_{f_2} N_2 \times \cdots \times_{f_k} N_k$ is immersed as mixed totally geodesic submanifold in $\tilde M$;  and

 \item[(d)]  For each point $p\in N$, the first normal space ${\rm Im} \,h_p$ is a subspace of $J(T_pN_\perp)$.
\end{enumerate} \end{theorem}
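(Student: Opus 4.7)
The plan is to generalize the proof of Theorem \ref{T:9.2} to the multiply warped setting. The two essential ingredients are the warped-product identity
$$\nabla_Z X = (X \ln f_i)\,Z \qquad\text{for } X \text{ lifted from } N_T,\ Z \in TN_i\ (i \geq 2),$$
inherited from the multiply warped structure, and the orthogonality $TN_i \perp TN_j$ for $i \ne j$.

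First I would fix an adapted orthonormal frame $\{X_1, JX_1, \ldots, X_h, JX_h\}$ of $\mathcal{D} = TN_T$, where $2h = \dim_{\mathbb R} N_T$, together with orthonormal frames $\{Z^i_1, \ldots, Z^i_{n_i}\}$ of each $TN_i$. Using $\tilde\nabla J = 0$, the Gauss formula, and the totally-real hypothesis (so $JW$ is normal to $N$ for $W \in \mathcal{D}^\perp$), the key computation is
\begin{align*}
\langle \sigma(Z, JX), JW\rangle &= \langle\tilde\nabla_Z JX, JW\rangle = \langle J\tilde\nabla_Z X, JW\rangle \\
&= \langle\nabla_Z X, W\rangle = (X\ln f_i)\,\delta_{ij}\,\langle Z, W\rangle
\end{align*}
for $X \in \mathcal{D}$, $Z \in TN_i$, $W \in TN_j$; the $\nabla_Z JX$ and $\sigma(Z, X)$ contributions drop out by the tangential--normal splitting. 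Replacing $X$ by $-JX$ yields the companion identity $\langle\sigma(Z, X), JW\rangle = -(JX\ln f_i)\,\delta_{ij}\,\langle Z, W\rangle$.

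With these identities in hand, I would split
$$||\sigma||^2 = \sum_{a,b} |\sigma(E_a, E_b)|^2 + 2\sum_{b,i,\alpha} |\sigma(E_b, Z^i_\alpha)|^2 + \sum_{(i,\alpha),(j,\beta)} |\sigma(Z^i_\alpha, Z^j_\beta)|^2$$
in the full adapted frame $\{E_b\}\cup\{Z^i_\alpha\}$ of $TN$, discard the first and third sums, and apply Bessel's inequality against the orthonormal subset $\{JZ^j_\beta\}$ of the normal bundle. Substituting the two identities above, once with $E_b = X_a$ and once with $E_b = JX_a$, yields
$$2\sum_{b,i,\alpha}|\sigma(E_b, Z^i_\alpha)|^2 \geq 2\sum_{a,i,\alpha}\bigl[(JX_a \ln f_i)^2 + (X_a \ln f_i)^2\bigr] = 2\sum_{i=2}^k n_i\,||\nabla(\ln f_i)||^2,$$
since $\nabla \ln f_i \in TN_T$ and $\{X_a, JX_a\}$ is an orthonormal basis of $TN_T$. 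This establishes \eqref{0.6}.

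For the equality case, I would trace the three inequalities in reverse. Equality forces: (I) $\sigma(\mathcal{D},\mathcal{D}) = 0$, which combined with the fact that $N_T$ is a totally geodesic leaf of $N$ yields (a); (II) $\sigma(\mathcal{D}^\perp,\mathcal{D}^\perp) = 0$, which via the totally umbilical embedding of each fiber $N_i$ in $N$ (with mean curvature $-\nabla\ln f_i$) yields (b), and via the orthogonality of $TN_i, TN_j$ in $N$ yields (c); and (III) each $\sigma(E_b, Z^i_\alpha)$ lies in $J\mathcal{D}^\perp$, which is precisely (d). The main obstacle is the bookkeeping in the Bessel step: one must verify that the two identities for $\langle\sigma(Z,X),JW\rangle$ and $\langle\sigma(Z,JX),JW\rangle$ together exhaust the real basis $\{X_a, JX_a\}$ of $TN_T$, so that the lower bound reconstitutes the full gradient norm $||\nabla\ln f_i||^2$ rather than only half of it.
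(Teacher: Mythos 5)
Your proposal is essentially the paper's own route (the Chen--Dillen argument behind Theorem \ref{T:2}, generalizing the proof of Theorem \ref{T:9.2}): the same adapted frame, the same two identities $\left<\sigma(Z,JX),JW\right>=(X\ln f_i)\left<Z,W\right>$ and $\left<\sigma(Z,X),JW\right>=-(JX\ln f_i)\left<Z,W\right>$ obtained from $\tilde\nabla J=0$ and the multiply warped relation $\nabla_ZX=(X\ln f_i)Z$, the same discarding of the $\mathcal D$--$\mathcal D$ and $\mathcal D^\perp$--$\mathcal D^\perp$ blocks followed by Bessel against $\{JZ^j_\beta\}\subset T^\perp N$, and the same reading-off of (a)--(d) from the vanishing of the discarded blocks and the exhaustion of the Bessel expansion. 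The only point to add is the converse half of the ``if and only if'': conditions (a)--(d) force $\sigma(\mathcal D,\mathcal D)=\sigma(\mathcal D^\perp,\mathcal D^\perp)=\{0\}$ and put $\sigma(\mathcal D,\mathcal D^\perp)$ inside $J(TN_\perp)$, so the exact identities above turn every inequality in your chain into an equality and give $||\sigma||^2=2\sum_{i=2}^k n_i||\nabla(\ln f_i)||^2$ identically.
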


\vskip.15in

\section{\uppercase{$CR$-warped products with compact holomorphic factor}}

When the holomorphic factor $N_{T}$ of a $CR$-warped product $N_{T}\times_{f}N_{\perp}$ is compact,  we have the following sharp results from \cite{c04-2}.

\begin{theorem} \label{T:10.1} Let $N_T\times_f N_\perp$ be a $CR$-warped product in the complex
projective $m$-space $CP^m$ of constant holomorphic sectional curvature $4$. If $N_T$ is compact, then $$m\geq h+p+hp.$$
\end{theorem}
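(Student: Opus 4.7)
The plan is to exploit the compactness of $N_T$ to reduce the $CR$-warped product situation, at a single well-chosen point, to the $CR$-product situation handled by Theorem \ref{T:7.2}.

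First, compactness forces $\mu := \ln f$ to attain a maximum at some $x_0 \in N_T$, where $\nabla \mu(x_0) = 0$. Since the gradient of $\mu$ lies in $\mathcal{D}$ and $\mathcal{D}$ is $J$-invariant, we have $(JX)(\mu) = 0$ at $x_0$ for every $X \in \mathcal{D}_{x_0}$. Plugging this into Proposition \ref{P:9.1}, which gives $A_{JZ}X = ((JX)\mu)\, Z$, we conclude $A_{JZ} X = 0$ for all $X \in \mathcal{D}$ and $Z \in \mathcal{D}^\perp$ along the whole fiber $\{x_0\} \times N_\perp$. Dualizing, $\langle \sigma(X, Y), JZ\rangle = 0$ for every $X \in \mathcal{D}$, every tangent $Y$ and every $Z \in \mathcal{D}^\perp$; in particular $\sigma(\mathcal{D}, \mathcal{D}^\perp) \subset \nu$, where $\nu$ denotes the $J$-invariant orthogonal complement of $J\mathcal{D}^\perp$ inside $T^\perp M$. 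This is precisely the structural identity of the second fundamental form that characterizes a $CR$-product.

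Next, fix any point $p_0 = (x_0, y_0)$ on this fiber and apply the pointwise codimension estimate behind Theorem \ref{T:7.2} at $p_0$. Choose orthonormal bases $\{e_1, \ldots, e_{2h}\}$ of $\mathcal{D}_{p_0}$ and $\{Z_1, \ldots, Z_p\}$ of $\mathcal{D}_{p_0}^\perp$; the goal is to show that the $2hp$ vectors $\sigma(e_i, Z_j) \in \nu_{p_0}$ are linearly independent. The input is the Codazzi identity together with the curvature of $CP^m(4)$: a short computation gives $(\tilde R(X,Y)Z)^\perp = 2\langle Y, JX\rangle\, JZ$ for $X, Y \in \mathcal{D}$ and $Z \in \mathcal{D}^\perp$, and feeding this into Codazzi produces enough relations among the $\sigma(e_i, Z_j)$ and $\sigma(Je_i, Z_j)$ to force their independence in $\nu_{p_0}$. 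Adding the $p$ independent directions supplied by $J\mathcal{D}^\perp_{p_0}$, the normal space at $p_0$ has real dimension at least $2hp + p$; since $\dim_{\mathbb R} T^\perp_{p_0} M = 2m - (2h + p)$, this gives $m \geq h + p + hp$.

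The hard part is the linear-independence step in the second paragraph, which is really where the whole content of Theorem \ref{T:7.2} lives: extracting the full $2hp$ independent $\nu$-components of the vectors $\sigma(e_i, Z_j)$ requires a careful use of Codazzi and of the specific form of the $CP^m(4)$ curvature, since the vanishing $A_{JZ}|_{\mathcal{D}} = 0$ only tells us where these vectors \emph{do not} live. Everything else is packaging: compactness of $N_T$ is used solely to produce the critical point at which Proposition \ref{P:9.1} upgrades the warped identity $A_{JZ}X = ((JX)\mu)Z$ to the $CR$-product identity $A_{JZ}X = 0$, after which the dimension count is mechanical.
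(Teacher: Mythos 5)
Your first step is sound: compactness of $N_T$ gives a critical point $x_0$ of $\ln f$, the gradient of $\ln f$ lies in $\mathcal D$, and Proposition \ref{P:9.1} then yields $A_{J\mathcal D^\perp}\mathcal D=0$, hence $\sigma(\mathcal D,\mathcal D^\perp)\subset\nu$, at every point of the slice $\{x_0\}\times N_\perp$. The gap is in the second step, where you claim that "the pointwise codimension estimate behind Theorem \ref{T:7.2}" can now be applied at $p_0$. That estimate is not a pointwise algebraic consequence of the vanishing of $A_{J\mathcal D^\perp}\mathcal D$ at the chosen point: the relations among the vectors $\sigma(e_i,Z_j)$ which force their independence come from pairing the Codazzi equation with $JW$, $W\in\mathcal D^\perp$, and this requires differentiating the identity $\langle\sigma(Y,Z),JW\rangle=((JY)\ln f)\,\langle Z,W\rangle$ in directions of $\mathcal D$. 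For a genuine $CR$-product the right-hand side is identically zero, so its derivative vanishes and one obtains the clean relations of the type $\langle\sigma_\nu(X,Z),\sigma_\nu(Y,W)\rangle+\langle\sigma_\nu(Y,Z),\sigma_\nu(X,W)\rangle=2\langle X,Y\rangle\langle Z,W\rangle$ on which Theorem \ref{T:7.2} rests. For a warped product the right-hand side vanishes only along the critical slice, and its first derivatives at $x_0$ are the Hessian of $\ln f$, which does \emph{not} vanish at a critical point. Consequently the relations actually available at $p_0$ carry Hessian correction terms (this is precisely why the sharp inequality \eqref{E:9.9} contains the term $\Delta(\ln f)$), and your concluding assertion that the critical point "upgrades the warped identity to the $CR$-product identity, after which the dimension count is mechanical" fails: first-order vanishing of $\nabla\ln f$ does not remove the second-order terms that the Codazzi computation produces.

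The omission is exactly where the real work lies. To repair the argument you must use that $x_0$ is a \emph{maximum}, not merely a critical point, so that $\mathrm{Hess}(\ln f)(x_0)$ is negative semidefinite, verify that these Hessian terms enter the symmetrized Gram identity with the favorable sign (so that the symmetric form replacing $2\langle X,Y\rangle$, roughly $2\langle X,Y\rangle-\mathrm{Hess}_{\ln f}(X,Y)-\mathrm{Hess}_{\ln f}(JX,JY)$, remains positive definite at $x_0$), and then rerun the independence argument of Theorem \ref{T:7.2} with this perturbed form. Note, in addition, that even in the $CR$-product case the passage from those symmetrized relations to the linear independence of all $2hp$ vectors $\sigma(e_i,Z_j)$ is a genuine argument rather than a mechanical count; since you invoke it at a point where its hypotheses have been altered, you cannot cite it as a black box without checking that the modified relations still support it.
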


\begin{theorem} \label{T:10.2} If $\,N_T\times_f N_\perp\,$ is a $CR$-warped product in $\,CP^{h+p+hp}\,$ with compact $N_T$, then $N_T$ is holomorphically isometric to $CP^h$.
\end{theorem}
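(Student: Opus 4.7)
The goal is to show that $N_T$ is holomorphically isometric to $CP^h$. Since each leaf $N_T\times\{q\}$ is totally geodesic in $M=N_T\times_f N_\perp$ and $\mathcal{D}$ is $J$-invariant, the leaf is a compact connected complex $h$-dimensional K\"ahler submanifold of $CP^{h+p+hp}$. A classical fact states that a totally geodesic connected complex submanifold of $CP^m$ is a linear $CP^h$ with the induced Fubini--Study metric of constant holomorphic sectional curvature $4$; moreover, for a complex K\"ahler submanifold of $CP^m(4)$, the Gauss equation shows that having constant holomorphic sectional curvature $4$ is equivalent to being totally geodesic. Hence it suffices to prove $\sigma|_{\mathcal{D}\times\mathcal{D}}\equiv 0$.

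The plan is to exploit the equality case of Theorem \ref{T:10.1}. First decompose the normal bundle as $T^\perp M = J\mathcal{D}^\perp\oplus\nu$, where $\nu$ is the maximal $J$-invariant orthogonal complement of $J\mathcal{D}^\perp$, so that $\dim_{\mathbb{C}}\nu = m-h-p = hp$. By Proposition \ref{P:9.1}, the $J\mathcal{D}^\perp$-component of $\sigma(X,Z)$ for $X\in\mathcal{D}$, $Z\in\mathcal{D}^\perp$ is explicitly $((JX)\ln f)JZ$; write $\tilde\sigma(X,Z)$ for its $\nu$-component. The proof of Theorem \ref{T:10.1} identifies $\tilde\sigma$ with a $\mathbb{C}$-linear bundle homomorphism $\mathcal{D}\otimes_{\mathbb{C}}\mathcal{D}^\perp\to\nu$ whose generic rank, by integration over the compact $N_T$, is exactly $hp$, producing the bound $m\geq h+p+hp$. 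In our equality case the homomorphism $\tilde\sigma$ is therefore fiberwise surjective onto $\nu$.

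Because the slice is K\"ahler in $CP^m$, the restriction $\sigma|_{\mathcal{D}\times\mathcal{D}}$ is $\mathbb{C}$-bilinear and takes values in $\nu$. I would next apply the Codazzi equation in the ambient space form to compute $(\bar\nabla_Z\sigma)(X,Y)$ for $X,Y\in\mathcal{D}$ and $Z\in\mathcal{D}^\perp$, pair it with an arbitrary vector of the form $\tilde\sigma(X',Z')\in\nu$, and then use the structure of the shape operator given by Proposition \ref{P:9.1} together with the warped-product Levi-Civita formulas $\nabla_X Z = (X\ln f)Z$, $\nabla_Z X=(X\ln f)Z$, to re-express all terms in a way that isolates $\langle\sigma(X,Y),\tilde\sigma(X',Z')\rangle$. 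Integrating the resulting identity over the compact leaf $N_T$ eliminates the divergence-type terms, and the surjectivity of $\tilde\sigma$ onto $\nu$ then forces $\sigma(X,Y)\perp\nu$. Combined with $\sigma(\mathcal{D},\mathcal{D})\subset\nu$, this yields $\sigma|_{\mathcal{D}\times\mathcal{D}}\equiv 0$, completing the proof.

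The main obstacle is the Codazzi manipulation of the last paragraph: the warped-product connection couples $\mathcal{D}$ and $\mathcal{D}^\perp$ through $\nabla\ln f$, so the tangential and mixed components must be carefully separated from the ambient curvature contribution coming from $CP^m$, and the integration step depends in an essential way on the compactness of $N_T$. Once the orthogonality is in hand, the reduction to a totally geodesic $CP^h$ is immediate.
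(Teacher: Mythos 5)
The survey states Theorem \ref{T:10.2} without proof (it is quoted from \cite{c04-2}), so your argument has to stand on its own, and as written it has a genuine gap rather than being a complete alternative route. Your reduction is fine: the leaves $N_T\times\{q\}$ are totally geodesic in $M$ and $J$-invariant, $\sigma(\mathcal D,\mathcal D)\perp J\mathcal D^\perp$ by Proposition \ref{P:9.1}, so the conclusion is equivalent to $\sigma|_{\mathcal D\times\mathcal D}\equiv 0$ along the leaves, and a compact totally geodesic complex submanifold of $CP^m$ is a linear $CP^h$. But everything that makes the theorem true --- why compactness of $N_T$ together with $m=h+p+hp$ forces this vanishing --- is not supplied. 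First, you assert that ``the proof of Theorem \ref{T:10.1}'' exhibits the $\nu$-component $\tilde\sigma$ as a bundle map of generic rank $hp$, hence fiberwise surjective onto $\nu$ in the equality case. You have not seen that proof; the statement of Theorem \ref{T:10.1} only gives $\dim_{\mathbb C}\nu=hp$ here and says nothing about the pointwise rank of $\tilde\sigma$, and even a proof showing full rank at some point would not give surjectivity along the whole leaf, which your later step needs. (Also $\mathcal D^\perp$ carries no complex structure, so ``$\mathcal D\otimes_{\mathbb C}\mathcal D^\perp$'' should be replaced by the correct identity $\tilde\sigma(JX,Z)=J\tilde\sigma(X,Z)$, which makes $X\mapsto\tilde\sigma(X,Z)$ $\mathbb C$-linear for each fixed $Z$.)

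Second, and decisively, the Codazzi-plus-integration step that is supposed to produce $\langle\sigma(X,Y),\tilde\sigma(X',Z')\rangle=0$ is only announced, and the mechanism you describe does not work as stated. For $X,Y\in\mathcal D$ and $Z\in\mathcal D^\perp$ (lifted fields), the relevant Codazzi identity in $CP^m(4)$ is, up to conventions, $(\bar\nabla_Z\sigma)(X,Y)-(\bar\nabla_X\sigma)(Z,Y)=\langle JX,Y\rangle JZ$; since $\nabla_ZX=\nabla_XZ=(X\ln f)Z$, $\nabla_ZY=(Y\ln f)Z$ and $\nabla_XY\in\mathcal D$, the unknown $\sigma(X,Y)$ enters this identity only inside the normal derivative $D_Z\big(\sigma(X,Y)\big)$, i.e.\ differentiated in a direction transverse to the leaf, while all other second-fundamental-form terms are mixed ones $\sigma(\mathcal D,\mathcal D^\perp)$. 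Pairing with $\tilde\sigma(X',Z')$ gives $Z\langle\sigma(X,Y),\tilde\sigma(X',Z')\rangle-\langle\sigma(X,Y),D_Z\tilde\sigma(X',Z')\rangle$ plus mixed terms, and integrating over the compact leaf $N_T\times\{q\}$ only removes divergences in the $\mathcal D$-directions; it neither eliminates the $Z$-derivative term nor isolates $\langle\sigma(X,Y),\tilde\sigma(X',Z')\rangle$, and none of the remaining terms has a definite sign. So the step you yourself flag as ``the main obstacle'' is exactly the content of the theorem --- the place where compactness and the minimal codimension must be exploited --- and your sketch does not yet provide it; the reduction to a totally geodesic $CP^h$ at the end is the easy part.
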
 

\begin{theorem} \label{T:10.3} For any $CR$-warped product $N_T\times_f N_\perp$ in $CP^m$ with compact $N_T$ and any $q\in N_\perp$, we have
\begin{align}\label{E:1.4}\int_{N_T\times\{q\}} ||\sigma||^2dV_{T}\geq 4hp\,\hbox{\rm vol}(N_T),\end{align} 
where $||\sigma||$ is the norm of the second fundamental form,  $dV_T$ is the volume element of $N_{T}$, and  $\hbox{\rm vol}(N_T)$ is the volume of $N_T$.

The equality sign of \eqref{E:1.4} holds
identically if and only if  we have:

\begin{enumerate}
\item The warping function $f$ is constant.

\item  $(N_T,g_{N_T})$ is holomorphically isometric to $CP^h$ and it is isometrically immersed in $CP^m$ as a totally geodesic complex submanifold.

\item $(N_\perp,f^2 g_{N_\perp})$ is isometric to an open portion of the real projective
$p$-space $RP^p$ of constant sectional curvature one  and it is isometrically immersed in
$CP^m$ as a totally geodesic totally real submanifold.

\item $N_T\times_f N_\perp$ is immersed linearly fully in a linear complex subspace $CP^{h+p+hp}$ of $CP^m$; and moreover, the immersion is rigid.
\end{enumerate}\end{theorem}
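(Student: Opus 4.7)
The plan rests on one decisive structural observation: since the ambient $\tilde{M}=CP^{m}$ is K\"{a}hler and the distribution $\mathcal{D}$ is $J$-invariant, every leaf $N_{T}\times\{q\}$ is a complex, hence K\"{a}hler, submanifold of $CP^{m}$, and is therefore minimal. Consequently its partial mean curvature vector $H_{T}=\sum_{\alpha=1}^{2h}\sigma(e_{\alpha},e_{\alpha})$, computed in any orthonormal frame $\{e_{\alpha}\}$ of $\mathcal{D}$, vanishes identically. This is the key input that distinguishes the argument from a direct application of Theorem~\ref{T:9.2}.

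I would next apply the Gauss equation to the $2hp$ mixed $2$-planes $e_{\alpha}\wedge e_{j}$, where $\{e_{j}\}$ is an orthonormal frame of $\mathcal{D}^{\perp}$. For unit $X\in\mathcal{D}$ and unit $Z\in\mathcal{D}^{\perp}$, the plane $X\wedge Z$ is totally real in $CP^{m}(4)$, so $\tilde{K}(X,Z)=1$. Summing the Gauss equation yields
\[
\sum_{\alpha,j}K^{M}(e_{\alpha},e_{j}) \;=\; 2hp + \langle H_{T},H_{\perp}\rangle - \|\sigma_{T\perp}\|^{2},
\]
where $H_{\perp}=\sum_{j}\sigma(e_{j},e_{j})$ and $\|\sigma_{T\perp}\|^{2}=\sum_{\alpha,j}\|\sigma(e_{\alpha},e_{j})\|^{2}$. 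A standard warped-product curvature computation produces $\sum_{\alpha,j}K^{M}(e_{\alpha},e_{j})=p\,\Delta f/f$. Combining these with $H_{T}=0$ gives the identity $\|\sigma_{T\perp}\|^{2}=2hp - p\,\Delta f/f$, and since $\|\sigma\|^{2}=\|\sigma_{TT}\|^{2}+2\|\sigma_{T\perp}\|^{2}+\|\sigma_{\perp\perp}\|^{2}\geq 2\|\sigma_{T\perp}\|^{2}$, I obtain the pointwise estimate
\[
\|\sigma\|^{2}\;\geq\; 4hp - \frac{2p\,\Delta f}{f}.
\]

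Integrating over the compact leaf $N_{T}\times\{q\}$ and applying the divergence theorem to rewrite $\int_{N_{T}}(\Delta f/f)\, dV_{T} = -\int_{N_{T}}\|\nabla\ln f\|^{2}\, dV_{T}$ will produce
\[
\int_{N_{T}\times\{q\}}\|\sigma\|^{2}\, dV_{T} \;\geq\; 4hp\,\mathrm{vol}(N_{T}) + 2p\int_{N_{T}}\|\nabla\ln f\|^{2}\, dV_{T} \;\geq\; 4hp\,\mathrm{vol}(N_{T}),
\]
which is the desired inequality \eqref{E:1.4}.

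For the equality discussion, both $\|\sigma\|^{2}=2\|\sigma_{T\perp}\|^{2}$ and $\nabla\ln f\equiv 0$ must hold pointwise; the former forces $\sigma_{TT}\equiv 0$ and $\sigma_{\perp\perp}\equiv 0$, and the latter forces $f$ to be a positive constant, reducing $M$ to a $CR$-product on which $\|\sigma\|^{2}=4hp$ identically. At this point I would invoke Theorem~\ref{T:7.3}: its equality case identifies $N_{T}$ holomorphically with an open part of $CP^{h}$, identifies $(N_{\perp},f^{2}g_{N_{\perp}})$ with an open part of $RP^{p}(1)$, forces both factors to be totally geodesic in $CP^{m}$, and realizes the immersion as the Segre embedding into $CP^{h+p+hp}\subset CP^{m}$; the compactness of $N_{T}$ upgrades the local holomorphic identification to a global one, yielding the claimed rigidity. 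The only delicate point in the whole argument is recognizing $H_{T}\equiv 0$ from the K\"{a}hler structure of the leaf: without it the Gauss computation retains an $\langle H_{T},H_{\perp}\rangle$ term of uncontrolled sign, and the sharp constant $4hp$ cannot be extracted merely by integrating the general inequality of Theorem~\ref{T:9.2}.
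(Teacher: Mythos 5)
Your argument is correct, and it reaches \eqref{E:1.4} by a route genuinely different from the one behind the paper's statement. The source \cite{c04-2} (and the survey's own toolkit) gets the integral estimate by integrating the refined pointwise inequality \eqref{E:9.9} of Theorem~\ref{T:11.3}, $||\sigma||^2\geq 2p\{||\nabla(\ln f)||^2+\Delta(\ln f)+2h\}$, over the compact leaf $N_T\times\{q\}$ (the $\Delta\ln f$ term integrates to zero), and settles equality through the equality analysis of that inequality; proving \eqref{E:9.9} itself requires the finer Codazzi-type study of the components of $\sigma(\mathcal D,\mathcal D^\perp)$ orthogonal to $J\mathcal D^\perp$. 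You bypass all of that: from the Gauss equation on mixed planes, the value $\tilde K=1$ on totally real planes of $CP^m(4)$, the warped-product formula $\sum_{\alpha,j}K^M(e_\alpha,e_j)=p\,\Delta f/f$ (with the paper's sign convention \eqref{2.6} for $\Delta$, which is also the convention under which your integration by parts $\int_{N_T}(\Delta f/f)\,dV_T=-\int_{N_T}||\nabla\ln f||^2\,dV_T$ is valid), and the vanishing of $\mathrm{trace}_{\mathcal D}\,\sigma$, you obtain the pointwise identity
\begin{equation*}
||\sigma_{T\perp}||^2=2hp-\frac{p\,\Delta f}{f},
\end{equation*}
and integration does the rest; the equality case then reduces to a $CR$-product with $||\sigma||^2=4hp$, so Theorem~\ref{T:7.3} (plus compactness of $N_T$) delivers conditions (1)--(4). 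Two small points of care: the identification $H_T=\sum_\alpha\sigma(e_\alpha,e_\alpha)=0$ uses not only that the leaf is a complex, hence minimal, submanifold of $CP^m$, but also that its second fundamental form in $CP^m$ is exactly $\sigma|_{\mathcal D\times\mathcal D}$ (either because leaves of a warped product are totally geodesic in $M$, or because the $\mathcal D^\perp$- and $T^\perp M$-components of the leaf's second fundamental form are orthogonal and must vanish separately); and you should state the sign convention once, since both your curvature sum and your integration by parts flip sign in the analysts' convention. What each approach buys: yours is more elementary and self-contained (no appeal to \eqref{E:9.9}), and it still produces the intermediate estimate $\int||\sigma||^2\,dV_T\geq 4hp\,\mathrm{vol}(N_T)+2p\int||\nabla\ln f||^2\,dV_T$ needed for Theorem~\ref{T:10.4}; the standard route buys the stronger pointwise inequality of Theorem~\ref{T:11.3}, valid without any compactness, together with its full equality classification. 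Your closing remark is also accurate: integrating only the basic inequality of Theorem~\ref{T:9.2} cannot produce the constant $4hp$.
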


\begin{theorem} \label{T:10.4} Let $N_T\times_f N_\perp$ be a $CR$-warped product with compact $N_T$ in $CP^m$. If the warping function $f$ is non-constant, then, for each $q\in N_\perp$, we have
\begin{align}\label{E:1.3}\int_{N_T\times\{q\}} ||\sigma||^2dV_{T}\geq 2p\lambda_1 \int_{N_T}(\ln f)^2dV_T+ 4hp\,\hbox{\rm vol}(N_T),\end{align}  
where  $\lambda_1$  is the first positive eigenvalue of the Laplacian $\Delta$ of $N_T$.

Moreover, the equality sign of \eqref{E:1.3} holds identically if and only if  we have 

\begin{enumerate}
\item  $\Delta\ln f=\lambda_1 \ln f$.

\item The $CR$-warped product is both $N_T$-totally geodesic and $N_\perp$-totally geodesic.
\end{enumerate}\end{theorem}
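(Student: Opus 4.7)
The plan is to bootstrap the foundational pointwise bound $\|\sigma\|^2\geq 2p\|\nabla\ln f\|^2$ from Theorem \ref{T:9.2} (which holds in every K\"ahler target) into a strictly stronger integrated inequality that captures the positive holomorphic sectional curvature of $CP^m(4)$, and then to combine it with a Rayleigh--Poincar\'e estimate that is available because $N_T$ is compact. The additive constant $4hp\,\mathrm{vol}(N_T)$ on the right of \eqref{E:1.3} is precisely the ambient-curvature contribution absent from Theorem \ref{T:9.2}; note that in the limiting case when $f$ is a constant, both $\int\|\nabla\ln f\|^2\,dV_T$ and $\int(\ln f)^2\,dV_T$ vanish (after the natural normalization) and \eqref{E:1.3} collapses to the constant-warping inequality of Theorem \ref{T:10.3}, which serves as a useful sanity check.

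First I would revisit the Gauss--Codazzi computation underlying Theorem \ref{T:9.2} and retain, rather than discard, the ambient curvature terms $\tilde R$ of $CP^m$. Using an adapted orthonormal frame $\{e_1,\ldots,e_h,Je_1,\ldots,Je_h\}$ for $\mathcal D$ and $\{e_1^*,\ldots,e_p^*\}$ for $\mathcal D^\perp$, each of the $2h\cdot p$ mixed plane sections $e_i\wedge e_\alpha^*$ is totally real in $CP^m(4)$ and therefore contributes $\tilde K(e_i\wedge e_\alpha^*)=1$ to the ambient curvature sum. Summing these contributions produces an additive $4hp$ on the right-hand side of the sharpened pointwise inequality, together with a term proportional to $\Delta\ln f$ arising from the warped-product curvature formulas. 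Integrating over the compact $N_T\times\{q\}$, the Laplacian term vanishes by the divergence theorem, yielding
\[
\int_{N_T\times\{q\}}\|\sigma\|^2\,dV_T \;\geq\; 2p\int_{N_T}\|\nabla\ln f\|^2\,dV_T \,+\, 4hp\,\mathrm{vol}(N_T).
\]
Next, after normalizing $f$ (by multiplying by a positive constant, equivalent to a harmless rescaling of the metric on $N_\perp$ that leaves the immersion into $CP^m$ and hence $\|\sigma\|^2$ unchanged) so that $\int_{N_T}\ln f\,dV_T=0$, the variational characterization of $\lambda_1$ on the compact manifold $N_T$ gives
\[
\int_{N_T}\|\nabla\ln f\|^2\,dV_T \;\geq\; \lambda_1\int_{N_T}(\ln f)^2\,dV_T,
\]
and substituting this into the previous display gives the desired \eqref{E:1.3}.

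For the equality analysis, tightness in the spectral step occurs exactly when $\ln f$ is a first eigenfunction of the Laplacian of $N_T$, that is, $\Delta\ln f=\lambda_1\ln f$ (condition (1) of the theorem); tightness in the integrated pointwise inequality, by tracing through the Gauss--Codazzi argument, forces the second fundamental form to annihilate $\mathcal D\otimes\mathcal D$ and $\mathcal D^\perp\otimes\mathcal D^\perp$, which is precisely the $N_T$- and $N_\perp$-totally geodesic assertion (condition (2)). The main obstacle is the careful bookkeeping in the first step: one must isolate the $4hp$ contribution coming from the fact that mixed plane sections between $\mathcal D$ and $\mathcal D^\perp$ are totally real and hence have ambient sectional curvature exactly $1$ in $CP^m(4)$, while simultaneously verifying that the cross-terms involving $\Delta\ln f$ are genuine divergences that vanish upon integration. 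Without the compactness of $N_T$, neither the integration by parts nor the Poincar\'e inequality is available, and both are used essentially---this is exactly what distinguishes Theorem \ref{T:10.4} from the pointwise inequality \eqref{E:9.1} valid in arbitrary K\"ahler ambients.
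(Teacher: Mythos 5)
Your overall strategy is the same as the one used in the source \cite{c04-2}: sharpen the pointwise bound \eqref{E:9.1} of Theorem \ref{T:9.2} to an inequality containing $\Delta(\ln f)$ and the ambient curvature of $CP^m(4)$ (this is exactly inequality \eqref{E:9.9} of Theorem \ref{T:11.3}, the case $c=1$ of Theorem \ref{T:11.1}, so you may simply quote it instead of re-deriving it), integrate over the compact leaf $N_T\times\{q\}$ so that the $\Delta(\ln f)$ term vanishes, and then invoke the variational characterization of $\lambda_1$. Two points in your write-up, however, do not hold up as stated. First, the bookkeeping you sketch for the constant is off by a factor $2$: the $2hp$ mixed plane sections $e_i\wedge e_\alpha^*$ are totally real and each has $\tilde K=1$, so ``summing these contributions'' yields $2hp$, not $4hp$. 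In the actual derivation the constant $4hp=2p\cdot 2h$ does not arise from sectional curvatures of mixed planes in a Gauss-type identity; it comes from the term $2c\,g(X,JY)g(JZ,W)$ in the curvature tensor of $\tilde M(4c)$, i.e.\ from components of the form $\tilde R(e_i,Je_i;Z,JZ)=\pm 2c$, which enter through the Codazzi equation when one controls the components of $\sigma(\mathcal D,\mathcal D^\perp)$ lying in the normal subbundle complementary to $J\mathcal D^\perp$. This gap is harmless only if you cite Theorem \ref{T:11.3} rather than rely on the sketched count.

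Second, the normalization step is not ``harmless'' in the sense you claim. Replacing $f$ by $cf$ and $g_{N_\perp}$ by $c^{-2}g_{N_\perp}$ leaves the submanifold, $||\sigma||^2$ and the left-hand side of \eqref{E:1.3} unchanged, but it changes $\int_{N_T}(\ln f)^2dV_T$, i.e.\ it changes the right-hand side of the very inequality you are proving. What your argument actually establishes is $\int_{N_T\times\{q\}}||\sigma||^2dV_T\geq 2p\lambda_1\int_{N_T}\big(\ln f-\overline{\ln f}\big)^2dV_T+4hp\,\hbox{\rm vol}(N_T)$, where $\overline{\ln f}$ denotes the mean value of $\ln f$ on $N_T$; this coincides with \eqref{E:1.3} precisely when $\int_{N_T}\ln f\,dV_T=0$, which is the normalization under which the statement must be read (the left-hand side is invariant under the above rescaling while the stated right-hand side is not, and in the equality case $\ln f$ is a $\lambda_1$-eigenfunction, hence automatically has zero mean). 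So either state this normalization explicitly or keep the mean-corrected form; proving the inequality ``after normalizing $f$'' does not yield \eqref{E:1.3} for the original $f$ when $\int_{N_T}\ln f\,dV_T\neq0$. Your equality discussion is correct in outline, provided you verify that the only terms discarded before integration are $||\sigma(\mathcal D,\mathcal D)||^2$ and $||\sigma(\mathcal D^\perp,\mathcal D^\perp)||^2$ (which gives condition (2)), with condition (1) coming from saturation of the spectral step; the converse implication should also be checked.
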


The following example shows that Theorems \ref{T:10.3} and \ref{T:10.4}  are sharp.

\begin{example} {\rm Let $\iota_1$ be the identity map of $CP^h$ and let $$\iota_2:RP^p\to CP^p$$ be a totally geodesic Lagrangian embedding of $RP^p$ into $CP^p$. Denote by $$\iota=(\iota_1,\iota_2):CP^h\times
RP^p\to CP^h\times CP^p$$ the product embedding of $\iota_1$ and $\iota_2$. 
Moreover, let $S_{h,p}$ be the Segre embedding of  $CP^h\times CP^p$ into $CP^{hp+h+p}$. Then the composition $\phi=S_{h,p}\circ\iota$:
\begin{align}  CP^h &\times RP^p\xrightarrow[\text{totally geodesic}]  {\text{$(\iota_1,\iota_2)$}}  CP^h\times CP^p \xrightarrow[\text{Segre
embedding}] {\text{$S_{h,p}$}} CP^{hp+h+p}\notag\end{align} is a $CR$-warped product in $CP^{h+p+hp}$ whose holomorphic factor $N_T=CP^h$ is a compact manifold. 
 Since the second fundamental form of $\phi$ satisfies the equation: $||\sigma||^2=4hp$, we have the equality case of \eqref{E:1.4} identically. }
\end{example} 
 
The next example shows that the assumption of compactness in Theorems \ref{T:10.3} and \ref{T:10.4} cannot be removed.

\begin{example}{\rm  Let ${\mathbb C}^*={\mathbb C}-\{0\}$ and ${\mathbb C}_*^{m+1}={\mathbb C}^{m+1}-\{0\}$. Denote by $\{z_0,\ldots,z_h\}$   a natural complex coordinate system on ${\mathbb C}^{m+1}_*$.  

Consider the action of
${\mathbb C}^*$ on ${\mathbb C}_*^{m+1}$ given by
$$\lambda\cdot (z_0,\ldots,z_m)=(\lambda z_0,\ldots,\lambda z_m)$$ for $\lambda\in {\mathbb C}_*$.  Let
$\pi(z)$ denote the equivalent class containing $z$ under this action. Then the  set of equivalent classes is the complex projective $m$-space $CP^m$ with the  complex structure induced from the complex structure on ${\mathbb C}^{m+1}_*$. 

For any two natural numbers $h$ and $p$, we define a map: $$\breve \phi:\mathbb C^{h+1}_*\times S^p\to\mathbb C^{h+p+1}_*$$ by
$$\breve \phi(z_0,\ldots,z_h;w_0,\ldots,w_p)=\big(w_0z_0,w_1z_0,\ldots,w_pz_0,z_1,\ldots,z_h\big)$$ 
for $(z_0,\ldots,z_h)$ in $\mathbb C^{h+1}_*$ and $(w_0,\ldots,w_p)$ in $S^{p}$ with $\sum_{j=0}^p w_j^2=1$.

Since the image of $\breve \phi$ is invariant under the action of ${\mathbb C}_*$, the composition:
\begin{align}\pi\circ\breve \phi: {\mathbb C}^{h+1}_*\times S^p\xrightarrow[\text{}] {\text{$\breve \phi$}} {\mathbb C}^{h+p+1}_*\xrightarrow[\text{}] {\text{$\pi$}} CP^{h+p}\notag\end{align} 
induces a $CR$-immersion of the product manifold $N_T\times S^p$ into $CP^{h+p}$, where $$N_T=\big\{(z_0,\ldots,z_h)\in CP^h:z_0\ne 0\big\}$$ is a proper open subset of $CP^h$. Clearly, the induced metric on
$N_T\times S^p$ is a warped product metric and the holomorphic factor $N_T$ is non-compact. 

Notice that the complex dimension of the ambient space is  $h+p$; far less than $h+p+hp$.}
\end{example}

\vskip.2 in

\section{\uppercase{Another optimal inequality for $CR$-warped products}}  

All $CR$-warped products in complex space forms also satisfy another general optimal inequality obtained in \cite{c11}. 

\begin{theorem} \label{T:11.1} Let $N=N_T^h\times_f N_\perp^p$ be a  $CR$-warped product in a complex space form $\tilde M(4c)$ of constant holomorphic sectional curvature $c$. Then we have
\begin{equation}\label{E:11.1}||\sigma||^2\geq 2p\big\{||\nabla (\ln f)||^2+\Delta(\ln f)+2hc\big\}.\end{equation}
  
If the equality sign of \eqref{E:11.1}  holds identically, then $N_T$ is a totally geodesic submanifold and $N_\perp$ is a totally umbilical submanifold. Moreover, $N$ is a minimal submanifold in $\tilde M(4c)$. \end{theorem}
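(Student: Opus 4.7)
The plan is to extract the inequality from the Gauss equation applied to mixed $\mathcal{D}$--$\mathcal{D}^\perp$ planes, combined with the warped-product sectional-curvature formula, and to use the K\"ahler condition to kill an otherwise troublesome cross-term in the computation.

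I fix a local $J$-adapted orthonormal frame $\{X_1,\ldots,X_h,\,JX_1,\ldots,JX_h\}$ for $\mathcal{D}=TN_T$ together with an orthonormal frame $\{Z_1,\ldots,Z_p\}$ for $\mathcal{D}^\perp=TN_\perp$ in the warped metric. From $\tilde\nabla J=0$ and the structural identities underlying Proposition~\ref{P:9.1} (namely $(\nabla_X Y)^{\mathcal{D}^\perp}=0$ and $\sigma(X,Y)\perp J\mathcal{D}^\perp$ for all $X,Y\in\mathcal{D}$), the identity $\sigma(X,JY)=J\sigma(X,Y)$ on $\mathcal{D}$ drops out; consequently $\sigma(JX_i,JX_i)=-\sigma(X_i,X_i)$, and pairing up the frame yields the crucial vanishing of the holomorphic trace
\[ \sum_{k=1}^{2h}\sigma(X_k,X_k)=0. \]
Geometrically, this just reflects that the leaf $N_T$ is a K\"ahler submanifold of the K\"ahler ambient $\tilde M(4c)$ and is hence automatically minimal.

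Next I apply the Gauss equation to each mixed plane $X_i\wedge Z_j$. Since $\mathcal{D}^\perp$ is totally real and $JZ_j\perp\mathcal{D}$, the ambient sectional curvature reduces to $\tilde K(X_i,Z_j)=c$, while the standard warped-product formula gives
\[ K^N(X_i,Z_j)=-\mathrm{Hess}(\ln f)(X_i,X_i)-(X_i\ln f)^2. \]
Summing the Gauss equation over $i$ and $j$ and invoking $\sum_k\sigma(X_k,X_k)=0$ to annihilate the cross term $\sum_{i,j}\langle\sigma(X_i,X_i),\sigma(Z_j,Z_j)\rangle=\bigl\langle\sum_k\sigma(X_k,X_k),\sum_j\sigma(Z_j,Z_j)\bigr\rangle$, I obtain the key identity
\[ \sum_{i,j}||\sigma(X_i,Z_j)||^2=p\bigl\{||\nabla\ln f||^2+\Delta(\ln f)+2hc\bigr\}. \]

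Inequality \eqref{E:11.1} now follows by decomposing
\[ ||\sigma||^2=\sum_{i,j}||\sigma(X_i,X_j)||^2+2\sum_{i,j}||\sigma(X_i,Z_j)||^2+\sum_{j,k}||\sigma(Z_j,Z_k)||^2 \]
and dropping the two non-negative block-diagonal sums. Equality forces each of those to vanish: $\sigma|_{\mathcal{D}\times\mathcal{D}}\equiv 0$ says $N_T$ is totally geodesic in $\tilde M(4c)$, and $\sigma|_{\mathcal{D}^\perp\times\mathcal{D}^\perp}\equiv 0$, combined with the warped-fiber identity $\sigma^{N_\perp\to N}(Z_j,Z_k)=-\delta_{jk}\nabla\ln f$, says $N_\perp$ is totally umbilical in $\tilde M(4c)$ with mean curvature vector $-\nabla\ln f$; together with the automatic vanishing of $\sum_k\sigma(X_k,X_k)$ they then force $N$ to be minimal. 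The real obstacle is the cross term $\sum_{i,j}\langle\sigma(X_i,X_i),\sigma(Z_j,Z_j)\rangle$ left over by the naive Gauss computation; the K\"ahler-forced vanishing of the $\mathcal{D}$-trace is what dispatches it and produces the sharp coefficients on the right-hand side of \eqref{E:11.1}.
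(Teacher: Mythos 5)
Your argument is correct, and it is complete. The two structural facts you invoke do hold for any $CR$-warped product (the leaves of $N_T$ are totally geodesic in $N$, and $\langle\sigma(\mathcal D,\mathcal D),J\mathcal D^\perp\rangle=0$, which is exactly the content of the shape-operator relation behind Proposition \ref{P:9.1}); they give $\sigma(X,JY)=J\sigma(X,Y)$ and hence $\sigma(JX,JY)=-\sigma(X,Y)$ on $\mathcal D$, so the $\mathcal D$-trace of $\sigma$ vanishes. Feeding the Gauss equation on mixed planes, the value $c$ of the ambient curvature on totally real planes (holomorphic sectional curvature $4c$), and the warped-product formula $K(X\wedge Z)=-\mathrm{Hess}\,f(X,X)/f$ into the sum over a full orthonormal frame $e_1,\dots,e_{2h}$ of $\mathcal D$ (it must be the full frame of $2h$ vectors, which is what produces $2hc$ rather than $hc$; your indices waver between $h$ and $2h$ but the intent is clear) yields exactly your identity $\sum_{k,j}\|\sigma(e_k,Z_j)\|^2=p\{\|\nabla\ln f\|^2+\Delta(\ln f)+2hc\}$, and the inequality \eqref{E:11.1} together with the stated equality conclusions ($\sigma(\mathcal D,\mathcal D)=0$ gives total geodesy of $N_T$, $\sigma(\mathcal D^\perp,\mathcal D^\perp)=0$ plus the fiber identity gives total umbilicity of $N_\perp$, and the two vanishing traces give minimality) follow as you say. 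The survey states Theorem \ref{T:11.1} without proof; in the source it cites the estimate is organized by splitting the normal bundle as $J\mathcal D^\perp\oplus\nu$ and treating the two components of $\sigma(\mathcal D,\mathcal D^\perp)$ separately (the $J\mathcal D^\perp$-part giving $p\|\nabla\ln f\|^2$ as in Theorem \ref{T:9.2}, the $\nu$-part via Codazzi-type identities), whereas your single Gauss-equation computation captures the whole mixed block at once; it is shorter and gives the same equality conditions.

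One caution: your key identity is written in the convention $\Delta=\mathrm{div}\,\mathrm{grad}=\mathrm{trace}\,\mathrm{Hess}$, while the survey's Section 2 defines $\Delta\varphi=\sum_j\{(\nabla_{e_j}e_j)\varphi-e_je_j\varphi\}$, the opposite sign; with that definition your computation reads $\|\sigma\|^2\geq 2p\{\|\nabla\ln f\|^2-\Delta(\ln f)+2hc\}$. The sign you use is in fact the one under which \eqref{E:11.1} and the equality classifications of Theorems \ref{T:11.2}--\ref{T:11.4} are consistent (check, e.g., the equality example $\phi=(w_0z_1,\dots,w_pz_1,w_0z_2,\dots,w_pz_2)$ with $f=\sqrt{|z_1|^2+|z_2|^2}$, where $\|\sigma\|^2=6/r^2$), so this is a clash of conventions between sections of the survey rather than a gap in your proof; just state explicitly which Laplacian you mean.
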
 

The following three theorems  from \cite{c11} completely classify all $CR$-warped products which satisfy the equality case of \eqref{E:11.1} identically.

 \begin{theorem}\label{T:11.2}  Let
$\phi:N_T^h\times_f N_\perp^p\to {\mathbb C}^m$ be a  $CR$-warped product in  $\mathbb C^m$. Then we have
\begin{equation}\label{E:5.5}||\sigma||^2\geq 2p\big\{||\nabla(\ln f)||^2+\Delta(\ln f)\big\}.\end{equation}

 The  equality case of \eqref{E:5.5} holds identically if and only if the following four statements hold.
\begin{enumerate}
\item $N_T$ is an open portion of $\mathbb  C^h_*:=\mathbb  C^h-\{0\}$;

\item $N_\perp$ is an open portion of $S^p$;

\item There is $\alpha,\, 1\leq\alpha\leq h,$ and complex Euclidean  coordinates $\{z_1,\ldots,z_h\}$ on $\mathbb C^h$ such that $f=\sqrt{\sum_{j=1}^\alpha z_j\bar z_j}$\,; 

\item Up to rigid motions, the immersion $\phi$ is given by
\begin{equation}\notag\phi=\big(w_0 z_1,\ldots,w_p z_1,\ldots,w_0z_\alpha ,\ldots, w_p z_\alpha ,z_{\alpha
+1},\ldots,z_h,0,\ldots,0\big)\end{equation} for $z=(z_1,\ldots,z_h)\in {\mathbb C}_*^h$ and $\,w=(w_0,\ldots,w_p)\in S^p\subset{\mathbb E}^{p+1}$.
\end{enumerate} \end{theorem}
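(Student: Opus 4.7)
The inequality \eqref{E:5.5} is simply the specialisation of Theorem \ref{T:11.1} to the case $c=0$, so there is nothing new to prove for the bound itself. The substance is in classifying the equality case, and I would proceed as follows.

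First, by the equality part of Theorem \ref{T:11.1}, saturation of \eqref{E:5.5} forces $N_T$ to be a totally geodesic holomorphic submanifold of $\mathbb C^m$, $N_\perp$ to be a totally umbilical totally real submanifold of $\mathbb C^m$, and the whole immersion $\phi$ to be minimal. Hence $N_T$ lies inside a complex linear subspace of $\mathbb C^m$; after an ambient isometry we identify that subspace with $\mathbb C^h\times\{0\}\subset\mathbb C^m$ and pick complex coordinates $z_1,\dots,z_h$ on it. Since $f>0$ on $N_T$, a translation puts $N_T\subset\mathbb C^h_*$, which is statement (i).

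Next I would describe each slice $\{p\}\times N_\perp$ explicitly. Proposition \ref{P:9.1} yields $A_{JZ}X=((JX)\ln f)Z$ for $X\in\mathcal D$, $Z\in\mathcal D^\perp$, so the mean curvature vector of a slice, computed inside $\mathbb C^m$, equals $-\nabla(\ln f)|_p$ and the first normal space along the slice lies in $J(\mathcal D^\perp)$. Combined with total umbilicity of $N_\perp$ from Theorem \ref{T:11.1}, this forces every slice to be an open portion of a round $p$-sphere in a real affine $(p+1)$-subspace of $\mathbb C^m$ orthogonal to $T_pN_T\oplus JT_pN_T$, so after rescaling the fibre metric $N_\perp$ is an open piece of $S^p\subset\mathbb E^{p+1}$, which is statement (ii).

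For (iii) and (iv), since $\nabla(\ln f)$ is horizontal and $N_T$ is flat and totally geodesic, the Codazzi equation combined with $A_{JZ}X=((JX)\ln f)Z$ constrains $\ln f$ to be a highly restricted pluriharmonic function on $\mathbb C^h$. Letting $\alpha$ denote the effective rank of the Hessian form $\partial\bar\partial\ln f$, a unitary change of coordinates on $\mathbb C^h$ brings $\ln f$ into the form $\ln f=\tfrac12\ln\bigl(\sum_{j=1}^\alpha z_j\bar z_j\bigr)$, giving (iii). Integrating the Gauss formula for the position vector of $\phi$ with this data, along each slice the position traces out a round $p$-sphere of radius $f$ in a real affine $(p+1)$-subspace of $\mathbb C^m$ whose location is governed by $z_1,\dots,z_\alpha$, while motion on $N_T$ contributes translations along the remaining complex coordinates $z_{\alpha+1},\dots,z_h$; the fundamental theorem of submanifolds in $\mathbb C^m$ then secures the resulting parametrisation (iv) up to rigid motions. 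The principal obstacle is this combined last step: one must explicitly integrate the coupled Gauss--Codazzi system to both pin down $\alpha$ and $f$ in closed form and recover the stated shape of $\phi$. The key tool throughout is the rigid identity of Proposition \ref{P:9.1}, which collapses Codazzi to a manageable system of ODEs along the $\alpha$ preferred holomorphic directions.
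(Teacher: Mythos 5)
Your reduction of the inequality to Theorem \ref{T:11.1} with $c=0$ is correct, and your overall plan (use the equality conclusions, describe the slices, pin down $f$, integrate to get $\phi$) is the right shape of argument; note that the survey itself only quotes Theorem \ref{T:11.2} from the original article, so the whole classification has to be established, and your write-up leaves precisely the decisive steps undone. The equality assertion of Theorem \ref{T:11.1} gives only necessary conditions ($N_T$ totally geodesic, $N_\perp$ totally umbilical, $N$ minimal); the finer facts you invoke on the way --- that the mean curvature vector of a slice is $-\nabla(\ln f)$, that the first normal space lies in $J\mathcal D^\perp$, the exact structure of $\sigma(\mathcal D^\perp,\mathcal D^\perp)$ --- are not contained in that statement and must be extracted from the way \eqref{E:11.1} is proved: equality annihilates specific components of $\sigma$, and these are different from the ones governed by the first inequality \eqref{E:9.1}, so they cannot simply be borrowed. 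More importantly, the heart of the theorem is exactly what you defer: one must derive from the equality conditions, via Codazzi, the concrete PDE satisfied by $\ln f$ on the totally geodesic $\mathbb C^h$, solve it to show that after a unitary change of coordinates $f=\sqrt{\sum_{j=1}^{\alpha}z_j\bar z_j}$, and then integrate the Gauss formula to recover the explicit parametrisation (iv). Announcing that ``the fundamental theorem of submanifolds secures the parametrisation'' is not a proof of (iii) and (iv), and the converse direction --- that the immersions listed in (i)--(iv) actually attain equality in \eqref{E:5.5} --- is never verified at all.

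In addition, the intermediate description you give is partly incorrect, which signals that the middle of the argument is a guess at its shape rather than a derivation. For $f=\sqrt{\sum_{j=1}^{\alpha}z_j\bar z_j}$ with $\alpha\geq 2$ the function $\ln f$ is \emph{not} pluriharmonic: $\partial\bar\partial\ln f$ is one half of the pullback of the Fubini--Study form under $\mathbb C^{\alpha}_*\to CP^{\alpha-1}$, hence nonzero of complex rank $\alpha-1$; so your simultaneous claims that $\ln f$ is pluriharmonic and that $\alpha$ is the rank of $\partial\bar\partial\ln f$ are mutually inconsistent, and the latter is in any case off by one for the function the theorem actually produces. Likewise, the step from ``$N_\perp$ totally umbilical'' to ``each slice is a round $p$-sphere in an affine $(p+1)$-plane'' is vacuous when $p=1$ (every curve is totally umbilical), so statement (ii) in that case must come from the sharper equality conditions, not from umbilicity; this is exactly the kind of case distinction the genuine proof has to handle.
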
 

 \begin{theorem} \label{T:11.3}   Let $\phi:N_T\times_{f} N_\perp\to CP^m$  be a $CR$-warped product  with $\dim_{\mathbb C} N_T=h$ and $\dim_{\mathbb R} N_\perp=p$. Then we have
\begin{equation}\label{E:9.9}||\sigma||^2\geq 2p\big\{||\nabla (\ln f)||^2+\Delta(\ln f)+2h\big\}.\end{equation}

 The $CR$-warped product  satisfies the equality case of \eqref{E:9.9} identically if and only if the following three statements hold.

\begin{enumerate}
\item[(a)]  $N_T$ is an open portion of  complex projective $h$-space $CP^h$;

\item[(b)]   $N_\perp$ is an open portion of  unit $p$-sphere $S^p$; and

\item[(c)]  There exists a natural number $\alpha\leq h$ such that, up to rigid motions,  $\phi$ is the composition $\pi\circ\breve{\phi}$, where 
\begin{equation}\notag \breve{\phi}(z,w)=\big(w_0 z_0,\ldots,w_p
z_0,\ldots,w_0z_\alpha ,\ldots, w_p z_\alpha ,z_{\alpha +1},\ldots,z_h,0\ldots,0\big)\end{equation} for $z=(z_0,\ldots,z_h)\in
{\mathbb C}_*^{h+1}$ and $\,w=(w_0,\ldots,w_p)\in
S^p\subset{\mathbb E}^{p+1}$, where  $\pi$ is the projection $\pi:{\mathbb C}^{m+1}_*\to CP^m$.
\end{enumerate} \end{theorem}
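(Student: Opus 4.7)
My plan has three phases: establish the pointwise inequality \eqref{E:9.9} via Gauss and O'Neill, extract rigidity from the saturated bound, and integrate the resulting normal form into the explicit parametrization in~(c).

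For the inequality, I would fix $J$-adapted orthonormal frames $\{X_1, JX_1, \dots, X_h, JX_h\}$ of $\mathcal D = TN_T$ and $\{Z_1, \dots, Z_p\}$ of $\mathcal D^\perp = TN_\perp$, and split $\|\sigma\|^2 = \|\sigma_{\mathcal D\mathcal D}\|^2 + 2\|\sigma_{\mathcal D\mathcal D^\perp}\|^2 + \|\sigma_{\mathcal D^\perp\mathcal D^\perp}\|^2$. O'Neill's mixed curvature formula for warped products yields $\sum_{i,k} K(X_i\wedge Z_k) = -p(\Delta\ln f + \|\nabla\ln f\|^2)$, while in $CP^m(4)$ one has $\tilde K(X\wedge Z) = 1$ (since $\langle JX, Z\rangle = 0$); inserting both into the Gauss equation and summing gives
\[
\|\sigma_{\mathcal D\mathcal D^\perp}\|^2 = 2hp + 2hp\langle H_1, H_2\rangle + p\bigl(\Delta\ln f + \|\nabla\ln f\|^2\bigr),
\]
where $H_1, H_2$ are the partial mean curvature vectors on the two factors. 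Proposition~\ref{P:9.1} and its K\"ahler consequences yield $\sigma(\mathcal D, \mathcal D) \subset \nu$ and the normal form $\sigma(X, Z) = -((JX)\ln f)JZ + \sigma'(X, Z)$ with $\sigma'(X, Z) \in \nu$, where $\nu$ is the $J$-invariant complement of $J\mathcal D^\perp$ in $T^\perp M$. Splitting the normal bundle as $\nu \oplus J\mathcal D^\perp$ and applying Cauchy--Schwarz inside the $\nu$-direction absorbs the cross term $4hp\langle H_1, H_2\rangle$ and produces \eqref{E:9.9}. Equality then saturates every intermediate step, forcing $\sigma(\mathcal D, \mathcal D) = 0$, $\sigma'(X, Z) = 0$ and $\sigma(Z, W) = \langle Z, W\rangle H_2$ with $H_2 = -\nabla\ln f$ (the value of $H_2$ is pinned down by a Codazzi computation using the warped-product formula for $\nabla_Z W$). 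Hence $N_T$ is a totally geodesic holomorphic submanifold of $CP^m$, i.e.\ an open portion of a linearly embedded $CP^h$, giving~(a); and $N_\perp$ is a totally umbilical totally real submanifold with a horizontal umbilical mean-curvature vector of constant norm, forcing it to be an open portion of a round $p$-sphere, giving~(b).

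For~(c), I would lift $\phi$ through the Hopf projection $\pi\colon \mathbb C^{m+1}_* \to CP^m$ to a map $\breve\phi$ and choose homogeneous coordinates $(z_0, \dots, z_h)$ on $CP^h \supset N_T$ adapted to $\ln f$: let $\alpha \leq h$ be the smallest integer such that, after a unitary rotation, $\nabla\ln f \in \mathrm{span}_{\mathbb C}\{\partial_{z_0}, \dots, \partial_{z_\alpha}\}$, normalizing the lift so that $f^2 = \sum_{j=0}^\alpha z_j \bar z_j$. The normal-form equation $\sigma(X, Z) = -((JX)\ln f)JZ$ then rewrites as a first-order Segre-type tensor-product equation for $\breve\phi$, nontrivial on the active block $(z_0, \dots, z_\alpha)$ and trivial on the passive block $(z_{\alpha+1}, \dots, z_h)$; integrating along $N_\perp \subset S^p \subset \mathbb E^{p+1}$ yields precisely the formula stated in~(c). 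The converse is a closed-form computation of $\|\sigma\|^2$ and of $\|\nabla\ln f\|^2 + \Delta\ln f$ for the explicit $\pi\circ\breve\phi$, verifying equality. The \emph{hard part} is this last phase: extracting the invariant $\alpha$ from the abstract equality data and integrating the Segre-type equation in the presence of the passive directions $(z_{\alpha+1}, \dots, z_h)$, which do not appear in the analogous Theorem~\ref{T:9.4} because the cruder bound \eqref{E:9.1} imposes more stringent pointwise constraints than \eqref{E:9.9} does.
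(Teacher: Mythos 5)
The survey states Theorem \ref{T:11.3} without proof (it is quoted from \cite{c11,c03}), so I am judging your plan on its own merits. Your Gauss-equation identity for the mixed part is correct (with $\Delta$ understood as $\operatorname{div}\operatorname{grad}$, i.e.\ the sign opposite to the survey's (2.6) --- worth flagging, since otherwise the classification in (c) contradicts the inequality), but the step that is supposed to finish the inequality fails: Cauchy--Schwarz cannot ``absorb'' the cross term $4hp\langle H_1,H_2\rangle$, because all you have to spend is $\|\sigma_{\mathcal D\mathcal D}\|^2+\|\sigma_{\mathcal D^\perp\mathcal D^\perp}\|^2\geq 2h|H_1|^2+p|H_2|^2\geq 2\sqrt{2hp}\,|H_1||H_2|$, which is strictly weaker than $4hp|H_1||H_2|$ as soon as $2hp>1$. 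What actually kills the cross term is a structural fact you never invoke: each leaf $N_T\times\{q\}$ is totally geodesic in $M$ and holomorphic in $CP^m$, so its second fundamental form is $\sigma|_{\mathcal D\times\mathcal D}$, which (since $\sigma(\mathcal D,\mathcal D)\subset\nu$) satisfies $\sigma(JX,JY)=-\sigma(X,Y)$; hence $\operatorname{trace}h_1\equiv 0$ and the cross term vanishes identically. With that, the inequality drops out of your identity, and equality holds exactly when $\sigma(\mathcal D,\mathcal D)=0$ and $\sigma(\mathcal D^\perp,\mathcal D^\perp)=0$ --- nothing more.

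This exposes the second, more damaging error: equality does \emph{not} force $\sigma'(X,Z)=0$. The $\nu$-component of the mixed second fundamental form is precisely what produces the extra $2h$ in \eqref{E:9.9} through the Gauss equation, and it is nonzero on most of the equality class: for constant $f$ the equality case reads $\|\sigma\|^2=4hp$, realized by the Segre-type immersions of Example 10.1, whose entire second fundamental form lies in $\nu$. If you integrate the normal form $\sigma(X,Z)=-((JX)\ln f)JZ$ with no $\nu$-part, you land in the equality class of the cruder inequality \eqref{E:9.1}, i.e.\ the family of Theorem \ref{T:9.4}, not the partial-Segre family in (c); so your phase 3, as written, cannot produce the stated classification. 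The genuinely hard step --- determining the $\nu$-component of $\sigma(\mathcal D,\mathcal D^\perp)$ (via Codazzi and the ambient curvature of $CP^m$) and integrating the resulting system through the Hopf lift, which is where the invariant $\alpha$ and the passive block $(z_{\alpha+1},\dots,z_h)$ really come from --- is exactly the step your normal form deletes. Two smaller points: $-\nabla\ln f$ is tangent to $M$, so ``$\sigma(Z,W)=\langle Z,W\rangle H_2$ with $H_2=-\nabla\ln f$'' conflates $\sigma$ with the second fundamental form of the fiber in $CP^m$ (the correct conclusion is $\sigma(\mathcal D^\perp,\mathcal D^\perp)=0$, the umbilicity of the fiber coming from the warped-product term); and $\|\nabla\ln f\|$ is not constant on the equality class (e.g.\ $f=|z_0|/|z|$), so the argument you give for (b) does not stand as stated.
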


 \begin{theorem} \label{T:11.4}  Let $\phi:N_T\times_{f} N_\perp\to CH^m$  be a $CR$-warped product  with
$\dim_{{\mathbb C}} N_T=h$ and $\dim_{{\mathbb R}} N_\perp=p$. Then we have
\begin{equation}\label{E:9.11}||\sigma||^2\geq 2p\big\{||\nabla (\ln f)||^2+\Delta(\ln f)-2h\big\}.\end{equation}

 The $CR$-warped product satisfies the equality case of \eqref{E:9.11} identically if and only if the following three statements hold.

\begin{enumerate}
\item[(a)]   $N_T$ is an open portion of  complex hyperbolic $h$-space $CH^h$;

\item[(b)]  $N_\perp$ is an open portion of  unit $p$-sphere $S^p$ $($or {$\mathbb R$}, when $p=1$$)$; and

\item[(c)]  up to rigid motions, $\phi$ is the composition $\pi\circ\breve{\phi}$, where either $\breve\phi$ is  given by
\begin{equation}\notag\breve{\phi}(z,w)=\big(z_{0},\ldots,z_{\beta},w_0 z_{\beta+1},\ldots,w_p z_{\beta+1},\ldots,w_0z_h ,\ldots, w_p z_h ,0\ldots,0\big)\end{equation} for  $0<\beta\leq h$,  $z=(z_0,\ldots,z_h)\in {\mathbb C}_{*1}^{h+1}$ and $\,w=(w_0,\ldots,w_p)\in S^p$, or $\breve \phi$ is given by
\begin{align} \notag&\breve{\phi}(z,u)=\big(z_0\cosh u,z_0\sinh u,z_1\cos
u,z_1\sin u,\ldots,\ldots,z_\alpha\cos u,z_\alpha\sin u,\\&\hskip.9inz_{\alpha+1},\ldots,z_h,0\ldots,0\big) \notag\end{align}
 for $z=(z_0,\ldots,z_h)\in {\mathbb C}_{*1}^{h+1}$, where  $\pi$ is the projection $\pi:{\mathbb C}^{m+1}_{*1}\to CH^m(-4)$.
\end{enumerate} \end{theorem}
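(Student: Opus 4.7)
The plan is to mirror the scheme used for Theorems \ref{T:11.2} and \ref{T:11.3}, with the new ingredient being the indefinite signature of the ambient pseudo-Hermitian model. We realize $CH^m(-4)$ as the projectivization $\pi : \mathbb{C}^{m+1}_{*1} \to CH^m$, where $\mathbb{C}^{m+1}_{*1}$ denotes the anti--de Sitter--type hypersurface in $\mathbb{C}^{m+1}$ defined by the Hermitian form of signature $(1,m)$. The first step is to apply Theorem \ref{T:11.1} with $c=-1$: equality in \eqref{E:9.11} forces $N_T$ to be a totally geodesic holomorphic submanifold of $CH^m$, $N_\perp$ to be a totally umbilical totally real submanifold, and the whole $N_T\times_f N_\perp$ to be minimal. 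Since the only totally geodesic complex submanifolds of $CH^m$ are the linearly embedded copies of $CH^h$, conclusion (a) follows.

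Next, I would horizontally lift the immersion via $\pi$ to a map $\breve \phi : N_T\times_f N_\perp \to \mathbb{C}^{m+1}_{*1}$ and exploit Proposition \ref{P:9.1} (with $\mu=\ln f$) together with the total geodesy of $N_T$. The total geodesy forces $\breve\phi$ to be linear in the $N_T$ variable, modulo a vertical fibration correction, while the totally umbilical condition on $N_\perp$ with mean-curvature vector $-\nabla(\ln f)$ constrains the $N_\perp$-tangent components of $\breve\phi$ to lie in a fixed affine subspace spanned by a single complex axis vector $a\in\mathbb{C}^{m+1}$. Integrating the resulting linear ODE along $N_\perp$ then reduces the classification to understanding the causal character of $a$ with respect to the indefinite Hermitian form.

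The third step is the explicit integration by case analysis on the sign of $\langle a,a\rangle$. When $a$ is spacelike, the structure equations integrate to yield an orbit $(w_0,\ldots,w_p)\in S^p\subset \mathbb{E}^{p+1}$; the parameter $\beta$ in conclusion (c) arises as the complex codimension of the span of the tangential component of $a$ within $N_T$, producing the first parametrization. When $a$ is timelike, the analogous integration replaces $(\cos u,\sin u)$ on a spacelike two-plane by $(\cosh u,\sinh u)$ on a hyperbolic pair, and one checks that the signature $(1,m)$ forces this case to occur only with $p=1$, because a higher-dimensional $N_\perp$ would require more than one independent timelike direction in $\mathbb{C}^{m+1}_{*1}$. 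Verifying that the induced metric on $N_\perp$ is $f^2 g_{S^p}$ (respectively $f^2 du^2$) and that isometries of $\mathbb{C}^{m+1}_{*1}$ preserving the Hermitian form descend to rigid motions of $CH^m$ then establishes both the formulas and the rigidity up to rigid motions.

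The main obstacle will be the causal-character dichotomy in the third step. Unlike in the proofs of Theorems \ref{T:11.2} and \ref{T:11.3}, where the positive-definite ambient Hermitian form admits only one normal form for $a$, here one must combine the Gauss and Codazzi equations with the umbilicity and minimality data to rule out null axis directions and to force the timelike case to collapse to $p=1$. Matching the parameters $\alpha$ and $\beta$ to the orthonormalization of the ambient Hermitian basis, and keeping track of the $\mathbb{C}^{m+1}_{*1}$-normalization $\langle z,z\rangle=-1$ throughout the integration, is where the bookkeeping will be most delicate.
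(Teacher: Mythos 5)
This survey states Theorem \ref{T:11.4} without proof, quoting it from \cite{c11} (see also \cite{c03}), so your outline can only be measured against the cited argument. In broad strokes you do follow it: the inequality is Theorem \ref{T:11.1} with $c=-1$; the equality case is attacked by passing to the pseudo-Hermitian model $\pi:{\mathbb C}^{m+1}_{*1}\to CH^m$, lifting the immersion horizontally, and integrating an explicit system; and conclusion (a) via the rigidity of totally geodesic complex submanifolds of $CH^m$ is correct.

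The genuine gap is in the decisive classification step, which you assert rather than carry out, and whose structure you misdescribe. First, the coarse consequences of Theorem \ref{T:11.1} (total geodesy of $N_T$, umbilicity of $N_\perp$, minimality) are not enough to integrate the lift: one needs the full strength of equality in \eqref{E:9.11}, namely $\sigma(\mathcal D,\mathcal D)=0$, the Proposition \ref{P:9.1} shape of $\sigma(\mathcal D,\mathcal D^\perp)$ with $\mu=\ln f$, and the precise $J\mathcal D^\perp$-valued form of $\sigma(\mathcal D^\perp,\mathcal D^\perp)$ with vanishing component in the complementary normal subbundle; you never derive these, yet they are what produce the second-order PDE system for $\breve\phi$. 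Second, the reduction to ``a single complex axis vector $a$'' whose causal character decides the case cannot match the answer: in the second normal form the timelike coordinate $z_0$ carries a $(\cosh u,\sinh u)$ pair while $\alpha$ spacelike coordinates simultaneously carry $(\cos u,\sin u)$ pairs, so the dichotomy is blockwise, governed by the signs occurring in the ODE for the lift along $N_\perp$ (equivalently by whether the negative-definite direction enters the blocks attached to $\nabla\ln f$), not by the causal type of one vector. Third, the two hardest points --- excluding null blocks and showing that a hyperbolic block forces $p=1$ --- are exactly where you write ``one checks'' and appeal loosely to ``only one timelike direction''; that counting is the crux and needs the Gauss--Codazzi/umbilicity data in the signature-$(2,2m)$ ambient space to be made into an argument. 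Finally, the ``if and only if'' also requires verifying that both displayed families do satisfy equality, which your plan omits. As it stands, this is a plausible road map whose core steps remain to be proved.
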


\vskip.2 in

\section{\uppercase{Warped Product $CR$-submanifolds and $\delta$-invariants}}

Let $N$ be a $CR$-submanifold of a K\"ahler manifold with holomorphic distribution $\mathcal D$ and totally real distribution $\mathcal D^\perp$. 
We define the {\it $CR$ $\delta$-invariant} $\delta(\mathcal D)$ of $N$ by
\begin{align}\label{4.1} \delta(\mathcal D)(x)=\tau(x)-\tau(\mathcal D_x),\end{align}
where $\tau$ and $\tau(\mathcal D)$ are the scalar curvature of $N$ and the scalar curvature of the holomorphic distribution $\mathcal D\subset TN$, respectively (see \cite{c11,c12} for details).

The following result from \cite{c12} provides a general optimal inequality involving the $CR$ $\delta$-invariant for $CR$-warped submanifolds in complex space forms.

\begin{theorem} \label{T:12.1} Let $N=N^T\times_f N^\perp$ be a $CR$-warped product in a complex space form $\tilde M^{h+p}(4c)$ with $h=\dim_{C}N^T\geq 1$ and $p=\dim N^\perp\geq 2$. Then 
\begin{align}\label{12.1} H^2\geq \frac{2(p+2)}{(2h+p)^2(p-1)}\! \left\{\delta(\mathcal D)-\frac{p\Delta f}{f}-\frac{p(p-1)c}{2}\right\},\end{align}
where $\Delta f$ is the Laplacian of the warping function $f$ and $H^{2}$ is the squared mean curvature.

The equality sign of \eqref{12.1} holds at a point $x\in N$ if and only if there exists an orthonormal basis $\{e_{2h+1},\ldots,e_n\}$ of $\mathcal D_x^\perp$ such that the coefficients of the second fundamental $\sigma$ with respect to $\{e_{2h+1},\ldots,e_n\}$ satisfy
\begin{equation}\begin{aligned}\notag &\sigma^r_{rr}=3\sigma^r_{ss}, \hskip.2in {\rm for}\;\;  2h+1\leq r\ne s\leq 2h+p,\\& \sigma^r_{st}=0,\hskip .4in {\rm for \; distinct}\; \; r,s,t\in \{2h+1,\ldots,2h+p\}.\end{aligned}\end{equation}
\end{theorem}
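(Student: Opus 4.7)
My plan is to combine three ingredients: a warped-product identity for mixed sectional curvatures, the Gauss equation in a complex space form, and a Chen-type algebraic optimization on the components of the second fundamental form. The first step is to exploit the warped-product structure. With an orthonormal frame $\{e_{1},\ldots,e_{2h}\}$ of $\mathcal{D}$ adapted so that $e_{2\alpha}=Je_{2\alpha-1}$, and $\{e_{2h+1},\ldots,e_{2h+p}\}$ of $\mathcal{D}^{\perp}$, the standard warped-product formula $\mathrm{Ric}(X,X)=\mathrm{Ric}_{N^{T}}(X,X)-(p/f)\mathrm{Hess}(f)(X,X)$ for $X\in\mathcal{D}$ sums to
\begin{equation*}
\sum_{i=1}^{2h}\sum_{r=2h+1}^{2h+p} K(e_{i},e_{r}) \;=\; \frac{p\,\Delta f}{f}.
\end{equation*}
Combined with the orthogonal decomposition $\tau=\tau(\mathcal{D})+\tau(\mathcal{D}^{\perp})+\sum_{i,r}K(e_{i},e_{r})$, this yields $\delta(\mathcal{D})=\tau(\mathcal{D}^{\perp})+p\Delta f/f$, so the asserted inequality is equivalent to
\begin{equation*}
\tau(\mathcal{D}^{\perp})-\tfrac{1}{2}p(p-1)c \;\leq\; \frac{(2h+p)^{2}(p-1)}{2(p+2)}\,H^{2}.
\end{equation*}

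Next, I would apply the Gauss equation to $\tau(\mathcal{D}^{\perp})$. Because $\mathcal{D}^{\perp}$ is totally real in $\tilde M(4c)$, we have $\langle e_{r},Je_{s}\rangle=0$ for $r,s>2h$, so each such 2-plane has ambient sectional curvature exactly $c$; hence
\begin{equation*}
\tau(\mathcal{D}^{\perp}) \;=\; \tfrac{1}{2}p(p-1)c + \sum_{\eta}\sum_{2h<r<s}\bigl(\sigma^{\eta}_{rr}\sigma^{\eta}_{ss}-(\sigma^{\eta}_{rs})^{2}\bigr).
\end{equation*}
The proof is thereby reduced to the purely algebraic statement
\begin{equation*}
\sum_{\eta}\Bigl[\,\sum_{r<s}\sigma^{\eta}_{rr}\sigma^{\eta}_{ss}-\sum_{r<s}(\sigma^{\eta}_{rs})^{2}\Bigr] \;\leq\; \frac{p-1}{2(p+2)}\sum_{\eta}(T^{\eta})^{2},
\end{equation*}
where $T^{\eta}=\sum_{i=1}^{2h+p}\sigma^{\eta}_{ii}$ is the full trace of the shape operator $A_{\eta}$.

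The main obstacle is the sharpness of the factor $\tfrac{p-1}{2(p+2)}$. The elementary bound $\sum_{r<s}a_{r}a_{s}\leq\tfrac{p-1}{2p}(\sum_{r}a_{r})^{2}$ is too weak to produce the $(p+2)$, so one must instead invoke the Chen-type algebraic optimization lemma of \cite{c11}, which handles the constrained extremum of $\sum_{r<s}a_{r}a_{s}$ when $\sum_{r}a_{r}$ is coupled to an auxiliary variable (here, the $\mathcal{D}$-part $\sum_{i\leq 2h}\sigma^{\eta}_{ii}$ of the trace). A Lagrange-multiplier analysis on that lemma produces a Whitney-type extremum $a_{r_{0}}=3a_{s}$ ($s\neq r_{0}$) for a distinguished index $r_{0}\in\{2h+1,\ldots,2h+p\}$; the $(p+2)$ then appears as the trace $3+(p-1)=p+2$ of the resulting $\mathrm{diag}(3,1,\ldots,1)$ diagonal pattern (the same $3$-to-$1$ ratio that characterizes the Whitney sphere in Example~2.2). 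Unwinding the equality case back through the Gauss step yields precisely the two conditions $\sigma^{r}_{rr}=3\sigma^{r}_{ss}$ and $\sigma^{r}_{st}=0$ (for distinct $r,s,t>2h$) stated in the theorem. The delicate point is the bookkeeping that couples the $\mathcal{D}$-trace contribution to $T^{\eta}$ with the $\mathcal{D}^{\perp}$-block and controls the interaction with normal directions orthogonal to $J\mathcal{D}^{\perp}$, so that the Cauchy--Schwarz step against $\sum_{\eta}(T^{\eta})^{2}$ closes with the sharp constant.
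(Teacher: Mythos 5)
Your first two reductions are correct and are exactly the opening of the actual argument: the warped-product identity $\sum_{i=1}^{2h}K(e_i\wedge Z)=\Delta f/f$ for unit $Z\in\mathcal D^\perp$ gives $\delta(\mathcal D)-p\Delta f/f=\tau(\mathcal D^\perp)$, and the Gauss equation (totally real $2$-planes have ambient curvature $c$ in $\tilde M(4c)$) turns the theorem into an estimate on the $\mathcal D^\perp\times\mathcal D^\perp$ block of $\sigma$. The gap is in the closing step. The ``purely algebraic statement'' you reduce to is false for unconstrained coefficients: take a single normal direction $\eta$ with $\sigma^\eta_{ii}=0$ for $i\le 2h$, $\sigma^\eta_{rs}=0$ for $r\ne s$ and $\sigma^\eta_{rr}=a$ for all $r>2h$; then the left side is $\tfrac{p(p-1)}{2}a^2$ while the right side is $\tfrac{(p-1)p^2}{2(p+2)}a^2$, which is strictly smaller. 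Consequently no optimization of unconstrained real numbers --- in particular not the $\delta$-invariant maximization lemma of \cite{c11}, whose sharp constants are of the type $\tfrac{p-1}{2p}$ --- can produce the factor $\tfrac{p-1}{2(p+2)}$, and your ``auxiliary variable'' (the $\mathcal D$-part of the trace) is a red herring, since it vanishes identically.

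What actually closes the proof are three structural facts you never state or use: (i) because $\dim_{\mathbb C}\tilde M=h+p$, the codimension equals $p$, so the normal bundle is exactly $J\mathcal D^\perp$ and there are no normal directions ``orthogonal to $J\mathcal D^\perp$'' to control; (ii) for a $CR$-warped product in a K\"ahler manifold one has $\langle\sigma(X,Y),JZ\rangle=0$ for $X,Y\in\mathcal D$, $Z\in\mathcal D^\perp$ (cf.\ \cite{c7}), so in each normal direction $Je_t$ the full trace reduces to the $\mathcal D^\perp$-trace, whence $n^2H^2=\sum_t\bigl(\sum_s\sigma^t_{ss}\bigr)^2$ with $n=2h+p$; and (iii) the components $\sigma^r_{st}=\langle\sigma(e_s,e_t),Je_r\rangle$ with $r,s,t>2h$ are totally symmetric in $r,s,t$ (Lagrangian-type cubic form). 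Granting (i)--(iii), your reduced inequality is equivalent to the sharp Whitney-type inequality for totally symmetric tensors, $\sum_{r,s,t}(\sigma^r_{st})^2\ge\frac{3}{p+2}\sum_r\bigl(\sum_s\sigma^r_{ss}\bigr)^2$, and it is this total symmetry --- not a coupling to the $\mathcal D$-trace --- that produces the $p+2$; its equality case, after an adapted choice of orthonormal basis of $\mathcal D^\perp_x$, is precisely the pattern $\sigma^r_{rr}=3\sigma^r_{ss}$, $\sigma^r_{st}=0$ stated in the theorem (this is the route taken in \cite{c12}). Without (i)--(iii) and a proof (or correct citation) of the symmetric-tensor lemma, your argument establishes neither the inequality nor the equality characterization.
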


All $CR$-warped products in ${\mathbb C}^{h+p}$ satisfying the equality case of \eqref{12.1} identically are completely classified in \cite{c12} as follows.

\begin{theorem} \label{T:12.2} Let $\psi:N^T\times_f N^\perp\to {\mathbb C}^{h+p}$ be a $CR$-warped product in ${\mathbb C}^{h+p}$ with $h=\dim_{C}N^T\geq 1$ and $p=\dim N^\perp\geq 2$. Then 
\begin{align}\label{12.2} H^2\geq \frac{2(p+2)}{(2h+p)^2(p-1)}\! \left\{\delta(\mathcal D)-\frac{p\Delta f}{f}\right\}.\end{align}

The equality sign of \eqref{12.2} holds identically if and only if, up to dilations and rigid motions of ${\mathbb C}^{h+p}$, one of the following three cases occurs: 
\vskip.04in 
\begin{enumerate}
\item[{\rm (a)}] The $CR$-warped product is an open part of the CR-product ${\mathbb C}^h\times W^p\subset {\mathbb C}^h\times {\mathbb C}^p$, where $W^{p}$ is the Whitney $p$-sphere in ${\mathbb C}^{p}$;
\vskip.04in 

\item[{\rm (b)}] $N^{T}$ is an open part of ${\mathbb C}^h$,  $N^{\perp}$ is an open part of
the unit $p$-sphere $S^p$, $f=|z_{1}|$ and $\psi$ is the minimal immersion defined by
\begin{equation}\begin{aligned}\notag &\big(  z_1w_0,\,\cdots, z_1w_{p},z_{2},\ldots,z_{h}\big),\end{aligned}\end{equation}
 where $z=(z_1,\ldots,z_h)\in {\mathbb C}^h$ and $ w=(w_0,\ldots,w_p)\in
S^p\subset {\mathbb E}^{p+1}$;
 
 \item[{\rm (c)}] $N^{T}$ is an open part of ${\mathbb C}^h$,  $N^{\perp}$ is the warped product of a curve and an open part of $S^{p-1}$ with warping function $\varphi=\frac{\sqrt{c^{2}-1}}{\sqrt{2}}{\rm cn}\big(c\,t, \tfrac{\sqrt{c^{2}-1}}{\sqrt{2}c}\big), c>1$,  $f=|z_{1}|$, and $\psi$ is the non-minimal immersion defined by
\begin{equation}\begin{aligned}\notag & \hskip.3in
\left(\!z_{1}  e^{ \int \frac{\varphi(\varphi' +{\rm i}k \varphi^{2})}{\varphi^{2}-1}dt},
 z_{1} \varphi e^{{\rm i}k\! \int \! \varphi dt}
w_{1}, \cdots
 z_{1} \varphi e^{{\rm i}k\! \int \! \varphi dt}w_{p},z_{2},\ldots,z_{h}\!\right)\! ,\end{aligned}\end{equation}
with  $z=(z_1,\ldots,z_h)\in {\mathbb C}^h$,  $(w_{1},\ldots,w_{p})\in S^{{p-1}}(1)\subset {\mathbb E}^{p}$, and  $k={\sqrt{c^{4}-1}}/{2}$.
 
\end{enumerate}\end{theorem}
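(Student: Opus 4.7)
The plan is to exploit the equality condition of Theorem \ref{T:12.1} specialized to $c=0$ together with the rigid structure forced by the $CR$-warped product geometry via Proposition \ref{P:9.1}. Since \eqref{12.2} is \eqref{12.1} with $c=0$, identically attained equality along $N$ means the pointwise equality condition of Theorem \ref{T:12.1} holds at every point: with respect to a suitable orthonormal frame $\{e_{2h+1},\ldots,e_{2h+p}\}$ of $\mathcal{D}^\perp$, the coefficients of $\sigma$ satisfy $\sigma^r_{rr}=3\sigma^r_{ss}$ and $\sigma^r_{st}=0$ for distinct $r,s,t$. Since the codimension of $\psi$ is exactly $h+p$, a first consequence is that $\sigma(\mathcal{D}^\perp,\mathcal{D}^\perp)$ takes values entirely in $J\mathcal{D}^\perp$; this is already an $H$-umbilicity-type condition on the totally real slice, reminiscent of \eqref{2.12} with $\lambda=3\mu$ (compare Example~2.2).

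Next, combine this with Proposition \ref{P:9.1}, which gives $A_{JZ}X=((JX)\ln f)Z$ for $X\in\mathcal{D}$, $Z\in\mathcal{D}^\perp$. This determines the shape operators along $J\mathcal{D}^\perp$ on the $\mathcal{D}$-part in terms of $\nabla\ln f$ on $N^T$, while the three-index relations above determine the shape operators along $J\mathcal{D}^\perp$ on the $\mathcal{D}^\perp$-part in terms of a single scalar function $\varphi$ per unit direction in $\mathcal{D}^\perp$. Feeding these expressions into the Codazzi equations of $\mathbb{C}^{h+p}$ decouples the system into (i) a structural equation on $N^T$ for $\ln f$ and (ii) an ODE along integral curves of $\mathcal{D}^\perp$ for $\varphi$ together with a phase function $\theta$ describing the rotation of the principal normal inside $J\mathcal{D}^\perp$.

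Solving (i) shows, in the nontrivial regime, that $N^T$ is flat (hence an open part of $\mathbb{C}^h$) and that $f=|z_1|$ for a suitable complex Euclidean coordinate system $(z_1,\ldots,z_h)$. The trivial regime where $\nabla\ln f\equiv 0$ on $N^T$ reduces the problem to classifying totally real $H$-umbilical submanifolds of $\mathbb{C}^p$ satisfying $\lambda=3\mu$, which by Example~2.2 forces $N^\perp$ to be the Whitney sphere $W^p$ and yields case~(a). In the nontrivial regime, the ODE (ii) for $\varphi$ is of the form $\varphi''=$ (cubic in $\varphi$), whose first integral is elliptic; the degenerate constant-$\varphi$ solution $\varphi\equiv 1$ forces $\theta\equiv 0$ (so $k=0$) and produces the explicit minimal immersion of case~(b), obtained geometrically by coning the unit sphere $S^p$ over the complex line spanned by $z_1$. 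The generic nonconstant solution integrates to the Jacobi elliptic function $\mathrm{cn}(ct,\tfrac{\sqrt{c^2-1}}{\sqrt{2}\,c})$ with $c>1$, the associated phase integrates to $e^{ik\int\varphi\,dt}$ with $k=\sqrt{c^4-1}/2$, and Nölker's warped product immersion theorem (Theorem~2.5) reconstructs $\psi$ in the explicit form of case~(c).

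The main obstacle is this last integration step: deriving from the Codazzi system the precise cubic ODE for $\varphi$, identifying its first integral, and normalizing it so as to match the Jacobi elliptic parameters in the statement exactly. The interplay between $\varphi$ and the phase factor $e^{ik\int\varphi\,dt}$, which encodes a choice of framing within $J\mathcal{D}^\perp$, requires careful bookkeeping; in particular, showing that the constant $k$ is forced to equal $\sqrt{c^4-1}/2$ (rather than a free parameter) is a calculational rather than conceptual difficulty, arising from matching the Codazzi equation components normal to the principal normal direction with those along it.
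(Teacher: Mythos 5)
First, a point of reference: this survey does not prove Theorem \ref{T:12.2} at all; it only records the classification from \cite{c12}, so your proposal can only be measured against the expected argument of that source. Your overall strategy is the natural one and surely parallels it: identical equality in \eqref{12.2} is the $c=0$ equality of \eqref{12.1}, hence the pointwise frame condition of Theorem \ref{T:12.1}; this is to be combined with $A_{JZ}X=((JX)\ln f)Z$ from Proposition \ref{P:9.1}, the fact that the ambient complex dimension is exactly $h+p$ (so the normal bundle is $J\mathcal D^{\perp}$ — note the real codimension is $p$, not $h+p$ as you wrote, and $\sigma(\mathcal D,\mathcal D)=0$ is then automatic from Proposition \ref{P:9.1}, with or without equality), and the Codazzi equations, followed by integration. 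But as a proof the proposal has genuine gaps. The decisive steps are only announced, not performed: the argument that $f=|z_1|$ (rather than, say, $\sqrt{\sum_{j\leq\alpha}z_j\bar z_j}$ as occurs in the equality case of Theorem \ref{T:11.2}), the derivation of the precise ODE for $\varphi$ and its elliptic first integral, and the forcing of $k=\sqrt{c^4-1}/2$ are exactly the content of cases (b) and (c), and you yourself flag them as "the main obstacle." A plan that defers these is not yet a proof.

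Two specific steps would also fail as written. In the trivial regime ($\nabla\ln f\equiv0$), the pointwise condition $\sigma^r_{rr}=3\sigma^r_{ss}$, $\sigma^r_{st}=0$ is \emph{not} the $H$-umbilical condition \eqref{2.12} with $\lambda=3\mu$: to relate them you must use the total symmetry of the cubic form $\langle\sigma(Z,W),JV\rangle$ on $\mathcal D^{\perp}$ to rewrite the condition as $\langle\sigma(Z,W),JV\rangle=\mu(Z)\langle W,V\rangle+\mu(W)\langle Z,V\rangle+\mu(V)\langle Z,W\rangle$ for some $1$-form $\mu$, and then invoke the classification of Lagrangian submanifolds of ${\mathbb C}^p$ whose second fundamental form has this conformal shape; that classification also contains the totally geodesic Lagrangian $p$-plane (which does satisfy equality, since then $H=0$, $\delta(\mathcal D)=0$, $\Delta f=0$), so this possibility must be confronted explicitly rather than absorbed into "Example 2.2 forces the Whitney sphere" — the uniqueness statement you quote presupposes $H$-umbilicity, i.e.\ absence of totally geodesic points. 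Second, the appeal to N\"olker's theorem to reconstruct $\psi$ in case (c) is not available: that theorem (Section 2) requires $\sigma(X,Z)=0$ for $X,Z$ tangent to different factors, whereas for a non-trivial CR-warped product Proposition \ref{P:9.1} gives $\langle\sigma(X,Z),JZ\rangle=((JX)\ln f)\,\|Z\|^2\neq0$, so the immersion is not mixed totally geodesic; the explicit immersions in (b) and (c) must instead be obtained by integrating the PDE system for $\psi$ directly, as in the proofs of Theorems \ref{T:9.3} and \ref{T:11.2}.
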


\vskip.1in

\section{\uppercase{Warped product real hypersurfaces in complex space forms}}

We have  the following non-existence theorem from \cite{CM} for Riemannian product real hypersurfaces in complex space forms, .

\begin{theorem}\label{T:13.1} There do not exist real hypersurfaces in complex projective and complex hyperbolic spaces which are Riemannian products of two or more Riemannian manifolds of positive dimension. 

In other words, every real hypersurface in a nonflat complex space form is irreducible.
\end{theorem}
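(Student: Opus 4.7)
The plan is to argue by contradiction: assume $M^{2n-1} = M_1^{n_1} \times M_2^{n_2}$ with $n_1, n_2 \geq 1$ is a Riemannian product real hypersurface of $\tilde M^n(4c)$, $c \neq 0$, and extract incompatible algebraic constraints on the shape operator. Set up the standard almost-contact metric structure on the hypersurface: pick a unit normal $\xi$ with shape operator $A$, and let $U = -J\xi$ be the characteristic vector field, $\eta = g(\cdot, U)$, and $\phi X = JX - \eta(X)\xi$. Then $\phi$ is skew with $\ker \phi = \mathbb{R}U$ and $\phi^2 = -I + \eta \otimes U$. Decompose $U = U_1 + U_2$ with $U_i \in TM_i$.

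The first ingredient exploits the vanishing of mixed sectional curvatures, $K_M(X_1, Y_2) = 0$, for $X_1 \in TM_1$ and $Y_2 \in TM_2$. Inserting the complex-space-form curvature tensor into the Gauss equation (2.5) and specializing to unit $X_1, Y_2$ gives
$$g(AX_1, Y_2)^2 - g(AX_1, X_1)\, g(AY_2, Y_2) = c\bigl(1 + 3 g(X_1, \phi Y_2)^2\bigr), \qquad (\ast)$$
whose right-hand side has fixed sign equal to that of $c$ and absolute value at least $|c|$. Tracing the Gauss equation produces the standard Ricci formula for a real hypersurface of $\tilde M^n(4c)$,
$$\mathrm{Ric}_M(X,Y) = (2n+1)c\, g(X,Y) - 3c\,\eta(X)\eta(Y) + H_0\, g(AX, Y) - g(A^2 X, Y),$$
where $H_0 = \mathrm{tr}\, A$. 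Block-diagonality of the Ricci tensor of a Riemannian product then forces
$$g\bigl((H_0 A - A^2)X_1, Y_2\bigr) = 3c\, \eta(X_1)\eta(Y_2), \qquad (\dagger)$$
so the off-diagonal block of $H_0 A - A^2$ from $TM_1$ to $TM_2$ is the rank-$\leq 1$ operator $X_1 \mapsto 3c\, g(X_1, U_1)\, U_2$.

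The argument is then finished by case analysis on the location of $U$. In the first case, $U$ is tangent to a single factor, say $U \in TM_1$ (so $U_2 = 0$), whence $(\dagger)$ reduces to the Sylvester-type identity $A_{22} A_{21} + A_{21} A_{11} = H_0 A_{21}$ relating the blocks $A_{ij}$ of $A$. Averaging $(\ast)$ over an orthonormal basis $\{f_\beta\}$ of $TM_2$ yields, for every unit $X_1 \in TM_1 \cap U^\perp$,
$$\|A_{21} X_1\|^2 - g(A_{11} X_1, X_1)\, \mathrm{tr}(A_{22}) = c\, n_2 + 3c\, \|P_2 \phi X_1\|^2.$$
The right-hand side has fixed sign $c$ and is bounded below by $|c| n_2 > 0$; a rank count from $(\dagger)$ combined with the parity of $n_1 + n_2 = 2n-1$ (forcing one factor to be odd-dimensional) then produces an $X_1$ for which the left-hand side vanishes, yielding a sign contradiction. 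In the second case, $U_1 \neq 0 \neq U_2$; the rank-one structure of $(\dagger)$ pins down the off-diagonal block of $A$ up to a one-parameter family, and restricting $(\ast)$ to $X_1 \in U_1^\perp \cap TM_1$, $Y_2 \in U_2^\perp \cap TM_2$ kills the $\eta$-terms and reduces to the same sign contradiction.

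The main obstacle will be the rank count in the first case: one must show that the algebraic constraints from $(\dagger)$ and $(\ast)$, possibly combined with the Codazzi equation $(\nabla_X A)Y - (\nabla_Y A)X = c\{\eta(Y)\phi X - \eta(X)\phi Y - 2g(X, \phi Y) U\}$, genuinely force the existence of a test vector $X_1$ annihilating the left-hand side of the averaged identity, rather than allowing the block structure of $A$ to conspire so that the two sides agree. Resolving this requires careful exploitation of the intrinsic rigidity imposed by the complex structure through $\phi$ and the characteristic vector $U$, which has no analogue in the purely Riemannian product setting.
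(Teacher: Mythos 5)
Your preparatory steps are sound: $(\ast)$ is the correct specialization of the Gauss equation to mixed planes, and $(\dagger)$ is the correct consequence of block-diagonality of the Ricci tensor of a product. But the proof stops exactly where the work begins. In your first case the contradiction is supposed to come from exhibiting a unit $X_1\in TM_1\cap U^{\perp}$ at which $\Vert A_{21}X_1\Vert^2-g(A_{11}X_1,X_1)\operatorname{tr}A_{22}$ vanishes, and this is only gestured at through a ``rank count'' and a parity remark before you concede it is the main obstacle; note also that when $U$ spans a one-dimensional factor there are no such test vectors at all. In your second case, $(\dagger)$ constrains the mixed block of $H_0A-A^2$, not of $A$ (one has $(A^2)_{21}=A_{22}A_{21}+A_{21}A_{11}$), so it does not ``pin down'' $A_{21}$ up to a one-parameter family, and the ``same sign contradiction'' it is supposed to reduce to was never established in the first case either. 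So the decisive step is missing in both branches.

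The gap is not a technicality: the pointwise algebraic consequences of the product hypothesis that you use (vanishing mixed sectional curvatures, block-diagonal Ricci --- in fact all Gauss-equation identities) cannot by themselves produce a contradiction. For example, take $TM_1=\mathbb{R}U$, $TM_2=U^{\perp}$, and a shape operator with $AU=-(c/a)U$ and $A=a\,\mathrm{id}$ on $U^{\perp}$ for some $a\neq 0$; since $\phi U=0$, one checks that $(\ast)$, $(\dagger)$ and every mixed-curvature identity coming from the Gauss equation are satisfied at a point by this configuration. What rules it out --- and what the actual proof of this theorem in the cited reference of Chen and Maeda uses, the survey itself giving no proof --- is the first-order content of the product assumption, namely that both factor distributions are parallel, exploited through the hypersurface identity $\nabla_XU=\phi AX$ and the Codazzi equation: if $\mathbb{R}U$ is parallel, then $\phi AX=\nabla_XU$ lies in $\mathbb{R}U$ while being orthogonal to $U$, hence $\phi AX=0$ and $A$ has rank at most one, contradicting $A=a\,\mathrm{id}$ on $U^{\perp}$. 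Your outline invokes the Codazzi equation only parenthetically and never uses the parallelism of $TM_1$ and $TM_2$, so the route as proposed cannot close; the missing idea is precisely to bring in this differential information.
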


A contact manifold is an odd-dimensional manifold $M^{2n+1}$ with  a 1-form $\eta$ such that $\eta\wedge(d\eta)^n\not=0$.  A curve $\gamma=\gamma(t)$ in a contact manifold is called a {\it Legendre curve} if $\eta(\beta'(t))=0$ along $\beta$. Let $S^{2n+1}(c)$ denote the hypersphere in ${\mathbb C}^{n+1}$ with curvature  $c$ centered at the origin. Then $S^{2n+1}(c)$ is a contact manifold endowed with a canonical contact structure which is the dual 1-form of the characteristic vector field $J\xi$, where $J$ is the complex structure and $\xi$ the unit normal vector on $S^{2n+1}(c)$.

 Legendre curves are known to play an important role in the study of contact manifolds, e.g. a diffeomorphism of a contact manifold is a contact transformation if and only if it maps Legendre curves to Legendre curves.

Contrast to Theorem \ref{T:13.1}, there exist many warped product real hypersurfaces in complex space forms as given in the following three theorems from \cite{c02-3}.

\begin{theorem}\label{T:13.2}  Let $a$ be a positive number and $\gamma(t)=(\Gamma_1(t),\Gamma_2(t))$ be a unit speed Legendre curve $\gamma:I\to S^3(a^2)\subset{\mathbb C}^2$ defined on an open interval  $I$.  Then
\begin{align} &\hbox{\bf x}(z_1,\ldots,z_{n},t)=\big(a\Gamma_1(t)z_1, a\Gamma_2(t) z_1,z_2,\ldots,z_n\big),\quad z_1\ne 0\end{align} 
defines a real hypersurface which is the warped product ${\mathbb C}_{**}^{n}\times_{a|z_1|} I$ of a complex $n$-plane and $I$, where ${\mathbb C}_{**}^{n}=\{(z_1,\ldots,z_n):z_1\ne 0\}$.

Conversely, up to rigid motions of ${\mathbb C}^{n+1}$, every  real hypersurface in ${\mathbb C}^{n+1}$ which is the warped product $N\times_f I$ of a complex hypersurface $N$  and an open interval $I$ is either obtained in the way described above or given by the product submanifold ${\mathbb C}^n\times C\subset {\mathbb C}^{n}\times{\mathbb C}^1$ of ${\mathbb C}^{n}$ and a real curve $C$ in ${\mathbb C}^1$.
\end{theorem}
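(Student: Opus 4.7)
The plan is to prove the two halves of the theorem separately.

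For the forward direction I would verify by direct calculation that the induced metric of $\hbox{\bf x}$ is the warped product metric. Regarding $\mathbb{C}^{n+1}$ as a real inner product space via $\langle u,v\rangle_{\mathbb{R}}=\mathrm{Re}\langle u,v\rangle_{\mathbb{C}}$, the three hypotheses on $\gamma$ translate into: $|\gamma(t)|^2_{\mathbb{R}}=1/a^2$, since $\gamma$ lies on the sphere of constant curvature $a^2$ in $\mathbb{C}^2$; $|\gamma'(t)|^2_{\mathbb{R}}=1$ by unit speed; and $\langle\gamma'(t),\gamma(t)\rangle_{\mathbb{C}}=0$, whose real part is the differential of the sphere equation and whose imaginary part is precisely the Legendre condition $\gamma'(t)\perp i\gamma(t)$. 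These identities make the off-diagonal terms $\langle\hbox{\bf x}_t,\hbox{\bf x}_{z_j}\rangle_{\mathbb{R}}$ vanish and yield $|\hbox{\bf x}_t|^2=a^2|z_1|^2$, while the leaf slices $\{t=\mathrm{const}\}$ inherit the standard flat metric on $\mathbb{C}^n_{**}$. Hence the pulled-back metric is $g_{\mathbb{C}^n_{**}}+a^2|z_1|^2\,dt^2$, as required.

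For the converse, let $\iota\colon M=N\times_f I\hookrightarrow \mathbb{C}^{n+1}$ be the immersion, $\xi$ a local unit normal, and $A=A_\xi$ its shape operator. The first observation is that $\xi=(1/f)J\partial_t$ (up to sign), hence $J\xi=-\partial_t/f$: since $N$ is a complex hypersurface, $JTN=TN$, so $J\partial_t\perp TN$; together with $J\partial_t\perp\partial_t$ (antisymmetry of $J$ under $\langle\cdot,\cdot\rangle_{\mathbb{R}}$), the vector $J\partial_t$ must be a scalar multiple of $\xi$, of length $f$. The second observation is that each leaf $N_{t_0}=N\times\{t_0\}$ is totally geodesic in $\mathbb{C}^{n+1}$, so lies in an affine complex hyperplane $L_{t_0}$. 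Indeed, leaves of a warped product are totally geodesic in $M$, so the second fundamental form of $N_{t_0}$ in $\mathbb{C}^{n+1}$ on pairs from $TN$ equals that of $M$ and takes values in $\mathbb{R}\xi$; but $N_{t_0}$ is complex, so $\sigma(X,JY)=J\sigma(X,Y)$, forcing $\sigma_{N_{t_0}}$ to lie simultaneously in $\mathbb{R}\xi$ and in $\mathbb{R}(J\xi)=\mathbb{R}(\partial_t/f)$, which are distinct real lines in the normal bundle of $N_{t_0}$. Hence both vanish.

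I would then split into cases. If $f$ is constant, a Gauss--Weingarten calculation comparing $\tilde\nabla_{\partial_t}\xi=-A\partial_t$ with $\tilde\nabla_{\partial_t}\xi=(1/f)J\tilde\nabla_{\partial_t}\partial_t$ forces $\sigma(X,\partial_t)=0$ for every $X\in TN$. Moore's theorem (cited in Section 2 of the paper) then implies that $\iota$ factors as a product $(\iota_1,\iota_2)$; a dimension count gives $\iota_1\colon N\to\mathbb{C}^n$ an open holomorphic isometry and $\iota_2\colon I\to\mathbb{C}$ a real curve, producing the product alternative $\mathbb{C}^n\times C$. If $f$ is non-constant, I would introduce a smooth moving frame $e(t)$ of unit vectors Hermitian-perpendicular to $L_t$ and write $\partial_t(p,t)=f(p)e^{i\phi(p,t)}e(t)$. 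The Gauss formula identifies the 1-form of $A|_{TN}$ with $d\phi|_{TN}$, and an integrability calculation (using flatness of $\mathbb{C}^{n+1}$) shows that $\phi$ separates as $\phi_1(p)+\phi_2(t)$ and that $e(t)$ traces a curve contained in a fixed complex 2-plane $E\subset\mathbb{C}^{n+1}$. After translating so that the common intersection of the $L_t$ passes through the origin, and rotating so that $E$ is the first two coordinate plane, $\iota$ takes the form given in the statement, with $a\gamma(t):=e(t)$ a unit-speed curve on $S^3(a^2)$; the orthogonality $\langle\gamma',\gamma\rangle_{\mathbb{C}}=0$ derived along the way is precisely the Legendre condition.

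The main obstacle is the non-constant case: one must show that $\bigcup_t\mathbb{C}e(t)$ is confined to a single complex 2-plane and that the phase $\phi(p,t)$ is additively separable. These two facts reduce the global geometry to that of a single Legendre curve in $S^3(a^2)$, after which matching $f$ to $|\partial_t|=a|z_1|$ fixes the warping function. Globalizing the moving frame and verifying that the 2-plane $E$ is common to all leaves is where the main effort resides.
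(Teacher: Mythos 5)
Your forward verification is essentially complete and correct, and the first half of your converse is sound: the identification $\xi=\pm J\partial_t/f$, the argument that each leaf $N\times\{t\}$ is totally geodesic in ${\mathbb C}^{n+1}$ (hence an open piece of an affine complex hyperplane $L_t$), and the constant-$f$ case via Moore's splitting all hold up. Moreover, the separation of the phase is indeed within easy reach of your setup: comparing $\tilde\nabla_X\partial_t$ computed from the Gauss formula with the derivative of $\partial_t=f\,e^{{\rm i}\phi}e(t)$ gives $\sigma(X,\partial_t)=f(X\phi)\xi$ and $X\phi=\left<X,J\nabla\ln f\right>$ for $X$ tangent to the leaves, and the $t$-independence of the right-hand side yields $\phi=\phi_1(p)+\phi_2(t)$.

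The genuine gap is that the decisive part of the non-constant case is asserted, not proved, and one intermediate claim is used without justification. First, confining $e(t)$ to a fixed complex $2$-plane requires the computation you skip: equating $\tilde\nabla_{\partial_t}\partial_t=-f\nabla f+\sigma(\partial_t,\partial_t)$ with $\frac{d}{dt}\bigl(f e^{{\rm i}\phi_1(p)+{\rm i}\phi_2(t)}e(t)\bigr)$ gives $e'(t)=-e^{-{\rm i}\phi_2(t)}\bigl(e^{-{\rm i}\phi_1(p)}\nabla f\bigr)+\beta\,{\rm i}\,e(t)$, and since the left side cannot depend on $p$ one must first show $e^{-{\rm i}\phi_1}\nabla f$ is a constant vector $v_0$; this step is also what forces $\|\nabla f\|\equiv a$ and hence pins down the sphere $S^3(a^2)$ and, eventually, $f=a|z_1|$ — none of which you derive. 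Second, even granting $e(t)\subset E:=\operatorname{span}_{\mathbb C}\{e(t_0),v_0\}$, your phrase ``translating so that the common intersection of the $L_t$ passes through the origin'' presupposes that the hyperplanes $L_t$ share a common affine complex $(n-1)$-plane; having all normals in a fixed complex $2$-plane does not give this (parallel or envelope-type families also satisfy it), so you still need to control the offsets $\left<\mathbf{x}(p_0,t),e(t)\right>$ before any translation and rotation can produce the stated normal form with a unit-speed Legendre curve. Since you yourself flag these points as ``where the main effort resides'' and leave them open, what you have is a correct and workable outline whose core classification step — the integrability computations that turn the leaf-plane family into the Legendre-curve parametrization — is missing.
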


 Let $S^{2n+3}$ denote the unit hypersphere in  ${\mathbb C}^{n+2}$ centered at the origin and put $$U(1)=\{\lambda\in {\mathbb C}:\lambda\bar\lambda=1\}.$$ Then there is a $U(1)$-action on $S^{2n+3}$ defined by $z\mapsto \lambda z$. At $z\in S^{2n+3}$ the vector $V=iz$ is tangent to the flow of the action. The quotient space $S^{2n+3}/\sim$, under the identification induced from the action, is a complex projective space $ CP^{n+1}$ which endows with the canonical Fubini-Study metric of constant holomorphic sectional curvature $4$. 
 
 The almost complex structure $J$ on $ CP^{n+1}$ is induced from the complex structure $J$ on ${\mathbb C}^{n+2}$ via the Hopf fibration: $\,\pi : S^{2n+3}\to CP^{n+1}$. It is well-known that the Hopf fibration $\pi$ is a Riemannian submersion such that $V=iz$ spans the vertical subspaces. 

Let $\phi:M\to CP^{n+1}$ be an isometric immersion. Then $\hat M=\pi^{-1}(M)$ is a principal circle bundle over $M$ with totally geodesic fibers. The lift $\hat \phi:\hat M\to S^{2n+3}$ of $\phi$ is an isometric immersion so that the diagram:
\[\label{E:5.b} \begin{CD} \hat M @>\hat \phi >>S^{2n+3}\\ @V{\pi}VV @VV{\pi}V\\ M @> \phi>>  CP^{n+1}\end{CD} \]
commutes.
 
Conversely, if $\psi:\hat M \to S^{2n+3}$ is an isometric immersion which is invariant under the $U(1)$-action, then there is a unique isometric immersion $\psi_\pi:\pi(\hat M)\to CP^{n+1}$ such that the associated diagram commutes. We simply call the immersion $\psi_\pi:\pi(\hat M)\to CP^{n+1}$ {\it the projection} of $\psi:\hat M\to S^{2n+3}$.

For a given  vector $X\in T_z(CP^{n+1})$ and a point $u\in S^{2n+2}$ with $\pi(u)=z$, we denote by $X_u^*$ the horizontal lift of $X$ at $u$ via $\pi.$ There exists  a canonical orthogonal decomposition:
\begin{equation}T_u S^{2n+3}=(T_{\pi(u)}CP^{n+1})_u^*\oplus
\hbox{\rm Span}\,\{V_u\}.\end{equation}
Since $\pi$ is a Riemannian  submersion, $X$ and  $X^*_u$ have the same length.

We put
\begin{equation}S_*^{2n+1}=\left\{(z_0,\ldots,z_n):\sum_{k=0}^n
z_k\bar z_k=1,\, z_0\ne 0\right\},\quad CP^n_0=\pi(S_*^{2n+1}).\end{equation}

The following theorem from  \cite{c02-3} classifies all warped products hypersurfaces of the form $N\times_{f}I$ in complex projective spaces.

\begin{theorem}\label{T:13.3}   Suppose that $a$ is a
positive number and $\gamma(t)=(\Gamma_1(t),\Gamma_2(t))$ is a
unit speed Legendre curve $\gamma:I\to S^3(a^2)\subset{\mathbb C}^2$ defined on an open interval $I$.  Let $\hbox{\bf x}:S_*^{2n+1}\times I\to{\mathbb C}^{n+2}$ be the map defined by
\begin{align} &\hbox{\bf x}(z_0,\ldots,z_{n},t)=\big(a\Gamma_1(t)z_0,a\Gamma_2(t) z_0,z_1,\ldots,z_{n}\big),\;\;\;\sum_{k=0}^{n} z_k\bar z_k=1.\end{align} 
Then 
\begin{enumerate}
\item $\,\hbox{\bf x}$ induces an isometric immersion $\psi:S_*^{2n+1}\times_{a|z_0|} I\to S^{2n+3}$.

\item  The image $\psi(S_*^{2n+1}\times_{a|z_0|} I)$ in $S^{2n+3}$ is invariant under the action of $U(1)$.

\item the projection $\psi_\pi:\pi(S_*^{2n+1}\times_{a|z_0|} I) \to CP^{n+1}$ of $\psi$ via $\pi$ is a warped product hypersurface $CP^{n}_0\times_{a|z_0|} I$ in $CP^{n+1}$.
\end{enumerate}

Conversely, if a  real  hypersurface in $CP^{n+1}$ is a warped product $N\times_f I$ of a complex hypersurface $N$ of $CP^{n+1}$  and an open interval $I$, then, up to rigid motions, it is locally obtained in the way described above.
\end{theorem}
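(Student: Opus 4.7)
The plan splits into the direct construction (items (1)--(3)) and the converse classification.

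For the direct part I would verify the three claims in order. First, $\mathbf{x}$ lands in $S^{2n+3}$: using $\gamma(t)\in S^3(a^2)$, so $|\Gamma_1|^2+|\Gamma_2|^2=1/a^2$, the squared norm is
$\|\mathbf{x}\|^2 = a^2|z_0|^2(|\Gamma_1|^2+|\Gamma_2|^2) + \sum_{k=1}^n|z_k|^2 = \sum_{k=0}^n|z_k|^2 = 1$.
Second, to check that the induced metric equals $g_{S_*^{2n+1}} + a^2|z_0|^2\,dt^2$, I would compute the three Hermitian inner products $\langle d\mathbf{x}(v), d\mathbf{x}(v)\rangle$, $\langle \partial_t\mathbf{x}, \partial_t\mathbf{x}\rangle$, and $\langle d\mathbf{x}(v),\partial_t\mathbf{x}\rangle$ for $v\in T_zS_*^{2n+1}$. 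The first collapses to $\|v\|^2$ by the same $|\gamma|^2=1/a^2$ identity; the second equals $a^2|z_0|^2|\gamma'|^2=a^2|z_0|^2$ by unit speed; the cross term is proportional to $\langle\gamma,\gamma'\rangle_{\mathbb{C}}$, which vanishes since $\operatorname{Re}\langle\gamma,\gamma'\rangle=0$ comes from $|\gamma|^2$ being constant and $\operatorname{Im}\langle\gamma,\gamma'\rangle=0$ is the Legendre condition. Claim (2) is immediate from the homogeneity $\mathbf{x}(\lambda z,t)=\lambda\mathbf{x}(z,t)$ for $\lambda\in U(1)$. For (3), because $\psi$ is $U(1)$-equivariant it descends to an isometric immersion of the quotient; I would also note that $\partial_t\mathbf{x}$ is horizontal for the Hopf bundle (again by the same Legendre vanishing), so the warping factor $a^2|z_0|^2$ survives the submersion and yields $CP^n_0 \times_{a|z_0|} I$ inside $CP^{n+1}$.

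For the converse I would lift the problem to $\mathbb{C}^{n+2}$. Let $\tau:\mathbb{C}^{n+2}_*\to CP^{n+1}$ denote the $\mathbb{C}^*$-quotient. Given $M=N\times_f I$ with $N$ a complex hypersurface, set $\tilde M:=\tau^{-1}(M)\subset \mathbb{C}^{n+2}_*$, a $\mathbb{C}^*$-invariant real hypersurface. Using the polar decomposition $\mathbb{C}^{n+2}_*=\mathbb{R}_+\times S^{2n+3}$ with metric $dr^2+r^2(\eta\otimes\eta+\pi^* g_{CP^{n+1}})$, a short calculation shows that $\tilde M$ inherits a warped product structure $\tilde M=\tilde N \times_{rf} I$, where $\tilde N:=\tau^{-1}(N)$ is a complex hypersurface of $\mathbb{C}^{n+2}_*$ and $rf$ denotes the product of the radial coordinate with $f\circ\tau$. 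Applying Theorem 13.2 in $\mathbb{C}^{n+2}$ (i.e.\ with $n$ replaced by $n+1$) yields two possibilities: either the Legendre form, or a product $\mathbb{C}^{n+1}\times C$. The product case is eliminated by $\mathbb{C}^*$-invariance of $\tilde M$: under the diagonal action, no one-dimensional real curve $C\subset \mathbb{C}^1$ is $\mathbb{C}^*$-invariant. Hence, up to a rigid motion, $\tilde M$ is parametrized by $(w_1,\ldots,w_{n+1},t)\mapsto (a\Gamma_1 w_1, a\Gamma_2 w_1, w_2,\ldots, w_{n+1})$ for a Legendre curve $\gamma$ in $S^3(a^2)$. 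Intersecting with $S^{2n+3}$ (using $\mathbb{C}^*$-homogeneity to normalize $\sum|w_k|^2=1$) and relabeling $z_0:=w_1$, $z_k:=w_{k+1}$ recovers exactly the map $\mathbf{x}$, and projecting by the Hopf fibration $\pi$ returns $M$ in the claimed form.

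The main obstacle I foresee lies in the converse direction: the rigid motion provided by Theorem 13.2 must be compatible with the $\mathbb{C}^*$-lift, so that after intersecting with $S^{2n+3}$ and passing to $CP^{n+1}$ it descends to a genuine isometry of $CP^{n+1}$. Concretely, I would need to show that $\mathbb{C}^*$-invariance of $\tilde M$ forces the conjugate subgroup $R\mathbb{C}^*R^{-1}$ (which stabilizes the standard form) to agree with the original $\mathbb{C}^*$, thereby constraining $R$ to lie, modulo irrelevant translations and phase factors, in $U(n+2)$; once that is done, the descent to an isometry of $CP^{n+1}$ is automatic.
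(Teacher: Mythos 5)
Your direct part is correct and is the standard verification: the norm computation from $|\Gamma_1|^2+|\Gamma_2|^2=1/a^2$, the vanishing of the cross term from $\operatorname{Re}\langle\gamma,\gamma'\rangle=0$ (constant length) together with the Legendre condition $\operatorname{Im}\langle\gamma,\gamma'\rangle=0$, the $U(1)$-equivariance, and the horizontality of $\partial_t\mathbf{x}$ so that the warping function $a|z_0|$ descends. Note that the survey states Theorem \ref{T:13.3} without proof, citing \cite{c02-3}, where the converse is obtained by analyzing the lifted immersion through the Hopf fibration directly; your idea of reducing to the flat classification (Theorem \ref{T:13.2}) via the cone $\tau^{-1}(M)\subset\mathbb{C}^{n+2}_*$ is a genuinely different and attractive shortcut, but it is exactly in the reduction that the unfinished work lies.

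Two points in the converse are gaps. First, the ``short calculation'' that $\tau^{-1}(M)$ is again a warped product $\tilde N\times_{rf}I$ is a real step, not a formality: you must show that the warped product structure of $M=N\times_f I$ lifts through the Hopf fibration, e.g.\ by Hiepko's criterion, checking that the distribution spanned by the vertical vector and the horizontal lift of $TN$ is integrable with totally geodesic leaves (this uses that $N$ is $J$-invariant, via $\nabla_XV=\phi X$), and that the lifted $t$-curves are still extrinsic circles (this uses that $J\partial_t$ is normal to $M$, so the O'Neill term $A_TH$ vanishes); none of this is automatic for a general warped product base. Second, and more seriously (you flag it yourself), after invoking Theorem \ref{T:13.2} you only know that $\tilde M$ coincides \emph{locally} with the model up to a rigid motion $R(x)=Ux+b$ of $\mathbb{C}^{n+2}$, and the proposed stabilizer argument ``$R\mathbb{C}^*R^{-1}=\mathbb{C}^*$, hence $R\in U(n+2)$'' does not work as stated: the identification is only local, so you cannot compare symmetry groups of images, and the model cone is itself invariant under all translations in its $\mathbb{C}^n$-cylinder directions, so one cannot expect (nor does one need) the conjugated $\mathbb{C}^*$ to equal the standard one. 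What is actually needed is the pointwise form of $\mathbb{C}^*$-invariance — at each point $y$ of the model piece both $y-b$ and $i(y-b)$ are tangent — combined with the fact that $y$ and $iy$ are tangent because the model is a complex cone, to conclude that the constant vectors $b,ib$ are tangent along an open piece and hence that $b$ lies in the $\mathbb{C}^n$-factor directions, a translation preserving the model; only then does $U$ become a unitary congruence and descend to a holomorphic isometry of $CP^{n+1}$ carrying $M$ onto $CP^n_0\times_{a|z_0|}I$. The same caveat affects your elimination of the product case, where ``the diagonal action'' is only the conjugated action until this is settled. As it stands, the converse is therefore not proved; the plan is salvageable, but this congruence bookkeeping is the crux and must be written out.
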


In the complex pseudo-Euclidean space ${\mathbb C}^{n+2}_1$ endowed with  pseudo-Euclidean metric
\begin{equation}g_0=-dz_0d\bar z_0 +\sum_{j=1}^{n+1}dz_jd\bar z_j,\end{equation} we  define the anti-de Sitter space-time  by
\begin{equation}H^{2n+3}_1=\big\{(z_0,z_1,\ldots,z_{n+1}):
\left<z,z\right>=-1\big\}.\end{equation} It is known that $H^{2n+3}_1$ has constant sectional curvature $-1$. There is a $U(1)$-action on
$H_1^{2n+3}$ defined by $z\mapsto \lambda z$. At a point $z\in H^{2n+3}_1$,  $iz$ is tangent to the flow of the action. The orbit is given by  $z_t=e^{it}z$ with ${{dz_t}\over{dt}}=iz_t$ which lies in the negative-definite plane spanned by $z$ and $iz$. 

The quotient space $H^{2n+3}_1/\sim$ is the complex hyperbolic space $CH^{n+1}$ which endows a canonical K\"ahler metric of constant holomorphic sectional curvature $-4$. The  complex structure $J$ on $ CH^{n+1}$ is induced from the canonical complex structure $J$ on $ {\mathbb C}^{n+2}_1$ via the totally geodesic fibration: $\pi:H^{2n+3}_1\rightarrow  CH^{n+1}.$

Let $\phi:M\to CH^{n+1}$ be an isometric immersion. Then $\hat M=\pi^{-1}(M)$ is a principal circle bundle over $M$ with totally geodesic fibers. The lift $\hat \phi:\hat M\to H_1^{2n+3}$ of $\phi$ is an isometric immersion such that the
 diagram:
\[\begin{CD} \hat M @>\hat \phi >>H_1^{2n+3}\\ @V{\pi}VV @VV{\pi}V\\ M @> \phi>>  CH^{n+1}\end{CD}\]
commutes.

Conversely, if $\psi:\hat M \to H_1^{2n+3}$ is an isometric immersion which is invariant under the $U(1)$-action, there
is a unique isometric immersion $\psi_\pi:\pi(\hat M)\to CH^{n+1}$,  called the {\it projection of\/} $\psi$ so that the associated diagram commutes.

We put
\begin{align} & H_{1*}^{2n+1}=\big\{(z_0,\ldots,z_{n})\in H^{2n+1}_1: z_n\ne 0\big\},\\& CH^n_*=\pi(H^{2n+1}_{1*}).\end{align}

The next theorem from  \cite{c02-3} classifies all warped products hypersurfaces of the form $N\times_{f}I$ in complex hyperbolic spaces.

 \begin{theorem}\label{T:13.4}  Suppose that $a$ is a positive number and $\gamma(t)=(\Gamma_1(t),\Gamma_2(t))$ is a unit speed Legendre curve $\gamma:I\to S^3(a^2)\subset{\mathbb C}^2$.  Let $\hbox{\bf y}:H_{1*}^{2n+1}\times I\to{\mathbb C}^{n+2}_1$ be the map defined by
\begin{align} &\hbox{\bf y}(z_0,\ldots,z_{n},t)=(z_0,\ldots,z_{n-1},
a\Gamma_1(t)z_{n},a\Gamma_2(t) z_{n}),\\&\hskip 1in z_0\bar z_0-\sum_{k=1}^{n} z_k\bar z_k=1.\end{align}
Then we have
\begin{enumerate}
\item  $\,\hbox{\bf y}$  induces an isometric immersion $\psi:H_{1*}^{2n+1}\times_{a|z_n|} I\to H_1^{2n+3}$.

\item  The image $\psi(H_{1*}^{2n+1}\times_{a|z_n|} I)$ in $H_1^{2n+3}$ is invariant under the $U(1)$-action.

\item the projection $\psi_\pi:\pi(H_{1*}^{2n+1}\times_{a|z_n|} I) \to CH^{n+1}$ of $\psi$ via $\pi$ is a warped product hypersurface $CH^{n}_*\times_{a|z_n|} I$ in $CH^{n+1}$.
\end{enumerate}

Conversely, if a  real hypersurface in $CH^{n+1}$ is a warped product $N\times_f I$ of a complex hypersurface $N$ and an open interval $I$, then, up to rigid motions, it is locally obtained in the way described above.
\end{theorem}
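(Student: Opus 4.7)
The plan is to handle the forward (construction) direction by a direct metric computation, and the converse (classification) by lifting to the anti--de Sitter space $H_1^{2n+3}$ and mirroring the argument of Theorem~\ref{T:13.3}, with careful attention to sign issues caused by the indefinite signature.

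For the forward direction, I first verify $\mathbf{y}(H_{1*}^{2n+1}\times I)\subset H_1^{2n+3}$ via
\begin{equation*}
\langle \mathbf{y},\mathbf{y}\rangle_1 = -|z_0|^2 + \sum_{k=1}^{n-1}|z_k|^2 + a^2\big(|\Gamma_1(t)|^2+|\Gamma_2(t)|^2\big)|z_n|^2,
\end{equation*}
which equals $-1$ upon using $|\gamma|^2=1/a^2$ (from $\gamma\subset S^3(a^2)$) together with the defining equation of $H_{1*}^{2n+1}$. The pullback metric then splits into three pieces: (i) on vectors tangent to $H_{1*}^{2n+1}$ with $t$ fixed, the identity block together with $a^2(|\Gamma_1|^2+|\Gamma_2|^2)=1$ reproduces the restricted $H_1^{2n+1}$-metric; (ii) $\mathbf{y}_t=(0,\ldots,0,a\Gamma_1'z_n,a\Gamma_2'z_n)$ has squared length $a^2|z_n|^2$ by the unit-speed condition $|\gamma'|^2=1$; (iii) the cross terms reduce to $\mathrm{Re}\langle\gamma,\gamma'\rangle_{\mathbb C}$ and $\mathrm{Im}\langle\gamma,\gamma'\rangle_{\mathbb C}$, both of which vanish---the former because $|\gamma|$ is constant, the latter exactly by the Legendre condition $\eta(\gamma')=0$ on $S^3(a^2)$. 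This establishes (1). Statement (2) is the equivariance $\mathbf{y}(\lambda z,t)=\lambda\mathbf{y}(z,t)$ for $\lambda\in U(1)$, and (3) follows by descending through $\pi$, noting that $|z_n|$ is $U(1)$-invariant so the warping function is well defined on the quotient.

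For the converse, suppose $M=N\times_f I\hookrightarrow CH^{n+1}$ with $N$ a complex hypersurface of $CH^{n+1}$. Because the leaves $N\times\{t\}$ are totally geodesic in the warped product $M$, the Gauss formula gives, for all $X,Y\in TN$,
\begin{equation*}
\sigma_{N\subset CH^{n+1}}(X,Y)=\langle AX,Y\rangle\,\xi,
\end{equation*}
where $\xi$ is the unit normal of $M$ and $A$ its shape operator. Since $N$ is a complex submanifold, $\sigma_N(JX,Y)=J\sigma_N(X,Y)$; comparing components along $\xi$ and $J\xi$ then forces $\langle AX,Y\rangle=0$ on $TN\times TN$, so $N$ is totally geodesic in $CH^{n+1}$. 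Thus $N$ is an open portion of a totally geodesic $CH^n$, and after a rigid motion of $CH^{n+1}$ we may assume $N\subset CH^n_*$, i.e., $\hat N:=\pi^{-1}(N)\subset H_{1*}^{2n+1}$. Next, since $TN$ is $J$-invariant and $\xi\perp TN$, the characteristic vector $J\xi$ is tangent to $M$ and orthogonal to $TN$, hence parallel to $\partial/\partial t$; normalization gives $J\xi=\pm f^{-1}\partial_t$. Fixing $p\in N$ and integrating the $t$-direction produces a curve lying in the totally geodesic $CH^1$ spanned by $T_p^\perp N$; this lifts to $H_1^{2n+3}$ into a totally geodesic $S^3(a^2)$-slice carried by a single transverse complex coordinate $z_n$. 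Demanding that this lift be $U(1)$-equivariant and of unit speed recovers the Legendre condition on $\gamma$ and the identification $f=a|z_n|$, matching the claimed form of $\mathbf{y}$.

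The main obstacle is the second half of the converse: after reducing $N$ to $CH^n_*$, one must pin down the precise two-dimensional complex slice of $\mathbb{C}_1^{n+2}$ carrying $\gamma$, check that the Hopf projection $\pi:H_1^{2n+3}\to CH^{n+1}$ respects the splitting, and extract the Legendre condition from the horizontality of the $t$-direction with respect to this indefinite submersion. The negative-definite sign on the $z_0$-coordinate introduces subtleties absent in the spherical analogue of Theorem~\ref{T:13.3}, most notably in verifying that the transverse slice is genuinely a Riemannian $S^3(a^2)$ rather than a Lorentzian or mixed-signature section of the pseudo-sphere $H_1^3$.
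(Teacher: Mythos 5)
Your forward direction is essentially complete and correct: the verification $\langle \mathbf{y},\mathbf{y}\rangle=-1$ using $|\Gamma_1|^2+|\Gamma_2|^2=1/a^2$, the block form of the pullback metric, the vanishing of the cross terms from constancy of $|\gamma|$ together with the Legendre condition, and the $U(1)$-equivariance give statements (1)--(3). (The survey itself does not reproduce a proof; the result is quoted from \cite{c02-3}.) The first step of your converse is also a correct and genuinely useful observation: because the leaves $N\times\{t\}$ are totally geodesic in the warped product and $N$ is a complex submanifold, comparing the $\xi$- and $J\xi$-components of $\sigma_N$ forces $\langle AX,Y\rangle=0$ on $TN\times TN$, so $N$ is an open piece of a totally geodesic $CH^n$ and $J\xi$ is proportional to $\partial_t$.

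The remainder of the converse, however, is where all the substance lies, and the one concrete mechanism you propose there fails. You claim that for fixed $p\in N$ the integral curve $t\mapsto\phi(p,t)$ lies in the totally geodesic $CH^1$ tangent to $T_p^{\perp}N$, and that lifting this single curve recovers $\gamma$. This is false already for the model hypersurfaces of the theorem: take $a=1$, $\gamma(t)=(\cos t,\sin t)$ (a Legendre curve in $S^3(1)$), and fix $(z_0,\ldots,z_n)$ with $z_n\neq 0$; the lifted $t$-curve is $c(t)=(z_0,\ldots,z_{n-1},z_n\cos t,\,z_n\sin t)$, and $c(0),c(\pi/4),c(\pi/2)$ are linearly independent over $\mathbb{C}$ (note $z_0\neq0$ automatically on $H^{2n+1}_{1}$), so $c$ spans a complex $3$-dimensional subspace of ${\mathbb C}^{n+2}_1$ and its projection lies in no totally geodesic $CH^1$, only in a totally geodesic $CH^2$. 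Hence one cannot obtain $\gamma$ by integrating the $t$-direction inside a fixed complex line; the curve $\gamma$ is not visible on any single transverse slice but is coupled to the coordinate $z_n$ across the whole hypersurface. The actual classification requires more: one must use the warped-product connection formulas and the Gauss--Codazzi equations to pin down the shape operator of $M$, lift the entire immersion (not one curve) through the indefinite Hopf fibration $\pi:H_1^{2n+3}\to CH^{n+1}$, and solve the resulting differential system for the position vector in ${\mathbb C}^{n+2}_1$, from which the Legendre curve $\gamma\subset S^3(a^2)$, the factorization through $z_n$, and $f=a|z_n|$ emerge simultaneously up to rigid motions. Since you yourself flag this second half as an unresolved obstacle, the converse direction of your proposal has a genuine gap, and the bridging step you do offer would not survive scrutiny.
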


\vskip.1in

\section{\uppercase{Warped Product $CR$-submanifolds of nearly K\"ahler manifolds}}

Let $M$ be an almost Hermitian manifold with metric tensor $g$ and almost complex structure $J$.  Then, according to A. Gray \cite{G} in 1970, $M$ is called a {\it nearly K\"ahler manifold} provided that   \begin{align}\label{14.1}(\nabla_XJ )X=0, \;\; \forall X\in TM.\end{align}  Historically speaking, nearly K\"ahler manifolds are exactly the {\it Tachibana manifolds} initially studied in  S. Tachibana \cite{T} around 1959. 

Nearly K\"ahler manifolds form an interesting class of manifolds admitting a metric connection with parallel totally antisymmetric torsion (see \cite{Ag}).

The best known example of nearly K\"ahler manifolds, but not K\"ahlerian, is $S^6(1)$ with the nearly K\"ahlerian structure  induced from the vector cross product on the space of purely imaginary Cayley numbers $\mathcal O$. More general examples of nearly K\"ahler manifolds are the homogeneous spaces $G/K$, where $G$ is a compact semisimple Lie group and $K$ is the fixed point set of an automorphism of $G$ of order 3 \cite{WG68}.

 Strict nearly K\"ahler manifolds obtained a lot of consideration in the 1980s due to their relation to Killing spinors.  
Th. Friedrich and R. Grunewald showed in \cite{FG} that a 6-dimensional Riemannian manifold admits a Riemannian Killing spinor if and only if it is nearly K\"ahler. 

The only known 6-dimensional strict nearly K\"ahler manifolds are
$$S^{6}=G_{2}/SU(3). \; Sp(2)/SU(2)\times U(1),\; SU(3)/U(1)\times U(1),\; S^{3}\times S^{3}.$$ In fact, these are the only homogeneous nearly K\"ahler manifolds in dimension six \cite{Bu}. P.-A.  Nagy proved in  \cite{Nagy} that indeed any strict and complete nearly K\"ahler manifold is locally a Riemannian product of homogeneous nearly K\"ahler spaces, twistor spaces over K\"ahler manifolds and 6-dimensional nearly K\"ahler manifolds. 

The non-existence result for warped products $N_{\perp}\times_{f}N_{T}$ in K\"ahler manifolds, Theorem \ref{T:9.1}, was extended in \cite{KKS07,SG08} to warped products in nearly K\"ahler manifolds in the following.

 \begin{theorem} \label{T:14.1}There does not exist a proper warped product CR-submanifold of the form $N_{\perp}\times_{f}N_{T}$ with $N_{\perp}$ a totally real submanifold and $N_{T}$ a holomorphic submanifold, in a nearly K\"ahler manifold.\end{theorem}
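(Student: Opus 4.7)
The plan is to mimic the Kähler argument of Theorem \ref{T:9.1} but replace the identity $\tilde\nabla J=0$ by the nearly Kähler identity $(\nabla_X J)Y+(\nabla_Y J)X=0$, and to show that the resulting tangential relations still force the warping function $f$ to be constant. I shall assume for contradiction that $M=N_\perp\times_f N_T$ is a proper CR-warped product in a nearly Kähler manifold $\tilde M$, with $N_\perp$ totally real of dimension $q\geq 1$ and $N_T$ holomorphic of (real) dimension $2h\geq 2$.

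First I would record the warped product connection identities: for $Z\in TN_\perp$ and $X\in TN_T$ one has $\nabla_Z X=\nabla_X Z=(Z\ln f)X$. Combining these with the Gauss and Weingarten formulas, and with the fact that $JX\in TN_T$ (since $N_T$ is holomorphic) while $JZ\in T^\perp M$ (since $N_\perp$ is totally real), a direct computation yields
\begin{align*}
(\nabla_Z J)X &= \sigma(Z,JX)-J\sigma(Z,X),\\
(\nabla_X J)Z &= -A_{JZ}X+D_X(JZ)-(Z\ln f)JX-J\sigma(X,Z).
\end{align*}
The nearly Kähler condition $(\nabla_Z J)X+(\nabla_X J)Z=0$ then gives, on taking tangential components,
\[
A_{JZ}X=-(Z\ln f)JX-2\bigl(J\sigma(X,Z)\bigr)^{T}.
\]

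Next I would take the inner product of this identity with an arbitrary $Y\in TN_T$ to obtain
\[
g(\sigma(X,Y),JZ)=-(Z\ln f)g(JX,Y)+2g(\sigma(X,Z),JY),
\]
and then subtract the version with $X$ and $Y$ swapped. Using the skew-symmetry of $J$, this gives
\[
(Z\ln f)\,g(JX,Y)=g(\sigma(X,Z),JY)-g(\sigma(Y,Z),JX).
\]
The decisive step is to replace $Y$ by $JY$ in this relation: on the left-hand side $g(JX,JY)=g(X,Y)$, while on the right the first summand becomes $-g(\sigma(X,Z),Y)$, which vanishes because $\sigma$ takes values in the normal bundle, and the second summand becomes $-g(\sigma(JY,Z),JX)$, which also vanishes because $JX\in TN_T$ is tangent to $M$. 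Hence
\[
(Z\ln f)\,g(X,Y)=0\qquad \text{for all }X,Y\in TN_T,\ Z\in TN_\perp.
\]
Choosing $Y=X\neq 0$ forces $Z(\ln f)=0$ for every $Z\in TN_\perp$, and since $f$ lives on the base $N_\perp$ this makes $f$ constant, contradicting the properness of the warped product.

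The main obstacle I anticipate is bookkeeping the asymmetry introduced by $(\nabla J)\neq 0$: in the Kähler case one immediately obtains a clean identity relating $\sigma$ to $J\sigma$, whereas here one has to extract two independent scalar relations (the symmetric and antisymmetric parts under $X\leftrightarrow Y$) from the single nearly Kähler identity and combine them. The trick of substituting $JY$ for $Y$, together with the observation that both remaining terms become pairings of $\sigma$ with tangent vectors and therefore vanish, is what makes the argument close up and recovers the conclusion $f=\mathrm{const}$.
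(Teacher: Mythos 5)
Your overall strategy is the right one (the survey itself gives no proof of Theorem \ref{T:14.1}, quoting \cite{KKS07,SG08}, so I compare with the standard argument), but your first displayed identity is unjustified, and it hides exactly the term that makes the nearly K\"ahler case harder than Theorem \ref{T:9.1}. Writing $(\tilde\nabla_Z J)X=\sigma(Z,JX)-J\sigma(Z,X)$ presupposes $\nabla_Z(JX)=(Z\ln f)\,JX$. The warped-product formula $\nabla_Z W=(Z\ln f)W$ holds for $W$ a \emph{lift} of a vector field on $N_T$; you may take $X$ to be such a lift, but $JX$ is in general not one, and the discrepancy $\nabla_Z(JX)-(Z\ln f)JX=[Z,JX]$ is precisely the $\mathcal D$-component of $(\tilde\nabla_Z J)X$ --- the term that vanishes identically in the K\"ahler case (where $\tilde\nabla J=0$) but not here. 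Carrying it along, your key relation must read $A_{JZ}X=[Z,JX]-(Z\ln f)JX-2\bigl(J\sigma(X,Z)\bigr)^{T}$, and after your antisymmetrization and the substitution $Y\mapsto JY$ what survives is not $0$ but a relation of the form $(Z\ln f)\,g(X,Y)=g\bigl((\tilde\nabla_Z J)X,JY\bigr)$, which you cannot discard for arbitrary $Y$. (A symptom that the genuine term was dropped early: the summand $2g(\sigma(X,Z),JY)$ in your identity is already zero for every $Y$, since $JY$ is tangent, so your penultimate equation would give $Z\ln f=0$ outright --- too cheaply.)

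The gap is repairable with one algebraic input which is what the published proofs use: in any almost Hermitian manifold $\tilde\nabla_Z J$ is skew-adjoint with respect to $g$ and anticommutes with $J$, hence $g\bigl((\tilde\nabla_Z J)X,JX\bigr)=0$. Pairing the corrected shape-operator identity with $JX$ (equivalently taking $Y=JX$) kills the extra term together with the $\sigma$-terms, because $g([Z,JX],JX)=g\bigl((\tilde\nabla_Z J)X,JX\bigr)=0$ and $g(J\sigma(X,Z),JX)=g(\sigma(X,Z),X)=0$; this yields $g(\sigma(X,JX),JZ)=-(Z\ln f)\|X\|^{2}$, and replacing $X$ by $JX$ reverses the sign, whence $Z\ln f=0$ for all $Z\in TN_\perp$, so $f$ is constant and the warped product is not proper. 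This is the nearly K\"ahler analogue of Chen's proof of Theorem \ref{T:9.1}: the nearly K\"ahler condition is used only to trade $(\tilde\nabla_X J)Z$ for $-(\tilde\nabla_Z J)X$, and the displaced tangential term is then annihilated by the identity above rather than assumed away.
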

 
 This theorem was further extended in \cite{KK09} to warped products $N\times_{f}N_{T}$  in a nearly K\"ahler manifold with $N$ and $N_{T}$ being Riemannian and holomorphic submanifolds of $\tilde M$, respectively.

 Similarly, Theorem \ref{T:9.3}  was also extended in \cite{AAK09,KKS07,SG08} to nearly K\"ahler manifold as follows.

 \begin{theorem}\label{T:14.2} Let $M=N_{T}\times_{f}N_{\perp}$ be a $CR$-warped submanifold of a nearly K\"ahler manifold $\bar M$. Then we have
 
\begin{enumerate}
 \item The squared norm of the second fundamental form $\sigma$ satisfies
 \begin{equation}\label{1} ||\sigma||^{2}\geq 2p ||\nabla \ln f||^{2},\end{equation}
 where $p$ is the dimension of $N_{\perp}$.
 
 \item If the equality in \eqref{1} holds identically, then $N_{T}$ is a totally geodesic submanifold, $N_{\perp}$ a totally umbilical submanifold, and $M$ is a minimal submanifold  of $\bar M$.
 \end{enumerate} \end{theorem}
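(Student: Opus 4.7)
The plan is to adapt the proof of the K\"ahler case, Theorem \ref{T:9.2}, to the nearly K\"ahler setting; the only new ingredient is that $(\tilde\nabla_X J)$ need not vanish but instead obeys Gray's antisymmetry $(\tilde\nabla_X J)Y = -(\tilde\nabla_Y J)X$, obtained by polarizing \eqref{14.1}. I expect this antisymmetry, combined with the symmetry of $\sigma$, to force cancellation of the nearly K\"ahler defect terms so that the pointwise identity controlling the mixed components of $\sigma$ is recovered verbatim from the K\"ahler case.

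First, fix at a point orthonormal frames $\{X_1,\ldots,X_{2h}\}$ of the holomorphic distribution $\mathcal D$ (so $\{JX_i\}$ is again an orthonormal basis of $\mathcal D$) and $\{Z_1,\ldots,Z_p\}$ of the totally real distribution $\mathcal D^\perp$, and record the warped product connection identity $\nabla_X Z = \nabla_Z X = (X\ln f)Z$. For $X\in\mathcal D$ and $Z,W\in\mathcal D^\perp$, compute $\langle\tilde\nabla_Z(JX),W\rangle$ in two ways: directly via the Gauss formula to get $((JX)\ln f)\langle Z,W\rangle$, and indirectly by writing $\tilde\nabla_Z(JX) = J\tilde\nabla_Z X + (\tilde\nabla_Z J)X$, whose first summand (via Gauss and $\langle\cdot,J\cdot\rangle = -\langle J\cdot,\cdot\rangle$) contributes $-\langle\sigma(X,Z),JW\rangle$. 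The main obstacle, and the genuinely new step, is controlling the defect $\langle(\tilde\nabla_Z J)X,W\rangle$: converting it via Gray's antisymmetry into $-\langle(\tilde\nabla_X J)Z,W\rangle$, then expanding $(\tilde\nabla_X J)Z = \tilde\nabla_X(JZ) - J\tilde\nabla_X Z$ using Weingarten on the first term and Gauss on the second, the surviving tangential pairing with $W$ collapses to $\langle\sigma(X,W),JZ\rangle - \langle\sigma(X,Z),JW\rangle$. Equating both expressions yields
\[
((JX)\ln f)\,\langle Z,W\rangle = -2\langle\sigma(X,Z),JW\rangle + \langle\sigma(X,W),JZ\rangle,
\]
and adding and subtracting the companion relation obtained by $Z\leftrightarrow W$ first forces the symmetry $\langle\sigma(X,Z),JW\rangle = \langle\sigma(X,W),JZ\rangle$ and then the clean identity
\[
\langle\sigma(X,Z),JW\rangle = -((JX)\ln f)\,\langle Z,W\rangle,
\]
formally identical to its K\"ahler counterpart.

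With this identity in hand the remainder is routine. Completing $\{JZ_1,\ldots,JZ_p\}$ to an orthonormal frame of the normal bundle and discarding all other normal directions gives
\[
\|\sigma(X_i,Z_j)\|^2 \;\geq\; \sum_{k=1}^{p}\langle\sigma(X_i,Z_j),JZ_k\rangle^2 \;=\; ((JX_i)\ln f)^2;
\]
summing over $j$ gains a factor of $p$, and summing over $i$ together with the fact that $\{JX_i\}$ is an orthonormal basis of $\mathcal D$ gives $\sum_i((JX_i)\ln f)^2 = \|\nabla\ln f\|^2$. Counting mixed indices twice in $\|\sigma\|^2$ then yields \eqref{1}. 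For the equality case, saturation in the three successive estimates forces $\sigma(\mathcal D,\mathcal D)=0$, $\sigma(\mathcal D^\perp,\mathcal D^\perp)=0$, and $\sigma(\mathcal D,\mathcal D^\perp)\subseteq J\mathcal D^\perp$. The first, together with the fact that the leaves $N_T\times\{q\}$ are totally geodesic in the warped product $M$, shows $N_T$ is totally geodesic in $\bar M$; the second, combined with the standard warped product fact that the fibers $\{x\}\times N_\perp$ are totally umbilical in $M$ with mean curvature vector $-\nabla\ln f$, shows $N_\perp$ is totally umbilical in $\bar M$; and tracing $\sigma$ over the adapted frame shows the mean curvature vector of $M$ vanishes, so $M$ is minimal in $\bar M$.
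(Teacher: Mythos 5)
Your argument is correct: Gray's antisymmetry plus the $Z\leftrightarrow W$ symmetrization really does cancel the nearly K\"ahler defect, yielding $\langle\sigma(X,Z),JW\rangle=-((JX)\ln f)\,\langle Z,W\rangle$ exactly as in the K\"ahler case (the only point worth making explicit is that $JX$ need not be a lift of a field on $N_T$, but you only use the pairing $\langle\nabla_Z(JX),W\rangle$, which is tensorial in $JX$, so the warped-product formula applies), and the frame count together with the saturation analysis then gives \eqref{1} and the stated equality conclusions. The survey states Theorem \ref{T:14.2} without proof, citing \cite{AAK09,KKS07,SG08}, and your route is essentially the argument of those sources (they phrase the cancellation via the tangential and normal parts of $(\tilde\nabla J)$, but the mechanism and the resulting lemma, identical to that behind Theorem \ref{T:9.2}, are the same), so there is no genuinely different approach to compare.
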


It was shown in \cite{KJA} that there does not exist a proper doubly warped product submanifold of a nearly K\"ahler manifold $\tilde M$ with one of the factors a holomorphic submanifold. It also shows that there do not exist doubly twisted product generic submanifolds of nearly K\"ahler manifolds in the form ${}_{f_{2}}N_{T}\times_{f_{1}}N_{1}$ such that $N_{T}$ is holomorphic and $N_{1}$ is an arbitrary submanifold in $\tilde M$.

\vskip.1in

\section{\uppercase{Warped product submanifolds in para-K\"ahler manifolds}}

\def\p{{\mathcal{P}}}
\def\D{{\mathcal{D}}}
\def\Dp{{\mathcal{D}}^\perp}
\def\Ni{{N_\top}}
\def\Na{N_\perp}
\def\j{{\rm j}}

An almost para-Hermitian manifold is a manifold $\widetilde M$ equipped with an almost product structure
$\p\ne \pm I$ and a pseudo-Riemannian metric $\widetilde g$ such that
\begin{align}\label{15.1}\p^2=I,\;\; \widetilde g(\p X,\p Y)=-\widetilde g(X,Y),\end{align}
for vector fields $X$, $Y$ tangent to $\widetilde M$, where $I$ is the identity map. Clearly, it follows from \eqref{15.1} that the dimension of $\widetilde M$ is even and the metric $\widetilde g$ is neutral. An almost para-Hermitian manifold  is called  {\it para-K\"ahler}  if it satisfies $\widetilde \nabla \p=0$ identically, where  $\widetilde \nabla$ is the Levi Civita connection of $\widetilde M$. We define $||X ||_{2}$ associated with $\widetilde g$ on $\widetilde M$  by $||X||_2=\widetilde g(X,X)$.

 A pseudo-Riemannian submanifold $M$ of a para-K\"ahler manifold $\widetilde M$ is called {\it invariant} if the tangent bundle of $M$ is invariant under the action of $\p$. $M$ is called {\it anti-invariant}  if $\p$ maps each tangent space $T_pM, \, p\in M,$ into the normal space $T_p^\perp M$ (cf. \cite{c11}).

 A pseudo-Riemannian submanifold $M$ of a para-K\"ahler manifold $\widetilde M$ is called a
\emph{$\p R$-submanifold} if the tangent bundle $TM$ of $M$ is the direct sum of  
an \emph{invariant} distribution $\D$ and an
\emph{anti-invariant} distribution $\Dp$, i.e.,  $$T(M)=\D\oplus\Dp, \;\; \p\D=\D,\;\;  \p\Dp\subseteq T_p^\perp(M).$$
 
A $\p R$-submanifold is called a \emph{$\p R$-warped product} if it is a warped product $\Ni \times_f \Na$ of an invariant submanifold $\Ni$ and an anti-invariant submanifold $\Na$. 

The notion of $\p R$-warped product submanifolds in para-K\"ahler manifolds are introduced and studied in \cite{CMu12}. In particular, the following results were obtained in \cite{CMu12}.

\begin{theorem}\label{T:15.1}
\label{small_codim}
Let $\Ni\times_{f}\Na$ be a $\p R$-warped product in a para-K\"ahler manifold $\widetilde M$.
If $\Na$ is space-like $($respectively, time-like\/$)$ and $\dim \widetilde M= \dim \Ni+2\dim \Na$,
then the second fundamental form $\sigma$ of $\Ni\times_{f}\Na$ satisfies
\begin{equation}
\label{ineq_small:eq}
||\sigma||^{2} \leq 2p {||\nabla\ln f||}_2\;\; \hbox{ {\rm (respectively,} $||\sigma||^{2} \geq 2p {||\nabla\ln f||}_2)$}.
\end{equation}

If the equality sign of \eqref{ineq_small:eq} holds identically,  we have
\begin{equation}
\label{eq} \sigma(\mathcal D,\mathcal D)= \sigma(\mathcal D^\perp,\Dp)=\{0\}.\end{equation}
\end{theorem}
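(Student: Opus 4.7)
My plan is to adapt the proof of Theorem~\ref{T:9.2} to the pseudo-Riemannian para-K\"ahler setting, paying careful attention to sign conventions. The dimension hypothesis $\dim\widetilde M=\dim N_T+2\dim N_\perp$ forces $\dim T^\perp M=p=\dim\mathcal{D}^\perp$. Since $\mathcal{P}\mathcal{D}^\perp\subseteq T^\perp M$ and $\mathcal{P}$ is injective (from $\mathcal{P}^2=I$), this forces $\mathcal{P}\mathcal{D}^\perp=T^\perp M$; hence every normal vector has the form $\mathcal{P}W$ for some $W\in\mathcal{D}^\perp$.

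Next, I would derive the key mixed-block identity. For $X\in\mathcal{D}$ and $Z,W\in\mathcal{D}^\perp$, combining the para-K\"ahler relation $\widetilde\nabla\mathcal{P}=0$, the skew property $\widetilde g(\mathcal{P}U,V)=-\widetilde g(U,\mathcal{P}V)$ (a consequence of \eqref{15.1}), and the Bishop--O'Neill warped-product identity $\nabla_Z \mathcal{P}X=((\mathcal{P}X)\ln f)Z$ yields
\begin{equation*}
\widetilde g(\sigma(X,Z),\mathcal{P}W)=-\widetilde g(\widetilde\nabla_Z \mathcal{P}X,W)=-((\mathcal{P}X)\ln f)\,\widetilde g(Z,W).
\end{equation*}
Since $\mathcal{P}\mathcal{D}^\perp=T^\perp M$, this fully determines $\sigma(X,Z)=((\mathcal{P}X)\ln f)\mathcal{P}Z$. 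A parallel comparison of the tangent/normal parts of both sides of $\widetilde\nabla_X \mathcal{P}Y=\mathcal{P}\widetilde\nabla_X Y$ for $X,Y\in\mathcal{D}$, using $\nabla_X Y,\nabla_X \mathcal{P}Y\in\mathcal{D}$ (both easy consequences of the warped-product connection formulas), forces $\mathcal{P}\sigma(X,Y)=0$ on the $\mathcal{D}^\perp$-summand, and hence $\sigma(\mathcal{D},\mathcal{D})=\{0\}$ is \emph{automatic} in this setting.

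To finish, pick pseudo-orthonormal frames $\{e_1,\ldots,e_n\}$ of $\mathcal{D}$ with signs $\epsilon_i$ and $\{e_{n+1},\ldots,e_{n+p}\}$ of $\mathcal{D}^\perp$, and expand
\begin{equation*}
\|\sigma\|^{2}=\|\sigma|_{\mathcal{D}\times\mathcal{D}}\|^{2}+2\|\sigma|_{\mathcal{D}\times\mathcal{D}^\perp}\|^{2}+\|\sigma|_{\mathcal{D}^\perp\times\mathcal{D}^\perp}\|^{2}.
\end{equation*}
The first block vanishes by the above. Using the explicit form of $\sigma(e_i,e_\alpha)$ together with the anti-isometric identity $\sum_i \epsilon_i((\mathcal{P}e_i)\ln f)^{2}=-\|\nabla\ln f\|_{2}$ on $\mathcal{D}$ (which follows from $\widetilde g(\mathcal{P}\,\cdot,\mathcal{P}\,\cdot)=-\widetilde g(\cdot,\cdot)$), the middle block evaluates to exactly $2p\|\nabla\ln f\|_{2}$. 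The last block carries a definite sign under the causal hypothesis on $N_\perp$: when $N_\perp$ is space-like, $T^\perp M$ is time-like, so every $\widetilde g(\sigma(e_\alpha,e_\beta),\sigma(e_\alpha,e_\beta))\le 0$ while $\epsilon_\alpha\epsilon_\beta=1$, giving a non-positive contribution; the signs reverse when $N_\perp$ is time-like. This yields \eqref{ineq_small:eq}, and equality forces $\sigma(\mathcal{D}^\perp,\mathcal{D}^\perp)=\{0\}$; combined with the automatic $\sigma(\mathcal{D},\mathcal{D})=\{0\}$ this delivers \eqref{eq}. The main technical obstacle is the pseudo-Riemannian sign bookkeeping throughout, particularly the anti-isometry identity for $\mathcal{P}$ on $\mathcal{D}$ and verifying that the $\mathcal{D}^\perp\times\mathcal{D}^\perp$ block has the claimed definite sign under each causal hypothesis on $N_\perp$.
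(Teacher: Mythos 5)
Your proposal is correct, and it follows essentially the expected route: the survey states this theorem without proof (citing Chen--Munteanu), and your argument is precisely the para-K\"ahler adaptation of the proof of Theorem \ref{T:9.2} that the cited source uses — the dimension hypothesis gives $T^\perp M=\mathcal{P}\mathcal{D}^\perp$, the warped-product and $\widetilde\nabla\mathcal{P}=0$ identities pin down the mixed block as $\sigma(X,Z)=((\mathcal{P}X)\ln f)\mathcal{P}Z$ and kill $\sigma(\mathcal{D},\mathcal{D})$, and the causal character of $N_\perp$ fixes the sign of the $\mathcal{D}^\perp\times\mathcal{D}^\perp$ block. Your sign bookkeeping (in particular $\sum_i\epsilon_i((\mathcal{P}e_i)\ln f)^2=-\|\nabla\ln f\|_2$ and the opposite causal character of $\mathcal{P}\mathcal{D}^\perp$) is accurate, so the inequality and the equality conclusion \eqref{eq} follow as claimed.
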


Para-complex numbers were introduced by J. T. Graves in 1845 as a generalization of complex numbers. Such numbers have the expression $v=x+\j y$, where $x,y$ are real numbers and $\j$ satisfies $\j^{2}=1,\,\j\ne 1$. 
The conjugate of $v$ is $\bar v=x-\j y$. The multiplication of two para-complex numbers is defined by
$$(a+\j b)(s+\j t)=(as+bt)+\j(at+bs).$$

For each natural number $m$, we put $$\mathbb D^{m}=\{(x_1+\j y_1,\ldots,x_m+\j y_m) : x_i, y_i\in{\mathbb{R}}\}.$$ With respect to the  multiplication of  para-complex numbers and the canonical flat metric,  $\mathbb D^{m}$ is a flat para-K\"ahler manifold of dimension $2m$. 
Once we  identify $$(x_1+\j y_1,\ldots,x_m+\j y_m)\in \mathbb D^{m}$$ with $(x_1,\ldots,x_m,y_1,\ldots,y_m)\in {\mathbb{E}}^{2m}_m$, we may identify $\mathbb D^{m}$ with the {\it para-K\"ahler $m$-plane} $\p^{m}$ in a natural way.

In the following we denote by $\mathbb S^{p}, \mathbb E^{p}$ and  $\mathbb  H^{p}$ the unit $p$-sphere, the Euclidean $p$-space and the unit hyperbolic $p$-space, respectively.

The following theorem from \cite{CMu12} completely classifies space-like $\p R$-warped products  in $\p^{m}$  which satisfy the equality case of \eqref{ineq_small:eq} identically.

\begin{theorem} \label{T:15.2}
Let $\Ni\times_f\Na$ be a space-like $\p R$-warped product  in the  para-K\"ahler $(h+p)$-plane $\p^{h+p}$ with $h=\frac{1}{2}\dim \Ni$ and $p=\dim \Na$. Then we have 
\begin{align}\label{IN} ||\sigma||^{2}\leq 2p{||\nabla\ln f||}_2.\end{align} 
The equality sign of \eqref{IN} holds identically if and only if  $\Ni$ is an open part of a para-K\"ahler $h$-plane, $\Na$ is an open part of $\mathbb S^{p},\, \mathbb E^{p}$ or $\mathbb H^{p}$,  and the immersion  is given by one of the following:

\begin{enumerate}
\item $\Phi:D_{1}\times_f {\mathbb{S}}^p\longrightarrow {\p}^{h+p}$;
\begin{equation}
\label{case1}
\begin{array}{c}
\Phi(z,w)=\text{\small$\Bigg($}z_1+\bar v_1(w_0-1)\sum\limits_{j=1}^hv_jz_j,\ldots,z_h+\bar v_h(w_0-1)\sum\limits_{j=1}^hv_jz_j,\\ \qquad w_1\sum\limits_{j=1}^h{\rm j} v_jz_j,\ldots,w_p\sum\limits_{j=1}^h\j v_jz_j \text{\small$\Bigg)$},\;\; h\geq 2,
\end{array}
\end{equation}
with warping function $f=\sqrt{\langle \bar v,z\rangle^2-\langle \j \bar v,z\rangle^2},$
where $v=(v_1,\ldots,v_h)\in{\mathbb{S}}^{2h-1}\subset {\mathbb{D}}^h$, $ w=(w_0,w_1,\ldots,w_p)\in{\mathbb{S}}^p$, $ z=(z_1,\ldots,z_h)\in D_{1}$ 
and $D_{1}=\left\{z\in{\mathbb{D}}^h : \langle \bar v,z\rangle^2>\langle \j \bar v,z\rangle^2\right\}$.

\item $\Phi:D_{1}\times_f {\mathbb{H}}^p\longrightarrow {\p}^{h+p}$;
\begin{equation}
\label{case2}
\begin{array}{c}
\Phi(z,w)=\text{\small$\Bigg($}z_1+\bar v_1(w_0-1)\sum\limits_{j=1}^hv_jz_j,\ldots,z_h+\bar v_h(w_0-1)\sum\limits_{j=1}^hv_jz_j,\\
\qquad w_1\sum\limits_{j=1}^h\j v_jz_j,\ldots,w_p\sum\limits_{j=1}^h\j v_jz_j \text{\small$\Bigg)$},\;\; h\geq 1,
\end{array}
\end{equation}
with the warping function $f=\sqrt{\langle \bar v,z\rangle^2-\langle \j \bar v,z\rangle^2}$, where $v=(v_1,\ldots,v_h)\in{\mathbb{H}}^{2h-1}\subset {\mathbb{D}}^h$,
$w=(w_0,w_1,\ldots,w_p)\in{\mathbb{H}}^p$  and 
$z=(z_1,\ldots,z_h)\in D_{1}$.

\item $\Phi(z,u):D_{1}\times_f {\mathbb{E}}^p\longrightarrow {\p}^{h+p}$;
\begin{equation}
\label{case3}
\begin{array}{c}
\Phi(z,u)=\text{\small$ \Bigg($}z_1+\dfrac{\bar v_1}{2}\Big(\sum\limits_{a=1}^pu_a^2\Big)\sum\limits_{j=1}^hv_jz_j,\ldots,
        z_h+\dfrac{\bar v_h}{2}\Big(\sum\limits_{a=1}^pu_a^2\Big)\sum\limits_{j=1}^hv_jz_j,\\
        u_1\sum\limits_{j=1}^h\j v_jz_j,\ldots,u_p\sum\limits_{j=1}^h\j v_jz_j \text{\small$ \Bigg)$},\;\; h\geq 2,
\end{array}\end{equation}
with the warping function $f=\sqrt{\langle \bar v,z\rangle^2-\langle \j \bar v,z\rangle^2}$, where $v=(v_1,\ldots,v_h)$ is a light-like vector in ${\mathbb{D}}^h$,
$z=(z_1,\ldots,z_h)\in D_{1}$ and $u=(u_1,\ldots,u_p)\in{\mathbb{E}}^p,$

Moreover, in this case, each leaf $\, {\mathbb{E}}^p$ is quasi-minimal in ${\p}^{h+p}$.

\item $\Phi(z,u):D_{2}\times_f {\mathbb{E}}^p\longrightarrow {\p}^{h+p}$;
\begin{equation}
\label{case4}
\begin{array}{c}  \Phi(z,u)=\text{\small$\Bigg( $}z_1+\dfrac{v_1}{2}\! \sum\limits_{a=1}^pu_a^2,\ldots,
        z_h+\dfrac{v_h}{2}\!\sum\limits_{a=1}^pu_a^2,
        \dfrac{v_0}{2}u_1,\ldots,\dfrac{v_0}{2}u_p \text{\small$\Bigg)$},\; h\geq 1, \end{array}
\end{equation}
with the warping function $f=\sqrt{-\langle v,z\rangle}$, where $v_0=\sqrt{b_1}+\epsilon\j\sqrt{b_1}$ with $b_1>0$,  $D_{2}=\{z\in{\mathbb{D}}^h:\langle v,z\rangle<0\}$,
$v=(v_1,\ldots,v_h)=(b_1+\epsilon\j b_1,\ldots,b_h+\epsilon\j b_h)$, $\epsilon=\pm1$,
$z=(z_1,\ldots,z_h)\in D_2$ and $ u=(u_1,\ldots,u_p)\in{\mathbb{E}}^p$.
\end{enumerate}
In each of the four cases the warped product is minimal in ${\mathbb{E}}^{2(h+p)}_{h+p}$.
\end{theorem}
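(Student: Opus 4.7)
The inequality $||\sigma||^2\leq 2p\,||\nabla\ln f||_2$ is immediate from Theorem \ref{T:15.1} since the hypothesis $\dim\p^{h+p}=2(h+p)=\dim\Ni+2\dim\Na$ is satisfied and $\Na$ is space-like. So the work will lie entirely in classifying the equality case.

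My plan is to first extract structural consequences from Theorem \ref{T:15.1}. When equality holds, we have $\sigma(\D,\D)=\sigma(\Dp,\Dp)=\{0\}$. Because the ambient manifold $\p^{h+p}$ is flat, the first vanishing implies that the invariant factor $\Ni$ is totally geodesic in $\p^{h+p}$ and hence, locally, an open part of a para-K\"ahler $h$-plane. For any warped product, each leaf $\{p\}\times\Na$ is totally umbilical inside $\Ni\times_f \Na$ with mean curvature vector $-\nabla\ln f$; combined with $\sigma(\Dp,\Dp)=\{0\}$ one concludes that $\Na$ is itself totally umbilical in $\p^{h+p}$. The Gauss equation then forces $\Na$ to have constant sectional curvature, so locally $\Na$ is an open part of $\mathbb S^p$, $\mathbb E^p$ or $\mathbb H^p$.

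Next I would set up explicit para-complex coordinates $z=(z_1,\ldots,z_h)$ on $\Ni\subset \p^{h+p}$ and an adapted frame on $\Na$, and translate the conditions $\sigma(\D,\D)=\sigma(\Dp,\Dp)=0$, together with the warped product relation $\widetilde\nabla_X Z=X(\ln f)Z$ for $X\in T\Ni,\,Z\in T\Na$, into a PDE system for the immersion $\Phi$. The Codazzi equation applied to $\p Z$ (for $Z\in \Dp$) combined with the flatness of $\p^{h+p}$ produces a constant vector $v=(v_1,\ldots,v_h)\in \mathbb D^h$ controlling how the $\Na$-leaves are attached along $\Ni$, with warping function $f^2=\langle\bar v,z\rangle^2-\langle \j\bar v,z\rangle^2$ in the non-degenerate cases. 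A direct integration then yields the explicit formulas: the causal character of $v$ (namely $\widetilde g(v,v)=1,-1,0$) corresponds precisely to cases (1), (2) and (3)--(4). The light-like subcase, where the bilinear form $\langle v,\cdot\rangle$ on $\mathbb D^h$ degenerates, forces $\Na$ to be $\mathbb E^p$ and requires a separate integration producing $f=\sqrt{-\langle v,z\rangle}$, which splits into the two normalizations (3) and (4) according to whether $\Phi$ uses $v,\bar v$ or a pair $(v,v_0)$ with $v_0=\sqrt{b_1}+\epsilon\j\sqrt{b_1}$.

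The hard part will be the light-like case, where the standard argument used for the spherical and hyperbolic normalizations breaks down and one must keep track of the null direction carefully; in particular, verifying the quasi-minimality statement for the $\mathbb E^p$ leaves in case (3) requires checking that the first normal space lies inside a totally null subspace of the normal bundle. Once the four parametrizations are obtained, the proof concludes by a direct computation of $\sigma$ in each explicit model: one checks that $\sigma$ has block form compatible with \eqref{eq} and that $\mathrm{trace}\,\sigma=0$ in $\mathbb E^{2(h+p)}_{h+p}$, which simultaneously verifies the equality in \eqref{IN} and the minimality of the immersion. This last verification follows the same scheme as in the K\"ahler analogue of Theorem \ref{T:9.3}, adapted to the split-signature setting.
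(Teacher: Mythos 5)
You should first note that this survey states Theorem \ref{T:15.2} without proof: it is quoted from \cite{CMu12}, so there is no in-paper argument to measure your write-up against, and your proposal has to stand on its own. As far as it goes, your reduction is sound: the codimension count $\dim \mathcal{P}^{h+p}=2(h+p)=\dim N_\top+2\dim N_\perp$ does place you in the hypotheses of Theorem \ref{T:15.1}, which gives the inequality \eqref{IN} for space-like $N_\perp$; and in the equality case the vanishing $\sigma(\mathcal D,\mathcal D)=\sigma(\mathcal D^\perp,\mathcal D^\perp)=\{0\}$, combined with the standard warped-product facts, does yield that $N_\top$ is totally geodesic (hence an open part of a para-K\"ahler $h$-plane) and that the leaves of $N_\perp$ are totally umbilical in the flat ambient space, hence of constant curvature, i.e.\ open parts of $\mathbb S^p$, $\mathbb E^p$ or $\mathbb H^p$, the flat case including the light-like mean curvature (quasi-minimal) situation.

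The genuine gap is that everything which makes Theorem \ref{T:15.2} a classification is left unexecuted. Equality in \eqref{IN} does more than annihilate $\sigma(\mathcal D,\mathcal D)$ and $\sigma(\mathcal D^\perp,\mathcal D^\perp)$: it pins down the mixed components $\sigma(X,Z)$, $X\in\mathcal D$, $Z\in\mathcal D^\perp$, exactly in terms of $\nabla \ln f$ through the para-complex structure, and it is this identification, inserted into the structure equations of the flat para-K\"ahler plane, that produces the constant para-complex vector $v$, the specific warping functions, and the explicit solutions \eqref{case1}--\eqref{case4}. In your proposal that entire portion is the sentence ``a direct integration then yields the explicit formulas,'' which is precisely where all the work of \cite{CMu12} lies; moreover your description of the null case is already off, since case (3) keeps $f=\sqrt{\langle\bar v,z\rangle^2-\langle {\rm j}\bar v,z\rangle^2}$ with $v$ light-like, while only case (4) has $f=\sqrt{-\langle v,z\rangle}$, so the split into the two normal forms needs an argument you have not supplied. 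Finally, the converse direction --- that each of the four immersions is a $\mathcal{P}R$-warped product attaining equality, is minimal in $\mathbb E^{2(h+p)}_{h+p}$, and has quasi-minimal leaves in case (3) --- is asserted rather than computed. As it stands this is a plausible road map consistent with the strategy of \cite{CMu12}, but not a proof of the classification.
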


\vskip.1in

\section{\uppercase{Contact $CR$-warped product submanifolds in Sasakian manifolds}}

An odd-dimensional Riemannian manifold $(M,g)$ is called an {\em almost contact metric manifold} if there exist on $M$ a $(1,1)$-tensor field $\phi $, a vector field $\xi $ and a 1-form $\eta $ such that
\begin{align} &\label{16.1}\phi ^{2}X=-X+\eta ( X) \xi , \ \eta (\xi )=1,
\\& \label{16.2} g(\phi X,\phi Y)=g(X,Y) -\eta ( X) \eta ( Y) \end{align}
for  vector fields $X,Y$ on $M$. On an almost contact metric manifold, we also have $\phi \xi =0$ and $\eta \circ \phi =0$. The vector field $\xi$  is called the {\em structure vector field}.

By a {\it contact manifold\/} we mean a  $(2n+1)$-manifold $M$ together with  a global 1-form $\eta$  satisfying $\eta\wedge (d\eta)^n \ne 0$ on $M$.
If $\eta$ of  an almost contact metric manifold $(M,\phi,\xi,\eta,g)$ is a contact form and if  $\eta$ satisfies $d\eta
(X,Y)=g(X,\phi Y)$ for all vectors  $X,Y$ tangent to $M$, then
$M$ is called a {\it contact metric manifold}.
A contact metric manifold is called {\it $K$-contact} if its characteristic vector field $\xi$ is a Killing vector field.
 It is well-known that a contact metric manifold is a $K$-contact manifold if and only if
\begin{align}  \label{16.3}\nabla _{X}\xi =-\phi X \end{align}
holds for all vector fields $X$ on $M$. In fact, an almost contact metric manifold satisfying condition  \eqref{16.3} is also a $K$-contact manifold. Condition \eqref{16.3} is equivalent to \begin{equation}
K(X, \xi)=1 \label{16.4}\end{equation}
for every unit tangent vector $X$  orthogonal to  $\xi$.

An almost contact metric structure of $M$ is called {\em normal} if the Nijenhuis torsion $[\phi ,\phi ]$ of $\phi $ equals to $-2{\rm d}\eta \otimes \xi $. A normal contact
metric manifold is called a {\em Sasakian manifold}. It can be proved that an almost contact metric manifold is Sasakian if and only if the Riemann curvature tensor $R$ satisfies
\begin{equation}\label{16.5} R(X,Y)\xi=\eta(Y)X-\eta(X)Y\end{equation}
 for any vector fields $X,Y$ on $M$.
A Sasakian manifold is also $K$-contact but the converse is not true in general if $\dim M\geq 5$.

A plane section $\pi$ in $T_{p} M$ is called a $\phi$-section if it is spanned by $X$ and $\phi X$, where $X$ is a unit tangent vector orthogonal to $\xi$. The sectional curvature of a $\phi$-section is
called a $\phi$-sectional curvature. A Sasakian manifold with constant $\phi$-sectional curvature
$c$ is said to be a Sasakian space form and is denoted by $\tilde M(c)$.

A submanifold $N$ normal to $\xi$ in a Sasakian manifold is said to be a $C$-totally real
submanifold. In this case, it follows that $\phi$ maps any tangent space of $N$ into the normal space, that is, $\phi(T_{x}N)\subset T^{\perp}_{x}N$, for every $x\in N$.
For submanifolds tangent to the structure vector field $\xi$, there are different classes
of submanifolds. We mention the following.

\begin{enumerate}
\item[(i)] A submanifold $N$ tangent to $\xi$ is called an {\it invariant submanifold} if $\phi$ preserves any tangent space of $M$, that is, $\phi(T_{x}N)\subset T_{x}N$, for every $x\in N$.

\item[(ii)] A submanifold $N$ tangent to $\xi$ is called an {\it anti-invariant submanifold} (or {\it totally real submanifold}) if $\phi$ maps any tangent space of $N$ into the normal space, that is, $\phi(T_{x}N)\subset T^{\perp}_{x}N$, for every $x\in N$.

\item[(iii)] A submanifold $N$ tangent to $\xi$ is called a {\it contact $CR$-submanifold} if it admits
an invariant distribution $\mathcal D$ whose orthogonal complementary distribution ${\mathcal D}^{\perp}$ is anti-invariant, that is, $TN= {\mathcal D} \oplus \mathcal D^{\perp}$ with $\phi(\mathcal D_{x})\subset \mathcal D_{x}$ and $\phi(\mathcal D_{x}^{\perp})\subset T_{x}^{\perp}N$, for every $x\in N$.

\item[(iv)] A contact $CR$-submanifold $N$ of a Sasakian manifold $\tilde M$ is called {\it (contact)
$CR$-product} if it is locally a Riemannian product of a $\phi$-invariant submanifold $N_{T}$ tangent to $\xi$ and a totally real submanifold $N_{\perp}$ of $\tilde M$.\end{enumerate}

First, we present the following two results from \cite{HM03,Mu05}.

\begin{theorem}\label{T:16.1} Let $\tilde M$ be a Sasakian manifold and let  $N=N_{\perp}\times_{f} N_{T}$ be a warped product $CR$-submanifold  such that $N_{\perp}$ is a totally real submanifold and $N_{T}$ an invariant submanifold of $\tilde M$. Then $N$ is a $CR$-product.
\end{theorem}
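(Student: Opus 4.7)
The plan is to adapt the proof strategy of Theorem \ref{T:9.1} to the Sasakian setting, with the structure vector field $\xi$ playing the decisive role. Since $N_T$ is an invariant submanifold of the Sasakian manifold $\tilde M$, by the standing convention for contact $CR$-submanifolds the characteristic vector field $\xi$ is tangent to $N_T$. In the warped product $N = N_\perp \times_f N_T$ the factor $N_\perp$ is the base and $N_T$ is the fiber, so $\xi$ is a vertical vector field, while vectors tangent to $N_\perp$ are horizontal.

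First, I would pick an arbitrary $X \in \Gamma(TN_\perp)$ and compute $\tilde\nabla_X \xi$ in two ways. The Sasakian identity \eqref{16.3} gives $\tilde\nabla_X \xi = -\phi X$, and since $N_\perp$ is totally real in $\tilde M$ we have $\phi X \in T^\perp N$, so the entire vector $-\phi X$ lies in the normal bundle. On the other hand, the Gauss formula yields $\tilde\nabla_X \xi = \nabla_X \xi + \sigma(X,\xi)$. Using the warped product connection formula (for horizontal $X$ and vertical $V$ one has $\nabla_X V = (X\ln f)\,V$), the tangential piece of $\tilde\nabla_X\xi$ equals $(X\ln f)\,\xi$.

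Equating tangential parts in the two expressions, the left-hand side is zero (since $-\phi X$ is normal) while the right-hand side is $(X\ln f)\,\xi$. Because $\xi$ is a nowhere-vanishing tangent vector, this forces $X(\ln f) = 0$ for every $X$ tangent to $N_\perp$. Since $f$ is a function on the base $N_\perp$ with vanishing gradient along $N_\perp$, it must be constant on each connected component. Thus the warping function is trivial, and $N = N_\perp \times N_T$ is a Riemannian product, i.e., a $CR$-product in the sense of the excerpt.

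The only subtlety — and the point that deserves the most care — is the placement of $\xi$ relative to the two factors; once it is established that $\xi$ is tangent to $N_T$ (hence vertical in the warped product) the rest of the argument is a direct two-term calculation and carries no technical obstacle. The argument is crucially Sasakian: the identity $\tilde\nabla_X \xi = -\phi X$ is what forces the tangential part to vanish, and this is precisely the feature absent in a general almost contact metric manifold, so the strategy does not extend beyond the $K$-contact/Sasakian framework without modification.
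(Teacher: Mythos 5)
Your proof is correct and follows essentially the standard argument: the survey states Theorem \ref{T:16.1} without proof, citing \cite{HM03,Mu05}, and the proof in those sources rests on exactly the computation you give, namely comparing the warped-product identity $\nabla_X\xi=(X\ln f)\,\xi$ for $X$ tangent to the base $N_\perp$ with the $K$-contact/Sasakian identity \eqref{16.3}, whose right-hand side $-\phi X$ is normal because $N_\perp$ is totally real, so that $X(\ln f)=0$ for all such $X$ and the warping function is constant. Your reading that $\xi$ is tangent to the invariant factor $N_T$ agrees with the convention of the cited papers (and with definition (i) of Section 16, which makes an invariant submanifold tangent to $\xi$), so the one subtlety you flag is resolved correctly.
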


\begin{theorem}\label{T:16.2} Let  $N=N_{T}\times_{f} N_{\perp}$ be a $CR$-warped products in a Sasakian manifold $\tilde M$. Then we have
\begin{enumerate}

\item The squared norm of the second fundamental form of $N$ satisfies
\begin{align} \label{16.6} ||\sigma||^{2}\geq 2p (||\nabla(\ln f)||^{2}+1),\;\; p= \dim N_{\perp}.\end{align}

\item If the equality sign of \eqref{16.6} holds identically, then $N_{T}$ is a totally geodesic submanifold and $N_{\perp}$ is a totally umbilical submanifold of $\tilde M$. Further, $N$ is a minimal submanifold of $\tilde M$.
Moreover, if $\tilde M$ is ${\bf R}^{2m+1}$ with the usual Sasakian structure then $\ln f$ is an harmonic function.

\item For the case $T^{\perp}N=\phi {\mathcal D}^{\perp}$,  if $p>1$ then the equality sign in \eqref{16.6} holds identically if and only if $N_{\perp}$ is a totally umbilical submanifold of $\tilde M$.
 \end{enumerate}\end{theorem}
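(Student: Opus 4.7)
The plan is to adapt the Kähler argument of Theorem~\ref{T:9.2} to the Sasakian setting; the extra ``$+1$'' on the right of \eqref{16.6} will come from a new, nonvanishing mixed component of $\sigma$ attached to the characteristic field $\xi$. First, since $\mathcal D$ is $\phi$-invariant and $\phi\xi=0$, the field $\xi$ is forced to be tangent to $N_T$. Comparing $\tilde\nabla_Z\xi=-\phi Z$ (Sasakian identity) with $\nabla_Z\xi=(\xi\ln f)Z$ (warped-product formula) for $Z\in\mathcal D^\perp$ forces $\xi\ln f=0$ and yields the first identity
\[ \sigma(Z,\xi)=-\phi Z,\qquad Z\in\mathcal D^\perp. \]
Next, I compute $\tilde\nabla_Z(\phi X)$ two ways: as $(\tilde\nabla_Z\phi)X+\phi\tilde\nabla_Z X$, using the Sasakian derivative $(\tilde\nabla_Z\phi)X=g(Z,X)\xi-\eta(X)Z$ together with $\nabla_Z X=(X\ln f)Z$, and as $\nabla_Z(\phi X)+\sigma(Z,\phi X)$. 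Pairing with the normal vector $\phi W$ and using $g(\phi Z,\phi W)=g(Z,W)$ produces the central identity
\[ g(\sigma(Z,\phi X),\phi W)=(X\ln f)\,g(Z,W),\qquad X\in\mathcal D,\ Z,W\in\mathcal D^\perp. \]
Replacing $X$ by $-\phi X$ gives the companion formula $g(\sigma(Z,X),\phi W)=-(\phi X\ln f)g(Z,W)$ for $X\perp\xi$ in $\mathcal D$.

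For the inequality, I fix an adapted orthonormal frame $\{X_1,\phi X_1,\ldots,X_h,\phi X_h,\xi\}$ of $\mathcal D$ and $\{Z_1,\ldots,Z_p\}$ of $\mathcal D^\perp$; then $\{\phi Z_b\}_{b=1}^{p}$ is an orthonormal subsystem of $T^\perp N$. Starting from
\[ ||\sigma||^2\ \geq\ \sum_{b=1}^{p}\sum_{i,j}g(\sigma(e_i,e_j),\phi Z_b)^2 \]
and retaining only those cross terms in which one of $e_i,e_j$ lies in $\mathcal D$ and the other equals $Z_b$, the three identities above contribute $(\phi X_j\ln f)^2$, $(X_j\ln f)^2$, and $1$ respectively. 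After accounting for the symmetry factor of $2$, the mixed piece sums to $2p(||\nabla\ln f||^2+1)$ since $\xi\ln f=0$, which is \eqref{16.6}.

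Equality forces the remaining components of $\sigma$, namely the $\phi\mathcal D^\perp$-components of $\sigma|_{\mathcal D\times\mathcal D}$ and $\sigma|_{\mathcal D^\perp\times\mathcal D^\perp}$ together with any component of $\sigma$ orthogonal to $\phi\mathcal D^\perp$, to vanish. Together these give $\sigma(\mathcal D,\mathcal D)=\sigma(\mathcal D^\perp,\mathcal D^\perp)=0$; the first says $N_T$ is totally geodesic in $\tilde M$, and the second, combined with the fact that each fiber of the warped product is totally umbilical with mean curvature $-\nabla\ln f$, forces $N_\perp$ to be totally umbilical in $\tilde M$. Tracing $\sigma$ then gives minimality of $N$. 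For the case $\tilde M=\mathbb R^{2m+1}$ (a Sasakian space form of $\phi$-sectional curvature $-3$) the explicit curvature tensor is available, and inserting the structural data above into the Gauss equation restricted to $N_T$ reduces $\Delta(\ln f)$ to zero. For part~(3), when $T^\perp N=\phi\mathcal D^\perp$ the third equality condition is automatic; if $p>1$ a Codazzi computation couples $\sigma|_{\mathcal D^\perp\times\mathcal D^\perp}$ to $\sigma|_{\mathcal D\times\mathcal D}$, so that total umbilicity of $N_\perp$ alone suffices.

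The step I expect to be the most delicate is the sufficiency direction of part~(3): showing that, in the anti-holomorphic case, total umbilicity of $N_\perp$ already implies equality in \eqref{16.6}. This requires a careful Codazzi argument that keeps track of the correction terms $(\tilde\nabla_X\phi)Y=g(X,Y)\xi-\eta(Y)X$ coming from the Sasakian structure, and the hypothesis $p>1$ is essential because it breaks a degeneracy in the $\mathcal D^\perp$-symmetry that occurs when $\mathcal D^\perp$ has rank one.
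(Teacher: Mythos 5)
Your part (1) is correct and is essentially the standard argument of Hasegawa--Mihai/Munteanu that the survey cites for Theorem~\ref{T:16.2} (the survey itself gives no proof): $\xi$ tangent to $N_T$, $\xi(\ln f)=0$ and $\sigma(Z,\xi)=-\phi Z$ from comparing $\tilde\nabla_Z\xi=-\phi Z$ with $\nabla_Z\xi=(\xi\ln f)Z$, the identity $g(\sigma(\phi X,Z),\phi W)=(X\ln f)g(Z,W)$ from $(\tilde\nabla_Z\phi)X$, and the frame count giving $2p\bigl(\|\nabla\ln f\|^2+1\bigr)$ are exactly right, as is the equality analysis leading to $\sigma(\mathcal D,\mathcal D)=\sigma(\mathcal D^\perp,\mathcal D^\perp)=0$, total geodesy of $N_T$, umbilicity of $N_\perp$ and minimality. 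Two points, however, are off. First, a small one in part (2): the harmonicity of $\ln f$ in ${\bf R}^{2m+1}(-3)$ cannot come from ``the Gauss equation restricted to $N_T$'' --- on $N_T$ alone the warping function does not appear. The correct route is the Gauss equation for \emph{mixed} plane sections $X\wedge Z$ ($X\in\mathcal D$, $Z\in\mathcal D^\perp$) together with the warped-product curvature identity $\sum_i K(e_i\wedge Z)=\Delta f/f$: under equality one has $\sigma(X,X)=0$, $\|\sigma(X,Z)\|^2=(\phi X\ln f)^2$ for unit $X\perp\xi$, $\|\sigma(\xi,Z)\|^2=1$, while $\tilde K(X,Z)=0$ and $\tilde K(\xi,Z)=1$ in ${\bf R}^{2m+1}(-3)$, whence $\Delta f/f=-\|\nabla\ln f\|^2$, i.e.\ $\Delta(\ln f)=0$.

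The genuine gap is the sufficiency direction of part (3), which you defer to ``a careful Codazzi argument coupling $\sigma|_{\mathcal D^\perp\times\mathcal D^\perp}$ to $\sigma|_{\mathcal D\times\mathcal D}$.'' That is not the mechanism, and as stated it would not deliver the conclusion. What is actually needed (and what the cited proofs use) are two purely algebraic facts: (i) for any contact $CR$-warped product one has $g(\sigma(X,Y),\phi Z)=0$ for $X,Y\in\mathcal D$, $Z\in\mathcal D^\perp$, because $g(\sigma(X,Y),\phi Z)=-g(\tilde\nabla_X\phi Y,Z)+g((\tilde\nabla_X\phi)Y,Z)$ and both terms vanish ($\nabla_X\phi Y\in\mathcal D$ since the base is totally geodesic in $N$, and $(\tilde\nabla_X\phi)Y\in\mathcal D$); and (ii) the $\phi\mathcal D^\perp$-valued part of $\sigma$ on $\mathcal D^\perp\times\mathcal D^\perp$ is totally symmetric, $g(\sigma(Z,W),\phi V)=g(\sigma(Z,V),\phi W)$, by the Weingarten formula and $(\tilde\nabla_Z\phi)W=g(Z,W)\xi$. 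In the anti-holomorphic case $T^\perp N=\phi\mathcal D^\perp$ there is no $\nu$-component, so by (i) equality in \eqref{16.6} is equivalent to $\sigma(\mathcal D^\perp,\mathcal D^\perp)=0$. If $N_\perp$ is totally umbilical in $\tilde M$, then $\sigma(Z,W)=g(Z,W)\phi V_0$ for some $V_0\in\mathcal D^\perp$, and total symmetry gives $g(Z,W)g(V_0,V)=g(Z,V)g(V_0,W)$; with $p>1$ choose unit $Z=W\perp V$ to force $V_0=0$, hence $\sigma(\mathcal D^\perp,\mathcal D^\perp)=0$ and equality holds. This is where $p>1$ enters, by linear algebra rather than by any Codazzi computation, and it is precisely the step your outline leaves unproved.
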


For contact $CR$-warped products in Sasakian space forms we have the following from \cite{Mu05} (see also \cite{HM03}).

\begin{proposition} Let  $N=N_{T}\times_{f} N_{\perp}$ be a non-trivial $($i.e., $f$ non constant$\,)$ complete, simply-connected, contact $CR$-warped product those second fundamental form $\sigma$ satisfies 
$$||\sigma||^{2}\geq 2p (||\nabla(\ln f)||^{2}+1)$$ in a Sasakian space form $\tilde M^{2m+1}(c)$. Then we have

\begin{enumerate}
\item $N_{T}$ is a totally geodesic Sasakian submanifold of $\tilde M^{2m+1}(c)$.  Thus $N_{T}$ is a Sasakian space form $N_{T}^{2\alpha+1}(c)$.

\item $N_{\perp}$ is a totally umbilical totally real submanifold of $\tilde M^{2m+1}(c)$. Hence $N_{\perp}$ is a Riemannian manifold of constant sectional curvature. Denote it by $\epsilon$.

\item If $p>1$,  the function $f$ satisfies 
$$||\nabla f||^{2}=\epsilon -\frac{c+3}{4}f^{2}.$$
\end{enumerate}\end{proposition}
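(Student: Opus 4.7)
The plan is to derive all three claims from the equality case of Theorem \ref{T:16.2} together with the Gauss equation applied to a fiber of the warped product. Since the hypothesis on $\|\sigma\|^2$ is exactly the equality in \eqref{16.6}, Theorem \ref{T:16.2}(2) gives three structural facts at once: $N_T$ is totally geodesic in $\tilde M$, each fiber $\{p\}\times N_\perp$ is totally umbilical in $\tilde M$, and the warped product $N$ is minimal in $\tilde M$. Statement (1) then follows quickly: the characteristic vector field $\xi$ is tangent to $N_T$ (it lies in the invariant distribution $\mathcal D$ of the contact $CR$-structure), so an invariant totally geodesic submanifold containing $\xi$ inherits the Sasakian structure tensors $(\phi,\xi,\eta,g)$ by restriction; and total geodesicity makes the $\phi$-sectional curvatures of $N_T$ coincide with those of $\tilde M$, forcing $N_T$ to be a Sasakian space form $N_T^{2\alpha+1}(c)$.

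For statement (2) I would apply the Gauss equation to the totally umbilical fiber. For an orthonormal pair $X,Y$ tangent to the fiber one has $\eta(X)=\eta(Y)=0$ (since $N_\perp$ is orthogonal to $\xi$) and $\phi X,\phi Y$ are normal to $N_\perp$ (totally real), so the standard Sasakian space form curvature tensor reduces to $\tilde K(X,Y) = (c+3)/4$ on every such 2-plane. The totally umbilical Gauss equation then reads
\[K_{\mathrm{fiber}}(X,Y) \;=\; \frac{c+3}{4} + |H'|^2,\]
where $H'$ is the fiber's mean curvature vector in $\tilde M$. Because the right side is independent of the chosen plane, each fiber has constant sectional curvature; and because fibers are pairwise homothetic to $(N_\perp,g_{N_\perp})$ with conformal factor $f^2$, the abstract manifold $(N_\perp,g_{N_\perp})$ itself has constant sectional curvature, denoted $\epsilon$, related to the ambient value by $K_{\mathrm{fiber}} = \epsilon/f^2$.

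The final step is to compute $|H'|^2$. The warped product connection formulas give
\[\sigma'(V,W) \;=\; -g(V,W)\,\nabla(\ln f) + \sigma^N(V,W),\qquad V,W \in \mathcal D^\perp,\]
where $\sigma'$ denotes the second fundamental form of the fiber in $\tilde M$ and $\sigma^N$ that of $N$ in $\tilde M$. The key claim is that $\sigma^N$ vanishes on $\mathcal D^\perp \times \mathcal D^\perp$. Since $N_T$ is a totally geodesic leaf of the warped product and simultaneously totally geodesic in $\tilde M$, transitivity of second fundamental forms yields $\sigma^N|_{\mathcal D \times \mathcal D} = 0$; minimality of $N$ then forces $\mathrm{trace}_{\mathcal D^\perp}\sigma^N = 0$; and umbilicity of the fiber in $\tilde M$ forces $\sigma^N|_{\mathcal D^\perp \times \mathcal D^\perp}$ to take the form $g(\cdot,\cdot)H''$ for a single normal vector $H''$, so the trace-free condition gives $pH'' = 0$ and hence $H''=0$. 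Therefore $H' = -\nabla(\ln f)$ and $|H'|^2 = |\nabla f|^2/f^2$. Substituting into the Gauss relation and multiplying through by $f^2$ yields $\epsilon = \frac{c+3}{4}f^2 + |\nabla f|^2$, which is statement (3). The hypothesis $p>1$ enters only because the Gauss formula for sectional curvature requires a 2-plane tangent to $N_\perp$; the main obstacle is establishing $H''=0$, since it requires weaving together the totally geodesic property of the leaf $N_T$, the minimality of $N$, and the umbilicity of the fiber, and it is the only step where the full strength of the equality case of Theorem \ref{T:16.2} enters beyond its immediate conclusions.
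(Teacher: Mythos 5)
Your argument is correct. The survey states this proposition without proof (it is quoted from the cited sources \cite{Mu05,HM03}), and your route is essentially the standard one there: read the hypothesis as the equality case of \eqref{16.6} (as printed, with ``$\geq$'', the hypothesis is automatic by Theorem \ref{T:16.2}, so equality is clearly intended), invoke Theorem \ref{T:16.2}(2) for totally geodesic $N_T$, totally umbilical fibers and minimality of $N$, note that an invariant totally geodesic submanifold tangent to $\xi$ inherits the Sasakian structure and, by the Gauss equation, has $\phi$-sectional curvature $c$, and then apply the Gauss equation along a fiber, where $\tilde K=(c+3)/4$ on $C$-totally real planes orthogonal to $\xi$, to get (2) and (3). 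The one point where you genuinely differ from the source proofs is the key vanishing $\sigma(\mathcal D^\perp,\mathcal D^\perp)=0$ (your $H''=0$): in the original arguments this falls out of the equality analysis of the inequality itself, whereas you recover it from the black-box conclusions alone — leaves are totally geodesic in $N$ and (by the theorem) in $\tilde M$, so $\sigma$ vanishes on $\mathcal D\times\mathcal D$; umbilicity of the fiber in $\tilde M$ forces $\sigma|_{\mathcal D^\perp\times\mathcal D^\perp}=g(\cdot,\cdot)H''$; and minimality then gives $pH''=0$. That is a clean way to avoid reopening the proof of Theorem \ref{T:16.2}, at the cost of trusting its stated equality conclusions. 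Two minor remarks: when you first assert that each fiber has constant curvature, you only have plane-independence at each point; constancy along the fiber (and hence of $\epsilon$) comes once you know $H'=-\nabla(\ln f)$, whose length is constant on each fiber, so the order of the two steps should be swapped or one should simply read off $\epsilon=\tfrac{c+3}{4}f^2+\|\nabla f\|^2$ at the end, the right-hand side being constant because it equals the curvature of $(N_\perp,g_{N_\perp})$. Also, the completeness and simple-connectedness hypotheses are not needed for conclusions (1)--(3); they play a role only in the finer classification in the cited paper.
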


The next result from \cite{HM03}  determines the minimum codimension of a contact $CR$-warped product with compact invariant factor in an odd-dimensional sphere endowed with the standard Sasakian structure.

\begin{theorem} \label{T:16.3} Let $N=N_{T}\times_{f} N_{\perp}$ be a contact $CR$-warped product in the
$(2m+1)$-dimensional unit sphere $S^{2m+1}$. If $N_{T}$ is compact, then 
$$m\geq s+p+s p,$$
where $\dim N_{T}=2s+1$ and $\dim N_{\perp}=p$.
\end{theorem}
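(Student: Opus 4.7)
My plan is to adapt the strategy behind the K\"ahler codimension bound in Theorem~\ref{T:10.1}, using compactness of $N_T$ as a tool to locate a base point where the warped product shape operator formulas degenerate into those of a contact $CR$-product, and then applying a Segre-embedding style dimension count at that point.

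First I would derive the Sasakian analogue of Proposition~\ref{P:9.1}: for $X\in\mathcal D$ and $Z\in\mathcal D^{\perp}$ on the warped product $N_T\times_f N_\perp$,
\[
A_{\phi Z}X \;=\; \bigl((\phi X)\ln f\bigr)\,Z,
\]
together with the extra identities needed to absorb the Reeb field $\xi\in TN_T$. Dualizing yields $\langle\sigma(X,Z),\phi W\rangle = ((\phi X)\ln f)\,\langle Z,W\rangle$ for $W\in\mathcal D^{\perp}$, so $\sigma(X,Z)$ decomposes as $((\phi X)\ln f)\,\phi Z+\sigma_{\nu}(X,Z)$, where $\sigma_{\nu}(X,Z)$ is the component in the orthogonal complement $\nu$ of $\phi\mathcal D^{\perp}$ inside $T^{\perp}N$.

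Next I would exploit compactness. Since $N_T$ is compact, the smooth warping function $f:N_T\to(0,\infty)$ attains its maximum at some point $p_{0}\in N_T$, so $(\nabla f)(p_{0})=0$, and therefore $(\phi X)\ln f = 0$ at $p_{0}$ for every $X\in\mathcal D$. By the previous step, along the whole slice $\{p_{0}\}\times N_\perp$ the mixed second fundamental form reduces to $\sigma(X,Z)=\sigma_{\nu}(X,Z)$; i.e., it takes values entirely in $\nu$. At a fixed point $(p_{0},q)$ I then aim to show that the $2sp$ vectors $\{\sigma_{\nu}(E_{\alpha},E^{*}_{j}):1\le\alpha\le 2s,\ 1\le j\le p\}$ are linearly independent in $\nu_{(p_{0},q)}$, where $\{E_{1},\ldots,E_{2s}\}$ is a $\phi$-adapted orthonormal frame of $\mathcal D$ and $\{E^{*}_{1},\ldots,E^{*}_{p}\}$ an orthonormal frame of $\mathcal D^{\perp}$. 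This follows, as in the CR-product case of Theorem~\ref{T:7.2}, by substituting vectors from $\mathcal D\cup\mathcal D^{\perp}$ into the Gauss equation of the Sasakian space form $S^{2m+1}$ (of constant $\phi$-sectional curvature $1$) and combining with $\phi$-equivariance identities of the type $\sigma_{\nu}(\phi X,Z)=\phi\,\sigma_{\nu}(X,Z)$ that pair off the $2sp$ vectors into linearly independent blocks of complex dimension~$sp$.

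The normal bundle $T^{\perp}N$ at $(p_{0},q)$ has real dimension $(2m+1)-(2s+1+p)=2m-2s-p$ and splits orthogonally as $\phi\mathcal D^{\perp}\oplus\nu$, so $\dim\nu_{(p_{0},q)}=2m-2s-2p$. Combining with the bound $\dim\nu_{(p_{0},q)}\ge 2sp$ obtained above yields $2m-2s-2p\ge 2sp$, i.e., $m\ge s+p+sp$, as required. The main obstacle is the linear-independence step: one must transplant the CR-product argument of Theorem~\ref{T:7.2} into the Sasakian framework, where the Reeb direction $\xi$ tangent to $N_T$ couples into the Gauss and Codazzi equations and threatens the clean orthogonality relations available in the K\"ahler setting. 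Careful bookkeeping of the $\xi$-contributions, together with the Sasakian identity $K(X,\xi)=1$ of \eqref{16.4}, should show that these extra terms do not collapse the rank of the mixed second fundamental form below $2sp$.
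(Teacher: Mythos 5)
Since this survey only quotes Theorem \ref{T:16.3} from \cite{HM03} and reproduces no proof, your proposal can only be judged on its own merits; its architecture (locate a critical point $p_{0}$ of $f$ on the compact factor, bound from below the rank of the mixed second fundamental form inside $\nu$, then count $\dim\nu=(2m+1)-(2s+1+p)-p=2m-2s-2p\geq 2sp$) is indeed the standard route to such codimension bounds, and the final arithmetic is correct. The genuine gap is precisely the step you defer: the linear independence of the $2sp$ vectors $\sigma_{\nu}(E_{\alpha},E^{*}_{j})$ at $(p_{0},q)$. You propose to obtain it ``as in the CR-product case of Theorem \ref{T:7.2}'', but that argument is not a pointwise one: for a (contact) $CR$-product the identity $A_{\phi Z}X=0$ holds on an open set, and the nonvanishing and mutual orthogonality of the mixed $\sigma$'s are extracted by \emph{differentiating} this identity through the Codazzi equation against the ambient curvature. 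For your warped product the identity valid on an open set is $A_{\phi Z}X=((\phi X)\ln f)\,Z$ (plus $A_{\phi Z}\xi$ proportional to $Z$); at the maximum point only the first-order datum $\nabla f$ vanishes, so differentiating leaves Hessian terms of $\ln f$ that survive at $p_{0}$ and enter every product $\langle\sigma_{\nu}(X,Z),\sigma_{\nu}(Y,W)\rangle$. Concretely, even the norms are not recovered by your pointwise reduction: the Gauss equation in $S^{2m+1}$ combined with the warped-product curvature gives, for unit $X\perp\xi$ tangent to $N_{T}$ and unit $Z$ tangent to $N_{\perp}$,
\begin{equation*}
\|\sigma(X,Z)\|^{2}=1+\frac{{\rm Hess}_{f}(X,X)}{f}+\langle\sigma(X,X),\sigma(Z,Z)\rangle ,
\end{equation*}
where at a maximum of $f$ the Hessian term is nonpositive and the last term is uncontrolled; orthogonality of distinct $\sigma_{\nu}(E_{\alpha},E^{*}_{j})$ is not addressed at all. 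What is actually required is the Sasakian Codazzi lemma underlying \eqref{16.7} (expressing $\|\sigma_{\nu}(X,Z)\|^{2}+\|\sigma_{\nu}(\phi X,Z)\|^{2}$ and the cross terms through ${\rm Hess}(\ln f)$, $\nabla\ln f$ and the curvature of $S^{2m+1}$), followed by the observation that at a maximum point of $f$ these second-order corrections enter with the favorable sign, so that the Gram matrix of $\{\sigma_{\nu}(E_{\alpha},E^{*}_{j})\}$ stays nondegenerate. Without this lemma the rank claim, and hence the whole proof, is unsupported; your closing paragraph locates the difficulty in the Reeb couplings, but those are the routine part, while the Hessian terms are the real obstruction.

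Two smaller corrections. Your assertion that along the slice $\{p_{0}\}\times N_{\perp}$ the mixed second fundamental form takes values entirely in $\nu$ fails for $X=\xi$: since $\xi$ is tangent and the ambient is Sasakian, $\sigma(\xi,Z)=-\phi Z\in\phi\mathcal D^{\perp}$ at every point, so the statement must be restricted to $X\in\mathcal D$ orthogonal to $\xi$ (which is all your count uses). Also, the equation driving the independence argument is Codazzi rather than Gauss, as indicated above.
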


An easy consequence of Theorem \ref{T:16.3} is the following.

\begin{corollary} Let $\tilde M(c)$ be a Sasakian space form with $c<3$. Then there do not exist contact $CR$-warped products $N_{T}\times_{f} N_{\perp}$, with $N_{T}$ a compact invariant submanifold tangent to $\xi$ and $N_{\perp}$ a $C$-totally real submanifold of $\tilde M(c)$\end{corollary}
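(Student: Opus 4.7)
The approach is by contradiction, extending the integration argument used in Theorem \ref{T:16.3} from the standard sphere $S^{2m+1}$ to an arbitrary Sasakian space form. Suppose $N_{T}\times_{f}N_{\perp}$ is a contact $CR$-warped product in $\tilde M(c)$ with $N_{T}$ compact (write $\dim N_{T}=2s+1$, $\dim N_{\perp}=p$) and $c<3$.

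The first step is to refine inequality \eqref{16.6} in Theorem \ref{T:16.2} for the Sasakian space form setting. Using the explicit curvature tensor of $\tilde M(c)$,
\[
\tilde R(X,Y)Z=\frac{c+3}{4}\{g(Y,Z)X-g(X,Z)Y\}+\frac{c-1}{4}\{\text{terms involving }\phi,\eta,\xi\},
\]
and running the Gauss–Codazzi computation for the $CR$-warped product decomposition in the manner of Theorems \ref{T:11.3} and \ref{T:11.4} for complex space forms, one should obtain a strengthened inequality of the form
\[
\|\sigma\|^{2}\geq 2p\Bigl\{\|\nabla\ln f\|^{2}+\Delta(\ln f)+1+\beta(c,s)\Bigr\},
\]
where $\beta(c,s)$ is a curvature–dependent term that is a positive multiple of $(3-c)$ whenever $c<3$. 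The ``$+1$'' records the Sasakian contribution $K(X,\xi)=1$ from Theorem \ref{T:16.2}, while $\beta(c,s)$ isolates the additional positive term produced by the second piece of the curvature tensor.

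The second step is to integrate this refined inequality over the compact invariant factor $N_{T}$. Green's theorem eliminates the Laplacian term, $\int_{N_{T}}\Delta(\ln f)\,dV=0$, yielding
\[
\int_{N_{T}}\|\sigma\|^{2}\,dV\;\geq\;2p\int_{N_{T}}\Bigl(\|\nabla\ln f\|^{2}+1+\beta(c,s)\Bigr)\,dV.
\]
The third step is to pair this lower bound with the codimension upper bound obtained by carrying over the argument of Theorem \ref{T:10.3} (or of Theorem \ref{T:16.3}) to the general Sasakian space form. Combining the two bounds, the positivity of $\beta(c,s)$ for $c<3$ forces the required ambient dimension to exceed the one available in $\tilde M(c)$, giving the desired contradiction and ruling out any such warped product.

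The hard part will be carrying out the first step honestly, namely computing $\beta(c,s)$ explicitly from the $\phi$-,$\eta$-,$\xi$-terms in $\tilde R$ restricted to mixed pairs $(X,Z)$ with $X\in\mathcal D$, $Z\in\mathcal D^{\perp}$, and confirming that the threshold at which $\beta$ changes sign is precisely $c=3$. A secondary technical issue is verifying that in the third step the estimate on $\int\|\sigma\|^{2}dV$ transfers from the $S^{2m+1}$ setting of Theorem \ref{T:16.3} to the general Sasakian space form without loss, which should follow from the same Segre-type embedding analysis together with the $c$-dependent curvature correction already incorporated into $\beta(c,s)$.
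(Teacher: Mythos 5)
Your argument has two genuine gaps, one analytic and one logical. Analytically, everything hinges on the conjectured refinement of \eqref{16.6} with a correction term $\beta(c,s)$ that is a positive multiple of $3-c$, but you give no derivation and the known curvature-corrected estimate for contact $CR$-warped products in a Sasakian space form --- inequality \eqref{16.7} of Theorem \ref{T:16.4}, coming from exactly the curvature tensor you write down --- carries the term $\tfrac{c+3}{2}\,s$, not a multiple of $3-c$. The mixed sectional curvatures $\tilde K(X,Z)$ with $X\in\mathcal D$, $Z\in\mathcal D^{\perp}$ equal $\tfrac{c+3}{4}$ (and $\tilde K(\xi,Z)=1$), so the sign threshold produced by the Gauss--Codazzi computation is $c=-3$, not $c=3$; there is no mechanism in the $\phi$-, $\eta$-, $\xi$-terms of $\tilde R$ that isolates a factor $3-c$. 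So the ``hard part'' you defer is not merely technical: as set up, the inequality you need is not expected to hold, and for $-3<c<3$ (e.g.\ the unit sphere, $c=1$, which is precisely the ambient space of Theorem \ref{T:16.3}) no such strictly positive $\beta$ can be extracted.

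Logically, even granting such an inequality, the concluding step does not produce a contradiction. Integration over the compact factor $N_T$ gives only a \emph{lower} bound for $\int_{N_T}\|\sigma\|^{2}\,dV$, and Theorem \ref{T:16.3} (like Theorem \ref{T:10.3} in the K\"ahler case) is a \emph{lower} bound on the ambient dimension, $m\geq s+p+sp$; neither statement supplies an upper bound to pair against, and the corollary places no restriction at all on $\dim\tilde M(c)$, so there is no ``available dimension'' to exceed. A nonexistence proof must produce an impossibility independent of codimension. This is also where your route departs from the paper: the survey does not introduce any new integral inequality but presents the corollary as an immediate consequence of Theorem \ref{T:16.3}, i.e.\ of the compact-$N_T$ codimension theorem and the curvature-sign information already built into its proof for Sasakian space forms, whereas your scheme both re-proves a (stronger, unestablished) pointwise inequality and then draws a conclusion that does not follow from the bounds you would have.
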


The following results from \cite{HM03,Mu05} are the  contact version of Theorems \ref{T:10.1} and \ref{T:10.3}.

\begin{theorem} \label{T:16.4}   Let  $N=N_{T}\times_{f} N_{\perp}$ a contact $CR$-warped product of a Sasakian space form $\tilde M^{2m+1}(c)$ such that $N_{T}$ is a
$(2s+1)$-dimensional invariant submanifold tangent to $\xi$ and $N_{\perp}$ a $p$-dimensional $C$-totally real submanifold of $\tilde M$. Then

\begin{enumerate}
\item The squared norm of the second fundamental form satisfies
\begin{align} \label{16.7} ||\sigma||^{2}\geq 2p \left(||\nabla(\ln f)||^{2}-\Delta(\ln f)+\frac{c+3}{2}s+1\right).\end{align}

\item The equality sign of \eqref{16.7} holds identically if and only if we have:
\item[(2.1)]
  $N_{T}$ is a totally geodesic invariant submanifold of $\tilde M(c)$. Hence $N_{T}$ is a Sasakian space form of constant $\phi$-sectional curvature $c$.
\item[(2.2)] $N_{\perp}$ is a totally umbilical totally real submanifold of $\tilde M$. Hence $N_{\perp}$ is a real space form of sectional curvature $\varepsilon \geq (c+3)/4$.
\end{enumerate} \end{theorem}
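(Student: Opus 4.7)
The plan is to carry out the Sasakian analogue of the Chen-style proof of Theorem~\ref{T:11.1}, with extra care for the structure vector field $\xi\in TN_T$. First I would fix an adapted orthonormal frame on $N=N_T\times_f N_\perp$: $\{e_1,\ldots,e_{2s}\}$ orthonormal in $\mathcal{D}_0:=\mathcal{D}\cap\xi^\perp$ and paired up by $\phi$, $e_{2s+1}=\xi$, and $\{v_1,\ldots,v_p\}$ orthonormal in $\mathcal{D}^\perp=TN_\perp$, so that $\{\phi v_1,\ldots,\phi v_p\}$ is orthonormal in the normal bundle. Computing $\tilde\nabla_Z(\phi X)$ in two ways via the Sasakian identity $(\tilde\nabla_X\phi)Y=g(X,Y)\xi-\eta(Y)X$, the Gauss formula, and the warped-product rule $\nabla_Z X=(X\ln f)Z$ for $X\in TN_T,\ Z\in TN_\perp$, and then pairing with $\phi W$, gives the key identity
\begin{equation*}
g(\sigma(X,Z),\phi W)=-\bigl((\phi X)\ln f+\eta(X)\bigr)\,g(Z,W),\qquad X\in TN_T,\ Z,W\in TN_\perp.
\end{equation*}
The special case $X=\xi$ together with $\tilde\nabla_Z\xi=-\phi Z$ produces the collateral facts $\xi\ln f=0$ and $\sigma(\xi,Z)=-\phi Z$, so $\nabla\ln f\in\mathcal{D}_0$; summing squares yields the algebraic companion $\sum_{i,a,b}g(\sigma(e_i,v_a),\phi v_b)^2=p(\|\nabla\ln f\|^2+1)$.

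Next I would invoke the Gauss equation together with the curvature tensor of the Sasakian space form. A direct computation gives $\tilde K(X,V)=(c+3)/4$ for unit $X\in\mathcal{D}_0,\ V\in\mathcal{D}^\perp$ and $\tilde K(\xi,V)=1$, whence $\sum_{i,a}\tilde K(e_i,v_a)=p\bigl(\tfrac{(c+3)s}{2}+1\bigr)$. The standard warped-product formula $K(X,V)=-\mathrm{Hess}_f(X,X)/f$ for mixed sections, combined with $\Delta f/f=\Delta\ln f-\|\nabla\ln f\|^2$, gives $\sum_{i,a}K(e_i,v_a)=p(\Delta\ln f-\|\nabla\ln f\|^2)$. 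Substituting both into $\sum K=\sum\tilde K+g(\mathrm{trace}\,h_1,\mathrm{trace}\,h_2)-\sum\|\sigma(e_i,v_a)\|^2$ produces
\begin{equation*}
\sum_{i,a}\|\sigma(e_i,v_a)\|^2=p\Bigl(\|\nabla\ln f\|^2-\Delta\ln f+\tfrac{(c+3)s}{2}+1\Bigr)+g(\mathrm{trace}\,h_1,\mathrm{trace}\,h_2).
\end{equation*}

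The inequality in~(1) then follows from the orthogonal decomposition $\|\sigma\|^2=\sum_{i,j}\|\sigma(e_i,e_j)\|^2+2\sum_{i,a}\|\sigma(e_i,v_a)\|^2+\sum_{a,b}\|\sigma(v_a,v_b)\|^2$ combined with the Cauchy--Schwarz bounds $\sum_i\|\sigma(e_i,e_i)\|^2\ge\|\mathrm{trace}\,h_1\|^2/(2s+1)$ and its analogue for the $v_a$'s, arranged to absorb the sign-indefinite cross term $g(\mathrm{trace}\,h_1,\mathrm{trace}\,h_2)$. For the equality case, equality in~(1) forces $\sigma(X,Y)=0$ for all $X,Y\in TN_T$ and $\sigma(Z,W)=0$ for all $Z,W\in TN_\perp$. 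The first condition, plus $\phi$-invariance and $\xi\in TN_T$, makes $N_T$ a totally geodesic Sasakian submanifold of $\tilde M^{2m+1}(c)$, hence itself of constant $\phi$-sectional curvature $c$, giving~(2.1). The second, combined with the warped-product identity $\sigma^{\tilde M}_{N_\perp}(Z,W)=\sigma^M(Z,W)-g(Z,W)\nabla\ln f$, shows that $N_\perp$ is totally umbilical in $\tilde M$ with mean curvature vector $-\nabla\ln f$, and Gauss's equation for $N_\perp\hookrightarrow\tilde M$ then yields the leaf sectional curvature $\varepsilon=(c+3)/4+\|\nabla\ln f\|^2\ge(c+3)/4$, which is~(2.2).

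The hard part will be the third step: the cross term $g(\mathrm{trace}\,h_1,\mathrm{trace}\,h_2)$ coming from the Gauss identity is sign-indefinite, and the naive estimate $\|\sigma\|^2\ge 2\sum_{i,a}\|\sigma(e_i,v_a)\|^2$ alone is not strong enough to dominate it. The argument must instead couple this with the algebraic companion $\sum_{i,a,b}g(\sigma(e_i,v_a),\phi v_b)^2=p(\|\nabla\ln f\|^2+1)$ from the first step together with a carefully weighted Cauchy--Schwarz on the purely tangential summands, paralleling the delicate balancing that underlies Chen's proof of Theorem~\ref{T:11.1}.
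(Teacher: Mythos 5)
Your preparatory work is essentially right: the adapted frame, the identity $g(\sigma(X,Z),\phi W)=-\bigl((\phi X)\ln f+\eta(X)\bigr)g(Z,W)$, the consequences $\xi\ln f=0$ and $\sigma(\xi,Z)=-\phi Z$, the curvature counts for the Sasakian space form, and the Gauss-equation bookkeeping leading to
\begin{equation*}
\sum_{i,a}\|\sigma(e_i,v_a)\|^2=p\Bigl(\|\nabla\ln f\|^2-\Delta\ln f+\tfrac{c+3}{2}s+1\Bigr)+g({\rm trace}\,h_1,{\rm trace}\,h_2)
\end{equation*}
are all correct (with the survey's sign convention for $\Delta$). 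The genuine gap is precisely the step you flag as the hard part, and the repair you sketch cannot work: Cauchy--Schwarz bounds $\|{\rm trace}\,h_1\|^2$ and $\|{\rm trace}\,h_2\|^2$ from \emph{above} by $(2s+1)\sum_i\|\sigma(e_i,e_i)\|^2$ and $p\sum_a\|\sigma(v_a,v_a)\|^2$, so the discarded pure terms can never be guaranteed to dominate $-2g({\rm trace}\,h_1,{\rm trace}\,h_2)$; with $a=\|{\rm trace}\,h_1\|$, $b=\|{\rm trace}\,h_2\|$ one would need $a^2/(2s+1)+b^2/p\ge 2ab$ for all $a,b\ge 0$, which fails whenever $(2s+1)p>1$. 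The companion identity $\sum_{i,a,b}g(\sigma(e_i,v_a),\phi v_b)^2=p(\|\nabla\ln f\|^2+1)$ is just part of the mixed term you have already counted, so it cannot offset the cross term either. As written, part (1) is therefore not proved.

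The missing idea is that the cross term vanishes identically. The leaves $N_T\times\{q\}$ are totally geodesic in $N$, so $\sigma$ restricted to $\mathcal D\times\mathcal D$ is their second fundamental form in $\tilde M$; each leaf is an invariant submanifold tangent to $\xi$ of the Sasakian manifold, and for such submanifolds one has $\sigma(X,\phi Y)=\phi\,\sigma(X,Y)$, hence $\sigma(\phi X,\phi Y)=-\sigma(X,Y)$, together with $\sigma(\xi,\xi)=0$; tracing over $\{e_1,\ldots,e_{2s},\xi\}$ gives ${\rm trace}\,h_1=0$, so $g({\rm trace}\,h_1,{\rm trace}\,h_2)=0$. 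With this observation your own identity yields \eqref{16.7} at once from $\|\sigma\|^2\ge 2\sum_{i,a}\|\sigma(e_i,v_a)\|^2$, and your equality discussion then goes through: equality holds exactly when $\sigma(\mathcal D,\mathcal D)=0$ and $\sigma(\mathcal D^\perp,\mathcal D^\perp)=0$, giving (2.1) and (2.2) as you describe (for the converse direction you should also note that total umbilicity of $N_\perp$ in the equality case comes with mean curvature vector $-\nabla\ln f$, i.e.\ no component in $T^\perp N$). Note that the survey itself gives no proof but cites Hasegawa--Mihai and Munteanu; their published arguments, modeled on Chen's proof of Theorem \ref{T:11.1}, avoid the cross term altogether: they retain only the mixed part of $\sigma$, compute its $\phi\mathcal D^\perp$-component exactly from the key identity, and obtain the lower bound for the component in the complementary normal subbundle by differentiating that identity and invoking the Codazzi equation, which is where the $-\Delta\ln f$ and $\frac{c+3}{2}s$ terms arise in their treatment.
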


An example of contact $CR$-warped submanifold  satisfying the equality case of \eqref{16.7}, but not the equality case of \eqref{16.6} was constructed in \cite{Mu05}.
Also, a contact version of Theorem \ref{3.1} for warped product submanifolds in Sasakian space forms was given in \cite{MM02}.

\vskip.2in

\section{\uppercase{Warped product submanifolds in affine spaces}}

 If $M$ is an $n$-dimensional manifold, let $f:M\to {\bf R}^{n+1}$ be a non-degenerate hypersurface of the affine $(n+1)$-space whose position vector field is nowhere tangent to $M$. Then $f$ can be regarded as a transversal field along itself. We call $\xi=-f$ the centroaffine normal. The $f$ together with this normalization is called a centroaffine hypersurface.

The centroaffine structure equations are given by
\begin{align} \label{23.1} &D_Xf_*(Y)=f_*(\nabla_X Y)+h(X,Y)\xi,\\
\label{23.2} &D_X\xi=-f_*(X),\end{align} where $D$ denotes the canonical flat connection of ${\bf R}^{n+1}$, $\nabla$ is a torsion-free connection on $M$, called the induced centroaffine
connection, and $h$ is a non-degenerate symmetric $(0,2)$-tensor field, called the {\it centroaffine metric}.

  From now on we assume that the centroaffine hypersurface is definite, i.e., $h$ is definite.  In case that $h$ is negative definite, we shall replace $\xi=-f$ by $\xi=f$ for the affine normal. In this way, the second fundamental form $h$ is always positive definite. In both cases, \eqref{23.1}  holds. Equation \eqref{23.2}  change sign. In case $\xi=-f$, we call $M$ positive definite; in case $\xi=f$, we call $M$ negative definite.

Denote by $\hat\nabla$ the Levi-Civita connection of $h$ and by $\hat R$ and $\hat \kappa$ the curvature tensor and the normalized scalar curvature of $h$, respectively. The {\it difference tensor} $K$ is  defined by
\begin{align} \label{23.3} &K_XY=K(X,Y)=\nabla_X Y-\hat\nabla_X Y,\end{align}
which is a symmetric $(1,2)$-tensor field. The difference tensor $K$ and the cubic form $C$ are related by $$C(X,Y,Z)=-2h(K_XY,Z).$$ Thus, for each $X$, $K_X$ is self-adjoint with respect to $h$.
The Tchebychev form $T$ and the {\it Tchebychev vector field} $T^\#$ of $M$ are defined respectively by
\begin{align} \label{23.4} & T(X)=\frac{1}{n} {\rm trace}\,K_X, \\
\label{23.5} &h(T^\#,X)=T(X).\end{align}
If $T=0$ and if we consider $M$ as a hypersurface of the equiaffine space, then $M$ is a so-called  {\it proper affine hypersphere} centered at the origin.

 If the difference tensor $K$ vanishes, then $M$ is a quadric, centered at the origin, in particular an ellipsoid if $M$ is positive definite and a two-sheeted hyperboloid if $M$ is negative definite.

  An affine  hypersurface $\phi: M\to \bf R^{n+1}$ is called a {\it graph hypersurface} if the transversal vector field  $\xi$  is a constant  vector field.   A result of \cite{NP} states that a graph hypersurface $M$ is  locally affine equivalent to the graph immersion of a certain function $F$.  
Again in case that $h$ is nondegenerate, it defines a semi-Riemannian metric,  called the {\it Calabi metric} of the graph hypersurface. If $T=0$,  a graph hypersurface is a so-called  {\it improper affine hypersphere}.

Let $M_1$ and $M_2$ be two improper affine hyperspheres in ${\bf R}^{p+1}$ and ${\bf R}^{q+1}$ defined respectively by the equations: $$x_{p+1}=F_1(x_1,\dots,x_p),\quad y_{q+1}=F_2(y_1,\dots,y_q).$$ Then one can define a new improper affine hypersphere $M$ in ${\bf R}^{p+q+1}$ by
$$z=F_1(x_1,\dots,x_p) + F_2(y_1,\dots,y_q),$$
where $(x_1,\dots,x_p,y_1,\dots,y_q,z)$ are the coordinates on ${\bf R}^{p+q+1}$. The Calabi normal of $M$ is $(0,\dots,0,1)$. Obviously, the Calabi metric on $M$ is the product metric. Following \cite{D2} we call this composition the {\it Calabi
composition} of $M_1$ and $M_2$.

  For a Riemannian $n$-manifold $(M,g)$ with Levi-Civita connection $\nabla$, \'E. Cartan and A. P. Norden studied nondegenerate affine immersions $f:(M,\nabla)\to {\bf R}^{n+1}$ with a transversal vector field $\xi$ and with $\nabla$ as the induced connection. 
  
  The well-known Cartan-Norden theorem states that if $f$ is  such an affine immersion, then  either $\nabla$ is flat and $f$ is a graph immersion or $\nabla$ is not flat and ${\bf R}^{n+1}$ admits a parallel Riemannian metric relative to which $f$ is an isometric immersion and $\xi$ is perpendicular to $f(M)$ (cf. for instance, \cite[p. 159]{N}) (see, also \cite{DNV}).
  
In \cite{c38,c44},  the author studied Riemannian manifolds in affine geometry from a view point different from Cartan-Norden. More precisely, he investigated  the following. 
  \vskip.1in
  
{\bf Realization Problem:}  {\it Which  Riemannian manifolds $(M,g)$ can be immersed  as affine
hypersurfaces in an affine space in such a way that the 
fundamental  form $h$ (e.g. induced by the centroaffine normalization or a constant 
transversal vector field) is the given Riemannian metric $g$}?
\vskip.1in

We say that a Riemannian manifold $(M,g)$ can be {\it realized as an affine hypersurface} if there exists a codimension one affine immersion from $M$ into some affine space in such a way that the induced affine metric $h$ is  exactly the Riemannian  metric $g$ of $M$ (notice that we do not put any assumption on the affine connection).  

Warped product submanifolds in affine hypersurfaces was investigated in \cite{c38,c44}. In particular, it was shown in \cite{c38} that  there exist many warped product Riemannian manifolds  which can be realized either as graph or centroaffine hypersurfaces. More precisely, we have the following results from \cite{c38}.

\begin{theorem}  \label{T:17.1}  Let  $f=f(s)$ be a positive function defined on an open interval $I$. Assume that ${\bf R}, S^n(a^2)$, $H^n(-a^2)$, and $\mathbb E^n$ are equipped with their canonical metrics. Then we have:
\vskip.04in

{\rm (a)} Every warped product surface $I\times_f {\bf R}$ can  be realized as a graph surface in the affine $3$-space ${\bf R}^3$.
\vskip.04in

{\rm (b)}  For each integer $n>2$, the warped product manifold  $I\times_f H^{n-1}(-a^2)$ can  be realized as a graph hypersurface
in ${\bf R}^{n+1}$. 
\vskip.04in

{\rm (c)} If $f'(s)\ne 0$ on $I$, then  the warped product manifold  $I\times_f {\mathbb E}^{n-1},n>2,$  can be realized as a graph hypersurface in ${\bf R}^{n+1}$. 
\vskip.04in

{\rm (d)} If $f'(s)^2>a^2$ on $I$ for some positive number $a$, then the warped product manifold  $I\times_f S^{n-1}(a^2),n>2,$ can  be realized as a graph hypersurface in ${\bf R}^{n+1}$. 
  \end{theorem}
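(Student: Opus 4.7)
The plan is to establish each part by exhibiting an explicit graph-type immersion and checking that the Calabi metric it induces agrees with the warped-product metric. Recall that for a graph $z=F(x^1,\ldots,x^n)$ in $\mathbb{R}^{n+1}$ with the constant transversal $\xi=(0,\ldots,0,1)$ and any parametrization $(u^1,\ldots,u^n)\mapsto (x(u),F(x(u)))$, the affine second fundamental form is $h_{ab}=F_{ij}\,x^i_{,a}\,x^j_{,b}$, i.e.\ the pullback of the Euclidean Hessian of $F$. So in every case the problem reduces to producing, for the given $f$, a function $F$ whose Hessian realizes the warped-product metric in suitable coordinates.

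For parts (a) and (c) I will use the ansatz
\[\phi(s,t_1,\ldots,t_{n-1}) = \left(\tfrac{1}{2}f(s)^2,\; t_1 f(s)^2,\ldots, t_{n-1}f(s)^2,\; \tfrac{1}{2}f(s)^2\sum_{i=1}^{n-1}t_i^2 + R\bigl(\tfrac{1}{2}f(s)^2\bigr)\right),\]
so that, in terms of the first $n$ coordinates $x=\tfrac12 f^2$ and $y_i=t_i f^2$, the immersion is the graph of $F(x,y)=\frac{1}{4x}\sum_i y_i^2+R(x)$. The hypothesis $f'\ne 0$ --- explicit in (c), and needed in (a) except for the trivial flat case $f\equiv\mathrm{const}$ (which is handled by the quadratic graph $z=\tfrac12(s^2+f^2 t^2)$) --- makes $s\mapsto\tfrac12 f(s)^2$ a local diffeomorphism, so the first $n$ components genuinely serve as graph coordinates. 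Computing the pullback of $\operatorname{Hess}F$ via $x=\tfrac12 f^2$, $y_i=t_i f^2$ shows that the mixed and fiber components automatically give $g_{s t_i}=0$ and $g_{t_i t_j}=f^2\delta_{ij}$, while the condition $g_{ss}=1$ collapses to the single compatibility ODE $R''(x)=1/(f(s(x))f'(s(x)))^2$, solvable by two integrations in $x$. This yields (a) and (c).

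For parts (b) and (d) the strategy is parallel, with $(t_1,\ldots,t_{n-1})$ replaced by a suitable isometric embedding $\iota$ of the curved fiber $N$ into affine space --- the round embedding $S^{n-1}(a^2)\hookrightarrow\mathbb{R}^n$ for (d) and an analogous graph-type realization of $H^{n-1}(-a^2)$ for (b) --- and one seeks an immersion of the form
\[\phi(s,q)=\bigl(A(s),\,B(s)\iota(q),\,G(s,q)\bigr).\]
Matching $\operatorname{Hess}F$ to $ds^2+f^2 g_N$ produces an algebraic relation fixing $B$ in terms of $f$, a differential relation between $A$ and $f$, and a quadrature determining the ``radial'' piece of $G$. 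The inequalities $(f')^2>a^2$ in (d) and the appropriate branch choice in (b) are precisely what is required so that the projection onto the first $n$ coordinates is a local diffeomorphism and the pertinent radicals remain real.

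The hard part will be the algebraic bookkeeping in (b) and (d): the curvature of $\iota(N)$ in $\mathbb{R}^n$ produces cross-terms in $F_{ij}x^i_{,s}x^j_{,t_\alpha}$ and $F_{ij}x^i_{,t_\alpha}x^j_{,t_\beta}$ that must cancel against the contributions coming from $f(s)^2 g_N$ in a delicate way. Pinpointing the right $A$, $B$, $G$ --- which is the non-routine content of \cite{c38} --- is where the substance of the argument lies; once these are identified the remaining verification is routine differentiation.
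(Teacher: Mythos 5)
Your framework is the right one, and it matches the spirit of \cite{c38} (note that the survey itself only states Theorem \ref{T:17.1} and gives no proof): for a graph hypersurface with constant transversal $\xi=(0,\ldots,0,1)$ the affine metric is indeed the pullback of the Euclidean Hessian of the defining function, independently of the parametrization, and your flat-fibre ansatz checks out. With $x=\tfrac12 f^2$, $y_i=t_i f^2$, $F=\frac{1}{4x}\sum_i y_i^2+R(x)$ one gets $h_{t_it_j}=f^2\delta_{ij}$, $h_{st_i}=0$, $h_{ss}=R''(x)\,f^2f'^2$, so $R''(x)=1/(f f')^2$ (well defined since $s\mapsto\tfrac12 f^2$ is a diffeomorphism when $f'\neq0$ on $I$) settles part (c), and part (a) in the two cases $f'\neq0$ on $I$ or $f$ constant.

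Two genuine gaps remain. First, part (a) is asserted for \emph{every} positive $f$, with no hypothesis on $f'$; at a critical point of $f$ your map has $\partial_s\phi=0$, so it is not even an immersion there, and the constant case you add does not cover non-monotone warping functions such as $f=2+\sin s$. You need either a different construction or a genuine argument handling zeros of $f'$; as it stands (a) is only proved under an extra hypothesis not present in the statement. Second, parts (b) and (d) are not actually proved: you write the expected shape $\phi(s,q)=\bigl(A(s),\,B(s)\iota(q),\,G(s,q)\bigr)$ and assert that matching the Hessian metric ``produces'' relations for $A$, $B$, $G$, but you neither exhibit these functions nor show the resulting system is solvable, and you explicitly defer this to \cite{c38}. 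Yet this is exactly where the hypotheses of the theorem must emerge --- the pattern (no condition for the $H^{n-1}(-a^2)$ fibre, $f'\neq0$ for $\mathbb E^{n-1}$, $f'^2>a^2$ for $S^{n-1}(a^2)$) suggests a radical of the form $\sqrt{f'^2-k}$ with $k$ the fibre curvature entering the construction --- so without carrying out this step the proposal establishes neither (b) nor (d).
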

  
  \begin{theorem}  \label{T:17.2} The following results hold.
\vskip.04in

{\rm (a)} If $n>2$ and  $f=f(s)$ is a positive function defined on an open interval $I$, then we have:
\vskip.04in

 {\rm (a.1)}  If $f'(s)^2>f^2(s)-a^2$ on $I$ for some positive number $a$, then $I\times_f H^{n-1}(-a^2)$ can  be realized as a centroaffine hypersurface
in ${\bf R}^{n+1}$. 
\vskip.04in

 {\rm (a.2)} If $f'(s)^2>f(s)^2$ on $I$, then   $I\times_f {\mathbb E}^{n-1}$  can be realized as a centroaffine hypersurface
in ${\bf R}^{n+1}$. 

\vskip.04in 
 {\rm (a.3)} If $f'(s)^2>f(s)^2+a^2$ on $I$ for some positive number $a$, then  $I\times_f S^{n-1}(a^2)$ can  be realized as a graph hypersurface in ${\bf R}^{n+1}$. 
\vskip.04in 

{\rm (b)} If $n=2$ and $f=f(s)$ is a  positive function defined on a closed interval $[\alpha,\beta]$, then the warped product surface $J\times _f {\bf R}, J=(\alpha,\beta)$, can always be realized as a centroaffine surface in ${\bf R}^3$.   
 \end{theorem}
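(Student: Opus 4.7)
The plan is to construct, in each case, an explicit immersion of the warped product into $\mathbb R^{n+1}$ that realizes it as an affine hypersurface of the claimed type, following the method of \cite{c38}. For the three subcases of part (a), the uniform pattern of the hypothesis $f'^2 > f^2 + K$ (with $K = -a^2, 0, a^2$ corresponding respectively to the curvatures of the three model spaces) strongly suggests a single ansatz
\[
\phi(s, y) = \big(f(s)\, \psi(y),\, g(s)\big) \in \mathbb R^{n+1},
\]
where $\psi: N^{n-1}(K) \hookrightarrow \mathbb R^{n}$ is a standard embedding of the model space into $\mathbb R^n$ and $g = g(s)$ is an auxiliary function to be determined. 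The centroaffine normalization is $\xi = -\phi$ (with sign adjusted for definiteness).

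First I would compute the second partial derivatives $\phi_{ss}$, $\phi_{sy_i}$, $\phi_{y_i y_j}$ and decompose each in the basis $\{\phi_s, \phi_{y_1}, \ldots, \phi_{y_{n-1}}, \xi\}$. Using the standard identities $\psi\cdot\psi_{y_i}=0$ and $\psi\cdot\psi_{y_iy_j} = -(g_N)_{ij}$ that hold for the natural embeddings of $S^{n-1}(a^2)$, $\mathbb E^{n-1}$, and $H^{n-1}(-a^2)$, the $\xi$-coefficient of $\phi_{y_iy_j}$ becomes an explicit rational expression in $f, f', g, g'$ times $(g_N)_{ij}$. Imposing this coefficient to equal $f(s)^2 (g_N)_{ij}$ yields a first-order ODE for $g(s)$ whose integration involves $\sqrt{f'(s)^2 - (f(s)^2 + K)}$. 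The hypothesis $f'^2 > f^2 + K$ is exactly what guarantees both that $g(s)$ is real and positive, and that the Wronskian $f g' - f' g$ remains nonzero, so the position vector is transversal to $T\phi(M)$ and the centroaffine normalization is well-defined.

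Once $g$ is fixed, the mixed coefficient $h(\partial_s,\partial_{y_i})$ vanishes automatically from $\psi\cdot\psi_{y_i}=0$, and $h(\partial_s,\partial_s)=1$ is arranged by an appropriate reparameterization of $s$, so the full centroaffine metric realizes $ds^2 + f(s)^2 g_{N^{n-1}(K)}$ as required. For part (b), the one-dimensional second factor $\mathbb R$ removes the rigidity that forces the inequality $f'^2 > f^2 + K$ in higher dimensions, and I would use a ruled-surface ansatz $\phi(s, t) = \alpha(s) + t\, \beta(s) \in \mathbb R^3$ with $\alpha, \beta$ chosen so that the resulting centroaffine surface carries the prescribed metric $ds^2 + f(s)^2 dt^2$; closedness of $[\alpha,\beta]$ ensures that the ODEs which arise have global solutions.

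The main obstacle is the simultaneous bookkeeping required in part (a): the three constraints (realness of $g$, positive-definiteness of $h$, and transversality of the position vector) must all collapse into the single inequality $f'^2 > f^2 + K$. Particular care is needed for case (a.3), which as stated gives a ``graph hypersurface'' rather than a centroaffine one; in that reading, a rotationally symmetric graph $z = F(r)$ in $\mathbb R^{n+1}$ is used, and the Hessian condition $\mathrm{Hess}(F) = ds^2 + f(s)^2 g_{S^{n-1}(a^2)}$ (after a change of variables between $r$ and $s$) produces an algebraic equation whose discriminant vanishes precisely along $f'^2 = f^2 + a^2$, recovering the same sharp inequality.
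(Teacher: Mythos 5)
Your overall plan (an explicit warped/rotational ansatz whose centroaffine structure equations reduce to ODEs in $s$, with the hypothesis on $f$ guaranteeing solvability) is in the same constructive spirit as the source \cite{c38} that this survey quotes without proof, but as written the execution has gaps that would sink it. First, the ``uniform identities'' you assume for $\psi$ fail for the flat fibre: if $\psi:\mathbb{E}^{n-1}\to\mathbb{R}^n$ satisfied $\psi\cdot\psi_{y_i}=0$ and $\psi\cdot\psi_{y_iy_j}=-(g_N)_{ij}$, then differentiating the first identity shows $\psi$ is a normal field of constant length, hence the image is a nonflat umbilical hypersurface, impossible for a flat $(n-1)$-manifold with $n-1\geq 2$; and for $H^{n-1}(-a^2)$ the identities hold only with respect to a Minkowski pairing. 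What the affine computation actually needs is a decomposition $\psi_{y_iy_j}=\Gamma^k_{ij}\psi_{y_k}+c_{ij}P$ with $P$ the position vector for the spherical and hyperbolic fibres but a constant vector (paraboloid model) for $\mathbb{E}^{n-1}$, so case (a.2) requires a separate ansatz. Second, and more seriously, freezing the first block as $f(s)\psi(y)$ leaves only one function $g$ to satisfy two conditions. Carrying out your computation in case (a.3) with $\psi$ the radius-$1/a$ sphere, the requirement $h(\partial_{y_i},\partial_{y_j})=f^2(g_N)_{ij}$ forces $g'/g=ff'/(f^2-a^2)$, i.e.\ $g=c\sqrt{|f^2-a^2|}$ --- no $\sqrt{f'^2-(f^2+a^2)}$ appears, so the hypothesis does not enter where you claim it does --- and one then finds $h(\partial_s,\partial_s)=-f'^2/(f^2-a^2)$, which is not $1$ for generic $f$. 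Your proposed repair by reparameterizing $s$ is not legitimate: after reparameterization the fibre coefficient is still $f^2$ expressed in the old variable, so you realize $dt^2+(f\circ s(t))^2g_N$, a different warped product. One needs two free functions $(A(s),B(s))$ in the ansatz $(A\psi,B)$ and must solve the coupled system $h(\partial_s,\partial_s)=1$, $h(\partial_{y_i},\partial_{y_j})=f^2(g_N)_{ij}$ together with transversality and definiteness on all of $I$; proving that the single inequality $f'^2>f^2+K$ suffices for this is exactly the analytic content of the theorem, and it is the step your sketch leaves out (abstract ODE existence would in any case only give a local realization).

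For part (b), the ruled ansatz $\phi(s,t)=\alpha(s)+t\,\beta(s)$ cannot work at all: linearity in $t$ gives $\phi_{tt}=0$, hence $h(\partial_t,\partial_t)=0$, whereas you need $h(\partial_t,\partial_t)=f^2>0$. You need genuine curvature in the $t$-direction, e.g.\ an ansatz $\phi(s,t)=A(s)\gamma(t)+B(s)\delta(t)$ with $(\gamma,\delta)$ a nondegenerate plane curve, and the real point of (b) --- that no inequality on $f$ is required when the fibre is one-dimensional, using only the bounds coming from positivity of $f$ on the compact interval $[\alpha,\beta]$ --- still has to be argued. Finally, concerning the wording of (a.3): ``graph hypersurface'' is almost certainly a misprint of the survey for ``centroaffine hypersurface,'' since a graph realization of $I\times_fS^{n-1}(a^2)$ is already given by Theorem \ref{T:17.1}(d) under the weaker condition $f'^2>a^2$, while $f'^2>f^2+a^2$ completes the centroaffine pattern of (a.1)--(a.2); note also that your discriminant heuristic for a rotational graph would recover the threshold $f'^2>a^2$, not $f'^2>f^2+a^2$, so that paragraph does not support the graph reading either.
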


\begin{theorem} \label{T:17.3} If a warped product manifold  $N_1\times_f N_2$  can be realized as a graph hypersurface
in  ${\bf R}^{n+1}$, then the warping function satisfies
\begin{align} \label{17.1}\frac{ \Delta f}{f}\geq -\frac{(n_1+n_2)^2}{4n_2}h(T^{\#},T^\#),\end{align} 
 where $T^{\#}$ is the Tchebychev vector field,  $n=n_1+n_2$,  $n_1=\dim N_1$ and $n_2=\dim N_2$.\end{theorem}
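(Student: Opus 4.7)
The plan is to mimic the proof of Theorem~\ref{T:3.1} with $c=0$, but with the affine difference tensor $K$ playing the role of the second fundamental form $\sigma$ and the Calabi metric $h$ playing the role of the induced Riemannian metric. The key geometric input is that for a graph hypersurface the transversal field $\xi$ is constant, so $D_{X}\xi=0$; combined with \eqref{23.1} and flatness of the ambient $D$, this gives both $R^{\nabla}=0$ and the Codazzi equation $(\nabla_{X}h)(Y,Z)=(\nabla_{Y}h)(X,Z)$. The latter is equivalent to total symmetry of the cubic form $C(X,Y,Z)=-2h(K_{X}Y,Z)$, hence to $K_{X}$ being $h$-self-adjoint for every $X$.

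The first step is to derive the affine analog of the Euclidean Gauss equation. Writing $\nabla=\hat{\nabla}+K$ and expanding yields
\begin{equation*}
R^{\nabla}(X,Y)Z=\hat{R}(X,Y)Z+(\hat{\nabla}_{X}K)(Y,Z)-(\hat{\nabla}_{Y}K)(X,Z)+[K_{X},K_{Y}]Z.
\end{equation*}
Since $R^{\nabla}=0$, and since the affine shape operator vanishes for a graph hypersurface so that the standard correction terms in the Codazzi-type identity for $K$ drop out to give $(\hat{\nabla}_{X}K)(Y,Z)=(\hat{\nabla}_{Y}K)(X,Z)$, one obtains $\hat{R}(X,Y)Z=-[K_{X},K_{Y}]Z$. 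Using $h$-self-adjointness of $K_{X}$ this rearranges to
\begin{equation*}
\hat{R}(X,Y,Z,W)=h(K(X,Z),K(Y,W))-h(K(X,W),K(Y,Z)),
\end{equation*}
so that for $h$-orthonormal $X,Y$ the sectional curvature of $h$ is $\hat{K}(X\wedge Y)=|K(X,Y)|_{h}^{2}-h(K(X,X),K(Y,Y))$. Summing over pairs with $X\in\mathcal{D}_{1}$, $Y\in\mathcal{D}_{2}$ and invoking the standard warped-product formula $\sum_{i,j}\hat{K}(e_{i}\wedge e_{n_{1}+j})=n_{2}\Delta f/f$ (in the paper's sign convention for $\Delta$) gives
\begin{equation*}
\frac{n_{2}\,\Delta f}{f}=\|K_{\mathrm{mix}}\|_{h}^{2}-h(A,B),
\end{equation*}
where $A=\sum_{i\leq n_{1}}K(e_{i},e_{i})$, $B=\sum_{j>n_{1}}K(e_{j},e_{j})$, and $A+B=\sum_{a}K(e_{a},e_{a})=nT^{\#}$.

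The conclusion then follows from the elementary inequality $4\,h(A,B)=|A+B|_{h}^{2}-|A-B|_{h}^{2}\leq|A+B|_{h}^{2}=n^{2}h(T^{\#},T^{\#})$, together with the non-negativity of $\|K_{\mathrm{mix}}\|_{h}^{2}$ (which uses positive-definiteness of $h$ in the definite case). This immediately produces $n_{2}\Delta f/f\geq-\tfrac{n^{2}}{4}h(T^{\#},T^{\#})$, which is \eqref{17.1}. The main obstacle is the careful verification in the first step that the Codazzi-type identity $(\hat{\nabla}_{X}K)(Y,Z)=(\hat{\nabla}_{Y}K)(X,Z)$ really holds in the graph setting, so that the middle term of the curvature decomposition vanishes and the affine Gauss identity takes the clean form above. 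Once this is secured, the rest of the argument is essentially a transcription of the proof of Theorem~\ref{T:3.1} with $c=0$; the sign flip $-[K_{X},K_{Y}]$ versus the positive sign in the Riemannian Gauss equation is precisely what converts the Riemannian upper bound into the affine lower bound \eqref{17.1}.
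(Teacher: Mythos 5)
Your argument is correct and is essentially the proof given in the cited source \cite{c38}: the affine Gauss equation $\hat R(X,Y)Z=-[K_X,K_Y]Z$ for the Calabi metric of a graph hypersurface (valid because $R^{\nabla}=0$ and $K_X$ is $h$-self-adjoint), the warped-product identity $\sum_{i,j}\hat K(e_i\wedge e_{n_1+j})=n_2\Delta f/f$, and the algebraic estimate $h(A,B)\leq \tfrac14 h(A+B,A+B)$ with $A+B=nT^{\#}$, exactly as in the proof of Theorem \ref{T:3.1} with the sign of the Gauss terms reversed. The only point stated loosely is the Codazzi identity $(\hat\nabla_XK)(Y,Z)=(\hat\nabla_YK)(X,Z)$, which is cleanest to obtain by splitting the vanishing curvature difference into its $h$-self-adjoint and $h$-skew-adjoint parts, but this is standard and does not affect the validity of the proof.
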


The following result characterizes affine hypersurfaces which verify the equality case of inequality \eqref{17.1}.

\begin{theorem}  \label{T:17.4}  Let  $\phi:N_1\times_f N_2\to {\bf R}^{n+1}$ be a realization of a warped product manifold as  a  graph  hypersurface. If the warping function satisfies the equality case of \eqref{17.1} identically,  then we have: 
\vskip.04in

{\rm (a)} The Tchebychev vector field $T^\#$ vanishes identically.
\vskip.04in

{\rm (b)} The warping function $f$ is a harmonic function.
\vskip.04in

{\rm (c)}  $N_1\times_f N_2$ is realized as an improper affine hypersphere.
  \end{theorem}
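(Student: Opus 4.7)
The plan is to imitate Chen's proof of Theorem \ref{T:3.1}, replacing the second fundamental form $\sigma$ by the difference tensor $K$ and the squared mean curvature $H^{2}$ by $h(T^{\#},T^{\#})$. For a graph hypersurface the transversal field $\xi$ is constant, so the induced connection $\nabla$ is flat and the curvature $\hat R$ of the Calabi metric is determined entirely by $K$ and $\hat\nabla K$ through the Gauss equation. Schematically, for an orthonormal pair one gets
$$\hat K(e_i\wedge e_j) \;=\; h(K(e_i,e_j),K(e_i,e_j)) \,-\, h(K(e_i,e_i),K(e_j,e_j)) \;+\; (\hat\nabla K\text{-terms}).$$
Combined with the standard warped-product identity $\sum_{i=1}^{n_1}\sum_{j=n_1+1}^{n}\hat K(e_i\wedge e_j)=n_{2}\Delta f/f$, this writes $n_{2}\Delta f/f$ as a quadratic expression in $K^{r}_{ij}=h(K(e_i,e_j),e_r)$ plus a summand that disappears after this summation by the apolarity identity for graph hypersurfaces. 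A single AM--GM step
$$h(\mathcal T_1,\mathcal T_2) \;\leq\; \tfrac{1}{4}\|\mathcal T_1+\mathcal T_2\|^{2} \;=\; \tfrac{n^{2}}{4}\,h(T^{\#},T^{\#}),$$
applied to the partial traces $\mathcal T_1=\sum_{i\leq n_1}K(e_i,e_i)$ and $\mathcal T_2=\sum_{j>n_1}K(e_j,e_j)$, then yields \eqref{17.1}. The strategy is to read off the equality case of this AM--GM step, which forces the mixed condition $K(X,Y)=0$ for $X\in\mathcal D_{1}$ and $Y\in\mathcal D_{2}$, together with $\mathcal T_{1}=\mathcal T_{2}$.

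The decisive observation---and the place where the affine setting is more rigid than the Riemannian one---exploits the total symmetry of the cubic form $C(X,Y,Z)=-2h(K(X,Y),Z)$. For $X,X'\in\mathcal D_{1}$ and $Y\in\mathcal D_{2}$, this symmetry combined with the mixed condition gives $h(K(X,X'),Y)=h(K(X,Y),X')=0$, so $K(X,X')$ lies in $\mathcal D_{1}$; symmetrically $K(Y,Y')$ lies in $\mathcal D_{2}$ for $Y,Y'\in\mathcal D_{2}$. Consequently $\mathcal T_{1}\in\mathcal D_{1}$ and $\mathcal T_{2}\in\mathcal D_{2}$, and because these distributions are $h$-orthogonal the equality $\mathcal T_{1}=\mathcal T_{2}$ can hold only if $\mathcal T_{1}=\mathcal T_{2}=0$. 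Summing then gives $nT^{\#}=\mathcal T_{1}+\mathcal T_{2}=0$, which is conclusion (a).

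With $T^{\#}=0$ the right-hand side of \eqref{17.1} vanishes identically, so the standing equality reads $\Delta f/f=0$; that is, $f$ is harmonic on $N_{1}$, giving (b). Conclusion (c) is then immediate from the definition recalled in Section 17: a graph hypersurface with $T\equiv 0$ is by definition an improper affine hypersphere. The main obstacle is the bookkeeping in the first paragraph, in particular confirming that the $\hat\nabla K$-terms really drop out after the warped-product summation rather than surviving as extra constraints; once the two equality conditions are isolated, the distribution-splitting argument of the second paragraph closes things cleanly.
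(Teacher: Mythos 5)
Your proposal is essentially the argument behind Theorem \ref{T:17.4} (the survey states it without proof; the source is \cite{c38}): obtain \eqref{17.1} by combining the Gauss-type equation of the Calabi metric with the warped-product identity $\sum_{i\le n_1<j}\hat K(e_i\wedge e_j)=n_2\Delta f/f$, read off the two equality conditions $K(\mathcal D_1,\mathcal D_2)=\{0\}$ and $\mathcal T_1=\mathcal T_2$, and then use the total symmetry of the cubic form $C(X,Y,Z)=-2h(K(X,Y),Z)$ to conclude $\mathcal T_1\in\mathcal D_1$, $\mathcal T_2\in\mathcal D_2$, hence $nT^{\#}=\mathcal T_1+\mathcal T_2=0$. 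This distribution-splitting step is exactly where the affine rigidity enters, and your conclusions (b) and (c) then follow as you say, (c) being just the definition of an improper affine hypersphere recalled in Section 17. Note that positive definiteness of the realized metric (which holds because you are realizing a Riemannian warped product) is what legitimizes both the AM--GM equality analysis and the conclusion that the mixed components of $K$ vanish.

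The one point you flagged does resolve favorably, but not for the reason you give. ``Apolarity'' means $\operatorname{trace}_hK_X=0$, i.e. $T\equiv 0$; it fails for a general graph hypersurface and is precisely conclusion (a), so invoking it would be circular. The correct reason is the Codazzi structure of a graph hypersurface: since the shape operator and the transversal connection form vanish, $\nabla h$ (hence $C$) is totally symmetric, and writing $0=R^{\nabla}(X,Y)Z=\hat R(X,Y)Z+(\hat\nabla_XK)(Y,Z)-(\hat\nabla_YK)(X,Z)+[K_X,K_Y]Z$, the lowered $\hat\nabla K$-part is symmetric in the last two arguments while the $\hat R+[K_X,K_Y]$-part is antisymmetric in them, so both vanish identically. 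Thus $\hat R(X,Y)Z=K_YK_XZ-K_XK_YZ$ holds pointwise, with no $\hat\nabla K$-terms to cancel and no summation needed; this gives exactly your sectional-curvature formula, and the rest of your proof goes through unchanged.
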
 

An application of Theorem \ref{T:17.3} is the following.

 \begin{corollary} \label{C:17.1}  If $N_1$ is a compact Riemannian manifold, then every  warped product manifold  $N_1\times_f N_2$ cannot be realized
as an improper affine hypersphere in  ${\bf R}^{n+1}$. \end{corollary}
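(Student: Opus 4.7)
The plan is to combine Theorem \ref{T:17.3} with a standard compactness argument for subharmonic functions. First I would unpack the definitions given just before Theorem \ref{T:17.3}: an \emph{improper affine hypersphere} is by convention a graph hypersurface (i.e.\ one equipped with a constant transversal vector field $\xi$) whose Tchebychev form $T$ vanishes identically. From the defining relation $h(T^\#,X)=T(X)$ it then follows immediately that the Tchebychev vector field $T^\#$ also vanishes, and in particular $h(T^\#,T^\#)\equiv 0$.

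Suppose, toward a contradiction, that some (nontrivial) warped product $N_1\times_f N_2$ with $N_1$ compact were realized as an improper affine hypersphere $\phi:N_1\times_f N_2\to{\bf R}^{n+1}$. Since $\phi$ is then, in particular, a graph realization, Theorem \ref{T:17.3} applies and inequality \eqref{17.1} reduces to
\begin{equation*}
\frac{\Delta f}{f}\;\geq\;-\frac{(n_1+n_2)^2}{4n_2}\,h(T^\#,T^\#)\;=\;0,
\end{equation*}
where $\Delta$ denotes the Laplacian on $N_1$. Since $f>0$, this yields $\Delta f\geq 0$ pointwise on $N_1$.

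Because $N_1$ is compact without boundary, the divergence theorem gives $\int_{N_1}\Delta f\,dV=0$, and combined with the pointwise bound $\Delta f\geq 0$ this forces $\Delta f\equiv 0$. So $f$ is a harmonic function on the closed Riemannian manifold $N_1$, and an integration by parts argument (or the Hopf maximum principle) shows that $f$ must be constant. This contradicts the assumption that $N_1\times_f N_2$ is a genuine warped product with non-constant warping function, and the proof is complete.

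The only real obstacle is the opening identification: one must check that ``improper affine hypersphere'' in the statement of the corollary matches the two hypotheses needed to invoke Theorem \ref{T:17.3}, namely being a graph hypersurface and satisfying $T^\#\equiv 0$. Once that matching is in place, the inequality of Theorem \ref{T:17.3} and the standard rigidity of subharmonic functions on a closed Riemannian manifold combine into a one-line contradiction.
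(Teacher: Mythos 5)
Your identification of the hypotheses is correct: by the conventions set out before Theorem \ref{T:17.3}, an improper affine hypersphere is a graph hypersurface with $T=0$, hence $T^{\#}=0$ by nondegeneracy of $h$, and Theorem \ref{T:17.3} then yields $\Delta f/f\geq 0$, so $\Delta f\geq 0$ on $N_1$. The compactness step ($\int_{N_1}\Delta f\,dV=0$, hence $\Delta f\equiv 0$, hence $f$ harmonic and therefore constant on the closed manifold $N_1$) is also fine and independent of the sign convention for $\Delta$. This is exactly the intended application of Theorem \ref{T:17.3}, which is all the survey itself offers, so in substance you have reproduced the expected argument.

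The genuine gap is your closing line. The corollary excludes \emph{every} warped product $N_1\times_f N_2$ with $N_1$ compact; it does not hypothesize a non-constant warping function, and in this paper warped products with constant $f$ (trivial warped products, i.e.\ Riemannian products up to a homothety of the second factor) are not excluded from the statement. Your argument ends with ``$f$ is constant,'' which by itself contradicts nothing; indeed, when $f$ is constant and $T^{\#}=0$ one has equality in \eqref{17.1}, so Theorem \ref{T:17.3} alone cannot rule that case out. Contrast this with Corollary \ref{C:17.5}, where the wording ``with arbitrary warping function'' is earned because the centroaffine inequality gives $\Delta f\geq n_1 f>0$, an immediate contradiction with $\int_{N_1}\Delta f\,dV=0$; in the improper case there is no such strict inequality, so the loophole is real. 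You should therefore either state explicitly that your proof covers only non-trivial warped products (non-constant $f$), or add a separate argument for the product case --- for instance starting from Theorem \ref{T:17.5}, which says a product Calabi metric forces the hypersurface to be locally a Calabi composition of improper affine spheres, supplemented by a global argument excluding a closed factor $N_1$; that additional step is not contained in the inequality and is missing from your sketch.
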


As an application of Theorems \ref{T:17.3} and \ref{T:17.4} we have the following. 

 \begin{theorem}  \label{T:17.5}  If the Calabi metric of an improper affine hypersphere in an affine space  is the Riemannian product metric of  $k$ Riemannian manifolds,  then the improper affine hypersphere  is locally the Calabi composition of $k$ improper affine spheres.
 \end{theorem}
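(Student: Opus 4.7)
The plan is to reduce to the case $k=2$ by induction, then deduce the splitting from the equality case analysis underlying Theorem \ref{T:17.3} together with an affine analog of Moore's product immersion lemma. For the base step, let $\phi : N_1 \times N_2 \to \mathbf{R}^{n+1}$ be an improper affine hypersphere whose Calabi metric is the Riemannian product $g_1+g_2$. View this product as a (trivial) warped product with constant warping function $f\equiv 1$. Since $f$ is constant we have $\Delta f = 0$, and since the hypersphere is improper, $T=0$ hence $T^{\#}=0$. Therefore both sides of inequality \eqref{17.1} vanish and we are automatically in the equality case, so Theorem \ref{T:17.4} applies and gives us (at least) that $T^{\#}=0$ and $f$ is harmonic.

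Next I would extract the additional information that is implicit in the equality case of Theorem \ref{T:17.3} but is not spelled out in its statement: namely, the equality forces the difference tensor $K$ to have no mixed components between $\mathcal{D}_1 = TN_1$ and $\mathcal{D}_2 = TN_2$, i.e.\ $K(X,Y)=0$ for $X\in\mathcal{D}_1$ and $Y\in\mathcal{D}_2$. Since the cubic form is totally symmetric and $T=0$, this in turn gives $h(K_XY,Z)=0$ for all $X\in\mathcal{D}_1$, $Y\in\mathcal{D}_2$ and all $Z$, so the induced affine connection $\nabla$ agrees with the Levi-Civita connection $\hat{\nabla}$ of $h$ on mixed directions. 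Because $h=g_1+g_2$ is a Riemannian product, $\hat{\nabla}$ is itself a product connection, so $\nabla$ has no mixed connection coefficients either.

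With both $h$ and $\nabla$ respecting the product decomposition, and with $\xi$ a constant transversal, the graph structure equations \eqref{23.1}--\eqref{23.2} decouple: choosing affine coordinates $(x^1,\ldots,x^{n_1},y^1,\ldots,y^{n_2},z)$ on $\mathbf{R}^{n+1}$ adapted to the splitting, with $\xi=\partial_z$, the defining function satisfies $\partial_{x^a}\partial_{y^b}F=0$ on all mixed pairs, and a straightforward integration gives $F(x,y)=F_1(x)+F_2(y)$ up to an affine transformation of $\mathbf{R}^{n+1}$. This is precisely the affine analog of Moore's lemma (Section 2) in the centroaffine/graph setting. Each summand $z=F_i$ defines an improper affine hypersphere in its own affine subspace $\mathbf{R}^{n_i+1}$: the induced Tchebychev form of $F_i$ must vanish because the total $T$ vanishes and the decomposition is orthogonal in $h$. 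Hence $\phi$ is locally the Calabi composition of two improper affine spheres.

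The inductive step is now immediate: having settled $k=2$, group $N_2\times\cdots\times N_k$ as a single factor, obtain the Calabi decomposition $\phi = F_1 + \tilde F$ with $\tilde F$ an improper affine hypersphere whose Calabi metric is the product of $g_2,\ldots,g_k$, and apply the inductive hypothesis to $\tilde F$. The main obstacle I foresee is step two, that is, rigorously extracting $K(\mathcal{D}_1,\mathcal{D}_2)=0$ from the equality case of Theorem \ref{T:17.3}; this requires revisiting the derivation of \eqref{17.1} (an orthonormal frame/Cauchy-Schwarz argument on the Gauss equation for graph hypersurfaces) and isolating the residual square that must vanish when equality holds. Once this is in hand, the splitting of $F$ is purely a matter of integrating the vanishing mixed Hessian, and the identification of each factor as an improper affine sphere is automatic from $T_i = 0$.
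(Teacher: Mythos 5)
Your proposal is correct and is essentially the paper's own route: the survey obtains Theorem \ref{T:17.5} precisely as an application of Theorems \ref{T:17.3} and \ref{T:17.4}, viewing the product metric as a warped product with $f\equiv 1$ and $T^{\#}=0$ and exploiting the equality case of \eqref{17.1}. The step you flag as the main obstacle does go through: in the frame computation behind \eqref{17.1} (the affine Gauss equation of a graph hypersurface together with the Cauchy--Schwarz step on the partial traces of $K$), the assumptions $\Delta f=0$ and $T^{\#}=0$ reduce the equality to $0=\sum_{i\le n_1<j\le n}||K(e_i,e_j)||^2+\big|\big|\sum_{i\le n_1}K(e_i,e_i)\big|\big|^2$, so both the mixed components of $K$ and the partial traces vanish, which is exactly what your integration of the vanishing mixed Hessian and the vanishing of each factor's Tchebychev form require.
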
 
 
 Theorem \ref{T:17.3} also implies the following.

\begin{corollary} \label{C:17.2}   If the warping function $f$ of a warped product manifold  $N_1\times_f N_2$ satisfies $\Delta f < 0$  at some point on $N_1$, then $N_1\times_f N_2$ cannot be realized
as an improper affine  hypersphere in  ${\bf R}^{n+1}$. \end{corollary}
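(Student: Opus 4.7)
The plan is to derive Corollary \ref{C:17.2} as an immediate consequence of Theorem \ref{T:17.3} together with the defining property of an improper affine hypersphere. First I would recall that an improper affine hypersphere is, by definition, a graph hypersurface whose Tchebychev form $T$ vanishes identically; from \eqref{23.5} this forces the Tchebychev vector field $T^{\#}$ to be zero, and hence $h(T^{\#},T^{\#})=0$ at every point. In particular, the hypothesis of Theorem \ref{T:17.3} is satisfied, since every improper affine hypersphere is a graph hypersurface.

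Next I would apply the inequality \eqref{17.1} from Theorem \ref{T:17.3} to a hypothetical realization $\phi : N_1 \times_f N_2 \to \mathbf{R}^{n+1}$ as an improper affine hypersphere. Substituting $h(T^{\#},T^{\#})=0$ into \eqref{17.1} yields
\begin{equation*}
\frac{\Delta f}{f} \;\geq\; 0
\end{equation*}
at every point of $N_1$. Because $f$ is a positive function on $N_1$, this forces $\Delta f \geq 0$ pointwise on all of $N_1$.

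Finally, if there is a point of $N_1$ at which $\Delta f < 0$, this directly contradicts the inequality $\Delta f \geq 0$ just established. Therefore no such realization can exist, which is exactly the conclusion of Corollary \ref{C:17.2}. There is no genuine obstacle in the argument: once one recognizes that the improper affine hypersphere condition kills the right-hand side of \eqref{17.1}, the corollary reduces to a one-line sign comparison. The only point requiring a little care is the bookkeeping that an improper affine hypersphere is a graph hypersurface (so that Theorem \ref{T:17.3} genuinely applies) and that $T=0$ on such a hypersphere implies $T^{\#}=0$, which both follow directly from the definitions recalled earlier in the section.
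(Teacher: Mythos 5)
Your argument is correct and is exactly the intended deduction: the paper presents Corollary \ref{C:17.2} as an immediate consequence of Theorem \ref{T:17.3}, obtained by noting that an improper affine hypersphere has $T=0$, hence $T^{\#}=0$ and $h(T^{\#},T^{\#})=0$, so \eqref{17.1} with $f>0$ forces $\Delta f\geq 0$ everywhere, contradicting $\Delta f<0$ at a point. Nothing further is needed.
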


 Similarly, for centro-affine hypersurfaces we have the following results from \cite{c38}.

 \begin{theorem}  \label{T:17.6}  If a warped product manifold  $N_1\times_f N_2$ can be realized as a centroaffine hypersurface
in ${\bf R}^{n+1}$, then the warping function satisfies
\begin{align} \label{17.2}\frac{ \Delta f}{f}\geq  n_1\varepsilon- \frac{(n_1+n_2)^2}{4n_2}h(T^{\#},T^\#),\end{align} where $n=n_1+n_2$, $n_i=\dim N_i$, $i=1,2$,  $ \Delta$ is the Laplace operator of $N_1$, and $\varepsilon=1$ or $-1$ according to whether the centroaffine hypersurface is elliptic or hyperbolic. 
\end{theorem}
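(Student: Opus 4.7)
The plan is to mirror the proof of Theorem~\ref{T:17.3}, computing the sum of mixed sectional curvatures of the warped product metric $h = g_1 + f^2 g_2$ in two different ways and reconciling them via a polarization inequality; the extra $n_1 \varepsilon$ term on the right-hand side will come from the non-vanishing curvature of the induced centroaffine connection $\nabla$.

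Fix a point $p \in N_1 \times_f N_2$ and choose an $h$-orthonormal frame $\{X_1,\ldots,X_{n_1},Z_1,\ldots,Z_{n_2}\}$ adapted to the horizontal/vertical splitting. The Bishop--O'Neill identity for the Levi-Civita connection $\hat\nabla$ of the warped product metric gives $\hat K(X_i,Z_\alpha) = -H^f(X_i,X_i)/f$, where $H^f$ is the Hessian of $f$ on $N_1$. Combined with the Laplacian sign convention $\Delta f = -\sum_i H^f(X_i,X_i)$ of Section~2, summation yields $\sum_{i,\alpha}\hat K(X_i,Z_\alpha) = n_2 \Delta f/f$. On the other hand, differentiating the structure equations \eqref{23.1}--\eqref{23.2} and using the flatness of $D$ gives $R^\nabla(X,Y)Z = \varepsilon[h(Y,Z)X - h(X,Z)Y]$. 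Writing $\nabla = \hat\nabla + K$ and invoking the total symmetry of the cubic form for centroaffine hypersurfaces (the Codazzi identity), the terms involving $\hat\nabla K$ cancel in the comparison of $R^\nabla$ with $\hat R$, leaving
\[
\hat R(X,Y)Z = \varepsilon[h(Y,Z)X - h(X,Z)Y] - [K_X, K_Y]Z.
\]
For an $h$-orthonormal pair this reduces to $\hat K(X,Y) = \varepsilon + ||K_X Y||_h^2 - h(K_X X, K_Y Y)$.

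Setting $\kappa_1 := \sum_i K_{X_i}X_i$ and $\kappa_2 := \sum_\alpha K_{Z_\alpha}Z_\alpha$ and summing the above Gauss identity over all mixed pairs then gives
\[
\sum_{i,\alpha}\hat K(X_i,Z_\alpha) = n_1 n_2 \varepsilon + \sum_{i,\alpha}||K_{X_i}Z_\alpha||_h^2 - h(\kappa_1,\kappa_2).
\]
Equating the two expressions for this sum, dropping the nonnegative term $\sum||K_{X_i}Z_\alpha||_h^2$, and applying the polarization bound $h(\kappa_1,\kappa_2) \leq \tfrac14 ||\kappa_1+\kappa_2||_h^2$ (immediate from $||\kappa_1-\kappa_2||_h^2 \geq 0$) yields
\[
n_2\, \frac{\Delta f}{f} \geq n_1 n_2 \varepsilon - \tfrac14 ||\kappa_1+\kappa_2||_h^2.
\]
By the definition of the Tchebychev vector field, $\kappa_1 + \kappa_2 = \sum_{a=1}^{n} K_{e_a}e_a = n\, T^\#$, so $||\kappa_1+\kappa_2||_h^2 = n^2\, h(T^\#,T^\#) = (n_1+n_2)^2\, h(T^\#,T^\#)$. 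Dividing by $n_2$ produces the claimed inequality \eqref{17.2}.

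The main technical step is the centroaffine Gauss equation in the precise form above; it rests on the total symmetry of the cubic form for centroaffine hypersurfaces, which is what forces the Codazzi contributions $(\hat\nabla_X K)(Y,Z) - (\hat\nabla_Y K)(X,Z)$ to vanish and yields the clean curvature difference $\hat R = R^\nabla - [K,K]$. Once this identity is in hand, the Bishop--O'Neill formula and the elementary polarization estimate are routine and directly parallel the argument for Theorem~\ref{T:17.3}.
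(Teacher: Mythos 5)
Your argument is correct, and since this survey states Theorem \ref{T:17.6} without proof (it is quoted from the source paper on affine warped product hypersurfaces), the only meaningful comparison is with the standard route, which is exactly what you follow: the affine analogue of the proof of Theorem \ref{T:3.1}, with the difference tensor $K$ playing the role of the second fundamental form and $nT^{\#}$ the role of $n\overrightarrow{H}$. Concretely, your three ingredients all check out: the mixed-curvature identity $\sum_{i,\alpha}\hat K(X_i,Z_\alpha)=n_2\,\Delta f/f$ with the paper's sign convention for $\Delta$; the centroaffine Gauss-type identity $\hat R(X,Y)Z=\varepsilon\{h(Y,Z)X-h(X,Z)Y\}-[K_X,K_Y]Z$, whose trace over orthonormal mixed pairs gives $n_1n_2\varepsilon+\sum\|K_{X_i}Z_\alpha\|_h^2-h(\kappa_1,\kappa_2)$; and the estimate $h(\kappa_1,\kappa_2)\le\tfrac14\|\kappa_1+\kappa_2\|_h^2=\tfrac{n^2}{4}h(T^{\#},T^{\#})$, legitimate because the normalization in Section 17 makes $h$ positive definite. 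Dividing by $n_2$ yields \eqref{17.2}, and the equality discussion is consistent with Examples \ref{E:17.1} and \ref{E:17.2}.

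The one place you should tighten is the justification of the Gauss-type identity: total symmetry of the cubic form does not, by itself, force $(\hat\nabla_XK)(Y,Z)-(\hat\nabla_YK)(X,Z)=0$. The clean derivation uses the conjugate connection $\bar\nabla=2\hat\nabla-\nabla$ (torsion-free precisely because $C$ is totally symmetric): one has $R^{\nabla}=\hat R+\mathrm{skew}(\hat\nabla K)+[K_X,K_Y]$ and $R^{\bar\nabla}=\hat R-\mathrm{skew}(\hat\nabla K)+[K_X,K_Y]$, and since $h(R^{\bar\nabla}(X,Y)Z,W)=-h(Z,R^{\nabla}(X,Y)W)$ together with the centroaffine Gauss equation $R^{\nabla}(X,Y)Z=\varepsilon\{h(Y,Z)X-h(X,Z)Y\}$ gives $R^{\bar\nabla}=R^{\nabla}$, averaging the two decompositions cancels the $\hat\nabla K$ terms (and, as a byproduct, shows they are indeed symmetric here). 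With that substitution your proof is complete and matches the intended argument.
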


\begin{theorem}   \label{T:17.7} Let  $\phi:N_1\times_f N_2\to {\bf R}^{n+1}$ be a realization of  a warped product manifold $N_1\times_f N_2$ as  a centroaffine hypersurface. If the warping function satisfies the equality case of \eqref{17.2} identically,  then we have: 
\vskip.04in

{\rm (1)} The Tchebychev vector field $T^\#$ vanishes identically.
\vskip.04in

{\rm (2)} The warping function $f$ is an eigenfunction of the Laplacian $\Delta$ with eigenvalue $n_1\varepsilon$.
\vskip.04in

{\rm (3)}  $N_1\times_f N_2$ is realized as a proper affine hypersphere centered at the origin.  \end{theorem}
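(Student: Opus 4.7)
The plan is to derive (1), (2) and (3) by analysing the equality case of inequality \eqref{17.2}, following closely the argument used for Theorem \ref{T:17.4} in the graph-hypersurface setting. The two differences from that setting are that the centroaffine transversal field satisfies $D_X\xi=-f_*(X)$ rather than being parallel, and that this sign is precisely what produces the constant-curvature-type term $n_1\varepsilon$ on the right-hand side of \eqref{17.2}.

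First I would revisit the derivation of \eqref{17.2} given in Theorem \ref{T:17.6}. Using the centroaffine structure equations \eqref{23.1}--\eqref{23.2}, the Gauss equation for the Levi-Civita connection $\hat\nabla$ of the affine metric $h$ takes the form of the Gauss equation of a submanifold in an ambient space of constant sectional curvature $\varepsilon$, with the difference tensor $K$ playing the role of the second fundamental form and the Tchebychev vector field $T^\#$ playing the role of the mean curvature vector. A Chen-type optimisation (parallel to the proof of Theorem \ref{T:3.1}) applied to the orthogonal decomposition $TM=\mathcal{D}_1\oplus\mathcal{D}_2$ coming from the warped product factors then produces \eqref{17.2}, with the term $h(T^\#,T^\#)$ arising from an AM-GM / Cauchy--Schwarz estimate on the partial traces of $K$.

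Next I would examine when equality holds in \eqref{17.2}. Tracking back through the Chen-type argument, equality will force two conditions: the realisation is \emph{mixed totally geodesic} in the centroaffine sense, meaning $K(X,Z)$ lies in a controlled subspace for $X\in\mathcal{D}_1$, $Z\in\mathcal{D}_2$; and the two partial traces $\mathrm{tr}_{\mathcal{D}_1}K$ and $\mathrm{tr}_{\mathcal{D}_2}K$ are related in the sharp way dictated by the coefficient $(n_1+n_2)^2/(4n_2)$. Because that coefficient is the critical value of the associated quadratic form, these two conditions combine to force $T^\#\equiv 0$, which is assertion (1). Once $T^\#\equiv 0$ is established, the realisation is by definition a proper affine hypersphere centred at the origin, which is (3); and substituting $T^\#=0$ into the equality case of \eqref{17.2} immediately yields $\Delta f=n_1\varepsilon f$, which is (2).

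The main obstacle I anticipate is the step that extracts $T^\#=0$ from the equality case, since the Tchebychev vector field sits inside a quadratic term with a negative coefficient and so is not obviously constrained. The key will be to track the Cauchy--Schwarz estimate used in the proof of \eqref{17.2} with enough care to see that the partial traces $\mathrm{tr}_{\mathcal{D}_1}K$ and $\mathrm{tr}_{\mathcal{D}_2}K$ must actually \emph{cancel} at equality, rather than merely agree up to proportionality, so that their sum $nT^\#$ vanishes identically. Once that cancellation is secured, conclusions (2) and (3) follow formally from (1).
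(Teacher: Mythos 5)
Your overall framework is the right one (and it is the one used in the source of this theorem): run the Gauss-type equation for the Levi-Civita connection $\hat\nabla$ of the centroaffine metric, with the difference tensor $K$ in the role of the second fundamental form and $T^\#$ in the role of the mean curvature, apply the warped-product identity $\sum_{i\le n_1}\hat K(e_i\wedge Z)=\Delta f/f$, and optimize as in Theorem \ref{T:3.1}; and your passage from (1) to (2) and (3) is correct, since $T^\#\equiv 0$ gives $\Delta f=n_1\varepsilon f$ directly from the equality in \eqref{17.2} and, by the definition recalled in Section 17, a centroaffine hypersurface with vanishing Tchebychev form is a proper affine hypersphere centered at the origin. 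The gap is in the step you yourself flag as the main obstacle, and the mechanism you propose for it cannot work. Writing $P_a=\mathrm{tr}_{\mathcal D_a}K$, the term estimated is $h(P_1,P_2)\le \tfrac14\,\|P_1+P_2\|_h^2=\tfrac{n^2}{4}h(T^\#,T^\#)$, and equality in this quadratic estimate forces $P_1=P_2$ (the analogue of $\mathrm{trace}\,h_1=\mathrm{trace}\,h_2$ in Theorem \ref{T:3.1}), not $P_1=-P_2$. No amount of careful tracking of the Cauchy--Schwarz step will produce the cancellation you hope for; in the isometric setting the very same equality condition certainly does not force the mean curvature to vanish, so something specific to affine geometry must enter.

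The missing ingredient is the total symmetry of the cubic form $C(X,Y,Z)=-2h(K_XY,Z)$, i.e. the self-adjointness of each $K_X$ with respect to $h$. Equality in \eqref{17.2} also forces the mixed components to vanish, $K(X,Z)=0$ for $X\in\mathcal D_1$, $Z\in\mathcal D_2$ (these appear with a nonnegative discarded term $\sum\|K(e_i,e_j)\|_h^2$, since $h$ is definite). Then for $X,Y\in\mathcal D_1$ and $Z\in\mathcal D_2$ one has $h(K(X,Y),Z)=h(K(X,Z),Y)=0$, so $K(\mathcal D_1,\mathcal D_1)\subset\mathcal D_1$, and likewise $K(\mathcal D_2,\mathcal D_2)\subset\mathcal D_2$. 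Hence $P_1\in\mathcal D_1$ and $P_2\in\mathcal D_2$; being equal and lying in complementary $h$-orthogonal distributions, both vanish, so $nT^\#=P_1+P_2=0$. This symmetry argument, absent from your proposal, is exactly what converts the equality condition $P_1=P_2$ into conclusion (1); with it in place, the rest of your outline goes through.
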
 

Two immediate consequences of Theorem \ref{T:17.6} are the following.

\begin{corollary} \label{C:17.3}  If the warping function $f$ of a warped product manifold  $N_1\times_f N_2$ satisfies $\Delta f \leq 0$  at some point on $N_1$, then $N_1\times_f N_2$ cannot be realized
as an elliptic  proper affine hypersphere in ${\bf R}^{n+1}$. \end{corollary}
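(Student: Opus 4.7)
The plan is to derive a contradiction from Theorem \ref{T:17.6} under the hypothesis that $N_1\times_f N_2$ is realized as an elliptic proper affine hypersphere, using the fact that proper affine hyperspheres are characterized by the vanishing of the Tchebychev form. The argument will be short because the main work is already contained in Theorem \ref{T:17.6}.

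First I would suppose, for contradiction, that $\phi: N_1\times_f N_2\to \mathbf{R}^{n+1}$ realizes the warped product as an \emph{elliptic} proper affine hypersphere. By definition of a proper affine hypersphere (as recalled right after formula \eqref{23.5}), the Tchebychev form $T$ vanishes identically, so the Tchebychev vector field $T^\#$ is zero and hence $h(T^\#,T^\#)\equiv 0$. Since the centroaffine hypersphere is elliptic, we have $\varepsilon=1$.

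Next I would substitute these into the inequality \eqref{17.2} from Theorem \ref{T:17.6}, which gives
\begin{equation*}
\frac{\Delta f}{f}\;\geq\; n_1\varepsilon - \frac{(n_1+n_2)^2}{4n_2}\,h(T^\#,T^\#) \;=\; n_1
\end{equation*}
at every point of $N_1$. Because $f$ is a positive function on $N_1$, this yields $\Delta f \geq n_1 f > 0$ everywhere on $N_1$, contradicting the hypothesis that $\Delta f \leq 0$ at some point of $N_1$. This completes the proof.

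There is essentially no obstacle here: the corollary is a direct consequence of Theorem \ref{T:17.6} once one records that the proper affine hypersphere condition forces $T^\#\equiv 0$ and that the elliptic case corresponds to $\varepsilon=+1$. The only mild subtlety is to keep in mind that positivity of $f$ (part of the definition of a warping function, see \eqref{E:warped}) is what lets us convert $\Delta f/f\geq n_1$ into a strict pointwise lower bound $\Delta f\geq n_1 f>0$ needed to contradict $\Delta f\leq 0$.
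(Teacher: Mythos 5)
Your proof is correct and follows exactly the route the paper intends: Corollary \ref{C:17.3} is stated there as an immediate consequence of Theorem \ref{T:17.6}, obtained by setting $T^{\#}=0$ (proper affine hypersphere) and $\varepsilon=1$ (elliptic) in \eqref{17.2}, which forces $\Delta f\geq n_1 f>0$ everywhere and contradicts $\Delta f\leq 0$ at a point. Nothing further is needed.
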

 
\begin{corollary} \label{C:17.4}  If the warping function $f$ of a warped product manifold  $N_1\times_f N_2$ satisfies  $(\Delta f)/ f< -\dim N_1$  at some point on $N_1$, then $N_1\times_f N_2$ cannot be realized
as a hyperbolic  proper affine hypersphere in   ${\bf R}^{n+1}$. \end{corollary}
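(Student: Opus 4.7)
The plan is to derive Corollary \ref{C:17.4} as an immediate contradiction argument from Theorem \ref{T:17.6}, using the definition of a proper affine hypersphere to eliminate the Tchebychev term.

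First, I would argue by contradiction: suppose $\phi:N_1\times_f N_2\to \mathbf{R}^{n+1}$ realizes the warped product as a hyperbolic proper affine hypersphere. Since the realization is proper affine spherical, the Tchebychev form $T$ vanishes identically (this is part of the definition of a proper affine hypersphere recalled in Section 17, where $T=0$ is exactly the condition characterizing such hyperspheres centered at the origin). Consequently $T^{\#}=0$, so $h(T^{\#},T^{\#})=0$ on the whole submanifold.

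Next, I would invoke Theorem \ref{T:17.6} with $\varepsilon=-1$, the sign corresponding to hyperbolic centroaffine hypersurfaces. Substituting $h(T^{\#},T^{\#})=0$ into inequality \eqref{17.2} collapses it to the pointwise lower bound
\begin{equation*}
\frac{\Delta f}{f}\geq -n_1=-\dim N_1
\end{equation*}
valid at every point of $N_1$. But by hypothesis there exists a point at which $(\Delta f)/f<-\dim N_1$, giving the desired contradiction.

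There is essentially no obstacle here: once Theorem \ref{T:17.6} is available, the only substantive step is recognizing that ``proper affine hypersphere centered at the origin'' forces $T^{\#}\equiv 0$, which is the defining property stated in Section 17. The only point that would require a little care in a full write-up is to confirm that Theorem \ref{T:17.6} applies without any additional regularity assumption on the realization beyond what the hypothesis ``hyperbolic proper affine hypersphere'' already supplies, but this is automatic since a centroaffine hypersphere is in particular a nondegenerate centroaffine hypersurface with the prescribed induced affine metric.
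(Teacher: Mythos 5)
Your proposal is correct and follows exactly the route the paper intends: Corollary \ref{C:17.4} is stated there as an immediate consequence of Theorem \ref{T:17.6}, obtained by noting that a proper affine hypersphere has $T^{\#}=0$ and that $\varepsilon=-1$ in the hyperbolic case, so inequality \eqref{17.2} forces $(\Delta f)/f\geq -n_1$ everywhere, contradicting the hypothesis at the given point.
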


Some other interesting applications of Theorem \ref{T:17.6} are the following.

 \begin{corollary} \label{C:17.5}  If $N_1$ is a compact Riemannian manifold, then every  warped product manifold  $N_1\times_f N_2$ with arbitrary warping function cannot be realized
as an elliptic  proper affine hypersphere in  ${\bf R}^{n+1}$. \end{corollary}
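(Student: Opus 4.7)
The plan is to argue by contradiction, combining Theorem \ref{T:17.6} with a standard integration argument on the compact factor $N_1$. Suppose that $N_1\times_f N_2$ can be realized as an elliptic proper affine hypersphere in ${\bf R}^{n+1}$. Since the realization is an \emph{elliptic} centroaffine hypersurface, we have $\varepsilon=1$ in inequality \eqref{17.2}. Since it is a \emph{proper} affine hypersphere, the Tchebychev form $T$ vanishes identically (by the definition recalled just before Theorem \ref{T:17.1}), hence $T^\#=0$ and $h(T^\#,T^\#)=0$.

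Plugging $\varepsilon=1$ and $h(T^\#,T^\#)=0$ into \eqref{17.2} from Theorem \ref{T:17.6} yields
\begin{equation*}
\frac{\Delta f}{f}\ \geq\ n_1
\end{equation*}
on all of $N_1$, where $\Delta$ is the Laplacian of $N_1$ in the sign convention \eqref{2.6ish} used throughout the paper. Because $f$ is a warping function, $f>0$ everywhere, and therefore $\Delta f\geq n_1 f>0$ pointwise on $N_1$.

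Now I would invoke compactness of $N_1$. Since $N_1$ is a compact Riemannian manifold without boundary and $\Delta$ is (up to sign) the divergence of the gradient, one has
\begin{equation*}
\int_{N_1}\Delta f\,dV_{N_1}\ =\ 0.
\end{equation*}
However, the pointwise bound $\Delta f\geq n_1 f>0$ together with $f>0$ and compactness of $N_1$ gives $\int_{N_1}\Delta f\,dV_{N_1}\geq n_1\int_{N_1}f\,dV_{N_1}>0$, a contradiction. Hence no such realization exists.

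The proof is essentially an immediate corollary of Theorem \ref{T:17.6}, so the main (mild) obstacle is bookkeeping: one must verify that the defining hypotheses ``elliptic'' and ``proper'' reduce the right-hand side of \eqref{17.2} to precisely $n_1$, which requires correctly reading the paper's definition of proper affine hypersphere (the vanishing of $T$) and noting that Chen's sign convention for $\Delta$ still yields $\int_{N_1}\Delta f\,dV=0$ by the divergence theorem. Everything else is a one-line integration argument.
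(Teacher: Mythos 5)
Your proof is correct and matches the paper's intended derivation: the corollary is presented there as an immediate consequence of Theorem \ref{T:17.6}, obtained exactly by setting $\varepsilon=1$ and $T^{\#}=0$ (the paper's definition of a proper affine hypersphere) so that \eqref{17.2} gives $\Delta f\geq n_1 f>0$ on $N_1$, which contradicts $\int_{N_1}\Delta f\,dV=0$ on the compact factor. The only cosmetic flaw is your citation of a nonexistent equation label for the Laplacian sign convention; the convention itself (Chen's $\Delta=-\operatorname{div}\operatorname{grad}$, as in the paper's Section 2) is handled correctly, since the integral of $\Delta f$ over a closed manifold vanishes in either convention.
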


 \begin{corollary} \label{C:17.6}  If $N_1$ is a compact Riemannian manifold, then every  warped product manifold  $N_1\times_f N_2$ cannot be realized
as an improper affine hypersphere in an affine space ${\bf R}^{n+1}$. \end{corollary}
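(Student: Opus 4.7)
The plan is to argue by contradiction, using Theorem~\ref{T:17.3} as the main tool. Suppose that a realization $\phi:N_{1}\times_{f}N_{2}\to{\bf R}^{n+1}$ as an improper affine hypersphere does exist. By the definition recalled just above, an improper affine hypersphere is a graph hypersurface whose Tchebychev form $T$ vanishes identically; hence the Tchebychev vector field $T^{\#}$ also vanishes, and $h(T^{\#},T^{\#})=0$. Since an improper affine hypersphere is in particular a graph hypersurface, Theorem~\ref{T:17.3} applies and gives
\[
\frac{\Delta f}{f}\;\geq\;-\frac{(n_{1}+n_{2})^{2}}{4n_{2}}\,h(T^{\#},T^{\#})\;=\;0,
\]
so $\Delta f\geq 0$ on $N_{1}$ (using $f>0$).

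The next step is to exploit the compactness of $N_{1}$. Integrating and using the divergence theorem on a compact manifold without boundary gives $\int_{N_{1}}\Delta f\,dV=0$, and together with $\Delta f\geq 0$ this forces $\Delta f\equiv 0$. Hence $f$ is harmonic on a compact connected Riemannian manifold, and by the maximum principle $f$ must be a positive constant $c$. Consequently the warped product collapses to the Riemannian product $N_{1}\times (c^{2}\,g_{N_{2}})$, and the realization becomes an improper affine hypersphere whose induced (Calabi) metric is a nontrivial Riemannian product of two factors.

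The final step, which I expect to be the main obstacle, is to turn ``$f$ is constant'' into a genuine contradiction with the compactness of $N_{1}$, since the two preceding steps are compatible with the trivial warped product. The cleanest route is to invoke Theorem~\ref{T:17.5}: because the Calabi metric of our improper affine hypersphere splits as a Riemannian product, the hypersphere must locally be the Calabi composition of two improper affine hyperspheres $M_{1}$ and $M_{2}$, with $M_{1}$ having Calabi metric locally isometric to $N_{1}$. But any improper affine hypersphere is a graph over an open subset of a Euclidean space, hence diffeomorphic to such an open subset, and so cannot be compact. One either patches the local Calabi factors together to globalize this diffeomorphism, or invokes the classical J\"orgens--Calabi--Pogorelov theorem (compactness of $N_{1}$ forces the Calabi metric on $M_{1}$ to be complete, so $M_{1}$ would have to be an elliptic paraboloid, which is noncompact) to contradict the compactness of $N_{1}$ and complete the proof.
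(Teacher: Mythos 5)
Your first three steps are exactly the intended application of Theorem \ref{T:17.3}: an improper affine hypersphere is a graph hypersurface with $T^{\#}=0$, so \eqref{17.1} gives $\Delta f\geq 0$ on $N_1$, and compactness (via $\int_{N_1}\Delta f\,dV=0$ and the maximum principle) forces the warping function to be a positive constant. Up to there your argument is correct and coincides with the reduction the paper has in mind. The step you yourself flag as the main obstacle, however, is not actually closed by what you propose. Theorem \ref{T:17.5} only says that the hypersurface is \emph{locally} the Calabi composition of two improper affine spheres; it does not hand you a globally defined improper affine hypersphere $M_1$ whose Calabi metric is the whole compact manifold $N_1$. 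Both of your suggested finishes presuppose precisely such a global object: ``patching the local Calabi factors'' is asserted rather than carried out (and this patching is the real content of the step --- a Moore--N\"olker type argument using the vanishing of the mixed part of the difference tensor, $K(\mathcal D_1,\mathcal D_2)=0$, which comes from the equality case once $f$ is constant), while the J\"orgens--Calabi--Pogorelov theorem also applies only to a globally defined, complete improper affine hypersphere, so invoking it does not bypass the globalization; the phrase ``compactness of $N_1$ forces the Calabi metric on $M_1$ to be complete'' has no meaning while $M_1$ exists only locally. There is also a smaller inaccuracy: an improper affine hypersphere need not be ``diffeomorphic to an open subset of Euclidean space''; a graph hypersurface is only locally affinely a graph.

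The fix is concentrated in one place. After $f$ is constant, equality holds in \eqref{17.1}, and the equality analysis underlying Theorem \ref{T:17.5} yields $K(\mathcal D_1,\mathcal D_2)=0$; since the induced connection of a graph hypersurface is flat, this gives $D_X(\phi_*V)=0$ for $X$ tangent to $N_1$ and $V$ tangent to $N_2$ (and symmetrically), so Moore's monodromy argument applies \emph{globally} on the connected product $N_1\times N_2$ and splits the immersion as $\phi(x,y)=\phi_1(x)+\phi_2(y)$ with $\phi_1:N_1\to\mathbb R^{n_1+1}$ a graph immersion with constant transversal vector $\xi_1$. Then the contradiction is elementary and does not need Calabi--Pogorelov: projecting $\phi_1$ along $\xi_1$ gives a local diffeomorphism of the compact manifold $N_1$ into $\mathbb R^{n_1}$, whose image would be nonempty, open and compact --- impossible. (Equivalently, a linear functional vanishing on $\xi_1$ attains a maximum on $N_1$, and at that point $\xi_1$ would be tangent, contradicting transversality.) As written, your proposal correctly identifies the route but leaves this global splitting --- the actual crux of the constant-warping case --- unproved.
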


The following examples  show that the results of this section are optimal.

\begin{example}\label{E:17.1} {\rm Let $M=N_1\times_{\cos s} N_2$  be the warped product of the open interval  $ N_1=(-\pi,\pi)$ and an open portion $N_2$ of the unit $(n-1)$-sphere $S^{n-1}(1)$ equipped with the warped product metric: 
\begin{equation}\begin{aligned}\label{17.3} &h={\rm d}s^2+\cos^2 s\Bigg({\rm d}u_2^2+\cos^2u_2{\rm d}u_3^2+\cdots+\prod_{j=2}^{n-1}\cos^2 u_j{\rm d}u_n^2\Bigg).
\end{aligned}\end{equation}
Consider the immersion of $M$ into the affine $(n+1)$-space  defined by
\begin{equation}\begin{aligned}\label{17.4} &\Bigg(\sin s,\sin u_2\cos s,\ldots,  \sin u_n \cos s\prod_{j=2}^{n-1}\cos^2 u_j ,\cos s\prod_{j=2}^{n}\cos u_j\Bigg). \end{aligned}\end{equation}
Then $M$ is a  centroaffine elliptic   hypersurface whose  centroaffine metric is the warped product metric  \eqref{17.3} and it satisfies $T^\#=0$. Moreover, the warping function $f=\cos s$ satisfies $$\frac{\Delta f}{f}=1=\varepsilon n_1.$$ Hence, this centroaffine hypersurface satisfies the equality case of \eqref{17.2} identically. Consequently, the estimate given in Theorem \ref{T:17.6} is optimal for centroaffine elliptic  hypersurfaces.}
\end{example}

\begin{example}\label{E:17.2} {\rm Let $M={\bf R}\times_{\cosh s} H^{n-1}(-1)$  be  the  warped product of the real line and the unit hyperbolic space $H^{n-1}(-1)$ equipped with warped product metric: 
\begin{equation}\begin{aligned}\label{17.5} &h={\rm d}s^2+\cosh^2 s \Bigg({\rm d}u_2^2+\cosh^2u_2{\rm d}u_3^2+\cdots+\prod_{j=2}^{n-1}\cosh^2 u_j{\rm d}
u_n^2\Bigg).
\end{aligned}\end{equation}
Consider the immersion of $M$ into the affine $(n+1)$-space  defined by
\begin{equation}\begin{aligned}\notag&\Bigg(\sinh s,\sinh u_2\cosh s,\ldots, \sinh u_n \cosh s\prod_{j=2}^{n-1}\cosh^2 u_j ,\cosh s\prod_{j=2}^{n}\cosh u_j\Bigg).\end{aligned}\end{equation}
Then $M$ is a centroaffine hyperbolic  hypersurface whose  centroaffine metric is the warped product metric  \eqref{17.5} and it
satisfies $T^\#=0$. Moreover, the warping function $f=\cosh s$ satisfies $$\frac{\Delta f}{f}=-1=\varepsilon n_1.$$ Therefore, this centroaffine hypersurface satisfies the equality case of \eqref{17.2} identically. Consequently, the estimate given in Theorem \ref{T:17.6} is optimal for centroaffine  hyperbolic hypersurfaces as well.}
\end{example}

\begin{example}\label{E:17.3} {\rm Let $M={\bf R}\times_{s} N_2$  be  the warped product of the real line and an open portion $N_2$ of  $S^{n-1}(1)$ equipped with the warped product metric: 
\begin{equation}\begin{aligned}\label{17.6} &h={\rm d}s^2+ s^2 \Bigg({\rm d}u_2^2+\cos^2u_2{\rm d}u_3^2+\cdots+\prod_{j=2}^{n-1}\cos^2 u_j{\rm d}
u_n^2\Bigg).
\end{aligned}\end{equation}
Consider the immersion of $M$ into the affine $(n+1)$-space  defined by
\begin{equation}\begin{aligned}\label{17.7} & s \Bigg( \sin u_2, \sin u_3\cos u_2,\ldots, \sin u_n \prod_{j=2}^{n-1}\cos^2 u_j , \prod_{j=2}^{n}\cos u_j, \frac{s}{2}\Bigg).
\end{aligned}\end{equation}
Then $M$ is a graph hypersurface with Calabi normal  given by $\xi=(0,\ldots,0,1)$ and it satisfies $T^\#=0$. Moreover, the  Calabi metric of this graph hypersurface is given by the warped product metric  \eqref{17.6}. Clearly, the warping function  is a harmonic function. Therefore, this warped product graph hypersurface satisfies the equality case of \eqref{17.1} identically. Consequently, the estimate given in Theorem \ref{T:17.3} is also optimal.}
\end{example}

\begin{remark} {\rm Example \ref{E:17.1} shows that the conditions $\Delta f\leq 0$ in Corollary \ref{C:17.3}  and  the ``harmonicity''   in Corollary  \ref{C:17.4}  are both necessary. }
\end{remark}

\begin{remark} {\rm Example \ref{E:17.1}  implies that the condition ``$N_1$ is a compact Riemannian manifold'' given   in Corollary \ref{C:17.6}  is  necessary.}
\end{remark}

\begin{remark} {\rm Example \ref{E:17.2} illustrates that the condition $(\Delta f)/f< -\dim N_1$   given in Corollary \ref{C:17.5}  is sharp.}
\end{remark}

\begin{remark} {\rm Example \ref{E:17.3} shows that the condition $\Delta f< 0$ in Corollary \ref{C:17.1}  is optimal as well.}
\end{remark}

\vskip.1in

\section{\uppercase{Twisted product submanifolds}}

Twisted products $B\times_{\lambda} F$ are natural generalizations of warped products, namely the function may depend on both factors (cf. \cite{cbook2}). 
When $\lambda$ depends only on $B$, the twisted product becomes a  warped product.  If $B$ is a point, the twisted product is nothing but a conformal change of metric on $F$.

The study of twisted product submanifolds was initiated in 2000 (see \cite{c5.1}). In particular, the following results are obtained in \cite{c5.1}.

\begin{theorem}\label{T:18.1} If  $\,M=N_\perp\times_\lambda N_T$ is a twisted product $CR$-submanifold of a K\"ahler manifold $\tilde M$ such that $N_\perp$ is a totally real submanifold and $N_T$ is a holomorphic submanifold of $\tilde M$, then $M$ is a $CR$-product. \end{theorem}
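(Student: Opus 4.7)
The plan is to mirror the strategy used for the warped-product analogue Theorem~\ref{T:9.1}: show that the twisting function $\lambda$ must be constant along every leaf of the $N_\perp$-factor. Once this is established, $\lambda$ depends only on the $N_T$-coordinate, call this function $\mu$; the twisted-product metric $g_{N_\perp}+\lambda^{2}g_{N_T}$ then becomes the genuine Riemannian product metric on $N_\perp\times (N_T,\mu^{2}g_{N_T})$, and such a Riemannian product of a totally real factor with a holomorphic factor is a $CR$-product by definition.

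First I would record the mixed-direction Levi--Civita connection formula for $M=N_\perp\times_\lambda N_T$. Using Koszul's identity together with the fact that horizontal lifts of $N_\perp$-vector fields commute with vertical lifts of $N_T$-vector fields and with the explicit form $g=g_{N_\perp}+\lambda^{2}g_{N_T}$, one verifies
$$\nabla_{Z}X=\nabla_{X}Z=(X\ln\lambda)\,Z$$
for $X$ lifted from $\mathcal{D}^\perp=TN_\perp$ and $Z$ lifted from $\mathcal{D}=TN_T$. This is the same mixed-direction formula as in the warped case; the twisting only modifies $\nabla_{Z}W$ for two vertical fields, which never enters the computation below.

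The crux is then the same double evaluation of $\tilde g(\sigma(Z,W),JX)$ that powers Theorem~\ref{T:9.1}. Using the Gauss formula, the K\"ahler condition $\tilde\nabla J=0$, and the fact that $W$ and $JW$ are tangent while $JX$ and $\sigma(Z,X)$ are normal, I would compute
$$\tilde g(\sigma(Z,W),JX)=-\tilde g(W,J\tilde\nabla_{Z}X)=\tilde g(JW,\tilde\nabla_{Z}X)=g(JW,\nabla_{Z}X)=(X\ln\lambda)\,g(JW,Z).$$
Swapping $Z$ and $W$ gives $\tilde g(\sigma(W,Z),JX)=(X\ln\lambda)\,g(JZ,W)$, and since $J$ is skew with respect to $g$ one has $g(JZ,W)=-g(JW,Z)$. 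Symmetry of $\sigma$ forces these two expressions to agree, so $(X\ln\lambda)\,g(JW,Z)=0$ for all $X\in\mathcal{D}^\perp$ and all $Z,W\in\mathcal{D}$.

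Taking $W=JZ$ for any nonzero $Z\in\mathcal{D}$ gives $g(JW,Z)=-\|Z\|^{2}\neq 0$, forcing $X\ln\lambda=0$ for every $X\in\mathcal{D}^\perp$. This is the desired constancy of $\lambda$ along $N_\perp$-leaves, and the proof concludes as in the first paragraph. The only genuinely technical step is verifying the twisted-product connection identity $\nabla_{Z}X=(X\ln\lambda)Z$; after that is in hand, the K\"ahler-identity manipulation is identical to the warped-product argument, and the $J\mathcal{D}\subset\mathcal{D}$ invariance combined with the skew-adjointness of $J$ does the rest.
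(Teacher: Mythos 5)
Your proposal is correct and follows essentially the same route as the original proof in \cite{c5.1} that this survey cites: the mixed twisted-product identity $\nabla_Z X=\nabla_X Z=(X\ln\lambda)Z$, the K\"ahler condition $\tilde\nabla J=0$, and the symmetry of $\sigma$ (equivalently, comparing the computation for $Z$ and for $JZ$) force $X\ln\lambda=0$ for every $X$ tangent to $N_\perp$. Hence $\lambda$ depends only on $N_T$, the metric becomes the product metric $g_{N_\perp}\oplus\lambda^2 g_{N_T}$, and $M$ is a $CR$-product, exactly as in the cited argument.
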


\begin{theorem}\label{T:18.2} Let $M=N_T\times_\lambda N_\perp$ be a  twisted product $CR$-submanifold of a K\"ahler manifold $\tilde M$ such that $N_\perp$ is a totally real submanifold
and $N_T$ is a holomorphic submanifold of $\tilde M$. Then we have

\begin{enumerate}
\item The squared norm of the second fundamental form of $M$ in $\tilde M$ satisfies
\begin{equation}\notag ||\sigma||^2\geq 2\, p\, ||\nabla^T(\ln\lambda)||^2,\end{equation}
where $\nabla^T(\ln\lambda)$ is the $N^T$-component of the gradient $\nabla(\ln \lambda)$ of $\,\ln\lambda$ and $p$ is the dimension of $N_\perp$.

\item If $\,||\sigma||^2= 2p\,||\nabla^T\ln\lambda ||^2\,$ holds identically, then $N_T$ is a
totally geodesic submanifold and $N_\perp$ is a totally umbilical  submanifold of $\tilde M$.

\item If $M$ is anti-holomorphic in $\tilde M$ and $\dim N_\perp>1$, then  $\,||\sigma||^2=
2p\,{||\nabla^T\ln\lambda ||^2}\,$ holds identically if and only if $N_T$ is a
totally geodesic  submanifold and $N_\perp$ is a totally umbilical submanifold of $\tilde M$.
\end{enumerate}\end{theorem}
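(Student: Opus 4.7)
The plan is to parallel Chen's proof of Theorem \ref{T:9.2} for $CR$-warped products, with the warping function $f$ replaced by $\lambda$, and with careful tracking showing that only the $N_T$-component $\nabla^T(\ln\lambda)$ of $\nabla(\ln\lambda)$ enters the estimate. First I would derive the twisted-product connection formulas via the Koszul identity together with the commutativity of horizontal and vertical lifts on a product manifold: for $X\in TN_T$ and $V\in TN_\perp$,
\[
\nabla_X V = \nabla_V X = X(\ln\lambda)\, V, \qquad (\nabla_V W)^{\mathrm{hor}} = -g(V,W)\,\nabla^T(\ln\lambda).
\]
Two structural corollaries follow immediately: the $N_T$-leaves are totally geodesic in $M$, and the $N_\perp$-fibers are totally umbilical in $M$ with mean curvature vector $-\nabla^T(\ln\lambda)$.

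Next, I would apply the K\"ahler condition $\tilde\nabla J=0$ to $\tilde\nabla_Z(JX)=J\tilde\nabla_Z X$ for $X\in\mathcal D$ and $Z\in\mathcal D^\perp$. Expanding both sides via the Gauss formula and the connection identities above, then pairing with $JW$ for $W\in\mathcal D^\perp$, yields the key identity
\[
g(\sigma(JX,Z),JW) = X(\ln\lambda)\, g(Z,W).
\]
Since $X\in TN_T$, the quantity $X(\ln\lambda)$ depends only on the $N_T$-directional derivative of $\ln\lambda$, which is exactly where the restricted gradient $\nabla^T(\ln\lambda)$ originates.

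To prove (1), I would fix orthonormal bases $\{e_1,\ldots,e_{2h}\}$ of $\mathcal D$ and $\{Z_1,\ldots,Z_p\}$ of $\mathcal D^\perp$. Replacing $X$ by $-JX$ in the key identity gives $g(\sigma(e_i,Z_\alpha),JZ_\beta)=-(Je_i)(\ln\lambda)\,\delta_{\alpha\beta}$, so $||\sigma(e_i,Z_\alpha)||^2 \ge (Je_i\ln\lambda)^2$. Summing over $i$ and $\alpha$ and using that $\{Je_i\}$ is also an orthonormal basis of $\mathcal D$, one obtains $\sum_{i,\alpha}||\sigma(e_i,Z_\alpha)||^2 \ge p\,||\nabla^T(\ln\lambda)||^2$. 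Since $||\sigma||^2 \ge 2\sum_{i,\alpha}||\sigma(e_i,Z_\alpha)||^2$, inequality (1) follows.

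For the equality claims, equality in the chain forces $\sigma(\mathcal D,\mathcal D)=0$, $\sigma(\mathcal D^\perp,\mathcal D^\perp)=0$, and $\sigma(\mathcal D,\mathcal D^\perp)\subset J\mathcal D^\perp$; combined with the intrinsic geometry of leaves and fibers inside $M$ from the first paragraph, this yields (2). For (3), the anti-holomorphic hypothesis automatically supplies the third condition, so the forward direction is immediate from (2); for the converse one unwinds the total umbilicity of $N_\perp$ in $\tilde M$ against the known mean curvature $-\nabla^T(\ln\lambda)$ of the fibers in $M$ to write $\sigma(V,W)=g(V,W)\xi_0$ with $\xi_0\in J\mathcal D^\perp$, and then uses $p>1$ together with a Codazzi-type argument in the ambient K\"ahler structure to conclude $\xi_0=0$. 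I expect the main obstacle to be this final step, namely ruling out the stray normal component $\xi_0$, which has no analogue in the hypersurface case $p=1$; once this is disposed of, the rest of the argument closes exactly as in the warped product case of Theorem \ref{T:9.2}.
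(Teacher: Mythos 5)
Your derivation of the twisted-product connection formulas, the key identity $g(\sigma(JX,Z),JW)=X(\ln\lambda)\,g(Z,W)$ for $X\in\mathcal D$, $Z,W\in\mathcal D^\perp$, and the frame estimate that yields statement (1), as well as the way you read off statement (2) from the equality conditions $\sigma(\mathcal D,\mathcal D)=0$, $\sigma(\mathcal D^\perp,\mathcal D^\perp)=0$, $\sigma(\mathcal D,\mathcal D^\perp)\subset J\mathcal D^\perp$ combined with the total geodesy of the leaves and the umbilicity (with mean curvature $-\nabla^T(\ln\lambda)$) of the fibres inside $M$, are correct and follow essentially the same route as Chen's argument, i.e.\ the twisted analogue of the proof of Theorem \ref{T:9.2}.

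The genuine gap is in statement (3): the converse implication is announced but not proved. You correctly reduce it to showing that the vector $\xi_0\in T^\perp M$ determined by $\sigma(V,W)=g(V,W)\,\xi_0$ on $\mathcal D^\perp$ vanishes, but you leave this step open, and the tool you name (``a Codazzi-type argument'') is not the one that does the job. The step closes in a few lines: since $M$ is anti-holomorphic, $\xi_0=JZ_0$ for some $Z_0\in\mathcal D^\perp$; for $V,W,Z\in\mathcal D^\perp$ the K\"ahler condition and the Weingarten formula give $g(\sigma(V,W),JZ)=g(A_{JW}V,Z)=g(\sigma(V,Z),JW)$, so the cubic form $g(\sigma(V,W),JZ)$ is totally symmetric in its three $\mathcal D^\perp$-arguments. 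Substituting $\sigma(V,W)=g(V,W)JZ_0$ gives $g(V,W)\,g(Z_0,Z)=g(V,Z)\,g(Z_0,W)$ for all $V,W,Z\in\mathcal D^\perp$; since $p>1$, choosing $V=W$ a unit vector orthogonal to $Z$ forces $g(Z_0,Z)=0$ for every $Z$, hence $Z_0=0$ and $\xi_0=0$. With $\xi_0=0$ one has $\sigma(\mathcal D^\perp,\mathcal D^\perp)=0$; the hypothesis that $N_T$ is totally geodesic in $\tilde M$ gives $\sigma(\mathcal D,\mathcal D)=0$; and anti-holomorphicity means $T^\perp M=J\mathcal D^\perp$, so the mixed components are exactly those prescribed by your key identity, whence $||\sigma||^2=2p\,||\nabla^T(\ln\lambda)||^2$. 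This is precisely where $p=\dim N_\perp>1$ enters, and the symmetry argument visibly fails for $p=1$, consistent with the hypersurface case you flagged.
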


 For mixed foliate twisted product $CR$-submanifolds of K\"ahler manifolds, we have the following result from \cite{c5.1}.

\begin{theorem}\label{18.3} Let $M=N_T\times_\lambda N_\perp$ be a twisted product $CR$-submanifold of a K\"ahler manifold $\tilde M$ such that $N_\perp$ is a totally real submanifold and $N_T$ is a holomorphic submanifold of $\tilde M$. If $M$ is mixed totally geodesic, then we have

\begin{enumerate}
\item The twisted function $\lambda$ is a function on $N_\perp$. 

\item  $N_T\times N^\lambda_\perp$ is a $CR$-product, where $N^\lambda_\perp$ denotes the
manifold $N_\perp$ equipped with the metric $g_{N_\perp}^\lambda=\lambda^2 g_{N_\perp}$.
\end{enumerate}\end{theorem}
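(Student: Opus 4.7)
The plan is to exploit the K\"ahler identity $\tilde\nabla J=0$ together with the twisted product connection formulas to extract a constraint on the $\mathcal D$-derivatives of $\ln\lambda$.

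First, I would record the standard twisted product connection formula: for any horizontal vector $X\in TN_T$ and vertical vector $Z\in TN_\perp$, the Koszul formula applied to $g_M=g_{N_T}+\lambda^2 g_{N_\perp}$ yields
\[
\nabla_X Z=\nabla_Z X=X(\ln\lambda)\,Z,
\]
using $[X,Z]=0$ for horizontal/vertical lifts. This is essentially the only technical computation and proceeds exactly as in the warped product case, even though $\lambda$ is permitted to depend on both factors, because the only mixed Koszul term that appears is $X\,g(Z,W)=X(\lambda^{2})g_{N_\perp}(Z,W)$, which is blind to any vertical dependence of $\lambda$.

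Next, I plan to compute $\tilde\nabla_Z(JX)$ in two independent ways. On the one hand, by the Gauss formula and the mixed totally geodesic hypothesis $\sigma(\mathcal D,\mathcal D^\perp)=\{0\}$ (noting $JX\in \mathcal D$ since $N_T$ is holomorphic),
\[
\tilde\nabla_Z(JX)=\nabla_Z(JX)+\sigma(Z,JX)=(JX)(\ln\lambda)\,Z,
\]
which is tangent to $M$. On the other hand, using $\tilde\nabla J=0$ together with $\sigma(Z,X)=0$,
\[
\tilde\nabla_Z(JX)=J\bigl(\nabla_Z X+\sigma(Z,X)\bigr)=X(\ln\lambda)\,JZ,
\]
which is normal to $M$ because $N_\perp$ is totally real. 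Equating the two expressions and splitting into tangential and normal parts forces both to vanish, giving $X(\ln\lambda)=0$ and $(JX)(\ln\lambda)=0$. Since $\mathcal D$ is $J$-invariant, this is just the statement that $Y(\ln\lambda)=0$ for every $Y\in TN_T$; hence $\ln\lambda$ is constant along each $N_T$-leaf and therefore descends to a function on $N_\perp$. This establishes (1).

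For (2), once $\lambda$ depends only on $N_\perp$, the twisted metric on $M$ coincides identically with the Riemannian product metric on $N_T\times N_\perp^{\lambda}$ with $g_{N_\perp}^{\lambda}=\lambda^{2}g_{N_\perp}$. Thus the given isometric immersion realizes $N_T\times N_\perp^{\lambda}$ as a Riemannian product of the holomorphic submanifold $N_T$ and the totally real submanifold $N_\perp^{\lambda}$ of $\tilde M$, which is precisely a $CR$-product. The main obstacle is keeping the bookkeeping of tangential versus normal components straight in the comparison of the two expressions for $\tilde\nabla_Z(JX)$: the point is that $Z\in TM$ and $JZ\in T^{\perp}M$ never mix, a property that depends crucially on the totally real character of $N_\perp$. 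Once that dichotomy is used, the conclusion is immediate.
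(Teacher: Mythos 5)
Your proof is correct and is essentially the same argument as the one behind the quoted result from \cite{c5.1}: the twisted-product connection formula $\nabla_Z X=X(\ln\lambda)Z$, the K\"ahler identity $\tilde\nabla J=0$, mixed total geodesy, and the fact that $JZ$ is normal (totally real $N_\perp$) combine to give $X(\ln\lambda)\,JZ=0$, so $\lambda$ is constant along the $N_T$-leaves and the twisted metric becomes the product metric of $g_{N_T}$ and $\lambda^2 g_{N_\perp}$, i.e.\ a $CR$-product. One cosmetic caveat: $JX$ need not be a basic (projectable) field, so $\nabla_Z(JX)$ may a priori contain a $\mathcal D$-component in addition to $(JX)(\ln\lambda)Z$; this does not harm the argument, since that component is tangent and orthogonal to $Z$, so the tangential/normal splitting still forces $X(\ln\lambda)=0$ for all basic $X$, which spans $\mathcal D$ and yields conclusion (1) exactly as you state.
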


Next, we provide ample examples of  twisted product $CR$-submanifolds in complex Euclidean spaces which are not $CR$-warped product submanifolds.

Let $z:N_T\to{\mathbb C}^m$ be a holomorphic submanifold of a complex Euclidean $m$-space ${\mathbb C}^m$ and $w:N^1_\perp\to {\mathbb C}^\ell$ be a totally real submanifold such that the image of $N_T\times N^1_\perp$ under the product immersion $\psi=(z,w)$ does not contain the origin $(0,0)$ of
${\mathbb C}^m\oplus {\mathbb C}^\ell$. 

Let $j:N_\perp^2\to S^{q-1}\subset {\mathbb E}^{q}$ be an isometric immersion of a Riemannian manifold $N_\perp^2$ into the unit hypersphere $S^{q-1}$ of ${\mathbb E}^{q}$ centered at the origin.  

Consider the map
$$\phi=(z, w)\otimes j:N_T\times N^1_\perp\times N^2_\perp\to ({\mathbb C}^m\oplus {\mathbb C}^\ell)\otimes 
{\mathbb E}^{q}$$
 defined by
\begin{equation}\label{Phi}\phi(p_1,p_2,p_3)=(z(p_1),z(p_2)) \otimes j(p_3),\end{equation}
for $ p_1\in N_T, \; p_2\in N^1_\perp,\; p_3\in N^2_\perp$.

On $({\mathbb C}^m\oplus {\mathbb C}^\ell)\otimes {\mathbb E}^{q}$ we define a complex structure $J$ by
$$J((B,E)\otimes F)=({\rm i}B,{\rm i} E)\otimes F,\quad {\rm i}=\sqrt{-1},$$ for any $B\in {\mathbb C}^m,\,E\in {\mathbb C}^\ell$ and $F\in {\mathbb E}^{q}$. Then $({\mathbb C}^m\oplus {\mathbb C} ^\ell)\otimes {\mathbb E}^{q}$ becomes a complex
Euclidean $(m+\ell)q$-space ${\mathbb C}^{(m+\ell)q}$.

Let us put $N_\perp=N^1_\perp\times N^2_\perp$. We denote by $|z|$ the distance function from the origin of ${\mathbb C}^m$ to the position of $N_T$ in ${\mathbb C}^m$ via $z$; and denote by  $|w|$
the distance function from the origin of ${\mathbb C}^\ell$ to the position of $N^1_\perp$ in ${\mathbb C}^\ell$ via $w$. We define a function $\lambda$ by $\lambda=\sqrt{|z|^2+|w|^2}$.
Then $\lambda>0$ is a differentiable function on $N_T\times N_\perp$, which depends on both $N_T$ and $N_\perp=N^1_\perp\times N^2_\perp$. 

Let $M$ denote the twisted product $N_T\times_\lambda N_\perp$ with twisted function $\lambda$. Clearly, $M$  is not a warped product.

For such a twisted product $N_T\times_\lambda N_\perp$ in ${\mathbb C}^{(m+\ell)q}$ defined above we have the following.

\begin{proposition} \label{P:18.1} The map
$$\phi=(z, w)\otimes j:N_T\times_\lambda N_\perp\to {\mathbb C}^{(m+\ell)q} $$ 
defined by \eqref{Phi} satisfies the following properties:

\begin{enumerate}
\item  $\phi=(z, w)\otimes j:N_T\times_\lambda N_\perp\to {\mathbb C}^{(m+\ell)q}$  is an isometric immersion.

\item  $\phi=(z, w)\otimes j:N_T\times_\lambda N_\perp\to
{\mathbb C}^{(m+\ell)q}$ is a twisted product $CR$-submanifold such that $N_T$ is a holomorphic submanifold and $N_\perp$ is a totally real submanifold of ${\mathbb C}^{(m+\ell)q}$.
\end{enumerate}\end{proposition}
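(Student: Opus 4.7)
My plan is to verify both claims by direct computation in the ambient tensor product space $({\mathbb C}^m \oplus {\mathbb C}^\ell)\otimes {\mathbb E}^q$, where an elementary tensor $(B,E)\otimes F$ has squared norm $(|B|^2+|E|^2)|F|^2$. First I would compute the three types of pushforward vectors at $(p_1,p_2,p_3)$:
\[\phi_*X = (z_*X,0)\otimes j(p_3), \quad \phi_*Y^1 = (0,w_*Y^1)\otimes j(p_3), \quad \phi_*Y^2 = (z(p_1),w(p_2))\otimes j_*Y^2,\]
for $X$, $Y^1$, $Y^2$ tangent respectively to $N_T$, $N^1_\perp$, $N^2_\perp$. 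Using $|j(p_3)|=1$ (since $j$ lands in $S^{q-1}$) and $\langle j(p_3), j_*Y^2\rangle = 0$ (since $j_*Y^2$ is tangent to the sphere), I would verify that all mixed inner products $\langle\phi_*X,\phi_*Y^1\rangle$, $\langle\phi_*X,\phi_*Y^2\rangle$, $\langle\phi_*Y^1,\phi_*Y^2\rangle$ vanish, and compute the diagonal blocks to see that the pullback metric has twisted product form with twisting function $\lambda=\sqrt{|z|^2+|w|^2}$; this proves (1) and incidentally shows injectivity of $d\phi$.

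For part (2), I would verify separately that the distribution tangent to $N_T$ is $J$-invariant and that the distribution tangent to $N_\perp = N^1_\perp\times N^2_\perp$ is totally real, using the ambient $J((B,E)\otimes F) = (iB,iE)\otimes F$. Invariance of the $N_T$-distribution follows immediately from $z$ being holomorphic, which gives $i z_*X = z_*(JX)$ and hence $J\phi_*X = \phi_*(JX)$. Total reality of the $N_\perp$-distribution splits into two checks: first, $J\phi_*Y^1 = (0,iw_*Y^1)\otimes j(p_3)$ lies in $T^\perp M$, which reduces via the previous paragraph's cross-term computation to $\langle iw_*Y^1, w_*(TN^1_\perp)\rangle = 0$, and this is exactly the total-reality assumption on $w$; second, $J\phi_*Y^2 = (iz,iw)\otimes j_*Y^2$ lies in $T^\perp M$ by combining $\langle j_*Y^2,j(p_3)\rangle = 0$ (which kills orthogonality against $\phi_*X$ and $\phi_*Y^1$) with $\langle iz,z\rangle + \langle iw,w\rangle = 0$ (which kills orthogonality against any other $\phi_*Y^2{}'$, since multiplication by $i$ is skew on the underlying real inner product of ${\mathbb C}^{m+\ell}$). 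Putting everything together yields the $CR$-warped-type structure claimed.

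The main obstacle I anticipate is bookkeeping rather than any conceptual barrier. The delicate point is the asymmetric role of $N^1_\perp$ (immersed into ${\mathbb C}^\ell$, living in the first tensor slot) and $N^2_\perp$ (immersed into $S^{q-1}\subset {\mathbb E}^q$, living in the second tensor slot): the twisting by $\lambda$ arises only through the $j_*Y^2$-terms, which carry the position vector $(z,w)$, whereas pushforward vectors along the $N_T$ and $N^1_\perp$ factors only carry the constant vector $j(p_3)$. One must organise the inner-product computation so as to isolate this position-vector dependence and identify the resulting block-diagonal metric with the intended twisted product form; once this organisation is set up, every verification reduces to one of three ingredients already supplied by the hypotheses: $z$ holomorphic, $w$ totally real, and $j$ isometric into the unit sphere.
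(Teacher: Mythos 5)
Your formulas for the three pushforwards, the vanishing of all mixed inner products, and your verification of statement (2) (invariance of the $N_T$-directions from holomorphicity of $z$; $J\phi_*Y^1\perp TM$ from total reality of $w$; $J\phi_*Y^2\perp TM$ from $\langle j_*Y^2,j(p_3)\rangle=0$ together with skew-symmetry of multiplication by $\mathrm{i}$) are all correct; note also that the survey states Proposition \ref{P:18.1} without proof, so there is no written argument in the paper to compare against. The genuine gap is in statement (1), exactly at the step you set aside as bookkeeping. Your own formulas give the diagonal blocks of the pullback metric as $\langle\phi_*X,\phi_*\tilde X\rangle=g_{N_T}(X,\tilde X)$, $\langle\phi_*Y^1,\phi_*\tilde Y^1\rangle=g_{N^1_\perp}(Y^1,\tilde Y^1)$ (because $|j(p_3)|=1$), and $\langle\phi_*Y^2,\phi_*\tilde Y^2\rangle=\lambda^2\,g_{N^2_\perp}(Y^2,\tilde Y^2)$; hence the induced metric is $g_{N_T}+g_{N^1_\perp}+\lambda^2 g_{N^2_\perp}$, in which $\lambda^2$ multiplies only the $N^2_\perp$-block. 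The twisted product $N_T\times_\lambda N_\perp$ with $N_\perp=N^1_\perp\times N^2_\perp$ carries the metric $g_{N_T}+\lambda^2\big(g_{N^1_\perp}+g_{N^2_\perp}\big)$, so an isometric immersion of that manifold would require $\Vert\phi_*Y^1\Vert=\lambda\,\Vert Y^1\Vert$, whereas you computed $\Vert\phi_*Y^1\Vert=\Vert Y^1\Vert$. Asserting that the block-diagonal metric ``has twisted product form with twisting function $\lambda$'' is therefore not a matter of organising the computation; as stated it is contradicted by your own displayed norms.

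Moreover, no reorganisation or change of twisting function closes this: if $g_{N^1_\perp}+\lambda^2 g_{N^2_\perp}$ were equal to $\mu^2 g_{N_\perp}$ for some function $\mu$ on the product and some fixed metric $g_{N_\perp}$ on $N_\perp$, then comparison on the $N^1_\perp$-block forces $\mu$ to be independent of the $N_T$-point $p_1$, while comparison on the $N^2_\perp$-block forces $\mu^2$ to carry the full $p_1$-dependence of $\lambda^2=|z(p_1)|^2+|w(p_2)|^2$; equivalently, the leaves $\{p_1\}\times N_\perp$ have second fundamental form in $M$ vanishing on $N^1_\perp$-directions but not on $N^2_\perp$-directions when $|z|$ is nonconstant, so they are not totally umbilical and the induced metric is not of twisted-product type relative to the splitting $N_T\times N_\perp$. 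What your computation actually proves is that $\phi$ is an isometric immersion of $N_T\times N^1_\perp\times N^2_\perp$ equipped with $g_{N_T}+g_{N^1_\perp}+\lambda^2 g_{N^2_\perp}$ (a warped product over $N_T\times N^1_\perp$ with fibre $N^2_\perp$), and that this submanifold is a $CR$-submanifold with holomorphic distribution $TN_T$ and totally real distribution $T(N^1_\perp\times N^2_\perp)$. To complete a proof of (1) as literally stated you would have to reconcile the induced metric with $g_{N_T}+\lambda^2 g_{N_\perp}$ — which the above shows cannot be done when $\dim N^1_\perp\geq 1$ and $|z|$ is nonconstant — or else make explicit (and justify) a reading of the symbol $N_T\times_\lambda N_\perp$ in which the domain carries the induced metric just computed. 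Until that discrepancy is addressed, part (1) of your argument is not established, although the cross-term computations, the injectivity of $d\phi$, and part (2) stand.
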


Proposition \ref{P:18.1} shows that there are many twisted product $CR$-submanifolds $N_T\times_\lambda N_\perp$ such that $N_T$ are holomorphic submanifolds and $N_\perp$ are totally real submanifolds. Moreover, such twisted product $CR$-submanifolds are not warped product $CR$-submanifolds.

\vskip.2in

\section{\uppercase{Related articles}}

 After the publication of \cite{c5.1,c7,c8,c02-2}, there are more than 100 articles on warped product submanifolds appeared during the last 10 years.  To help further study in this vibrant field of research, we divide those articles  into 16 categories according to their main results as follows.
  \vskip.1in
\begin{enumerate}
\item[1.] {Warped products in Riemannian manifolds:}  \cite{c6.1,c02,c02-2,c04,c05-2,c11,CD08-2,CWei,DT,DV,Ol10-3,suceava}.

\vskip.05in
\item[2.]  {Warped products in (generalized) complex space forms:}  \cite{BRV,c4.0,c4.1,c4.2,c7,c8,c02-2,c02-3,c02-4,c03,c03-2,c04-2,c11,c12,CD08,CD08-2,CM,CV,M04,M05-3,R10,RV}.
 \vskip.05in
  
\item[3.]   {Warped products in K\"ahler manifolds:}  \cite{AK08-2,c7,c02-2,c11,CD08,CD08-2,KAJ08,KKS09,Sa06,Sa09-2,Sa10}.
\vskip.05in

\item[4.]   {Warped products in nearly K\"ahler manifolds:}  \cite{AAK09,KJA,KK09,KKS07,SG08,UC11}.
 \vskip.05in

\item[5.]   {Warped products in locally conformal K\"ahler manifolds:}  \cite{BM04,BM08,JKK10,KJ,KY04,MB08-2,Mu07-2,Y04-2}.
\vskip.05in

\item[6.]   {Warped products in para-K\"ahler manifolds:}  \cite{c11,CMu12,SG08}.
\vskip.05in

 \item[7.]   {Warped products in Sasakian manifolds:} \cite{A,AK08,HM03,Mi04,MM02,Mu05,Mu07,MUKW,Ol08,Ol10,Ol10-2,P,ST08,T,U10-2,UK10,UKK10}.
\vskip.05in

 \item[8.]   {Warped products in generalized Sasakian manifolds:} \cite{AAS06,KC07,KC10-2,KKS07,MN11,MUKW,Ol09-2,SO12,U10-2,UK10}.
\vskip.05in

\item[9.]   {Warped products in Kenmotsu manifolds:} \cite{AK12-2,AEM05,At10,KK08,KKS08,KKS11,MAEM06,Ol09,Sr12,ST10,UOK12}.
 \vskip.05in

 \item[10.]   {Warped products in cosymplectic manifolds:} \cite{At11,HTA12,KC10,Kh12,PKK12,SC07,U10,UA11,UC11.2,UK10,UKK10-2,UWM12,Y04,Y04-2}.
 \vskip.05in

 \item[11.]   {Warped products in other contact metric manifolds:} \cite{A06,AK12,At09-2,At10-2,DK12,KC07,KC10-2,MUKW,P,ST08,SO11,T}.
\vskip.05in

 \item[12.]   {Warped products in Riemannian product manifolds:} \cite{AK12-3,At08,At08-2,At09,At11-2,Sa06-2,Sa09}.
\vskip.05in

 \item[13.]   {Warped products submanifolds in quaternionic manifolds:} \cite{M05,M05-2,V}.
\vskip.05in

\item[14.]  {Warped products in affine space:} \cite{c38,c44,c11}.
 \vskip.05in

 \item[15.]   {Twisted products submanifolds:} \cite{c5.1,KJA,U10-4}.
  \vskip.05in

 \item[16.]   {Warped products submanifolds in other spaces:} \cite{MV12,PKK12,Sa05,SB12,U10-3}.
\end{enumerate}


\begin{thebibliography}{120}

\bibitem{A} E. Abedi, G. Haghighatdoost, M. Ilmakchi and Z. Nazari, {\it Contact 3-structure QR-warped product submanifold in Sasakian space form}, Turkish J. Math. {\bf 37} (2013), 195--201.

\bibitem{Ag} I. Agricola,  {\it The Srni lectures on non-integrable geometries with torsion},  Arch. Math. (Brno) {\bf 42} (2006), suppl., 5--84. 

\bibitem{AAS06} R.  Al-Ghefari, F. R. Al-Solamy and M. H, Shahid,  {\it Contact CR-warped product submanifolds in generalized Sasakian space forms}, Balkan J. Geom. Appl. {\bf 11} (2006), no. 2, 1--10.

\bibitem{AAK09} N. S. Al-Luhaibi,  F. R. Al-Solamy and V. A. Khan, {\it CR-warped product submanifolds of nearly Kaehler manifolds}, J. Korean Math. Soc. {\bf 46} (2009), no. 5, 979--995. 

\bibitem{A06} F. R. Al-Solamy, {\it Contact CR-warped product submanifolds in almost $C(\alpha)$-manifolds with constant $\phi$-sectional curvature}, An. Stiint. Univ. Al. I. Cuza Iasi. Mat. (N.S.) {\bf 52} (2006), no. 2, 423--432.

\bibitem{AK12} F. R. Al-Solamy and M. A. Khan, {\it Semi-invariant warped product submanifolds of almost contact manifolds}, J. Inequal. Appl. {\bf 2012}, 2012:127, 12 pages.

\bibitem{AK12-2} F. R. Al-Solamy and M. A. Khan, {\it Semi-slant warped product submanifolds of a Kenmotsu manifold}, Math. Prob. Engin. {\bf 2012}, Article ID 708191, 10 pages.

\bibitem{AK12-3} F. R. Al-Solamy and M. A. Khan, {\it Warped product submanifolds of Riemannian product manifolds}, Abstract Appl.  Anal. {\bf 2012}, Article ID 724898, 12 pages.

\bibitem{AK08} F. R. Al-Solamy and V. A. Khan, {\it  Warped product semi-slant submanifolds of a Sasakian manifold}, Serdica Math. J. {\bf 34} (2008), no. 3, 597--606.

\bibitem{AK08-2} F. R. Al-Solamy and V. A. Khan, {\it  Non-existence of non-trivial generic warped product in Kaehler manifolds}, Note Mat. {\bf 28} (2008), no. 2, 63--68.

\bibitem{AEM05} K. Arslan,  R. Ezentas, I. Mihai and C. Murathan, {\it Contact CR-warped product submanifolds in Kenmotsu space forms}, J. Korean Math. Soc. {\bf 42} (2005), no. 5, 1101--1110.

\bibitem{At08} M. At\c{c}eken, {\it  Warped product semi-slant submanifolds in locally Riemannian product manifolds}, Bull. Aust. Math. Soc. {\bf 77} (2008), no. 2, 177--186.

\bibitem{At08-2} M. At\c{c}eken, {\it A condition for warped product semi-invariant submanifolds to be Riemannian product semi-invariant submanifolds in locally Riemannian product manifolds}, Turkish J. Math. {\bf 32} (2008), no. 3, 349--362.

\bibitem{At09} M. At\c{c}eken, {\it Geometry of warped product semi-invariant submanifolds of a locally Riemannian product manifold}, Serdica Math. J. {\bf 35} (2009), no. 3, 273--286.

\bibitem{At09-2} M. At\c{c}eken, {\it  Warped product semi-invariant submanifolds in almost paracontact Riemannian manifolds},  Math. Probl. Eng. {\bf 2009}, Art. ID 621625, 16 pp.

\bibitem{At10} M. At\c{c}eken, {\it  Warped product semi-slant submanifolds in Kenmotsu manifolds}, Turkish J. Math. {\bf 34} (2010), no. 3, 425--432.

\bibitem{At10-2} M. At\c{c}eken, {\it  Warped product semi-invariant submanifolds in almost paracontact metric manifolds}, Math. Morav. {\bf 14} (2010),  15--21.

\bibitem{At11} M. At\c{c}eken, {\it Contact CR-warped product submanifolds in cosymplectic space forms},  Collect. Math. {\bf 62} (2011), no. 1, 17--26.

\bibitem{At11-2}  M. At\c{c}eken, {\it  Warped product semi-invariant submanifolds in locally decomposable Riemannian manifolds},  Hacet. J. Math. Stat. {\bf 40} (2011), no. 3, 401--407.

\bibitem{bejancu} A. Bejancu,  Geometry of $CR$-Submanifolds,  D. Reidel Publ. Co. 1986.

\bibitem{bishop} R. L. Bishop and B. O'Neill, {\em Manifolds of negative curvature}, Trans. Amer. Math. Soc. {\bf 145} (1969), 1--49.

\bibitem{Bl} D. E. Blair,  Riemannian geometry of contact and symplectic manifolds, 2nd edition, Progress in Mathematics, 203. Birkh\"auser Boston, Inc., Boston, MA, 2010. 

\bibitem{BRV}  J. Bolton,  C. Rodriguez Montealegre and L. Vrancken, {\it Characterizing warped-product Lagrangian immersions in complex projective space}, Proc. Edinb. Math. Soc. (2) {\bf 52} (2009), no. 2, 273--286.

\bibitem{BM04} V. Bonanzinga and K. Matsumoto, {\it  Warped product CR-submanifolds in locally conformal Kaehler manifolds}, Period. Math. Hungar. {\bf 48} (2004), no. 1-2, 207--221. 

\bibitem{BM08} V. Bonanzinga and K. Matsumoto, {\it On doubly warped product CR-submanifolds in a locally  conformal Kaehler manifold}, Tensor (N.S.) {\bf 69} (2008), 76--82.

\bibitem{Bu} J.-B. Butruille,  {\it Classification des vari\'et\'es approximativement k\"ahleriennes homog\`enes}, 
Ann. Global Anal. Geom. {\bf 27} (2005), no. 3, 201--225.

\bibitem{cbook} B. Y. Chen,  Geometry of Submanifolds, M. Dekker, New York, 1973.

\bibitem{c2} B. Y. Chen, {\em Some $CR$-submanifolds of a Kaehler manifold. I}, J. Differential Geometry {\bf 16} (1981), no. 2, 305--322.

\bibitem{c3} B. Y. Chen, {\em Some $CR$-submanifolds of a Kaehler manifold. II}, J. Differential Geometry {\bf 16} (1981), no. 3, 493--509.

\bibitem{cbook2} B. Y. Chen,  Geometry of Submanifolds and Its Applications, Science University of Tokyo,  1981.

\bibitem{c4} B. Y. Chen, {\em Some pinching and classification theorems for minimal submanifolds}, Arch. Math. {\bf 60} (1993), no. 6, 568--578.

\bibitem{c4.0} B. Y. Chen,  {\em  Jacobi's elliptic functions and Lagrangian immersions}, { Proc.  Royal Soc. Edinburgh Sect. A Math.} {\bf 126} (1996), no. 4, 687--704.

\bibitem{c4.1} B. Y. Chen,  {\em  Complex extensors and Lagrangian submanifolds in complex Euclidean spaces}, { T\^ohoku Math. J.} {\bf 49} (1997), no. 2,  277--297.

\bibitem{c4.2} B. Y. Chen, {\em  Interaction of Legendre curves and Lagrangian submanifolds}, Israel J. Math. {\bf 99} (1997), 69--108. 

\bibitem{c5} B. Y. Chen,  {\em Some new obstructions to minimal and Lagrangian isometric immersions}, Japan. J. Math. {\bf 26} (2000), no. 1, 105--127.

\bibitem{c5.1} B. Y. Chen,  {\em Twisted product $CR$-submanifolds in Kaehler manifolds}, Tamsui Oxford J. Math. Sci. {\bf 16} (2000), no. 2, 105--121.

\bibitem{c6} B. Y. Chen,  {\em Riemannian Submanifolds}, Handbook of Differential Geometry, Vol. I (2000) (eds. F. Dillen and L. Verstraelen), North Holland Publ.,  pp. 187--418

\bibitem{c6.1} B. Y. Chen,  {\em  Complex extensors, warped products and Lagrangian immersions},  Soochow J. Math. {\bf 26} (2000), no. 1, 1--18.

\bibitem{c7} B. Y. Chen,  {\em  Geometry of warped product $CR$-submanifolds in Kaehler manifolds}, Monatsh. Math. {\bf 133} (2001), no. 3, 177--195.

\bibitem{c8} B. Y. Chen,  {\em  Geometry of warped product $CR$-submanifolds in Kaehler manifolds, II}, Monatsh. Math. {\bf 134} (2001), no. 2, 103--119.

\bibitem{c8.1} B. Y. Chen,  {\em  Riemannian geometry of Lagrangian submanifolds},  Taiwanese J. Math. {\bf 5} (2001), no. 4, 681--723.

\bibitem{c02} B. Y. Chen,  {\em On isometric minimal immersions  from warped products into real space forms}, Proc. Edinburgh Math. Soc. {\bf 45} (2002), no. 3, 579--587.

\bibitem{c02-2} B. Y. Chen,  {\em Geometry of warped products as Riemannian submanifolds and related problems},  Soochow J. Math. {\bf 28} (2002), no. 2, 125--156.

\bibitem{c02-3} B. Y. Chen,  {\em  Real hypersurfaces in complex space forms which are warped products}, {Hokkaido Math. J.} {\bf 31} (2002), no. 2, 363--383. 

\bibitem{c02-4} B. Y. Chen,  {\em  Non-immersion theorems for warped products in complex hyperbolic spaces}, Proc. Japan Acad. Ser. A Math. Sci. {\bf 78} (2002), no. 6, 96--100.

\bibitem{c03} B. Y. Chen,  {\em Another general inequality for $CR$-warped products in complex space forms},  Hokkaido Math. J. {\bf 32} (2003), no. 2, 415--444. 

\bibitem{c03-2} B. Y. Chen,  {\em  A general optimal inequality for warped products in complex projective spaces and its applications}, Proc. Japan Acad. Ser. A Math. Sci. {\bf 79} (2003), no. 4, 89--94.

\bibitem{c04} B. Y. Chen,  {\em Warped products in real space forms}, Rocky Mountain J. Math. {\bf 34} (2004), no. 2, 551--563.

\bibitem{c04-2} B. Y. Chen,  {\em  $CR$-warped products in complex projective spaces with compact holomorphic factor}, Monatsh. Math. {\bf 141} (2004), no. 3, 177--186.
 
 \bibitem{c05} B. Y. Chen, {\it A general optimal inequality for arbitrary Riemannian submanifolds},  J. Inequal. Pure Appl. Math. {\bf 6} (2005), no. 3, Article 77, 10 pp.
 
\bibitem{c05-2} B. Y. Chen,  {\em  On warped product immersions}, J. Geom. {\bf 82} (2005), no. 1-2, 36--49. 

\bibitem{c38} B. Y. Chen,  {\it Geometry of affine warped product hypersurfaces}, Results Math. {\bf 48} (2005), no. 1-2,  9--28.

\bibitem{c44} B. Y. Chen, {\it  Realization of Robertson-Walker space-times as affine hypersurfaces}, J. Phys, A {\bf 40} (2007), no. 15, 4241--4250.

\bibitem{c11} B. Y. Chen, Pseudo-Riemannian geometry, $\delta$-invariants and applications, World Scientific,  Hackensack, NJ, 2011.

\bibitem{c12} B. Y. Chen, {\it An optimal inequality for CR-warped products in complex space forms involving CR $\delta$-invariant},  Internat. J. Math. {\bf 23} (2012), no. 3, 1250045, 17 pp.

\bibitem{CD08} B. Y. Chen and F. Dillen, {\it Warped product decompositions of real space forms and Hamil\-tonian-stationary Lagrangian submanifolds}, Nonlinear Anal. {\bf 69} (2008), no. 10, 3462--3494.

\bibitem{CD08-2} B. Y. Chen and F. Dillen, {\it  Optimal inequalities for multiply warped product submanifolds}, Int. Electron. J. Geom. {\bf 1} (2008), no. 1, 1--11; Erratum, ibid {\bf 4} (2011), no. 1, 138.

\bibitem{CK} B. Y. Chen and W. E. Kuan,  {\em The Segre imbedding and its converse}, Ann. Fac. Sci. Toulouse, Math. {\bf 7} (1985), no. 1, 1--28

\bibitem{CM} B. Y. Chen and S. Maeda,  {\em  Real hypersurfaces in nonflat complex space forms are irreducible},  Osaka J. Math. {\bf 40} (2003), no. 1, 121--138. 

\bibitem{CMu12} B. Y. Chen and M.-I.  Munteanu, {\it Geometry of $\mathcal{PR}$-warped products in para-K\"ahler manifolds},  Taiwanese J. Math. {\bf 16} (2012), no. 4, 1293--1327. 

\bibitem{CO} B. Y. Chen and K. Ogiue,  {\it On totally real submanifolds}, Trans. Amer. Math. Soc. {\bf 193} (1974), 257--266.

\bibitem{CO2} B. Y. Chen and K. Ogiue,  {\it Two theorems on Kaehler manifolds}, Michigan Math. J. {\bf 21} (1974), no. 3, 225--229.

\bibitem{CV} B. Y. Chen and L. Vrancken, {\it  Lagrangian submanifolds satisfying a basic equality}, { Math. Proc. Camb. Phil. Soc.} {\bf 120} (1996), no. 2, 291--307.

\bibitem{CWei}  B. Y. Chen and S. W. Wei, {\it Growth estimates for warping functions and their geometric applications}, Glasg. Math. J. {\bf 51} (2009), no. 3, 579--592.

\bibitem{dajczer} M. Dajczer and L. A. Florit, {\em  On Chen's basic equality},  Illinois J. Math. {\bf 42} (1998), no. 1, 97--106. 

\bibitem{DT} M. Dajczer and R. Tojeiro, {\it  Isometric immersions in codimension two of warped products into space forms}, Illinois J. Math. {\bf 48} (2004), no. 3, 711--746.

\bibitem{DV} M. Dajczer and T. Vlachos, {\it Isometric immersions of warped products}, arXiv:1108.3905, 2012.

\bibitem{DNV} F. Dillen, K. Nomizu and L. Verstraelen,  {\it Conjugate connections and RadonÕs theorem in affine differential geometry},   Monatsh. Math.  {\bf  109} (1990), no. 3,  221--235.

\bibitem{D2} F. Dillen and  L. Vrancken, {\it Calabi-type composition of affine spheres}, {Differential Geom. Appl.} {\bf 4} (1994),  no. 4, 303--328.

\bibitem{DK12} M. K. Dwivedi and J.-S. Kim, {\it Chen-Tripathi inequality for warped product submanifolds of $S$-space forms}, An. Stiint. Univ. Al. I. Cuza Iasi. Mat. (N.S.) {\bf 58} (2012),  no. 1, 195--208. 

\bibitem{ejri2} N. Ejiri,  {\it Some compact hypersurfaces of constant scalar curvature  in a sphere}, J. Geom. {\bf 19} (1982), no. 2, 197--199. 

\bibitem{FG} T. Friedrich and R. Grunewald, {\it On the first eigenvalue of the Dirac operator on 6-dimensional manifolds}, Ann. Global Anal. Geom. {\bf 3} (1985), no. 3, 265--273.

\bibitem{G} A. Gray, {\it  Nearly K\"ahler manifolds},  J. Differential Geometry {\bf 4} (1970), no. 3,  283--309.

\bibitem{gromov} M. Gromov, {\em A topological technique for the construction of solutions of differential equations and inequalities}, Intern. Congr. Math. (Nice 1970) {\bf  2} (1971), 221--225.

\bibitem{HTA12} S. M. K. Haider, M. Thakur  and Advin, {\it Warped product skew CR-submanifolds of a cosymplectic manifold}, Lobachevskii J. Math. {\bf 33} (2012), no. 3, 262--273.

\bibitem{HM03} I. Hasegawa and I. Mihai, {\it Contact CR-warped product submanifolds in Sasakian manifolds}, Geom. Dedicata {\bf 102} (2003), no. 1, 143--150.

\bibitem{JKK10} N. Jamal, K. A. Khan and V. A. Khan, {\it Generic warped product submanifolds of locally conformal K\"ahler manifolds}, Acta Math. Sci. Ser. B Engl. Ed. {\bf 30} (2010), no. 5, 1457--1468.

\bibitem{KAJ08} K. A. Khan, S. Ali and N. Jamal, {\it Generic warped product submanifolds in a Kaehler manifold}, Filomat {\bf 22} (2008), no. 1, 139--144.

\bibitem{KC07} K. A. Khan, V. A. Khan and S. Uddin, {\it Warped product contact CR-submanifolds of trans-Sasakian manifolds},  Filomat {\bf 21} (2007), no. 2, 55--62.

\bibitem{KC10} K. A. Khan, V. A. Khan and S. Uddin, {\it Warped product submanifolds of cosymplectic manifolds}, Balkan J. Geom. Appl. {\bf 13} (2008), no. 1, 55--65.

\bibitem{KC10-2} M. A. Khan and K. S. Chahal, {\it Warped product pseudo-slant submanifold of trans-Sasakian manifolds},  Thai J. Math. {\bf 8} (2010), no. 2, 263--273.

\bibitem{Kh12} M. A. Khan, S. Uddin and R. Sachdeva, {\it Semi-invariant warped product submanifolds of cosymplectic manifolds},  J. Inequal. Appl. {\bf 2012}, 2012:19, 12 pp.

\bibitem{KJ} V. A. Khan and N. Jamal,  {\it CR-warped product submanifolds of locally conformal Kaehler manifolds} Toyama Math. J. {\bf 33} (2010), 1--19

\bibitem{KJA} V. A. Khan, N. Jamal and A. Al-Kathiri,  {\it   A note on warped products submanifolds of nearly Kaehler manifolds}, Toyama Math. J. {\bf 32} (2009), 59--73.

\bibitem{KK08} V. A. Khan and K. A. Khan,  {\it  Contact CR-warped product submanifolds of Kenmotsu manifolds}, Thai J. Math. {\bf 6} (2008), no. 1, 139--145.

\bibitem{KK09} V. A. Khan and K. A. Khan,  {\it  Generic warped product submanifolds in nearly Kaehler manifolds}, Beitr\"age Algebra Geom. {\bf 50} (2009), no. 2, 337--352.

\bibitem{KKS07} V. A. Khan, K. A. Khan and S. Uddin,  {\it Warped product CR-submanifolds in nearly Kaehler manifolds}, SUT J. Math. {\bf 43} (2007), no. 2, 201--213. 

\bibitem{KKS08} V. A. Khan, K. A. Khan and S. Uddin,  {\it Contact CR-warped product submanifolds of Kenmotsu manifolds}, Thai J. Math. {\bf 6} (2008), no. 2, 307--314. 

\bibitem{KKS09} V. A. Khan, K. A. Khan and  S. Uddin, {\it  CR-warped product submanifolds in a Kaehler manifold}, Southeast Asian Bull. Math. {\bf 33} (2009), no. 5, 865--874.

\bibitem{KKS11} V. A. Khan, K. A. Khan and  S. Uddin,  {\it A note on warped product submanifolds of Kenmotsu manifolds},  Math. Slovaca {\bf 61} (2011), no. 1, 79--92.

\bibitem{KY04} Y. H. Kim and D. W. Yoon, {\it Inequality for totally real warped products in locally conformal Kaehler space forms.}, Kyungpook Math. J. {\bf 44} (2004), no. 4, 585--592.

\bibitem{Kr} G. I. Kru\u{c}hkovi\u{c}, {\it On semi-reducible Riemannian spaces} (in Russian), Dokl. Akad. Nauk SSSR {\bf 115} (1957), 862--865.

\bibitem{MN11} F. Malek and V. Nejadakbary, {\it  Warped product submanifold in generalized Sasakian space form},  Acta Math. Acad. Paedagog. Nyh\'azi. (N.S.) {\bf 27} (2011), no. 2, 325--338.

\bibitem{MB08-2} K. Matsumoto and V. Bonanzinga, {\it  Doubly warped product CR-submanifolds in a locally conformal Kaehler space form} II, An. Stiint. Univ. Al. I. Cuza Iasi. Mat. (N.S.) {\bf 53} (2007), suppl. 1, 235--248.

\bibitem{MM02} K. Matsumoto and I. Mihai,  {\it  Warped product submanifolds in Sasakian space forms},  SUT J. Math. {\bf 38} (2002), no. 2, 135--144.

\bibitem{m} M. Meumertzheim, H. Reckziegel and M. Schaaf,  {\it Decomposition of twisted and warped product nets},  Results Math.   {\bf 36} (1999), no. 3-4, 297--312.

\bibitem{M04} A. Mihai,  {\it   Warped product submanifolds in complex space forms}, Acta Sci. Math. (Szeged) {\bf 70} (2004), no. 1-2, 419--427.

\bibitem{M05} A. Mihai,  {\it  Warped product submanifolds in quaternion space forms}, Rev. Roumaine Math. Pures Appl. {\bf 50} (2005), no. 3, 283--291.

\bibitem{M05-2} A. Mihai,  {\it Warped product submanifolds in quaternion space forms},  Acta Univ. Apulensis Math. Inform.  {\bf 10} (2005), 31--38.

\bibitem{M05-3} A. Mihai,  {\it  Warped product submanifolds in generalized complex space forms}, Acta Math. Acad. Paedagog. Nyh‡zi. (N.S.) {\bf 21} (2005), no. 1, 79--87.

\bibitem{Mi04} I.  Mihai, {\it Contact CR-warped product submanifolds in Sasakian space forms}, Geom. Dedicata {\bf 109} (2004),  no. 1, 165--173. 

\bibitem{MV12} H. Mirandola and F. Vit\'orio, {\it Multiple warped product metrics into quadrics}, arXiv:1210.1812v1, 2012.
\bibitem{moore} J. D. Moore,  {\it Isometric immersions of riemannian  products}, J. Differential Geometry {\bf 5} (1971), no. 1-2, 159--168.

\bibitem{Mu05} M.-I. Munteanu, {\it  Warped product contact CR-submanifolds of Sasakian space forms},  Publ. Math. Debrecen {\bf 66 }(2005), no. 1-2, 75--120.

\bibitem{Mu07} M.-I. Munteanu, {\it A note on doubly warped product contact CR-submanifolds in trans-Sasakian manifolds}, Acta Math. Hungar. {\bf 116} (2007), no. 1-2, 121--126.

\bibitem{Mu07-2} M.-I. Munteanu, {\it  Doubly warped product CR-submanifolds in locally conformal K\"ahler manifolds}, Monatsh. Math. {\bf 150} (2007), no. 4, 333--342.

\bibitem{MAEM06} C.  Murathan, K. Arslan, R.  Ezentas and I. Mihai, {\it Warped product submanifolds in Kenmotsu space forms}, Taiwanese J. Math. {\bf 10} (2006), no. 6, 1431--1441. 

\bibitem{MUKW} A. Mustafa, S. Uddin, V. A. Khan and B. R. Wong, {\it Contact CR-warped product submanifolds of nearly trans-Sasakian manifolds}, Taiwan. J. Math. {\bf 17} (2013) (in press).

\bibitem{Nagy} P.-A. Nagy, {\it Nearly K\"ahler geometry and Riemannian foliations}, Asian J. Math. {\bf 6} (2002), no. 3, 481--504.

\bibitem{nolker} S. N\"olker,  {\it Isometric immersions of warped products},  Differential Geom. Appl. {\bf 6} (1996), no. 1, 1--30.

\bibitem{NP} K. Nomizu and U. Pinkall, {\it On the geometry of affine immersions}, {Math. Z.} {\bf 195} (1987), no. 2, 165--178.

\bibitem{N} K. Nomizu and T. Sasaki, {Affine Differential Geometry. Geometry of Affine Immersions}, {Cambridge Tracts in Math.} no. {\bf 111}, Cambridge University Press, 1994.

\bibitem{Ol08} A. Olteanu, {\it  CR-doubly warped product submanifolds in Sasakian space forms} Bull. Transilv. Univ. Brasov Ser. III {\bf 1(50)} (2008), 269--277. 

\bibitem{Ol10} A. Olteanu, {\it CR-doubly warped product submanifolds in Sasakian space forms}, Bull. Transilv. Univ. Brasov Ser. III {\bf 1(50)} (2008), 269--277.

\bibitem{Ol09} A. Olteanu, {\it  Contact CR-doubly warped product submanifolds in Kenmotsu space forms},  J. Inequal. Pure Appl. Math. {\bf 10} (2009), no. 4, Article 119, 7 pp.

\bibitem{Ol09-2} A. Olteanu, {\it  Legendrian warped product submanifolds in generalized Sasakian space forms},  Acta Math. Acad. Paedagog. Nyh\'azi. (N.S.) {\bf 25} (2009), no. 1, 137--144.

\bibitem{Ol10-2} A. Olteanu, {\it Multiply warped product submanifolds in Sasakian space forms}, Int. Electron. J. Geom. {\bf 3} (2010), no. 1, 1--10.

\bibitem{Ol10-3} A. Olteanu, {\it A general inequality for doubly warped product submanifolds}, Math. J. Okayama Univ. {\bf 52} (2010), 133--142.

\bibitem{oneill} B. O'Neill,  {Semi-Riemannian Geometry with Applications to Relativity}, Academic Press, New York, 1983.

\bibitem{P} A. R. Palma, {\it Pseudo-slant warped products in trans-Sasakian manifolds},  Bull. Transilv. Univ. Brasov Ser. B (N.S.) {\bf 14(49)} (2007), suppl., 235--240.

\bibitem{PKK12} S. Y. Perktas, E. Kilic and S. Keles, {\it Warped product submanifolds of Lorentzian paracosymplectic manifolds}, Arab. J. Math. {\bf 1} (2012), no. 3, 377--393.

\bibitem{R10} P. W. Rao, {\it Totally real warped product submanifolds in generalized complex space forms}, Int. J. Contemp. Math. Sciences, {\bf 5} (2010), no. 52, 2577--2586.

\bibitem{RV}  C. Rodr\'iguez Montealegre and L. Vrancken, {\it Warped product minimal Lagrangian immersions in complex projective space}, Results Math. {\bf 56} (2009), 405--420.

\bibitem{Sa05} B. Sahin, {\it  Warped product lightlike submanifolds}, Sarajevo J. Math. {\bf 1(14)} (2005), no. 2, 251--260.

\bibitem{Sa06} B. Sahin, {\it  Nonexistence of warped product semi-slant submanifolds of Kaehler manifolds}, Geom. Dedicata {\bf 117} (2006), 195--202.

\bibitem{Sa06-2} B. Sahin, {\it  Warped product semi-invariant submanifolds of a locally product Riemannian manifold}, Bull. Math. Soc. Sci. Math. Roumanie (N.S.) {\bf 49(97)} (2006), no. 4, 383--394.

\bibitem{Sa09} B. Sahin, {\it Warped product semi-slant submanifolds of a locally product Riemannian manifold}, Studia Sci. Math. Hungar. {\bf 46} (2009), no. 2, 169--184.

\bibitem{Sa09-2} B. Sahin, {\it  Warped product submanifolds of Kaehler manifolds with a slant factor},  Ann. Polon. Math. {\bf 95} (2009), no. 3, 207--226.

\bibitem{Sa10} B. Sahin, {\it Skew CR-warped products of Kaehler manifolds}, Math. Commun. {\bf 15} (2010), no. 1, 189--204.

\bibitem{SG08} B. Sahin and R. G\"unes, {\it  CR-warped product submanifolds of nearly Kaehler manifolds}, Beitr\"age Algebra Geom. {\bf 49} (2008), no. 2, 383--397.

\bibitem{sasahara}  T. Sasahara,  {\it On Chen invariant of $CR$-submanifolds in a complex hyperbolic space}, Tsukuba J. Math. {\bf 26} (2002), no. 1, 119--132.

\bibitem{SC07} S. S. Shukla and P. K. Chaubey, {\it   On totally real warped products in locally conformal almost cosymplectic manifolds}, Ultra Sci. Phys. Sci. {\bf 19} (2007), no. 3, 617--622.

\bibitem{ST08} S. S. Shukla and S. K. Tiwari, {\it  C-totally real warped product submanifolds in S-space forms}, Aligarh Bull. Math. {\bf 27} (2008), no. 2, 95--100.

\bibitem{ST10} S. S. Shukla and S. K. Tiwari, {\it Contact CR-warped product submanifolds in locally conformal almost cosymplectic manifolds}, Bull. Calcutta Math. Soc. {\bf 102} (2010), no. 2, 121--128.

\bibitem{SB12} K. Singh, S. S. Bhatia, {\it Warped product contact CR-submanifolds of globally framed f-manifolds with Lorentz metric},  arXiv:1204.6125, 2012.

\bibitem{Sr12} S. K.  Srivastava, {\it On warped product submanifolds of Kenmotsu manifolds}, 	arXiv:1206.4416, 2012.

\bibitem{suceava} B. Suceav\u{a},  {\it The Chen invariants  of warped products of hyperbolic
planes and their applications to immersibility problems},  Tsukuba J. Math. {\bf 25} (2001), no. 2, 311--320.

\bibitem{SO11} S. Sular and C. Ozgur, {\it  Doubly warped product submanifolds of $(\kappa,\mu)$-contact metric manifolds}, Ann. Polon. Math. {\bf 100} (2011), no. 3, 223--236.

\bibitem{SO12} S. Sular and C. Ozgur, {\it Contact CR-warped product submanifolds in generalized Sasakian space forms}, Turk. J. Math. {\bf 36} (2012), no. 3, 485--497.

\bibitem{T} S. Tachibana, {\it On almost-analytic vectors in certain almost-Hermitian manifolds}, Tohoku Math. J. (2) {\bf 11} (1959), no. 2, 351--363.

\bibitem{T} M. M. Tripathi, {\it C-totally real warped product submanifolds}, An. \c{S}tiin\c{t}. Univ. Al. I. Cuza Ia\c{s}i. Mat. (N.S.) {\bf 58} (2012), no. 2, 417--436.

\bibitem{U10} S. Uddin, {\it  Warped product CR-submanifolds of LP-cosymplectic manifolds}, Filomat {\bf 24} (2010), no. 1, 87--95.

\bibitem{U10-2} S. Uddin, {\it Warped product CR-submanifolds in Lorentzian para Sasakian manifolds},  Serdica Math. J. {\bf 36} (2010), no. 3, 237--246.

\bibitem{U10-3} S. Uddin, {\it CR-warped product submanifolds of Lorentzian manifolds}, Math. Morav. {\bf 14} (2010), no. 1, 129--136.

\bibitem{U10-4} S. Uddin, {\it  On doubly warped and doubly twisted product submanifolds}, Int. Electron. J. Geom. {\bf 3} (2010), no. 1, 35--39.

\bibitem{UC11} S.  Uddin and   A. Y. M. Chi, {\it Warped product pseudo-slant submanifolds of nearly Kaehler manifolds}  An. Stiint. Univ. ``Ovidius'' Constanta Ser. Mat. {\bf19} (2011), no. 3, 195--204.

\bibitem{UC11.2} S.  Uddin and   A. Y. M. Chi, {\it   Warped product CR-submanifolds of cosymplectic manifolds},  Ric. Mat. {\bf 60} (2011), no. 1, 143--149.

\bibitem{UK10} S. Uddin and K. A. Khan, {\it  Warped product semi-slant submanifolds of trans-Sasakian manifolds}, Differ. Geom. Dyn. Syst. {\bf 12} (2010), 260--270.

\bibitem{UA11} S. Uddin and K. A. Khan, {\it  Warped product CR-submanifolds of cosymplectic manifolds},  Ric. Mat. {\bf 60} (2011), no. 1, 143--149.

\bibitem{UKK10} S. Uddin, V. A. Khan and H. H. Khan, {\it  Some results on warped product submanifolds of a Sasakian manifold}, Int. J. Math. Math. Sci. {\bf 2010}, Art. ID 743074, 9 pp.

\bibitem{UKK10-2} S. Uddin, V. A. Khan and K. A. Khan, {\it  A note on warped product submanifolds of cosymplectic manifolds}, Filomat {\bf 24} (2010), no. 3, 95--102. 

\bibitem{UOK12} S. Uddin, C. Ozel and K. A. Khan, {\it Warped product CR-submanifolds of Lorentzian $\beta$-Kenmotsu manifolds}, Publi Inst. Math. {\bf 92(106)} (2012), 157--163.

\bibitem{UWM12} S. Uddin, B. R. Wong and A. A. Mustafa, {\it Warped product pseudo-slant submanifolds of a nearly cosymplectic manifold}, Abstrac Appl. Anal. {\bf 2012}, Article ID 420890, 13 pages.

\bibitem{V} G.-E. V\^{i}lcu, {\it On Chen invariants and inequalities in quaternionic geometry}, J. Inequal. Appl. 2013, {\bf 2013}:66.

\bibitem{WG68} J. A. Wolf and  A. Gray,  {\it Homogeneous spaces defined by Lie group automorphisms I}, J. Differential Geometry {\bf 2} (1968), no. 1, 77--114.

\bibitem{Y04} D. W. Yoon, {\it Some inequalities for warped products in cosymplectic space forms}, Differ. Geom. Dyn. Syst. {\bf 6} (2004), 51--58.

\bibitem{Y04-2} D. W. Yoon, K. S. Cho and S. G. Han, {\it Some inequalities for warped products in locally conformal almost cosymplectic manifolds}, Note Mat. {\bf 23} (2004/05), no. 1, 51--60.

\end{thebibliography}
\end{document}